\renewcommand{\H}{\mathcal{H}}
\newcommand{\V}{\mathcal{V}}
\newcommand{\N}{\mathbb{N}}
\newcommand{\R}{\mathbb{R}}
\renewcommand{\SS}{\mathbb{S}}
\newcommand{\Om}{\Omega}
\newcommand{\e}{\varepsilon}
\newcommand{\vphi}{\varphi}
\newcommand{\Lip}{{\rm Lip}}
\newcommand{\Div}{{\rm div}\,}
\newcommand{\Id}{{\rm Id}\,}
\newcommand{\dist}{{\rm dist}}
\newcommand{\loc}{{\rm loc}}
\newcommand{\diam}{{\rm diam}\,}
\newcommand{\spt}{{\rm spt}}
\newcommand{\weak}{\rightharpoonup}
\newcommand{\weakstar}{\stackrel{\scriptscriptstyle{*}}{\rightharpoonup}}
\newcommand{\pa}{\partial}
\newcommand{\cc}{\subset\!\subset}
\newcommand{\cl}{\mathrm{cl}\,}
\newcommand{\eps}{\e}
\newcommand{\dive}{\mathrm{div}\,}
\newcommand{\h}{\H^n}
\newcommand{\rn}{\mathbb{R}^{n+1}}
\newcommand{\ac}{\mathcal{E}}
\newcommand\restr[2]{{
		\left.\kern-\nulldelimiterspace 
		#1 
		\right|_{#2} 
}}
\newcommand{\one}{{\scriptscriptstyle{(1)}}}
\newcommand{\zero}{{\scriptscriptstyle{(0)}}}
\newcommand{\half}{{\scriptscriptstyle{(1/2)}}}
\newcommand{\toweak}{\rightharpoonup}
\newcommand{\Acal}{\mathcal{A}}
\newcommand{\Ccal}{\mathcal{C}}
\newcommand{\Hcal}{\mathcal{H}}
\newcommand{\Lcal}{\mathcal{L}}
\newcommand{\Rcal}{\mathcal{R}}
\newcommand{\Scal}{\mathcal{S}}
\newcommand{\Fcal}{\mathcal{F}}
\newcommand{\Wbf}{\mathbf{W}}
\newcommand{\Sbb}{\mathbb{S}}
\newcommand{\mres}{\mathbin{\vrule height 1.6ex depth 0pt width 
		0.13ex\vrule height 0.13ex depth 0pt width 1.3ex}}
\theoremstyle{plain}
\newtheorem{theorem}{Theorem}[section]
\newtheorem{lemma}[theorem]{Lemma}
\newtheorem{corollary}[theorem]{Corollary}
\newtheorem{proposition}[theorem]{Proposition}
\newtheorem*{theorem*}{Theorem}
\newtheorem*{corollary*}{Corollary}
\theoremstyle{definition}
\newtheorem{definition}{Definition}[section]
\newtheorem{remark}[theorem]{Remark}
\newtheorem*{notation*}{Notation}
\numberwithin{equation}{section}
\numberwithin{figure}{section}
\newcommand{\id}{{\rm id}\,}
\newcommand{\wire}{\mathbf{W}}
\title{Free boundary regularity for semilinear variational problems with a topological constraint}
\author{Michael Novack}
\address{Department of Applied Mathematics, Illinois Institute of Technology, W 32nd St., E1 Room 208, 10 S Wabash Ave, Chicago, IL 60616, United States of America}
\email{mnovack@illinoistech.edu}
\author{Daniel Restrepo}
\address{Department of Mathematics, Johns Hopkins University, 3400 N. Charles Street, Baltimore, MD 21218, United States of America}
\email{drestre1@jh.edu}
\author{Anna Skorobogatova}
\address{Department of Mathematics, ETH Z\"{u}rich, R\"{a}mistrasse 101, 8092 Z\"{urich}, Switzerland}
\email{anna.skorobogatova@math.ethz.ch}
\begin{document}
	
	\begin{abstract}
		We study a class of semilinear free boundary problems in which admissible functions $u$ have a topological constraint, or {\it spanning condition}, on their 1-level set. This constraint forces $\{u=1\}$, which is the free boundary, to behave like a surface with some special types of singularities attached to a fixed boundary frame, in the spirit of the Plateau problem \cite{HP16}. 
        Two such free boundary problems are the minimization of capacity among surfaces sharing a common boundary and an Allen-Cahn formulation of the Plateau problem. We establish the existence of minimizers and study their regularity properties, obtaining the optimal Lipschitz regularity of minimizers and analytic regularity for the free boundaries away from a codimension two singular set. The singularity models for these problems are given by conical critical points of the minimal capacity problem, which are closely related to spectral optimal partition and segregation problems.
	\end{abstract}
	
	\maketitle

 \tableofcontents
    
	\section{Introduction}
	
	\subsection{Background}\label{subsec:overview}
	In this paper we study the regularity of solutions to elliptic variational problems with a topological constraint on the level sets of admissible functions. Given a compact set $\wire\subset\R^{n+1}$ with $\Om = \rn \setminus \wire$ and potentials $F,V:[0,1]\to[0,\infty)$ vanishing at $0$, consider the minimization problems
	\begin{eqnarray}
		\label{new model intro}
		\inf\left\{\int_{\Omega} |\nabla u|^2 +F(u)\,:\,\begin{aligned} &u\in C^0(\Om;[0,1]),\,{\nabla u \in L^2_\loc(\Omega),} \\ &\mbox{$u$ vanishes at infinity, $\{u = 1\}$ ``spans" $\wire$}\end{aligned}\ \right\}\,,
	\end{eqnarray}
	and
	\begin{eqnarray}
		\label{new model volume constrained}  
		\inf\left\{\int_{\Omega} |\nabla u|^2 +F(u)\,:\, \begin{aligned} &u\in C^0(\Om;[0,1]),\,{\nabla u \in L^2_\loc(\Omega),} \\ &\int_{\Omega}V(u)=1,\,\mbox{$\{u = 1\}$ ``spans" $\wire$} \end{aligned}\right\}\,.
	\end{eqnarray}
	Here the terminology ``$\{u=1\}$ spans $\wire$" means that for a homotopically closed family $\mathcal{C}$ of smooth embeddings of $\mathbb{S}^1$ in $\Omega$ (which is independent of $u$), called a spanning class,
	\begin{equation}\label{eq:HP spanning}
		\{u=1\} \cap \gamma\neq\varnothing\qquad \mbox{for every $\gamma \in \mathcal{C}$}\,.
	\end{equation}
	We will refer to any set satisfying \eqref{eq:HP spanning} as $\mathcal{C}$-{\bf spanning}. This type of condition originated in the study of the set-theoretic Plateau problem \cite{HP16} and at a heuristic level forces the spanning set to behave like a surface bounded by $\wire$; see Figure \ref{fig:spanning}.

	Several examples of these types of problems have appeared in the literature and motivate our work. An early version of the model \eqref{new model intro}--with a slightly different notion of spanning--and $F=0$ is the classical problem of finding \textit{surfaces of minimal capacity} spanning a closed curve. In the case when $\wire$ is a Jordan curve satisfying some additional restrictions, this problem was solved in $\R^3$ in \cite{evans1940minimal, evans1940minimum} using multivalued harmonic functions. A similar method was used in \cite{caffarelli1975surfaces} to address the case when $\wire$ is a knot, yielding however only local minimizers. Most relevant to our choice of spanning condition is the diffuse interface/Allen-Cahn approximation of the Plateau problem recently introduced in \cite{maggi2023hierarchy} and which is a prototypical example of \eqref{new model volume constrained}. In that case, $F=W/\e^2$ is a double-well potential (e.g. $F =u^2(u-1)^2/\e^2$) and $V$ is a particular volume potential related to $F$ (see \eqref{eq:v def}). It is shown in \cite{maggi2023hierarchy} that the rescaled problems 
	\begin{equation}\label{eq:MNR23a problem}
	\inf\Big\{\eps\int_{\Omega} |\nabla u|^2 +\frac{W(u)}{\eps^2}\, dx \,:  \int_{\Omega}V(u)=v,\,\mbox{$\{u = 1\}$ is $\mathcal{C}$-spanning} \Big\}\,,
	\end{equation}
	converge as $\e, v\to 0$, $\eps \ll v$, to the Plateau problem of Harrison-Pugh \cite{HP16}. Finally, although it is not immediately obvious from the statements of \eqref{new model intro}-\eqref{new model volume constrained}, they share some common features with optimal partition/segregation problems from the free boundary literature; see Remark \ref{remark:segregation}.
	
	\begin{figure}
		\begin{overpic}[scale=0.6,unit=1mm]{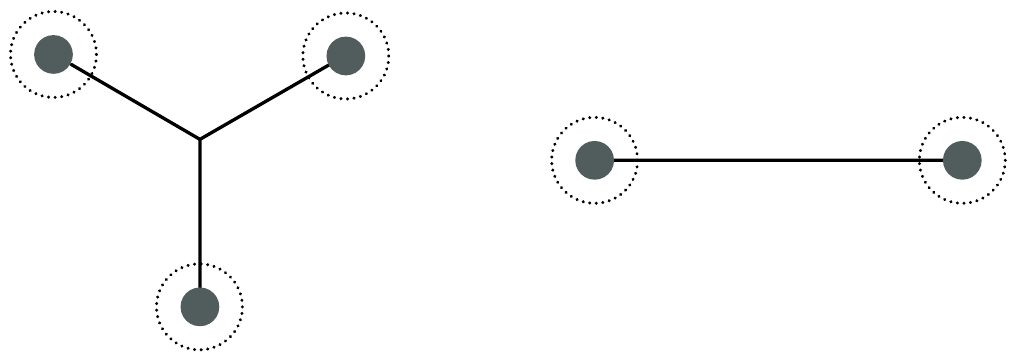}
			\put(57,25){\small{$\gamma_1$}}
			\put(-3,28){\small{$\gamma_1$}}
			\put(25,30){\small{$\gamma_2$}}
			\put(93,25){\small{$\gamma_2$}}
			\put(11,4){\small{$\gamma_3$}}
		\end{overpic}
		\caption{Shown above are two different configurations of $\wire\subset \mathbb{R}^2$, generators for an associated spanning class $\mathcal{C}$, and a spanning set. In both cases, $\wire$ is the union of the gray balls, $\mathcal{C}$ is the family of smooth loops homotopic to some $\gamma_i$, and the example spanning sets are composed of line segments.}\label{fig:spanning}
	\end{figure}
	
	As is to be expected, the spanning constraint significantly affects the behavior of minimizers and the corresponding analysis. For example, with respect to \eqref{new model intro}, in combination with the requirement that $u$ vanishes at infinity it eliminates from contention all constant functions and thus forces non-constant minimizers. Also, for \eqref{new model volume constrained}, it allows for the approximation of minimal surfaces with codimension 1 singularities, for example triple junctions in the plane, by energy minimizing solutions of an Allen-Cahn free boundary problem. This, however, is not possible when considering stationary, stable solutions to the classical Allen-Cahn equation \cite{TW12}. At the level of the Euler-Lagrange equation, the spanning constraint significantly changes even its derivation. This is due to the fact that when $\{u=1\}$ is $\mathcal{C}$-spanning and the test function $\varphi$ takes negative values, there is no reason that $\{u + t\varphi=1\}$ is $\mathcal{C}$-spanning and admissible for \eqref{new model intro}-\eqref{new model volume constrained}. Setting aside this consideration for the moment, \eqref{new model intro}-\eqref{new model volume constrained} formally lead to a free boundary problem with a transmission condition. If $u$ is minimizer and $u$ and the level set $\{u=1\}$ are sufficiently regular, then there exists a potential $\Phi$ such that $u$ solves the free boundary problem
	\begin{equation}\label{ace modified}
		\left\{
		\begin{aligned}
			2\,\Delta u&=\Phi'(u)\,,&\quad\mbox{on $\Om\cap\{u \neq 1\}$}\,,
			\\
			|\pa_\nu^+u |&=|\pa_\nu^-u|\,,&\quad\mbox{on $\Om\cap\{u = 1 \}$}\,,
			\\
			\mbox{such }&\mbox{that $\{u = 1 \}$ spans $\wire$}\,,
		\end{aligned}
		\right .
	\end{equation}
	where $\pa_\nu^\pm$ denote the one-sided directional derivative operators with respect to a unit normal $\nu$ to $\{u=1\}$ (cf.~ \cite[Prop. 1.4]{maggi2023hierarchy}). In the case of \eqref{new model intro}, $\Phi = F$, whereas in \eqref{new model volume constrained} $\Phi= F-\lambda V$, with $\lambda \in \R$ a Lagrange multiplier associated with the volume constraint $\int V(u)=1$. The interested reader may refer to \cite[Eq. 1-2]{evans1940minimal} regarding the significance of the transmission condition $|\pa_\nu^+u |=|\pa_\nu^-u|$ and its integral formulation in the simplest case $F=0$ with no volume constraint. 
	
	\subsection{Main results}\label{subsec:main results}
	Throughout the paper, we will assume that: $F$ and $V$ are potential functions satisfying the hypotheses
	\begin{align}\tag{H1}\label{eq:A1}
		&F,V \in C^{2}([0,1];[0,\infty))\,   \\ \tag{H2} \label{eq:A2}
		&0=F(0)=V(0)=F'(0)=V'(0)=V'(1)=F'(1)\,,\quad \textup{and} \\ \tag{H3}\label{eq:A3}
		&\mbox{$V$ is strictly increasing on (0,1)}\,.
	\end{align}
	For existence of minimizers in the presence of the volume constraint, we will also assume that
	\begin{equation}\tag{H4}\label{eq:A4}
		\lim_{t\to 0}\frac{V(t)}{F(t)}=0\,,
	\end{equation}
	which is mild and satisfied for example in the Allen-Cahn setting \cite{maggi2023hierarchy} where
	\begin{equation}\label{eq:v def}
		V(t) = {\mathcal{F}}(t)^{(n+1)/n}\,,\qquad {\mathcal{F}}(t) = \int_0^t \sqrt{F(s)}\,ds   \,.
	\end{equation}
	{Lastly, for the homotopically closed family of smooth loops $\mathcal{C}$ (the spanning class), we assume that
	\begin{equation}	
        \begin{aligned}\label{eq:spanning class assumption}
			&\mbox{no $\gamma\in \mathcal{C}$ is homotopic in $\Om$ to a point when $n\geq 2$, and}  \\
            &\mbox{no $\gamma\in \mathcal{C}$ is homotopic in $\Om$ to a point, or to $\partial B_R$ if $\wire \subset B_R$, when $n=1$\,.}
		\end{aligned}
        \end{equation}
		This assumption is sharp in the following sense: if $\mathcal{C}$ contains such a curve $\gamma$ homotopic (in $\Omega)$ to a point $x$, the problems \eqref{new model intro}-\eqref{new model volume constrained} are trivial on the connected component $\Om'$ of $\Om$ containing $x$, since any $u$ satisfying \eqref{eq:HP spanning} must be $1$ on $\Om'$. Also, in the plane, if there exists $\gamma\in \mathcal{C}$ homotopic to $\partial B_R$ with $\wire \subset B_R$, then the admissible class is empty in \eqref{new model intro} or the infimum is infinite in \eqref{new model volume constrained}. So there is nothing lost in assuming \eqref{eq:spanning class assumption}.}
	
 
	{Our main results are sharp regularity and existence of minimizers {(in the admissible class)} for \eqref{new model intro} and \eqref{new model volume constrained}.}
	
	\begin{theorem}[Regularity of minimizers]\label{thm:main regularity theorem}
		If $\wire = \mathbb{R}^{n+1}\setminus \Omega$ is compact, $\mathcal{C}$ is a spanning class for $\wire$ satisfying \eqref{eq:spanning class assumption}, $F$ and $V$ satisfy \eqref{eq:A1}-\eqref{eq:A3}, and {$u$ is a minimizer of \eqref{new model intro} or \eqref{new model volume constrained}}, then
		
		\smallskip
		
		\noindent{\bf (i):} $u$ is locally Lipschitz in $\Omega$, and
		
		\smallskip
		
		\noindent{\bf (ii):} on $\{\Om'\subset \Om:\mbox{$\Om'$ is a connected component of $\Om$ not contained in $\{u=1\}$}\}$, the free boundary $\{u=1\}$ {has empty interior in $\R^{n+1}$ and} decomposes as
		\begin{equation}\notag
			\{u=1\} =  \Rcal(u) \sqcup \mathcal{S}(u) \,,
		\end{equation}
		where $\mathcal{R}(u)$ is locally an {analytic} $n$-dimensional manifold and $\mathcal{S}(u)$ is a closed set with Hausdorff dimension at most $n-1$. If $n=1$, $\{u=1\}$ consists in a locally finite number of analytic curves meeting with equal angles at a discrete number of singular points.
	\end{theorem}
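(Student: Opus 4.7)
My plan follows the classical two-step strategy for free boundary problems: first establish optimal (Lipschitz) regularity for $u$, then analyze the geometry of $\{u=1\}$ via a blow-up analysis together with dimension reduction. Integrating by parts in \eqref{EL outer} yields $(1-u)(2\Delta u - \Phi'(u))=0$ in $\mathcal{D}'(\Omega)$, so $u$ solves $2\Delta u = \Phi'(u)$ classically on $\{u<1\}$ and standard elliptic bootstrap provides interior regularity away from the free boundary. All the remaining work happens at $\{u=1\}$.

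For (i), I would adapt an Alt--Caffarelli--Friedman-type monotonicity formula to our semilinear two-phase setting. Writing $v := 1-u \geq 0$, the differential inequality \eqref{eq:differential inequality} yields $2\Delta v \geq -\Phi'(1-v)$, so $v$ is almost-subharmonic. Near a free boundary point $x_0$, the set $\{v>0\}$ splits locally into at least two components; letting $v_\pm$ denote the restrictions of $v$ to two such components extended by zero, the ACF functional
\begin{equation*}
\Psi(r) = \frac{1}{r^4}\prod_{\pm} \int_{B_r(x_0)} |\nabla v_\pm|^2 |x-x_0|^{1-n}\,dx
\end{equation*}
is almost-monotone, with an error of order $r^2$ absorbed by the boundedness of $\Phi'$ on $[0,1]$. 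Finiteness of $\lim_{r\to 0^+}\Psi(r)$, combined with a non-degeneracy bound for $v$ from below (derived from the PDE and the spanning hypothesis \eqref{eq:spanning class assumption}), yields the Lipschitz bound, hence $u \in C^{0,1}_\loc(\Omega)$.

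For (ii), given Lipschitz regularity, the rescalings $v_r(x) := (1-u)(x_0+rx)/r$ are uniformly bounded and converge along subsequences to a blow-up limit $v_0$. Since $\Phi'(1)=0$ by \eqref{eq:A2}, the semilinear term scales away in the limit, so $v_0$ is harmonic on each component of $\{v_0>0\}$; passing to the limit in the inner-variation identity \eqref{eq innervariation} produces the transmission condition $|\partial_\nu^+ v_0|=|\partial_\nu^- v_0|$ on $\{v_0=0\}$. A Weiss-type monotonicity formula tailored to $1$-homogeneous scaling forces $v_0$ to be $1$-homogeneous, reducing the local geometry of $\{u=1\}$ to the classification of conical profiles. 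The regular set $\mathcal{R}(u)$ is defined as those free boundary points admitting a flat ``two-plane'' blow-up $v_0(x)=c|x\cdot e|$; an $\varepsilon$-regularity/improvement-of-flatness argument (in the spirit of De Silva's for the one-phase Bernoulli problem, adapted to the transmission condition) promotes flatness into $C^{1,\alpha}$ regularity of the free boundary, after which a partial-hodograph transform and elliptic bootstrap upgrade to analyticity. The singular set $\mathcal{S}(u)$ is the complement, and Federer's dimension reduction yields $\dim_{\mathcal{H}} \mathcal{S}(u) \leq n-1$ once non-flat $1$-homogeneous solutions in $\R^k$ are ruled out for $k \leq n-1$. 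For $n=1$, $1$-homogeneous solutions in $\R^2$ are explicitly unions of linear wedges and the equal-flux condition $|\partial_\nu^+v_0|=|\partial_\nu^- v_0|$ forces equal-angle meeting, giving the stated triple-junction picture.

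The main obstacle is the classification of non-flat $1$-homogeneous critical points of the limit transmission problem. To run Federer's reduction sharply at codimension one, one must rule out non-trivial conical profiles in ambient dimensions $\leq n-1$, which connects directly to spectral optimal partition problems (as flagged in the abstract) via the correspondence between conical profiles and eigenfunction configurations on spherical partitions. A secondary technical difficulty is ensuring that the spanning constraint is not lost under blow-up: one must argue either that the limiting free boundary retains the correct local topological structure, or devise a purely variational/PDE-based non-degeneracy argument that bypasses the spanning condition on vanishing scales.
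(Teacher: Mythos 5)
The central issue with your approach to part (i) is a circularity that the paper's argument is specifically designed to avoid. You set up the ACF functional by ``letting $v_\pm$ denote the restrictions of $v$ to two such components extended by zero,'' asserting that $\{v>0\}$ splits locally into at least two components near a free boundary point. But that local two-phase structure is precisely one of the \emph{conclusions} of the regularity theory, not a hypothesis: as the paper notes explicitly (Remark 1.7, and the proof of Proposition 5.1), the number of connected components of $\{v>0\}$ near a point of $\{v=0\}$ is a priori unbounded, and a clean separation into two components is obtained only at points whose blow-ups are linear, after the frequency gap (Proposition 5.4) has been established. The paper sidesteps this by using the Almgren frequency function $N_{v,x_0}(r)$ directly on $v=1-u$: the inner/outer variation identities give almost-monotonicity of $N$ (Lemma 3.7), a compactness argument gives the locally uniform lower bound $N\geq\alpha>0$ (Lemma 3.11), and a dimension-reduction/blow-up argument improves this to the sharp bound $N\geq 1$ (Proposition 3.18), from which Lipschitz regularity follows via a Campanato-type estimate. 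None of this requires any a priori structure of $\{v=0\}$. A second, more minor mismatch: you invoke the spanning hypothesis to get a non-degeneracy lower bound, but the paper's Theorems 3.1 and 1.1 are proved \emph{without} using spanning at all---they hold for any solution of the Euler-Lagrange relations \eqref{eq innervariation}--\eqref{eq:differential inequality}. Tying regularity to spanning would in fact be a step backward from the paper's scope.

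For part (ii), your proposed route (Weiss monotonicity for 1-homogeneity of blow-ups, De Silva-type improvement of flatness, partial hodograph transform for analyticity, Federer reduction for the singular set) is a coherent alternative template, but it is genuinely different from what the paper does and has unresolved issues in this setting. The paper classifies free boundary points by frequency value: $\mathcal{R}(u)=\{N=1\}$, $\mathcal{S}(u)=\{N\geq 3/2\}$, with the frequency gap (no values in $(1,3/2)$) proved via an inductive classification of homogeneous tangent functions (Proposition 5.4, following Soave--Terracini). At frequency-1 points, a reflection argument (Lemma 5.5) turns $v$ into a classical $C^2$ solution of a semilinear PDE with non-vanishing gradient on its nodal set, and analyticity of $\mathcal{R}(u)$ then follows from the analyticity of level sets of semilinear elliptic equations (Koch's result), not from a hodograph transform. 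Since this is a \emph{transmission} problem with a two-sided flux condition rather than a Bernoulli-type free boundary, adapting De Silva's one-phase/two-phase flatness-improvement machinery would require non-trivial modification, whereas the reflection argument is immediate once you know the local two-component structure. Finally, for the Federer reduction, you correctly identify that classifying non-flat 1-homogeneous profiles is the obstacle, but note that the paper shows these profiles do exist (e.g. $|h|$ for homogeneous harmonic polynomials $h$ of degree $\geq 2$), so one cannot ``rule them out''; instead the paper uses the $3/2$ frequency gap plus the planar classification (Lemma 3.16) as the base case of the induction, and translation-invariance of tangent functions at off-origin singular points to reduce dimension. Your triple-junction picture for $n=1$ is correct in spirit, but the paper also needs the Hartman--Wintner asymptotic expansion (together with a finiteness argument for components, Lemma 5.10) to prove uniqueness of tangent cones and the ``finitely many analytic arcs'' structure.
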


	\begin{theorem}[Existence of minimizers]\label{thm:main existence theorem}
		Suppose $\wire =\mathbb{R}^{n+1}\setminus \Omega$ is compact, $\mathcal{C}$ is a spanning class for $\wire$ satisfying \eqref{eq:spanning class assumption}, and $F$ and $V$ satisfy \eqref{eq:A1}-\eqref{eq:A4}. Then there exists a minimizer {$u \in C^0(\Omega;[0,1])$ with $\nabla u \in L^2_\loc(\Omega)$} for \eqref{new model volume constrained}, and if either
\begin{equation}\label{eq:sharp existence assumption 2}
  n\geq 2\qquad \mbox{or}\qquad  \mbox{there exists $t_j\searrow 0$ such that $F(t_j)>0$},
\end{equation}
there exists a minimizer {$u \in C^0(\Omega;[0,1])$ with $\nabla u \in L^2_\loc(\Omega)$} for \eqref{new model intro}.
	\end{theorem}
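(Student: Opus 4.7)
The plan is to apply the direct method of the calculus of variations to both \eqref{new model intro} and \eqref{new model volume constrained}. Fix a minimizing sequence $u_k$ with $\{u_k=1\}$ $\mathcal{C}$-spanning. The uniform bound $\int_\Omega|\nabla u_k|^2\le C$ together with $u_k\in[0,1]$ gives, along a subsequence, $u_k\rightharpoonup u$ in $W^{1,2}_\loc(\Omega)$ and $u_k\to u$ a.e.\ for some $u\in W^{1,2}_\loc(\Omega;[0,1])$. Weak lower semicontinuity yields $\int|\nabla u|^2\le\liminf\int|\nabla u_k|^2$ and Fatou's lemma yields $\int F(u)\le\liminf\int F(u_k)$. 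The substantive work then lies in (i) verifying that the limit $u$ satisfies the spanning condition on an appropriate level set, (ii) preserving the volume constraint in \eqref{new model volume constrained}, and (iii) excluding the trivial limit $u\equiv 0$ in \eqref{new model intro}.

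For the spanning step I would first note that for every $s<1$ the superlevel set $\{u_k\geq s\}$ is $\mathcal{C}$-spanning, since it contains $\{u_k=1\}$. By the coarea formula combined with the Dirichlet bound these sets have uniform perimeter, and I would then invoke a compactness/stability result for $\mathcal{C}$-spanning sets under $L^1$ or Hausdorff convergence, in the spirit of Harrison--Pugh \cite{HP16} and \cite{maggi2023hierarchy}, to conclude that $\{u\geq s\}$ is $\mathcal{C}$-spanning for a.e.\ $s$ close to $1$. The truncation $\tilde u=\min(u/s,1)$ is then admissible, satisfies $\{\tilde u=1\}=\{u\geq s\}$, and has energy converging to that of $u$ as $s\nearrow 1$ thanks to the boundary conditions $F'(1)=V'(1)=0$ in \eqref{eq:A2}. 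Handling the volume constraint \eqref{new model volume constrained} after this truncation requires a further adjustment of $s$ so that $\int V(\tilde u)=1$ exactly, which is routine given \eqref{eq:A3}--\eqref{eq:A4} and continuity of the relevant integrals in $s$.

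The main obstacle is ruling out escape or concentration of mass. For the volume-constrained problem \eqref{new model volume constrained} I would use hypothesis \eqref{eq:A4}: since $V/F\to 0$ at $0$, the volume contribution from the region where $u_k$ is small is controlled by a vanishing multiple of $\int F(u_k)\le C$, so $\int V(u_k)\to\int V(u)$ along the subsequence and the constraint is preserved. The Lagrange multiplier $\lambda$ in $\Phi=F-\lambda V$ is then recovered from the Euler--Lagrange equations \eqref{EL outer}--\eqref{eq:differential inequality}. For \eqref{new model intro} the obstacle is different: one must exclude the trivial limit $u\equiv 0$. When $n\geq 2$, the spanning assumption \eqref{eq:spanning class assumption} implies that any $\mathcal{C}$-spanning level set has positive capacity in $\Omega$, giving a strictly positive lower bound on $\int|\nabla u|^2$ that survives the limit. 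In the planar case $n=1$, a $\mathcal{C}$-spanning set may have vanishing capacity, so one instead exploits the second clause of \eqref{eq:sharp existence assumption 2}: the existence of $t_j\searrow 0$ with $F(t_j)>0$ forces any $\mathcal{C}$-spanning admissible function to produce a definite amount of potential energy, again keeping the infimum away from $0$.

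The hardest technical step is the spanning preservation: passing from weak $W^{1,2}$ convergence of $u_k$ to convergence of the superlevel sets $\{u_k\geq s\}$ in a topology strong enough to propagate the $\mathcal{C}$-spanning property. Once that topological step is in place and is combined with the constraint preservation above and the continuity of $u$ from Theorem \ref{thm:main regularity theorem}, the standard direct-method template produces minimizers for both problems.
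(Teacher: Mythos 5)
Your high-level template (direct method, weak lower semicontinuity, compactness-and-stability of the spanning constraint) matches the paper's outline, but two of the key steps are argued incorrectly, and the route to an actual minimizer is not valid.

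\textbf{Volume constraint preservation.} The claim that hypothesis \eqref{eq:A4} controls the volume because ``the volume contribution from the region where $u_k$ is small is controlled by a vanishing multiple of $\int F(u_k)$'' does not address the actual loss mechanism. Volume can escape to infinity on sets where $u_k$ is close to $1$, and there \eqref{eq:A4} gives no control whatsoever: a translated bump carrying a fixed amount of $\int V(u_k)$ disappears in the $L^1_\loc$ limit while the energy stays bounded. The paper rules this out by a concentration-compactness style comparison (Step four of the proof of Theorem \ref{thm:main existence theorem}): assuming $\e := \int V(u) < 1$, one identifies the infimum as $\Psi_\wire(\e) + \Psi(1-\e)$, where $\Psi$ is the auxiliary radial isoperimetric energy of Theorem \ref{thm:existence for isop problem}, and then builds a strictly better competitor $\max\{u, w(\cdot - ke_1)\}$ using the decay at infinity of $u$ (which comes from the regularity theory, Corollary \ref{corollary higher order regularity}) and the volume-fixing variations of Lemma \ref{lemma volume fixing}. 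Hypothesis \eqref{eq:A4} does get used, but only in \eqref{missing expensive parts} to show the overlap regions $A_k, B_k$ have an unfavorable energy-to-volume ratio, and in Theorem \ref{thm:existence for isop problem} to guarantee the radial minimizer $w$ exists. Your argument, as stated, would fail without these ingredients.

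\textbf{Producing an actual minimizer.} The truncation $\tilde u = \min(u/s,1)$ constructs, at best, admissible competitors with energy approaching that of $u$; it does not exhibit a function achieving the infimum. The paper instead proves that the minimizer of the \emph{relaxed} problem \eqref{new model intro 2} or \eqref{new model volume constrained 2} (with the Borel-measurable spanning condition \eqref{eq:c spanning for functions} that is closed under $L^1_\loc$ convergence by Theorem \ref{thm:prelim compactness thm}) is itself continuous and decays at infinity, hence admissible for the original problems. This pass-through crucially relies on the Lipschitz regularity of Theorem \ref{thm:holder reg for crit points} applied via Corollary \ref{corollary higher order regularity} and, for $n \geq 2$, on the $L^{2(n+1)/(n-1)}$ integrability from the Sobolev inequality. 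You mention continuity ``from Theorem \ref{thm:main regularity theorem}'' only at the very end, but it is in fact the load-bearing step.

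\textbf{Smaller issues.} The assertion that a $\mathcal{C}$-spanning set in the plane ``may have vanishing capacity'' is false — a spanning set contains at least a curve and has positive capacity. What actually fails for $n=1$ is that $\nabla u \in L^2(\mathbb{R}^2)$ gives no decay of $u$ at infinity (no Sobolev embedding at the critical exponent), and the assumption $F(t_j) > 0$ is used to establish the uniform measure bound $\sup_j \mathcal{L}^2(\{u_j > t\}) < \infty$ for the minimizing sequence — not to bound the infimum away from $0$. Similarly, passing from an $L^2$ Dirichlet bound to ``uniform perimeter of superlevel sets'' via coarea requires an $L^1$ gradient bound which you do not have, and $L^1$-set-convergence stability of spanning is precisely the delicate content of the generalized Definition \ref{def:span borel} and the compactness Theorem \ref{thm:prelim compactness thm}, not a known stability result that one can simply invoke.
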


	\noindent  The proof of Theorem \ref{thm:main regularity theorem} comprises the bulk of the paper, and the Lipschitz continuity is used in the proof of the existence of minimizers for \eqref{new model volume constrained}, which is why we have opted to state it first. {We also point out that the precise value assigned by the volume constraint in \eqref{new model volume constrained} does not play any role in our proofs. Indeed, we can change the volume constraint from $1$ to any $m>0$ in \eqref{new model volume constrained} simply by considering the volume potential $\tilde{V} = V/m$, which satisfies \eqref{eq:A1}-\eqref{eq:A3} if $V$ does. }

 \begin{remark}[Optimality of the assumptions in Theorem \ref{thm:main existence theorem}]
     If $n=1$ and $F(t) =0$ for $t\in [0,t_0]$, then using logarithmic cutoffs it can be shown there does not exist a minimizer for \eqref{new model intro}; see Remark \ref{remark:optimality of ex assumptions}. An alternative in this case would be to solve the problem on a bounded domain $\Om$ with vanishing Dirichlet conditions; see Remark \ref{remark:bdd domains}.
 \end{remark}

	\begin{remark}[On the free boundary decomposition]
		Even with the assumption \eqref{eq:spanning class assumption} on $\mathcal{C}$, part (ii) of Theorem \ref{thm:main regularity theorem} is optimal in terms of its restriction to 
        \[
            \{\Om'\subset \Om:\mbox{$\Om'$ is a connected component of $\Om$ not contained in $\{u=1\}$}\}\,.
        \]
        {For instance, when $n\geq 2$,} one can consider the case that $\wire$ is the union of a solid torus $T$ and some $\overline{B_r \setminus B_s}$ with $T \cc B_s$, the spanning class $\mathcal{C}$ consists of curves contained in $B_s$ whose linking number with $T$ is $1$, and the potential $F$ vanishes at $1$, the global minimizer for \eqref{new model intro} is $1$ on $B_s\setminus T$ and $0$ on $\overline{B_r}^c$; it has zero energy. The same configuration is obviously minimizing in the additional presence of a volume constraint if in addition $\int_{B_s\setminus T}V(1)=1$. {An analogous example can be constructed when $n=1$ with $\Wbf=\partial B_r \cup \{0\}$, and the spanning class $\Ccal$ consisting of curves contained $B_r(0)\setminus \{0\}$ which have winding number one around the origin.} However, as long as $\mathbb{R}^{n+1}\setminus \wire$ does not have any bounded connected components, minimizers for \eqref{new model intro} or \eqref{new model volume constrained} cannot be locally constant. 
	\end{remark}

	\subsection{Discussion}\label{subsec:discussion}
	
	We begin with a detailed outline of the proofs of Theorems \ref{thm:main regularity theorem}-\ref{thm:main existence theorem} and comments on the key ideas.
	
	\subsubsection{Commentary on proofs and structure of article}

{The mathematical content of the article is divided into two blocks. The first one is given by Section \ref{s:freq-monotonicity} and Section \ref{s:fb-structure}, where we study the regularity of suitably well-behaved weak solutions to \eqref{ace modified}; while the second part, given by Section \ref{s:proof-main-theorems} and the appendices, is devoted to show that minimizers of the variational problems \eqref{new model volume constrained} and \eqref{new model intro} exist and are well-behaved weak solutions to \eqref{ace modified} to which the regularity theory developed in the first block apply. From a technical perspective, the methods of the first block are rather general and close to those used in the study of nodal sets of harmonic functions, optimal partitions and segregation problems (see Remark \ref{remark:segregation}); in fact, the spanning condition does not play any role in this part of the paper. The second part contains the theoretical framework and machinery tailored to the study of variational problems with homotopic spanning conditions, building upon the generalization of spanning from \cite{maggi2023plateau,maggi2023hierarchy}, and which is one of the technical cornerstones of the paper.}
        
	Since \eqref{ace modified} is completely formal, in order to rigorously analyze minimizers we do not appeal directly to it but instead base our arguments on several other properties of minimizers of \eqref{new model intro} and \eqref{new model volume constrained}. In Section \ref{s:freq-monotonicity}, we begin by presenting three criticality conditions, namely (the weak formulations of) the outer variation equation
		\begin{equation}\notag
			(1-u)\,\big\{2\Delta u-\Phi'(u)\big\}=0\qquad\mbox{in $\Omega$},
		\end{equation}
		the inner variation equation for $u$, and the differential inequality
		\begin{align*}
			2\Delta u \leq \Phi'(u)
		\end{align*}
		(see \eqref{eq innervariation}-\eqref{eq:differential inequality} and Remark \ref{remark:criticality remark}). {We will defer the derivation of these conditions for minimizers of \eqref{new model intro} and \eqref{new model volume constrained} until Section \ref{s:proof-main-theorems}.} Since this set of conditions comes from considering only first order variations of either \eqref{new model intro} or \eqref{new model volume constrained}, we adopt them as our notion of critical points or stationary solutions for these models. We then prove Theorem \ref{thm:main regularity theorem}-(i) for critical points of the problem {under the additional assumption of a uniform lower bound of Almgren's frequency function for $v=1-u$, which we later verify the validity of for minimizers in Section \ref{s:proof-main-theorems}.} Under this latter transformation, the new criticality conditions are given by \eqref{eq modified iv}-\eqref{eq modified ineq}, allowing us to see solutions as ``almost''-subharmonic functions satisfying a weak transmission condition at their zero set, and guaranteeing almost-monotonicity of the frequency function. Under this reformulation, our model is quite similar to those in optimal partition/segregation problems, and the arguments utilize some similar tools. More precisely, the proof of the Lipschitz regularity for stationary solutions {satisfying a uniform lower frequency bound (Theorem \ref{thm:holder reg for crit points})} fundamentally relies on the fact that conditions \eqref{eq modified iv}-\eqref{eq modified ineq} are enough to guarantee the almost-monotonicity of the Almgren frequency function which unlocks a series of monotonicity and unique continuation type properties that help us obtain regularity (see, e.g., \cite{CL2007, CL2008}). {Since a uniform lower bound on the frequency function in turn implies that $v$ is H\"{o}lder continuous, the key step in this part of our analysis is improving the lower frequency bound to 1 at all points in $\{v=0\}$, therefore improving the H\"{o}lder regularity to Lipschitz regularity.} This is achieved via a dimension reduction argument and a blow-up analysis. Note that since the absolute value of any harmonic function satisfies locally \eqref{eq modified iv}-\eqref{eq modified ineq} (with $G=0$), Lipschitz regularity is the sharp regularity of stationary solutions.
			
		In Section \ref{s:fb-structure}, we show Theorem \ref{thm:main regularity theorem}-(ii) again for any critical point $u$ satisfying the uniform lower frequency bound. {The starting point is the fact that points in $\{u=1\}$ with frequency greater than 1 actually have frequency greater than or equal to $\tfrac{3}{2}$. This frequency gap is sharp since the frequency $\frac{3}{2}$ is attained at triple junctions as in Figure \ref{fig:spanning}, and was obtained in the work \cite{SoaveTerracini}.} From this, we can split $\{u=1\}=  \Rcal(u) \sqcup \mathcal{S}(u)$  where $\Rcal(u)$ consists of the points in $\{u=1\}$ where $v=1-u$ has frequency value 1, and $\mathcal{S}(u)$ consists of those points in $\{u=1\}$ at which $v$ has frequency value greater or equal than $\tfrac{3}{2}$. After this point, we show that $\Rcal(u)$ is a regular manifold and we derive estimates on the dimension of $\mathcal{S}(u)$. These latter arguments are standard and essentially the same as in \cite{TT} (see also \cite{CL2007, CL2008}).  {Observe that all of the results in Section \ref{s:freq-monotonicity} and Section \ref{s:fb-structure} rely only on the validity of the Euler--Lagrange equations and on the uniform lower frequency bound, so the Lipschitz regularity of solutions and the partial regularity of their free boundary can be concluded conditionally on these hypotheses.}

        {The main body of Section \ref{s:proof-main-theorems} is primarily devoted to the proof of the lower frequency bound for minimizers of \eqref{new model intro} and \eqref{new model volume constrained}. Given a solution  $v=1-u$ to the Euler-Lagrange equations \eqref{eq modified iv}-\eqref{eq modified ineq}, we observe that a lower bound on the frequency at a point $x_0 \in \Omega$ holds if, in some sense, the free boundary $\{v=0\}$ disconnects $B_r(x_0)$ (for $r$ sufficiently small) and $\{v>0\}\cap B_r(x_0)$ has at least two comparable (in size) connected components. Motivated by this observation, we provide a suitable definition of \textit{essentially connected components} of $\{v>0\}$ for functions $v  \in W^{1,2}_{\rm loc}(\Omega;[0,1])$ satisfying a generalized spanning condition, see Lemma \ref{lemma connected 2}, which we exploit to  relax the variational problems to a natural larger admissible class, which remains closed under taking ``cup competitors" (see Definition \ref{d:cup-comp} and Remark \ref{remark cupcompetitor}). This strategy is analogous to the one in \cite{DGM17}, where the authors provide an alternative proof to the existence of solutions for the Harrison-Pugh formulation of the Plateau problem. We then proceed to derive the aforementioned Euler-Lagrange equations and the uniform lower frequency bound for minimizers of these relaxed problems. The validity of the lower frequency bound for minimizers of the relaxed variational problems, i.e.~ \eqref{new model intro 2} and \eqref{new model volume constrained 2}, is then a consequence of an energy comparison argument in order to rule out the existence of one (essentially) connected component of $v$ being much larger than all the others in some ball.

		With the Euler-Lagrange equations and lower frequency bound at hand, the regularity theory from Sections \ref{s:freq-monotonicity} and \ref{s:fb-structure} applies, thus yielding interior Lipschitz continuity and free boundary regularity for minimizers to the relaxed variational problems. This in turn implies that any such minimizer is a minimizer for the corresponding original variational problems \eqref{new model intro} or \eqref{new model volume constrained}, giving therefore the conclusions of Theorem \ref{thm:main regularity theorem} for minimizers of our original problems. Lastly, as a corollary of the Lipschitz continuity, we prove Theorem \ref{thm:main existence theorem} in Section \ref{s:existence}. The main difficulty in this case is to rule out volume loss at infinity in \eqref{new model volume constrained}. Our proof relies on the simple observation that limits of minimizing sequences to \eqref{new model volume constrained} are still minimizers to a problem of the same form with a (possibly) different volume constraint, which allows us to deduce continuity and uniform decay at infinity from Theorem \ref{thm:main regularity theorem}. With this knowledge at hand, we are able to rule out volume loss using an energy comparison argument.}

        

	
	\subsubsection{Further remarks}\label{subsubsec:further remarks}
	
	Here we collect some final observations regarding our results and related problems in the literature.
	
	\begin{remark}[Connection to optimal partition \& segregation problems]\label{remark:segregation}
		As pointed out above, models \eqref{new model intro} and \eqref{new model volume constrained} share several similarities with those stemming from optimal partition and segregation problems; this is mainly due to the fact that the homotopic spanning condition \eqref{eq:HP spanning} imposes a local separation property at each point $x_0 \in \{u=1\}$. More precisely, it forces $\{u=1\}$ to disconnect any small ball centered at $x_0$. This, coupled with the variational nature of \eqref{new model intro} and \eqref{new model volume constrained}, suggests that in $\{u=1\}\cap B_r(x_0)$ (for $r$ small) we should see an optimal partition of the connected components of $\{u<1\}\cap B_r(x_0)$. However, note that in contrast to general optimal partition problems, the number of these connected components is not prescribed and could be infinitely many a priori. Only in some special cases when we show that nearby certain free boundary points $\{u<1\}$ has finitely many components (see Lemma \ref{lemma density components}), is our model locally equivalent to the general framework of \cite{TT}. Indeed, after applying Theorem \ref{thm:main regularity theorem} to obtain the Lipschitz regularity of our solutions and localizing to any ball $B\subset \Omega$ for which there are finitely many connected components of $\{u<1\}$, we observe that for $v=1-u$, the functions $v_i = v\mid_{U_i}$ for connected components $U_1,\dots, U_{N-1}$ of $\{v>0\}$ and $U_N = B\setminus \bigcup_{i=1}^{N-1} U_i$, satisfy the hypotheses of the main theorem therein.
  
		We also point out that the same methods used in Section \ref{s:freq-monotonicity} show that {the Lipschitz continuity assumption in the class of functions considered in \cite[Theorem 1.1]{TT} can be relaxed to a lower frequency bound assumption of the form \eqref{eq:lower freq bound assumption theorem 2.1}.} This observation is immediate if the forcing term $f(x,u)$ considered in \cite[Theorem 1.1]{TT} is $x$-independent, since in that case it directly falls under our hypotheses. In the case of $x$ dependence, the result holds from straightforward adaptations of our arguments, exploiting fundamentally the assumption $|f(x,u)|\leq Cu$ in \cite{TT} (compare with \eqref{eq:g assumptions}), combined with the observation that the hypothesis \cite[(G3)]{TT} is sufficient to assume in place of the validity of the general inner variation identity \eqref{eq innervariation}. {We note that the lower frequency bound is satisfied if, for instance, solutions are assumed to be $\alpha$-H\"older regular for any $\alpha \in (0,1)$ - see Section \ref{subsec:discussion} for a further discussion on the frequency lower bound.}
        
        On the other hand, equipped with the Lipschitz continuity of our solutions $u$, together with a discrete spectral gap between the two lowest values (1 and $\tfrac{3}{2}$) of the frequency function, we in turn obtain an analogous structure for the free boundary $\{u=1\}$ to that for the segregated system considered in \cite{TT}. Therein, the authors also use the frequency function to distinguish between the regular and singular parts of the free boundary, and characterize the regular part as the points where the solution blows up to a linear function on either side of the free boundary. 
	\end{remark}

{
\begin{remark}[Further analysis of singularities]
   Classifying the types of free boundary singularities, at least in low dimensions, is one example of a natural follow-up question. {The singularities correspond to radially homogeneous solutions of \eqref{eq innervariation}-\eqref{eq:differential inequality} with $\Phi=0$. By rewriting in terms of $v=1-u$ and restricting to the unit sphere, these solutions may be identified with critical points of the optimal spectral partitioning problem on the sphere considered in \cite{bogosel2016method, helffer2009spectral}. In light of the asymptotic convergence of the rescaled Allen-Cahn problems \eqref{eq:MNR23a problem} to the Plateau problem as $\e\to 0$ and $v\to 0$, one may investigate the relationship between the types of singularities in each problem.} The limiting Plateau problem singularity models are conical $n$-dimensional area-minimizing (in the sense of Almgren) sets in $\R^{n+1}$. Since these have been classified as only $Y$-singularities when $n=1$ and only $Y$- and $T$-singularities when $n=2$ (see \cite{Taylor}) {and these cones coincide with $\{u=1\}$ for suitable homogeneous solutions of \eqref{eq innervariation}-\eqref{eq:differential inequality}}, this suggests that the other conical singularities that we find for general critical points in these dimensions {should not be present for minimizers of \eqref{eq:MNR23a problem} if $\e$ is small enough with respect to $\wire$ and $v$}. 

   Furthermore, building on the classification of singularities for general critical points when $n=1$ and corresponding local structure of the free boundary (see Lemma \ref{lemma:planar tangent classification} and Theorem \ref{thm:main regularity theorem}.(ii)), it would also be of interest to classify singularity models for critical points when $n=2$ and analyze the local structure of the free boundary near singularities there. {We refer the reader to the recent related work \cite{OgnVel24a}, where the authors obtain a structural result close to singularities of frequency 3/2.} The arguments in Section \ref{s:fb-structure} imply that after the two lowest frequencies $1$ and $3/2$ (corresponding to regular and $Y$-points), there is a non-explicit gap between $3/2$ and the next lowest frequency, and that if $\{u=1\}\cap \mathbb{S}^n$ is smooth and $n\geq 2$, then the frequency is at least $2$; cf.~ Proposition \ref{p: freq gap}.
\end{remark}
}
	
	\begin{remark}[Extension of existence from \cite{maggi2023hierarchy}]\label{r:improved-existence} In \cite[Theorem 1.2.(i)]{maggi2023hierarchy}, the existence of minimizers for 
		\begin{equation}\label{eq:ac problem in existence remark}
			\inf\Big\{\eps\int_{\Omega} |\nabla u|^2 +\frac{F(u)}{\eps^2}\, dx \,:  \int_{\Omega}V(u)=v,\,\mbox{$\{u = \delta\}$ is $\mathcal{C}$-spanning in the sense of \eqref{eq:c spanning for functions}} \Big\}\,,
		\end{equation}
		is established in a regime for $\e$, $v$, and $1/2<\delta\leq 1$ that ensures the proximity (in the sense of minimizers converging to minimizers) of \eqref{eq:ac problem in existence remark} to either the Plateau problem or its ``positive volume" extension
		\begin{equation}\notag
			\inf \big\{ \mathrm{Per}\,(E;\Om) : E\subset \Om, \,|E|=v ,\, \mbox{$E^\one \cup \pa^* E$ is $\mathcal{C}$-spanning}\big\}\,,  
		\end{equation}
		{where $\pa^* E$ denotes the reduced boundary of $E$ and $\mathrm{Per}\,(E;\Om) = \Hcal^n(\partial^* E \cap \Omega)$ is the relative perimeter of $E$ in $\Omega$.} The proof there relies on the asymptotic connection between \eqref{eq:ac problem in existence remark} and various sharp interface problems. Theorem \ref{thm:main existence theorem} strengthens this result significantly in the case $\delta=1$ by removing any restrictions on the parameters $\e$ and $v$: by setting $F=W/\e^2$ and replacing $V$ with $V/v$, we have the existence of continuous minimizers for any values of $\e$ and $v$ in \eqref{eq:ac problem in existence remark} as long as $\delta=1$. 
	\end{remark}
	
	\begin{remark}[Extension to bounded domains]\label{remark:bdd domains}
		Although we do not consider this problem here, one might also formulate \eqref{new model intro}-\eqref{new model volume constrained} on e.g.~ a bounded open set $\Omega$ with a corresponding spanning class $\mathcal{C}$ of smooth loops contained in $\Om$. The arguments in Theorem \ref{thm:main regularity theorem} are local in nature and would thus apply equally well in this scenario as well.
	\end{remark}
	
	\subsection{Notation}
	We will use $C$ to denote constants dependent only on the dimension $n+1$ of the ambient Euclidean space and on the fixed wire frame $\mathbf{W}$ in \eqref{new model intro} or \eqref{new model volume constrained}  throughout. If a constant $C$ has any additional dependencies on quantities $a,b,\dots$, we will use the notation $C(a,b,\dots)$. We denote open balls of radius $r$ centered at $x$ in $\R^{n+1}$ by $B_r(x)$. If $x=0$, we will omit the dependency on the center. $\Hcal^n$ denotes the $n$-dimensional Hausdorff measure (often used on $n$-dimensional spheres embedded in $\R^{n+1}$), while $\Lcal^{n+1}$ denotes the $(n+1)$-dimensional Lebesgue measure on $\R^{n+1}$. {For $\alpha \in (0,1)$, $[f]_{C^\alpha(U)}$ denotes the H\"{o}lder seminorm $\sup\big\{\frac{|f(x)-f(y)|}{|x-y|^\alpha} : x,y\in U, \ x\neq y\big\}$ of a function $f$ on a domain $U$, and $[f]_{\Lip(U)}$ denotes the Lipschitz seminorm $\sup\big\{\frac{|f(x)-f(y)|}{|x-y|} : x,y\in U, \ x\neq y\big\}$.}
	
	\subsection{Acknowledgements}
	The authors would like to thank Bozhidar Velichkov for pointing out numerous important references in the optimal partitions literature, to which our variational problem closely relates, Xavier Ros-Oton for many fruitful discussions that allow us to simplify some of our arguments, and Dennis Kriventsov for pointing out \cite{koch2015partial}.

	\section{Monotonicity properties and regularity of stationary solutions}\label{s:freq-monotonicity}
	
	The main result in this section, Theorem \ref{thm:holder reg for crit points}, establishes Lipschitz regularity for functions satisfying {a lower frequency bound} and the criticality conditions
    		\begin{align}\label{eq innervariation}
			&0=\int_\Omega\Big(|\nabla u|^2 + \Phi(u) \Big) \dive T - 2 \langle \nabla u, \nabla u \nabla T \rangle \,dx && \mbox{for all $T\in C_c^\infty(\Omega, \R^{n+1})$}\,,\\ \label{EL outer} 
			&2\,\,\int_{\Omega}|\nabla u|^2\,\varphi\,dx  = 	\int_{\Omega} (1-u)\,\Big\{ 2 \nabla u\cdot \nabla \varphi +\Phi'(u)\,\varphi\Big\}\,dx &&\mbox{for all $\varphi \in C_c^\infty(\Omega)$, and}\\ \label{eq:differential inequality}
			& 0\leq  \int_\Om 2\nabla u \cdot \nabla \varphi + \Phi'(u)\, \varphi \,dx&& \mbox{for all $\varphi \in C_c^1(\Om;[0,\infty))$,}
		\end{align}
    which we refer to as stationary solutions. 
    
    \begin{remark}[Spanning and the Euler-Lagrange equations]\label{remark:criticality remark}
		The first equation \eqref{eq innervariation} is the inner variational equation, the equation \eqref{EL outer} is the weak formulation of
		\begin{equation}\label{eq:weak 1}
			(1-u)\,\big\{2\Delta u-\Phi'(u)\big\}=0\qquad\mbox{in $\Omega$}\,,
		\end{equation} 
		and \eqref{eq:differential inequality} is the weak form of the differential inequality
		\begin{equation}\label{eq:weak 2}
			2\Delta u \leq \Phi'(u)\qquad\mbox{in $\Om$}\,.
		\end{equation}
		All of the equations above are quite natural in light of the spanning condition \eqref{eq:HP spanning}. The inner variational equation utilizes the simple fact that precomposing $u$ with a domain diffeomorphism preserves the spanning constraint; \eqref{EL outer} is a rigorous version of the intuition that $u$ should satisfy the usual volume-constrained Allen-Cahn equation on $\{u<1\}$, with possible singularities concentrated on $\{u=1\}$; and the differential inequality \eqref{eq:differential inequality} is the manifestation of the fact outer perturbations $u+\varphi$ by {\it negative} test functions $\varphi$ may disturb the spanning constraint, so that $u+\varphi$ is not an admissible variation.
	\end{remark}
    
    {In Section \ref{s:prelim}, we will show that minimizers to \eqref{new model intro}-\eqref{new model volume constrained} satisfy such conditions.} In Section \ref{subsec:statement of 3.1}, we state Theorem \ref{thm:holder reg for crit points} and show how it implies Theorem \ref{thm:main regularity theorem}.(i). The remaining subsections constitute the proof of Theorem \ref{thm:holder reg for crit points}.
	
	\subsection{Statement of Theorem \ref{thm:holder reg for crit points} and application to proof of Theorem \ref{thm:main regularity theorem}(i)}\label{subsec:statement of 3.1}
	Given  
	\begin{equation}\label{eq:g assumptions}
		\mbox{$G \in C^{2}([0,1])$ such that $G(0)=G'(0)=G'(1)=0$}, 
	\end{equation}
	we consider functions $v\in W^{1,2}_\loc(\Om;[0,1])$ satisfying
	\begin{align}\label{eq modified iv}  
		&\int_\Omega\Big(|\nabla v|^2 + G(v) \Big) \dive T - 2 \langle \nabla v, \nabla v \nabla T \rangle \,dx =0 &&\mbox{ for all $T\in C_c^\infty(\Omega; \R^{n+1})$,}\\
		\label{eq modified out} 
		&2\,\,\int_{\Omega}|\nabla v|^2\,\varphi \,dx = 	-\int_{\Omega} v\,\Big\{ 2\nabla v\cdot \nabla \varphi +G'(v)\,\varphi\Big\} dx&&\mbox{ for all $\varphi \in C_c^\infty(\Omega)$, and} \\ \label{eq modified ineq} 
		&\int_\Om \varphi \, d\mu =\int_\Om -2\nabla v \cdot \nabla \varphi - G'(v)\,\varphi\,dx&& \mbox{ for all $\varphi \in C_c^\infty(\Om)$,}
	\end{align}
	where $\mu$ is a non-negative Radon measure on $\Om$ depending only on $v$. In particular, we have the weak differential inequality
	\begin{equation}
		2\Delta v \geq G'(v) \quad \text{in $\Omega$}\,.
	\end{equation}
	We will verify that these variational identities are satisfied by $v=1-u$ when $u$ minimizes \eqref{new model intro} or \eqref{new model volume constrained}; see Corollary \ref{c:EL-v} below. In particular, they have natural interpretations in the context of \eqref{new model intro}-\eqref{new model volume constrained} (see Remark \ref{remark:criticality remark}).
	
	Observe that the assumptions \eqref{eq:g assumptions} together with the mean value theorem imply that
	\begin{equation}\label{eq:g estimates}
		|G(t)| \leq k t^2, \qquad |G'(t)| \leq kt\,,
	\end{equation}
	where $k = \sup_{[0,1]} |G''|$.
    
	The main result of this section establishes the optimal Lipschitz regularity  of $v$ for solutions of \eqref{eq modified iv}-\eqref{eq modified ineq} {satisfying an additional condition \eqref{eq:lower freq bound assumption theorem 2.1} on its zero set $\{v^*=0\}$ (see \eqref{eq: precise rep intro} for a definition of $v^*$)}. To state condition, we recall Almgren's frequency function
		\begin{equation}\notag
			N_{v,x_0}(r) = \frac{r\int_{B_r(x_0)}|\nabla v|^2\,dx }{\int_{\partial B_r(x_0)}v^2\,d\mathcal{H}^n(x)} = \frac{rD_{x_0,r}(r)}{H_{x_0,r}(r)}\qquad x_0 \in \Om\,, \,\, r < \dist(x_0,\partial \Om) \,.
	\end{equation}
    In Lemma \ref{lemma almgren} we will show that $N_{v,x_0}$ is monotone in $r$ for solutions of \eqref{eq modified iv}-\eqref{eq modified out}, so that the quantity
    \begin{equation}\notag
      N_{v,x_0}(0^+):= \lim_{r\to 0}N_{v,x_0}(r)  
    \end{equation}
    is well-defined {provided that $H_{v,x_0}(r) > 0$ for all $r>0$.}

\begin{theorem}[Lipschitz regularity for critical points {with frequency lower bound}]\label{thm:holder reg for crit points}
	Suppose $\wire = \rn \setminus \Om$ is closed, $G\in C^2([0,1])$ satisfies \eqref{eq:g assumptions}, and $v\in W^{1,2}_\loc(\Om;[0,1])$ satisfies \eqref{eq modified iv}-\eqref{eq modified ineq}, {and for any connected $\Omega' \cc \Omega$ on which $v$ is not uniformly $0$,
    \begin{equation}\label{eq:lower freq bound assumption theorem 2.1}
        \inf_{x_0\in \Om',\,x_0\in \partial \{v^*=0\} } N_{v,x_0}(r)>0\,.
    \end{equation}
    }
	
	\smallskip
	\noindent{\bf (i)} Then there is $r_{**}=r_{**}(G,n)>0$ with the following property: if
	$\Omega'\cc \Omega$ with $d=\dist(\wire,\partial \Om')>0$, then there are $M=M(\Om',v)>0$ such that 
	\begin{equation}\label{eq:freq bound for thm 3.1}
		N_{v,x}(r) \leq M \qquad \forall x \in \Om \mbox{ with $\dist(x,\Om') \leq d/2$},\quad  \,\, r < \min\{d/2, r_{**}\}\,,
	\end{equation}
	and $C=C(M,n)$ such that for any $x_0 \in \Omega'$ and $r< \min\{r_{**},d/3\}$,
	\begin{equation}\label{eq uniform lip 2}
		r[v]_{\Lip(B_{r/2}(x_0))}\leq C \Big(\frac{1}{r^{n-1}}\int_{B_{r(x_0)}}|\nabla v|^2\Big)^\frac{1}{2}\,.
	\end{equation}
	Also, $\mathcal{L}^{n+1}(\tilde{\Om} \cap \{v=0\})=0$ for any connected component $\tilde{\Om}\subset\Om$ on which $v$ is not identically zero.
	
	\smallskip
	
	\noindent{\bf (ii)} If in addition $\wire$ is compact, $\nabla v \in L^2(\Om)$, and $\mathcal{L}^{n+1}(\{v<t\})<\infty$ for all $t\in (0,1)$, then given $d>0$, there is $M(v,d)$ such that $N_{v,x}(r)\leq M$ for all $x\in\Om$ with $\dist(x,\partial \Om)\geq d$ and $r<\min\{d,r_{**}\}$. 
\end{theorem}
{\begin{remark}
    The hypothesis ``$\mathcal{L}^{n+1}(\{v<t\})<\infty$ for all $t\in (0,1)$" in part \textbf{(ii)} of Theorem \ref{thm:holder reg for crit points} is motivated by the properties of minimizers $u=1-v$ of \eqref{new model intro}. When $n\geq 2$, such minimizers satisfy $u \in L^{2(n+1)/(n-1)}(\Omega)$, where $\frac{2(n+1)}{n-1}$ is the Sobolev dual exponent of $2$, while for $n=1$, we no longer have the desired Sobolev embedding so instead we directly verify these measure bounds on the superlevel sets of $u$ (which correspond to sublevel sets of $v$). See the proof of Theorem \ref{thm:main existence theorem} in Section 5 for more details.
\end{remark}}

\subsection{Monotonicity formulae} Our first result is a semilinear version of the classical monotonicity formula for harmonic maps (see, e.g., \cite[Proposition 3.3.6]{lin2008analysis}).

\begin{lemma}[Almost-monotonicity of normalized Dirichlet energy]\label{lemma mon normalized Dir}
	Let $v\in W^{1,2}_\loc(\Om;[0,1])$ satisfy the inner variation equation \eqref{eq modified iv}. Then, given any $x_0 \in \Omega$ and $r_0=\dist(x_0, {\Wbf})$, we have that 
	\begin{equation}\label{eq almost monotonicity 0}
		\frac{d}{ds}\Big(	\frac{1}{s^{n-1}}\int_{ B_s(x_0)} |\nabla v|^2\Big) = \frac{2}{s^{n-1}} \int_{\partial B_{s}(x_0)}  |\nabla v \cdot \hat{x}|^2\, d\Hcal^n(x)	+M(s)\qquad \mbox{for a.e.~ $s \in (0,r_0)$},
	\end{equation}		
	where $\hat x = (x-x_0)/|x-x_0|$ and
	\begin{equation}\label{eq extra term Dir derivative}
		M(s)= \frac{n+1}{ s^n}\int_{B_{s}(x_0)}G(v) - \frac{1}{ s^{n-1}} \int_{\partial B_{s}(x_0)}G(v)\, d\mathcal{H}^{n}.
	\end{equation}
\end{lemma}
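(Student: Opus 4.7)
The idea is to derive a Pohozaev-type identity by testing the inner variation identity \eqref{eq modified iv} against radial vector fields concentrated on balls. Assume $x_0 = 0$ without loss of generality and set $D(s) := \int_{B_s}|\nabla v|^2$. For $\e$ with $s+\e < r_0$, choose a smooth nonincreasing cutoff $\eta_\e : [0,\infty) \to [0,1]$ equal to $1$ on $[0,s]$ and vanishing on $[s+\e, \infty)$. Since $\eta_\e \equiv 1$ near the origin, the vector field $T_\e(x) := \eta_\e(|x|)\,x$ lies in $C_c^\infty(\Om;\R^{n+1})$ and is an admissible test field in \eqref{eq modified iv}.

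A direct calculation yields $\dive T_\e = (n+1)\eta_\e(|x|) + \eta_\e'(|x|)|x|$ and $(\nabla T_\e)_{ij} = \eta_\e(|x|)\delta_{ij} + \eta_\e'(|x|)\,x_i x_j/|x|$, so $\langle \nabla v, \nabla v\,\nabla T_\e\rangle = \eta_\e(|x|)|\nabla v|^2 + \eta_\e'(|x|)|x|\,|\nabla v \cdot \hat x|^2$. Substituting into \eqref{eq modified iv} and regrouping the $\eta_\e$ and $\eta_\e'|x|$ contributions yields
\[
\int_\Om \eta_\e(|x|)\bigl[(n-1)|\nabla v|^2 + (n+1)G(v)\bigr]dx = -\int_\Om \eta_\e'(|x|)|x|\bigl[|\nabla v|^2 + G(v) - 2|\nabla v\cdot\hat x|^2\bigr]dx.
\]

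The next step is to send $\e \to 0^+$. The left-hand side converges by dominated convergence to $\int_{B_s}[(n-1)|\nabla v|^2 + (n+1)G(v)]dx$. For the right-hand side, the coarea formula rewrites it as $-\int_0^\infty \eta_\e'(r)\,r\,h(r)\,dr$, where $h(r) := \int_{\partial B_r}[|\nabla v|^2 + G(v) - 2|\nabla v\cdot\hat x|^2]\,d\mathcal{H}^n$ is an element of $L^1_{\loc}(0, r_0)$. Because $-\eta_\e'$ is a probability-like bump concentrated on $(s, s+\e)$, Lebesgue differentiation applied to $r\mapsto r\,h(r)$ shows this limit equals $s\,h(s)$ for a.e.\ $s \in (0, r_0)$. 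We therefore obtain the Pohozaev identity
\[
(n-1)D(s) + (n+1)\!\int_{B_s}\!G(v)\,dx = s\!\int_{\partial B_s}\!\bigl[|\nabla v|^2 + G(v) - 2|\nabla v\cdot\hat x|^2\bigr]d\mathcal{H}^n.
\]

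To close the argument, combine this identity with the coarea identity $D'(s) = \int_{\partial B_s}|\nabla v|^2\,d\mathcal{H}^n$ (valid for a.e.\ $s$) and the elementary formula $\tfrac{d}{ds}(s^{1-n}D(s)) = s^{1-n}D'(s) - (n-1)s^{-n}D(s)$. Substituting the Pohozaev identity to eliminate $\int_{\partial B_s}|\nabla v|^2$ produces exactly \eqref{eq almost monotonicity 0}--\eqref{eq extra term Dir derivative}. The only delicate point is the $\e\to 0$ passage in the boundary-like integral, which is routine once one recognizes the coarea integrand as an $L^1_{\loc}$ function of $r$ and restricts attention to full-measure sets of admissible $s$.
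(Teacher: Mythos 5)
Your proposal is correct and follows essentially the same route as the paper: test the inner variation identity against the radial vector fields $T = \eta(|x|)x$, perform the same algebra to isolate the $\eta$ and $\eta'|x|$ contributions, pass to the indicator of the ball via an approximating family of cutoffs, and deduce the Pohozaev identity \eqref{eq derivative dirichlet 2}, from which \eqref{eq almost monotonicity 0}--\eqref{eq extra term Dir derivative} follow by differentiating $s\mapsto s^{1-n}D(s)$. The only difference is cosmetic: you use a smooth cutoff supported on $[0,s+\e]$ (so the test field is automatically admissible in $C_c^\infty$), whereas the paper uses piecewise-linear cutoffs supported on $[0,s]$ after a regularization step; both rely on the absolute continuity of $s\mapsto\int_{B_s}|\nabla v|^2$ and Lebesgue differentiation to identify the a.e.\ limit.
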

\begin{proof}
	Without loss of generality, let us assume $x_0=0$,  and fix $s < r_0$. Let us consider the vector field $T= \eta(|x|) x$, where $\eta \in C_c^\infty(B_{s})$  is a radially symmetric non-negative function. Then, 
	$$\nabla T = \eta(|x|) \Id + |x|\eta'(|x|) \hat{x}\otimes \hat{x}.$$	
	Therefore, 
	$$\langle \nabla v, \nabla v \nabla T\rangle=\eta(|x|)|\nabla v|^2 + \eta'(|x|) |x||\nabla v\cdot \hat{x}|^2 ,$$ 
	and
	$$\text{div}\,T= (n+1)\eta(|x|)+|x|\eta'(|x|).$$ 
	Plugging in the previous expressions into the inner variation \eqref{eq modified iv}, we obtain
	\begin{eqnarray}\label{eq powerfield 0}
		(n-1)\int_{B_{s}} \eta(|x|) |\nabla v|^2\, dx  &=&   \int_{B_{s}}  \eta'(|x|)|x|(2|\nabla v \cdot \hat{x}|^2-|\nabla v|^2)\, dx\\\notag
		&& -\int_{B_{s}}G(v)\big[(n+1)\eta(|x|)+|x|\eta'(|x|)\big]\, dx
	\end{eqnarray}
	Given $s \in (0,r_0)$, after an appropriate regularization procedure, we may consider the following family of Lipschitz test functions:
	\begin{equation*}
		\eta_k(t)=
		\begin{cases}
			1, \quad t\in [0,s-1/k],\\
			k(s-t), \quad t\in [s-1/k,s],
		\end{cases}
	\end{equation*}
	where $k >\frac{1}{s}$. Let us notice that since $\eta_k \to 1_{[0,s]}$ as $k\to \infty$ and $ s\mapsto \int_{ B_s} |\nabla v|^2 $ is absolutely continuous in $(0, r_0)$, we can take $k\to \infty$ in \eqref{eq powerfield 0} for almost every $s\in (0,r_0)$ to deduce 
	\begin{equation}\label{eq derivative dirichlet 2}
		(n-1)\int_{B_{s}}  |\nabla v|^2 =  s \int_{\partial B_{s}}  \big(|\nabla v|^2 -2|\nabla v \cdot \hat{x}|^2+ G(v)\big)\, d\mathcal{H}^{n-1}(x) -(n+1)\int_{B_{s}}G(v)\quad \mbox{for a.e. $s$}.
	\end{equation}
	Dividing by $s^n$, we can rewrite \eqref{eq derivative dirichlet 2} as
	\begin{equation*}
		\frac{d}{ds}\Big(	\frac{1}{s^{n-1}}\int_{ B_s} |\nabla v|^2\Big) = \frac{2}{s^{n-1}} \int_{\partial B_{s}}  |\nabla v \cdot \hat{x}|^2\, d\Hcal^n(x)	+\frac{n+1}{ s^n}\int_{B_{s}}G(v) - \frac{1}{s^{n-1}} \int_{\partial B_s} G(v)\, d\Hcal^n\, .
	\end{equation*}
	This is precisely the claimed identity \eqref{eq almost monotonicity 0}.	
\end{proof}

The next result shows the almost-subharmonicity of $v^2$, which is a slight variation of \cite[Theorem 1.3-Step 3]{maggi2023plateau}. {First, however, we recall the notion of precise representative, which allows us to make sense of pointwise values of $W^{1,2}_\loc$ functions up to $\Hcal^n$-a.e.}

If $u\in W^{1,2}(\Om)$, then ${\h}$-a.e.~ $x\in \Om$ is a Lebesgue point of $u$, and the {\bf precise representative} $u^*$ is given by
\begin{equation}\label{eq: precise rep intro}
u^*(x) =  \begin{cases}
\displaystyle\lim_{r\to 0}\displaystyle\frac{1}{\omega_{n+1}r^{n+1}}\int_{\{|z-x|<r\}} u(z)\,d\mathcal{L}^{n+1}(z) & \mbox{if the limit exists} \\ 
0  & \mbox{otherwise} \,,
\end{cases}
\end{equation}
with the above limit existing for ${\h}$-a.e.~ $x\in \Om$ (see e.g.~ \cite[Chapter 4.8]{EG}). 

\begin{lemma}[Almost-subharmonicity of $v^2$]\label{lemma alsmot subharmonicity} 
	Let $v\in W^{1,2}_\loc(\Om;[0,1])$ satisfy the outer variation equation \eqref{eq modified out} with $G$ satisfying \eqref{eq:g assumptions}. If $x_0\in\Om$ and $r_0=\dist(x_0,\pa\Om)$, then, for $k = \sup_{[0,1]}|G''|$, the function
	\begin{equation}\label{eq:almon of averages}
		g(r)= e^{\frac{k r^2}{4} }\,\fint_{B_r(x_0)} v^2\,dx
	\end{equation}
	is increasing on $(0,r_0)$. Furthermore, if $v^*$ is the precise representative of $v$, 
	\begin{equation}\label{eq subharmonic bound}
		\int_{B_r(x_0)} v^2 \,dx\leq \frac{2r}{n+1}\int_{\partial B_r(x_0)} (v^*)^2d\mathcal{H}^{n}
	\end{equation}
	for every $r\in (0,\min\{r_*, r_0\})$ with $r_* = \sqrt{(n+1)/k}$.
\end{lemma}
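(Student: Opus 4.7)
The plan is to extract from the outer variation identity \eqref{eq modified out} a weak almost-subharmonicity inequality for $v^2$, and then to test it against the Lipschitz cut-off $\tfrac12(r^2-|x-x_0|^2)_+$ in order to reduce the problem to a one-dimensional ODE for the volume average. First, I would use $2v\,\nabla v=\nabla(v^2)$ to rewrite \eqref{eq modified out} as
\[
\int_\Omega \nabla(v^2)\cdot\nabla\varphi\,dx=-2\int_\Omega |\nabla v|^2\,\varphi\,dx-\int_\Omega v\,G'(v)\,\varphi\,dx\qquad\forall\,\varphi\in C_c^\infty(\Omega).
\]
For $\varphi\geq 0$ the first term on the right is nonpositive, and $|v\,G'(v)|\leq k v^2$ by \eqref{eq:g estimates}, so
\[
\int_\Omega \nabla(v^2)\cdot\nabla\varphi\,dx\leq k\int_\Omega v^2\,\varphi\,dx\qquad\forall\,\varphi\in C_c^\infty(\Omega;[0,\infty)),
\]
i.e.\ the distributional inequality $\Delta(v^2)+k v^2\geq 0$.

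Next, I would test this inequality against a smooth nonnegative approximation of $\psi(x)=\tfrac12(r^2-|x-x_0|^2)_+$, noting that $\nabla\psi=-(x-x_0)$ on $B_r(x_0)$, $\psi\leq r^2/2$, and $\psi$ is supported in $\overline{B_r(x_0)}$; the approximation by admissible test functions is analogous to the regularization of $\eta_k$ in the proof of Lemma~\ref{lemma mon normalized Dir}. Passing to the limit gives
\[
-\int_{B_r(x_0)}(x-x_0)\cdot\nabla(v^2)\,dx\leq \frac{kr^2}{2}\int_{B_r(x_0)} v^2\,dx.
\]
Applying the divergence theorem to the vector field $v^2(x-x_0)$, valid for a.e.~$r$ since $v^2\in W^{1,2}_{\mathrm{loc}}$ (with the boundary trace coinciding with $(v^*)^2$ for a.e.~$r$), gives
\[
\int_{B_r(x_0)}(x-x_0)\cdot\nabla(v^2)\,dx=r\int_{\partial B_r(x_0)}(v^*)^2\,d\mathcal{H}^n-(n+1)\int_{B_r(x_0)} v^2\,dx,
\]
and setting $h(r):=\int_{B_r(x_0)} v^2\,dx$, so that $h'(r)=\int_{\partial B_r(x_0)}(v^*)^2\,d\mathcal{H}^n$ for a.e.~$r$, the two identities combine into the one-dimensional differential inequality
\[
r\,h'(r)\geq \Big(n+1-\tfrac{kr^2}{2}\Big)\,h(r)\qquad\text{for a.e.~}r\in(0,r_0).
\]

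Both assertions then follow by elementary manipulations of this ODE. For the monotonicity, with $|B_1|$ the volume of the unit ball in $\mathbb{R}^{n+1}$, a direct computation yields
\[
g'(r)=\frac{e^{kr^2/4}}{|B_1|\,r^{n+2}}\Big(r\,h'(r)+\tfrac{kr^2}{2}h(r)-(n+1)h(r)\Big)\geq 0,
\]
so $g$ is nondecreasing on $(0,r_0)$. For \eqref{eq subharmonic bound}, when $r<r_*=\sqrt{(n+1)/k}$ one has $n+1-kr^2/2\geq (n+1)/2$, and the ODE rearranges to
\[
\int_{B_r(x_0)} v^2\,dx = h(r)\leq \frac{r\,h'(r)}{n+1-kr^2/2}\leq \frac{2r}{n+1}\int_{\partial B_r(x_0)}(v^*)^2\,d\mathcal{H}^n.
\]
The only delicate point is justifying the cut-off approximation and identifying the boundary integral with the precise representative for a.e.~$r$, both of which are routine via slicing/coarea for $v\in W^{1,2}_{\mathrm{loc}}$; I expect no essential obstacle beyond this bookkeeping, and the overall strategy mirrors Step~3 of the cited argument in \cite{maggi2023plateau}.
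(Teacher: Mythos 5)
Your argument is essentially the same as the paper's: both test the outer variation identity against a smooth approximation of the cut-off $\tfrac12(r^2-|x-x_0|^2)_+$, drop the nonnegative Dirichlet term, use $|v\,G'(v)|\le kv^2$, and arrive at the differential inequality $r\,h'(r)\ge (n+1-kr^2/2)\,h(r)$ (equivalently, $\phi'(r)\ge -\tfrac{kr}{2}\phi(r)$ for the volume average $\phi$). Your packaging via the distributional inequality $\Delta(v^2)+kv^2\ge 0$ is a cosmetic reorganization, not a different route, and the deduction of the monotonicity of $g$ is fine since $h$ is absolutely continuous.

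There is, however, a genuine gap at the very end. Your chain
\[
h(r)\le \frac{r\,h'(r)}{n+1-kr^2/2}\le \frac{2r}{n+1}\int_{\partial B_r(x_0)}(v^*)^2\,d\mathcal{H}^n
\]
relies on the identity $h'(r)=\int_{\partial B_r(x_0)}(v^*)^2\,d\Hcal^n$, which holds only for a.e.\ $r$. Thus your argument proves \eqref{eq subharmonic bound} only for a.e.\ $r\in(0,\min\{r_*,r_0\})$, whereas the lemma asserts it for \emph{every} such $r$, and that strengthening is actually used later (e.g.\ Remark \ref{remark:weak uc} deduces $v\equiv 0$ on $B_r(x)$ from the vanishing of the boundary integral at a \emph{particular} radius $r$, which could a priori lie in the exceptional null set). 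You flag ``identifying the boundary integral with the precise representative for a.e.\ $r$'' as the only delicate point, but that is a different (and weaker) claim than the needed passage from a.e.\ $r$ to all $r$. The paper closes this gap explicitly: using the fact that $t\mapsto v^*(ty)$ is absolutely continuous for $\Hcal^n$-a.e.\ $y\in\mathbb{S}^n$, one takes a sequence $t_j\to r$ of good radii and applies dominated convergence to both sides of the a.e.\ inequality to obtain it at the fixed radius $r$. You should either supply this limiting argument or explicitly state it, rather than asserting it is routine bookkeeping, since the choice of representative is precisely where the subtlety lies.
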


\begin{remark}\label{remark:weak uc}
	Note that an immediate consequence of \eqref{eq subharmonic bound} is the following statement: if $v\in W^{1,2}(\Om;[0,1])$ satisfies the outer variation equation \eqref{eq modified out} with $G$ satisfying \eqref{eq:g assumptions} and $\int_{\partial B_r(x)}v^2\,d\mathcal{H}^n=0$ for some $x\in \Om$ and $r<\min\{\dist(x,\partial \Om),r_*\}$, then $v\equiv 0$ on $B_r(x)$.
\end{remark}
\begin{proof}
	Assume without loss of generality $x_0=0$ and let $r\in (0,r_0)$. Testing \eqref{eq modified out} with ${\{\varphi_j\}_j}\subset C_c^1(\Omega;[0,\infty))$ such that $\varphi_j(x)\to \vphi(x):=[(r^2-|x|^2)/2]_+$ uniformly and $\nabla \varphi_j \to \nabla \varphi = - x {\mathbf{1}_{B_r}}$ in $L^2$, we obtain
	\begin{equation}\label{eq modified out v2} 
		\int_{B_r}  2\, v\, (\nabla v\cdot x)\, dx \geq \int_{B_r} G'(v)v\varphi\,dx\,.
	\end{equation}
	On the other hand, recalling from \eqref{eq:g estimates} that $|G'(v)v|\leq k v^2$ and using that $0 \leq \varphi \leq r^2/2$, the estimate \eqref{eq modified out v2} in turn yields
	\begin{equation}\label{eq lower boun der}
		\int_{B_r}  2\, v\, (\nabla v\cdot x) \, dx \geq -\frac{k r^2}{2}\int_{B_r} v^2\,dx.
	\end{equation}
	So, introducing the notation 
	$$\phi(r) := \fint_{B_r} v^2\,dx ,$$
	and using \eqref{eq lower boun der} leads us to the lower bound
	\begin{equation*}
		\phi'(r) = \frac{1}{r}\fint_{B_r}  2\, v\, (\nabla v\cdot x) \, dx \geq - \frac{k r}{2} \fint_{B_r} v^2\,dx = - \frac{k r}{2} \phi(r)\, ,
	\end{equation*}
	for a.e.~ $r\in (0,r_0)$. By combining this estimate with the absolute continuity of $g$, we deduce its monotonicity, {thus establishing \eqref{eq:almon of averages}.}
    
    Lastly, towards \eqref{eq subharmonic bound}, we differentiate $g$ to get
	\begin{equation*}
		0\leq   \frac{1}{r^{n+1}} \int_{\partial B_r} v^2\,d\mathcal{H}^{n} -  \frac{n+1}{r^{n+2}} \int_{B_r} v^2\,dx +  \frac{k r}{2 r^{n+1}} \int_{B_r} v^2\,dx\quad \mbox{for a.e.~ $r<r_0$},
	\end{equation*}
	which in turns implies that
	\begin{equation}\label{eq partial subharmonic bound}
		\int_{B_r} v^2 \,dx\leq \frac{2r}{n+1}\int_{\partial B_r(x_0)} (v^*)^2d\mathcal{H}^{n}\quad\mbox{for almost every $r<\min\{r_0,\sqrt{(n+1)/k}\}$}\,.
	\end{equation}
	To prove \eqref{eq subharmonic bound} for \emph{all} small $r$, we choose a more careful Lebesgue representative of $v$, since the measure zero set for which \eqref{eq partial subharmonic bound} fails depends on the ``choice" of $v$. We thus consider the precise representative
	\begin{equation}\notag
		v^*(x) = \lim_{s \to 0} \fint_{B_s(x)}v(y)\,dy\,.
	\end{equation}
    Note that $v^*(ty)$ is absolutely continuous as a function of $t\in (0,r_0)$ for $\mathcal{H}^n$-a.e.~ $y\in \mathbb{S}^n$ \cite[Section 4.9.2]{EG}. So, by fixing small $r$, letting $t_j\to r$ be a sequence of radii for which \eqref{eq partial subharmonic bound} holds for $t_j$ and $v^*$, and applying the dominated convergence theorem {in $\partial B_1$ to $\{v^*(t_j \cdot)\}_j$, bearing in mind that $v^*(t_jy) \to v^*(ry)$} for $\mathcal{H}^n$-.a.e $y\in \partial B_1$, we find that
	\begin{equation}\notag
		\int_{B_r}v^2\,dx = \lim_{j\to \infty} \int_{B_{t_j}}v^2\,dx \leq \limsup_{j\to \infty}\frac{2t_j}{n+1} \int_{\partial B_{t_j}}{(v^*)^2}\,d\mathcal{H}^n = \frac{2r}{n+1}\int_{\partial B_r}(v^*)^2\,{d\Hcal^n}\,.
	\end{equation}
	Thus \eqref{eq subharmonic bound} holds as claimed for all $r<\min\{r_0,r_*\}$.
\end{proof}

We also have the almost-subharmonicity of $v$ by \cite[Proof of Theorem 1.3, Step 3]{maggi2023hierarchy}, which is written for minimizers $u$ of \eqref{new model volume constrained} but in fact only relies on the Euler-Lagrange equations \eqref{eq innervariation}-\eqref{eq:differential inequality}, and may be rewritten in terms of $v=1-u$. The proof follows analogous reasoning to that of Lemma \ref{lemma alsmot subharmonicity} but since it is short, only exploits the 
{validity of \eqref{eq modified ineq}} and allows us to define the precise representative of $v$ as the limit of integral averages at \emph{all} points in $\Omega$, we include it here for the convenience of the reader.

\begin{lemma}[Almost-subharmonicity of $v$]\label{l:almost-subharm-v}
	Let $v\in W^{1,2}_\loc(\Om;[0,1])$ satisfy 
    {\eqref{eq modified ineq}} with $G$ satisfying \eqref{eq:g assumptions}. If $x_0\in \Om$ and $r_0 = \dist(x_0,{\Wbf})$, then for $k = \sup_{[0,1]}|G''|$ the function
	\begin{equation}\label{eq:almost mon of u averages}
		r\mapsto e^{\frac{kr^2}{8}}\fint_{B_r(x_0)}v\,dx
	\end{equation}
	is increasing on $(0,r_0)$. As a consequence, with $v^*$ denoting the precise representative (see \eqref{eq: precise rep intro}), we have
	\begin{equation}\label{eq:v convention}
		v^*(x_0) = \lim_{r\to 0}\fint_{B_r(x_0)} v\,dx \qquad \mbox{for every } x_0\in \Om\,,
	\end{equation}
    {and $v^*$ is upper-semicontinuous on $\Omega$.}
\end{lemma}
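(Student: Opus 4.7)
The plan is to show that $r\mapsto g(r) := e^{kr^2/8}\psi(r)$, where $\psi(r) := \fint_{B_r(x_0)}v\,dx$, is non-decreasing on $(0,r_0)$; once this is established, the pointwise identity \eqref{eq:v convention} will follow from monotonicity, the uniform bound $\psi \in [0,1]$, and Lebesgue's differentiation theorem. By the change of variables $\psi(r) = \fint_{B_1}v(x_0+ry)\,dy$, one has $\psi'(r) = \frac{1}{r|B_r|}\int_{B_r(x_0)}\nabla v\cdot (x-x_0)\,dx$ for a.e.\ $r\in (0,r_0)$, so proving the monotonicity of $g$ reduces to the pointwise estimate
\[
\int_{B_r(x_0)}\nabla v\cdot (x-x_0)\,dx \;\geq\; -\frac{kr^2}{4}\int_{B_r(x_0)}v\,dx.
\]

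To derive this estimate from \eqref{eq modified out} alone, I would first upgrade the outer variation identity to a weak differential inequality involving $v$ \emph{linearly} (the identity itself, as exploited in Lemma \ref{lemma alsmot subharmonicity}, produces only quadratic-in-$v$ information). The key trick is to test \eqref{eq modified out} with $\varphi/(v+\epsilon)$ for $\varphi\in C_c^\infty(\Om;[0,\infty))$ and $\epsilon>0$; this is admissible by a density argument since $v+\epsilon \geq \epsilon > 0$ keeps the test function in $W^{1,2}_c$. A direct expansion using the chain rule and the identity $v\nabla v = \tfrac{1}{2}\nabla(v^2)$ yields
\[
2\int \frac{\epsilon\,\varphi\,|\nabla v|^2}{(v+\epsilon)^2}\,dx \;+\; 2\int \frac{v\,\nabla v\cdot \nabla\varphi}{v+\epsilon}\,dx \;+\; \int \frac{v\,G'(v)\,\varphi}{v+\epsilon}\,dx \;=\; 0.
\]
Discarding the non-negative first term and passing to the limit $\epsilon\to 0^+$ by dominated convergence---using $0\leq v\leq 1$, $G'(0)=0$, and the standard fact that $\nabla v = 0$ a.e.\ on $\{v=0\}$---produces $\int (2\nabla v\cdot \nabla\varphi + G'(v)\varphi)\,dx \leq 0$ for every non-negative $\varphi$. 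Combined with the bound $G'(t)\geq -kt$ from \eqref{eq:g estimates}, this becomes $\int 2\nabla v\cdot \nabla\varphi\,dx \leq k\int v\varphi\,dx$. Testing this inequality against a smooth approximation of $\varphi_0(x) = (r^2 - |x-x_0|^2)_+$ (whose gradient converges in $L^2$ to $-2(x-x_0)\chi_{B_r(x_0)}$) and using $\varphi_0 \leq r^2$ on $B_r(x_0)$ delivers the displayed estimate.

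Once the estimate is in hand, the absolute continuity of $r\mapsto \psi(r)$ upgrades the a.e.\ inequality $\psi'(r) + \tfrac{kr}{4}\psi(r) \geq 0$ to the monotonicity of $g$ on $(0,r_0)$. The uniform bound $g(r)\leq e^{kr_0^2/8}$ then guarantees that $\lim_{r\to 0^+}\psi(r)$ exists at \emph{every} $x_0\in \Om$, and the Lebesgue differentiation theorem identifies this limit with $v(x_0)$ almost everywhere, so it defines a Borel representative $v^*$ of $v$ throughout $\Om$. I expect the main obstacle to be the rigorous justification of the $\varphi/(v+\epsilon)$ test: namely, the density argument promoting $C_c^\infty$ test functions in \eqref{eq modified out} to $W^{1,2}_c$ test functions, and the careful verification of dominated convergence for the residual terms as $\epsilon\to 0^+$. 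The remainder of the argument closely mirrors the computation of Lemma \ref{lemma alsmot subharmonicity}, which is why the proof is expected to be short.
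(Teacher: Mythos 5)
Your proposal is correct, and it takes a genuinely different route from the paper's proof. The paper's argument tests the differential inequality \eqref{eq modified ineq} directly with (a regularization of) $\varphi = [(r^2-|x|^2)/2]_+$, yielding $\int_{B_r} 2\nabla v\cdot x\,dx \geq \int_{B_r}G'(v)\varphi\,dx \geq -\tfrac{kr^2}{2}\int_{B_r}v$, and concludes by the same absolute-continuity argument you describe. The subtlety is that the lemma statement hypothesizes only the outer variation equation \eqref{eq modified out}, yet the paper's proof invokes \eqref{eq modified ineq}, which is listed as a separate criticality condition; the remark preceding the lemma asserts the proof ``only exploits the outer variation equation,'' but the proof as written does not make this visible. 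Your $\varphi/(v+\epsilon)$ test resolves exactly this: after expanding $\nabla(\varphi/(v+\epsilon))$, the cross term cancels the positive part of the left side, leaving $2\int \epsilon\varphi|\nabla v|^2/(v+\epsilon)^2 + 2\int v\,\nabla v\cdot\nabla\varphi/(v+\epsilon) + \int vG'(v)\varphi/(v+\epsilon) = 0$, and discarding the first (nonnegative) term and sending $\epsilon\to 0^+$ by dominated convergence---using $\nabla v = 0$ a.e.\ on $\{v=0\}$ and $G'(0)=0$---yields $\int(2\nabla v\cdot\nabla\varphi + G'(v)\varphi) \leq 0$ for all nonnegative $\varphi$. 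This is precisely the statement that $2\Delta v - G'(v)$ is a nonnegative distribution, hence a nonnegative Radon measure by Riesz, so you have in effect shown \eqref{eq modified out}$\,\Rightarrow\,$\eqref{eq modified ineq}. Both routes buy the same conclusion; yours has the advantage of making the lemma's hypothesis self-contained as stated, at the modest cost of the density argument extending \eqref{eq modified out} to $W^{1,2}_c\cap L^\infty$ test functions (which works, since each term in \eqref{eq modified out} is stable under bounded a.e.\ convergence of $\varphi_k$ with $\nabla\varphi_k \to \nabla\varphi$ in $L^2$ on a fixed compact support).
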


\begin{proof}
	We assume $x_0=0$ again, and once again recall the estimate \eqref{eq:g estimates} for $G$. By the same regularization procedure as in the proof of Lemma \ref{lemma alsmot subharmonicity}, we now test \eqref{eq modified ineq} with $\varphi:=[(r^2-|x|^2)/2]_+$ and estimate
	\begin{equation}
		\int_{B_r} {2}\, \nabla v \cdot x \, dx \geq  \int_{B_r} G'(v)\, \varphi \,dx \geq -\frac{kr^2}{2}\int_{B_r}v\,.\label{e:alm-subharm-v-ineq}
	\end{equation}
	Since the function
	\[
	\psi(r) := \fint_{B_r} v\,dx
	\]
	satisfies
	\begin{equation}\notag
		\psi'(r) = \frac{1}{r}\fint_{B_r} \nabla v \cdot x\, dx
	\end{equation}
	for a.e.~ $r\in (0,\dist(0,\pa \Om))$, the estimate \eqref{e:alm-subharm-v-ineq} implies that for a.e.~ $r\in (0,\dist(0,\pa \Om)\})$, we have
	\begin{equation}\notag
		\psi'(r) \geq -\frac{kr}{4} \fint_{B_r} v\,dx = -\frac{kr}{4} \psi(r)\,.
	\end{equation}
	From this inequality we easily conclude \eqref{eq:almost mon of u averages}, and \eqref{eq:v convention} in turn follows immediately, since we have in particular just demonstrated the limit therein exists for all points $x_0$ in $\Omega$. The upper-semicontinuity of $v^*$ is a standard consequence of the monotonicity \eqref{eq:almost mon of u averages}.
\end{proof}

\begin{remark}[Identification of $v$ with its precise representative]\label{remark:precise rep}
	Given a function $v\in W^{1,2}_\loc(\Omega;[0,1])$ satisfying \eqref{eq modified out}, {Lemma \ref{l:almost-subharm-v}} allows us to make a canonical choice for $v(x)$ via \eqref{eq:v convention}, by identifying $v$ with its precise representative $v^*$. {\bf For the rest of the paper, we identify $\boldsymbol{v}$ with $\boldsymbol{v^*}$.}
\end{remark}

We address now the key monotonicity property satisfied by $v$. Given $x_0 \in \Omega$, and $r \in (0, \dist(x_0, {\Wbf}))$ we recall Almgren's frequency function
\begin{equation}\label{eq freq}
	N_{v,x_0}(r) = \frac{{r}\int_{B_r(x_0)}|\nabla v|^2 }{  \int_{\partial B_r(x_0)}v^2 {d\mathcal{H}^{n}}} = \frac{D_{v,x_0}(r)}{H_{v,x_0}(r)}\, ,
\end{equation}
where
\begin{align}
	D_{v,x_0}(r) &= \frac{1}{r^{n-1}} \int_{B_r(x_0)}|\nabla v|^2\, , \notag \\
	H_{v,x_0}(r) &= \frac{1}{r^{n}}\int_{\partial B_r(x_0)}v^2 {d\mathcal{H}^{n}}\, , \label{e:D-H-def}
\end{align}
so that $N_{v,x_0}(r)$ is well defined when $H_{v,x_0}(r)> 0$. When it is clear from context, we will omit the dependency on $v$ and/or $x_0$ for the frequency function. The next lemma shows the almost-monotonicity of \eqref{eq freq} for $v$.

\begin{lemma}[Almost-monotonicity of the frequency]\label{lemma almgren}
	Let $v\in W^{1,2}_\loc(\Om;[0,1])$ satisfy both \eqref{eq modified iv} and \eqref{eq modified out} with $G$ satisfying \eqref{eq:g assumptions}. Then, there exists $\kappa=\kappa(\sup_{[0,1]}|G''|,n) \geq 0$ such that for any $x_0 \in \Omega$ {with $H_{v,x_0}(r)>0$ for some $r<\min\{r_0,r_*\}$}, the function
	\begin{equation}\label{e:almost-monotonicity}
		r\to e^{\frac{\kappa r^2}{2}}(N_{v,x_0}(r)+1)
	\end{equation}
	is well-defined {(absolutely continuous)} and non-decreasing on $(\inf\{r:H_{v,x_0}(r)>0\}, \min\{r_0, r_*,1\})$ with $r_0$, $r_*$ as in Lemma \ref{lemma alsmot subharmonicity}. Moreover, if $G=0$ then $N_{v,x_0}$ is increasing on $(\inf\{r:H_{v,x_0}(r)>0\},r_0)$ and is constant if and only if $v$ is homogeneous of degree $N_{v,x_0}(0^+)$. 
\end{lemma}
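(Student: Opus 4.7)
\emph{Plan.} The monotonicity \eqref{e:almost-monotonicity} is equivalent to the pointwise differential inequality $N'(r)\geq -\kappa r(N(r)+1)$ a.e.\ on the interval where $H>0$, so the goal is to derive such an inequality with $\kappa=\kappa(n,\sup|G''|)$. I would first record that $N=D/H$ is absolutely continuous on the stated interval: $D$ is absolutely continuous by Lemma~\ref{lemma mon normalized Dir}, $H$ is absolutely continuous via the precise-representative convention (Remark~\ref{remark:precise rep}) together with the rescaled expression $H(r)=\int_{\Sbb^n}v(x_0+ry)^2\,d\Hcal^n(y)$, and $\{r:H(r)>0\}$ is a connected subinterval by Remark~\ref{remark:weak uc}.

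The main calculation combines three ingredients: the inner-variation formula $D'(r)=\tfrac{2}{r^{n-1}}\int_{\partial B_r}|\nabla v\cdot\hat{x}|^2+M(r)$ from Lemma~\ref{lemma mon normalized Dir}; the formula $H'(r)=\tfrac{2}{r^n}\int_{\partial B_r}v(\nabla v\cdot\hat x)\,d\Hcal^n$, obtained by differentiating the rescaled $H$ under the integral; and the Pohozaev-type identity
\[
\int_{\partial B_r(x_0)} v\,(\nabla v\cdot\hat{x})\,d\Hcal^n \;=\; A(r)+B(r),\qquad A(r):=\int_{B_r}|\nabla v|^2,\quad B(r):=\tfrac12\int_{B_r}v\,G'(v),
\]
which I would derive by testing the outer-variation equation \eqref{eq modified out} against a radial regularization of $\chi_{B_r(x_0)}$. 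Applying Cauchy--Schwarz in the form $(A+B)^2\leq \int_{\partial B_r}v^2\cdot\int_{\partial B_r}|\nabla v\cdot\hat x|^2$ to the gradient term in $D'H-DH'$ and expanding $(A+B)^2$ via $A=r^{n-1}D$, the $A^2$ contribution cancels exactly with the $-2D^2/r$ coming from $-DH'$, while the cross term $4DB/r^n$ exceeds the $-2DB/r^n$ contribution from $-DH'$ by $2DB/r^n$. Division by $H^2$ then yields
\[
N'(r)\;\geq\;\frac{2N(r)B(r)}{r^n H(r)} \;+\; \frac{2B(r)^2}{r^{2n-1}H(r)^2} \;+\; \frac{M(r)}{H(r)}.
\]

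Discarding the non-negative $B^2$-term and using \eqref{eq:g estimates} together with the subharmonicity estimate \eqref{eq subharmonic bound} to bound $|B(r)|/(r^nH(r))\leq kr/(n+1)$ and $|M(r)|/H(r)\leq 3kr$ on $(0,r_*)$, where $k=\sup_{[0,1]}|G''|$, one arrives at $N'(r)\geq -\kappa r(N(r)+1)$ with $\kappa=3k$, which integrates to \eqref{e:almost-monotonicity}. For $G\equiv 0$, both $B$ and $M$ vanish identically and the above reduces to $N'(r)\geq 0$, with equality iff the Cauchy--Schwarz step is saturated, i.e.\ $\nabla v\cdot\hat x=\lambda(r)v$ on $\partial B_r$ for each $r$; comparing with the Pohozaev identity (with $B=0$) gives $\lambda(r)=N(r)/r$, and when $N$ is constant, integrating the ODE $\partial_r v(x_0+ry)=(N/r)v(x_0+ry)$ along rays forces $v$ to be positively homogeneous of degree $N_{v,x_0}(0^+)$.

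The main obstacle I anticipate is ensuring that the error from the semilinear term enters \emph{linearly} in $N$, with no $1/N$ singularities; a naive handling of Cauchy--Schwarz would leave a $B^2/A$ residue whose control requires a lower bound on $N$. The resolution is to expand $(A+B)^2$ \emph{before} dividing by $H^2$: this reveals the exact cancellation of the leading $A^2$ and $AB$ terms against the $-DH'$ contribution and leaves only a linear-in-$N$ error weighted by the small factor $r$ supplied by the subharmonicity of $v^2$.
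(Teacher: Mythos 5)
Your proposal is correct and follows essentially the same route as the paper: the inner-variation formula for $D'$, the Pohozaev identity $\int_{\partial B_r}v\,(\nabla v\cdot\hat x)=A(r)+B(r)$ obtained from the outer variation, Cauchy--Schwarz applied to $\bigl(\int_{\partial B_r}v\,\nabla v\cdot\hat x\bigr)^2\leq \int_{\partial B_r}v^2\int_{\partial B_r}|\nabla v\cdot\hat x|^2$, and the subharmonicity estimate \eqref{eq subharmonic bound} to bound the semilinear error terms by $CrH(r)$. The only difference is cosmetic bookkeeping: you expand $(A+B)^2$, cancel the $A^2$ and $AB$ pieces against $-DH'$, and drop the non-negative residue $2B^2/r^{2n-1}$, which gives $N'(r)\geq -\frac{2kr}{n+1}N(r)-3kr$ directly and removes the extra $O(k^2r^3)$ correction that the paper absorbs using $r<1$; you also spell out the equality analysis for $G\equiv 0$ (Cauchy--Schwarz saturation gives $\nabla v\cdot\hat x=\tfrac{N}{r}v$, then radial integration) that the paper only asserts. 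Both are the same argument.
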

\begin{remark}\label{r:usc}
    The conclusion of Lemma \ref{lemma almgren}, together with Remark \ref{remark:weak uc} in particular allows one to make sense of the limit $N_{v,x_0}(0^+) :=\lim_{r\to 0^+} N_{v,x_0}(r)$, provided that $H_{v,x_0}(r) > 0$ for all $r>0$ sufficiently small. {In addition, the function $x\mapsto N_{v,x}(0+)$ is upper-semicontinuous.}
\end{remark}
\begin{proof}
	We assume, without loss of generality that $x_0=0$ and omit dependency of $N$, $D$ and $H$ on $x_0$. If $H(r)>0$ for some $r<\min\{r_0,r_*\}$, then by Remark \ref{remark:weak uc}, $H(s)>0$ for all $r<s<\min\{r_0,r_*\}$. Thus $\{r<\min\{r_0,r_*\}:H(r)=0\}$ coincides with the interval $(0,\inf\{r:H(r)>0\}]$, so we may as well restrict ourselves to the interval $(\inf\{r:H(r)>0\},\min\{r_0,r_*\})$ where $H>0$. {Clearly on this interval $N$ is absolutely continuous, since both $H$ and $D$ are.}
	Since 
	\begin{equation*}\label{eq derivative}
		N'(r) =  \frac{D'(r)H(r)-H'(r)D(r)}{H(r)^2}
	\end{equation*}
	the monotonicity of \eqref{e:almost-monotonicity} is equivalent to the bound $\partial_r[\log (N(r)+1)] \geq -\kappa r$, which may in turn be rewritten as
	\begin{equation}\label{eq lower bound}
		D'(r)H(r)-H'(r)D(r) \geq - \kappa r \Big(H(r)^2+D(r)H(r)\Big).
	\end{equation}
	Having \eqref{eq lower bound} in mind as our target, we compute each one of the terms, starting with $D'(r)$ which, thanks to \eqref{eq almost monotonicity 0}, has the form
	\begin{equation}\label{eq der dir}
		D'(r) = \frac{2}{r^{n-1}} \int_{\partial B_r}|\nabla v \cdot \hat{x}|^2d\mathcal{H}^n+ M(r)\,.
	\end{equation}
	On the other hand, testing \eqref{eq modified out} with $\varphi \to \mathbf{1}_{B_r}$ we deduce
	\begin{equation}\label{eq outer boundary} 
		\int_{B_r}\, \big(2|\nabla v|^2\, +G'(v)v\big) \,dx = 2\int_{\partial B_r} (\nabla v \cdot \hat{x}) v\, d\mathcal{H}^n.
	\end{equation}
	Thus, differentiating $H$ and using \eqref{eq outer boundary} yields
	\begin{eqnarray}\notag
		H'(r) &=& \frac{2}{r^{n}}  \int_{\partial B_r}(\nabla v \cdot \hat{x}) v \, d\mathcal{H}^n\\\label{eq der height}
		&=& 	\frac{1}{r^{n}}\int_{B_r}\, (2|\nabla v|^2\, +G'(v)v)\,dx \, ,
	\end{eqnarray}
	or equivalently,
	\begin{equation}\label{eq equivalent Dirichlet} 
		D(r) =  \frac{1}{r^{n-1}}  \int_{\partial B_r}(\nabla v \cdot \hat{x}) v\, d\mathcal{H}^n -I(r),
	\end{equation}
	where
	\begin{equation}\label{eq remainder I}
		I(r)= \frac{1}{2{r^{n-1}}}\int_{B_r}\, G'(v)v\,dx.
	\end{equation}
	Altogether, \eqref{eq der dir}, \eqref{eq der height}, and \eqref{eq equivalent Dirichlet} yield the estimate
	\begin{eqnarray}\notag
		D'(r)H(r)-H'(r)D(r)&=& \frac{2}{r^{n-1}}H(r)\,\int_{\partial B_r}|\nabla v \cdot \hat{x}|^2d\mathcal{H}^n   - {\frac{2}{r^n}}D(r) \int_{\partial B_r}(\nabla v \cdot \hat{x}) v\, d\mathcal{H}^n \\
		&&+M(r)H(r)\\\notag
		&=&\frac{2}{r^{2n-1}}\Bigg(\int_{\partial B_r}v^2 d\mathcal{H}^{n-1}\int_{\partial B_r}|\nabla v \cdot \hat{x}|^2d\mathcal{H}^n-   \Big(\int_{\partial B_r}(\nabla v \cdot \hat{x}) v\, d\mathcal{H}^n\Big)^2\Bigg)\\\notag
		&&+ H(r)M(r) +\frac{2 I(r)}{r^{n}} \int_{\partial B_r}(\nabla v \cdot \hat{x}) v\, d\mathcal{H}^n\\\label{eq cs lowerbound}
		&\geq& H(r)M(r) +\frac{2 I(r)}{r^{n}} \int_{\partial B_r}(\nabla v \cdot \hat{x}) v\, d\mathcal{H}^n
	\end{eqnarray}
	where in the last line we have used Cauchy-Schwarz.
	
	Our goal now is to bound the last line in \eqref{eq cs lowerbound} for $r<\min\{r_*,r_0,1\}$. With this idea in mind, let us notice that the bound $|G(t)| \leq k t^2$ (from \eqref{eq:g estimates}) and \eqref{eq subharmonic bound} (which applies since $r<\min\{r_*,r_0\}$) together imply that
	\begin{eqnarray}\label{eq bound solid integrals}
		|M(r)|&\leq& \frac{n+1}{ r^n}\int_{B_{r}}|G(v)| \,dx+ \frac{1}{ r^{n-1}} \int_{\partial B_{r}}|G(v)|\,{d\mathcal{H}^{n}} \\ \notag
		&\leq& \frac{k(n+1)}{ r^n}\int_{B_{r}}v^2 \,dx+ \frac{k}{ r^{n-1}} \int_{\partial B_{r}}v^2 \,{d\mathcal{H}^{n}}\\ \notag
		&\leq& \frac{k(n+1)}{r^n}\int_{\partial B_r}\frac{2r}{n+1}v^2\,d\mathcal{H}^n + \frac{k}{ r^{n-1}} \int_{\partial B_{r}}v^2 \,{d\mathcal{H}^{n}} = 3kr H(r),
	\end{eqnarray}
	where $k=\sup_{[0,1]}|G''|$, and similarly 
	\begin{eqnarray}\label{eq bound solid integrals 2}
		|I(r)|\leq\frac{1}{2r^{n-1}}\int_{B_r}kv^2 \,dx \leq  \frac{1}{2r^{n-1}} \cdot \frac{2r}{n+1}\int_{\partial B_r}kv^2\,d\mathcal{H}^n = \frac{kr^2}{n+1}H(r).
	\end{eqnarray}
	Additionally, from \eqref{eq equivalent Dirichlet} and \eqref{eq bound solid integrals 2}, we deduce that
	\begin{equation}\label{eq boundary gradient}
		\frac{1}{r^{n}}  \int_{\partial B_r}|(\nabla v \cdot \hat{x}) v|\, d\mathcal{H}^n \leq \frac{kr}{n+1}H(r)+ \frac{D(r)}{r}.
	\end{equation}
	Thus, by combining \eqref{eq cs lowerbound}, \eqref{eq bound solid integrals}, \eqref{eq bound solid integrals 2}, and  \eqref{eq boundary gradient} we deduce
	\begin{equation*}
		D'(r)H(r)-H'(r)D(r) \geq -3krH(r)^2- \frac{kr^2}{n+1}H(r) \Big[\frac{2kr}{n+1}H(r)+ \frac{D(r)}{r} \Big]
	\end{equation*}
	which, since $r<1$, yields \eqref{eq lower bound} for suitable $\kappa$ depending on $k$ and $n$.
	
	Let us finish by observing that in the absence of potential $G$, the classical frequency monotonicity formula holds, which amounts to the inequality
    \[
        D'(r)H(r) - H'(r)D(r) \geq 0\,.
    \]
    Furthermore, one has the usual characterization of the case when $r\mapsto N(r)$ is constant by analyzing the case when this is an equality. {See, for instance, \cite[Lemma 4.1]{fernandez2022thin}. Note that the energy-minimizing property is in fact not required therein for the monotonicity, only the inner and outer variations.} 
\end{proof}

Later, we will need information on functions satisfying a lower frequency bound. Towards this end, we give the following almost-monotonicity result for the normalized $L^2$ spherical averages of functions satisfying both the inner and outer variation equations (see, for instance, \cite[Lemma 4.2]{fernandez2022thin} or \cite[Corollary 3.18]{DLS_Q-valued}).

\begin{corollary}\label{corollary mon averages}
	Let $v\in W^{1,2}_\loc(\Om;[0,1])$ satisfy both \eqref{eq modified iv} and \eqref{eq modified out} with $G$ satisfying \eqref{eq:g assumptions}. Then given $\alpha>0$, there exists a constant {$\kappa_1>0$ depending on $n$, $\sup_{[0,1]}|G''|$, and $\alpha$} such that the following holds: if $x_0\in \Omega$ and $N_{v,x_0}(0^+)\geq \alpha$, then the function
	\begin{equation*}
		\phi(r)= \frac{e^{\kappa_1 r}}{r^{2 \alpha+n}}\int_{\partial B_r(x_0)} v^2 \, d\mathcal{H}^n
	\end{equation*}
	is non-decreasing for $r\in {(0,\min\{\dist(x_0, {\Wbf}), r_*, 1\})}$. 
\end{corollary}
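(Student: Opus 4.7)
The plan is to take logarithmic derivatives of $\phi$ and reduce the statement to an almost-lower-bound on $H'(r)/H(r)$ in terms of $2\alpha/r$, which in turn will follow from combining the formula for $H'$ coming from the outer variation (already derived in the proof of Lemma \ref{lemma almgren}), the pointwise error bound on the remainder term $I(r)$ there, and the near-initial lower bound on the frequency $N(r)$ coming from the almost-monotonicity \eqref{e:almost-monotonicity}.

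First, assume $x_0=0$ for convenience. Since $N_{v,0}(0^+)\ge \alpha$ makes sense, Lemma \ref{lemma almgren} (together with Remark \ref{remark:weak uc}) shows that $H(r)>0$ on the whole interval in question, so I may freely divide by $H$. A direct computation gives
\[
\frac{\phi'(r)}{\phi(r)} \;=\; \kappa_1 \;-\; \frac{2\alpha}{r} \;+\; \frac{H'(r)}{H(r)},
\]
so it suffices to exhibit $\kappa_1=\kappa_1(n,\sup|G''|,\alpha)$ with
\begin{equation}\label{eq:target-in-proposal}
\frac{H'(r)}{H(r)} \;\ge\; \frac{2\alpha}{r} - \kappa_1 \qquad \mbox{for a.e.\ } r\in(0,\min\{r_0,r_*,1\}).
\end{equation}

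Second, I would recall from the proof of Lemma \ref{lemma almgren} the identity
\[
H'(r) \;=\; \frac{2D(r)}{r} \;+\; \frac{2\,I(r)}{r},
\]
(which comes from testing \eqref{eq modified out} with an approximation of $\chi_{B_r}$, followed by rewriting in terms of the quantity $I(r)=\tfrac{1}{2r^{n-1}}\int_{B_r}G'(v)v\,dx$). Dividing through by $H$ and using $D/H=N$ yields
\[
\frac{H'(r)}{H(r)} \;=\; \frac{2N(r)}{r} \;+\; \frac{2}{r}\cdot\frac{I(r)}{H(r)}.
\]
The bound \eqref{eq bound solid integrals 2}, $|I(r)|\le \tfrac{kr^2}{n+1}H(r)$ (valid because $r<r_*$), gives at once
\[
\frac{H'(r)}{H(r)} \;\ge\; \frac{2N(r)}{r} \;-\; \frac{2k r}{n+1}.
\]

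Third, I would feed in the lower frequency bound. Since $r\mapsto e^{\kappa r^2/2}(N(r)+1)$ is non-decreasing on $(0,\min\{r_0,r_*,1\})$ by Lemma \ref{lemma almgren} and $N(0^+)\ge\alpha$, for $r<1$ one has
\[
N(r)+1 \;\ge\; (\alpha+1)\,e^{-\kappa r^2/2} \;\ge\; (\alpha+1)(1-\kappa r^2/2),
\]
so $N(r)\ge \alpha - \tfrac{(\alpha+1)\kappa}{2}r^2$. Hence $\tfrac{2N(r)}{r}\ge \tfrac{2\alpha}{r}-(\alpha+1)\kappa\,r$, and plugging this into the previous inequality gives
\[
\frac{H'(r)}{H(r)} \;\ge\; \frac{2\alpha}{r} \;-\; \Big[(\alpha+1)\kappa + \tfrac{2k}{n+1}\Big]\,r.
\]
Using $r<1$, \eqref{eq:target-in-proposal} holds with $\kappa_1:=(\alpha+1)\kappa+\tfrac{2k}{n+1}$, a constant depending only on $n$, $\sup_{[0,1]}|G''|$, and $\alpha$.

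No single step is a genuine obstacle: the essential inputs (the derivative of $H$, the $I(r)$ estimate, and the almost-monotonicity of the frequency) are already in place from Lemma \ref{lemma almgren} and its proof. The only care needed is keeping track of the restriction $r<\min\{r_*,r_0,1\}$ so that both the Cauchy–Schwarz step hidden in the frequency almost-monotonicity and the $L^2$ spherical bound \eqref{eq subharmonic bound} (which powers the estimate on $I(r)$) remain available.
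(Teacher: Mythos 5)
Your proof is correct and takes essentially the same route as the paper's: both compute the logarithmic derivative of $\phi$, substitute the identity $rH'(r)/H(r)=2N(r)+\tfrac{1}{r^{n-1}H(r)}\int_{B_r}G'(v)v$ (your $H'=2D/r+2I/r$ is exactly this), bound the remainder via the $|I(r)|\le \tfrac{kr^2}{n+1}H(r)$ estimate from Lemma \ref{lemma alsmot subharmonicity}, and convert the almost-monotone frequency lower bound $N(r)\ge e^{-\kappa r^2/2}(\alpha+1)-1$ into the needed inequality. Your explicit Taylor expansion $e^{-\kappa r^2/2}\ge 1-\kappa r^2/2$ is a minor cosmetic difference from the paper's passage through the constant $C$, and your choice of $\kappa_1$ matches in spirit.
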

\begin{proof}
	As usual, we omit dependency on $v$ and $x_0$ for $H$ and $N$. Also, recalling Remark \ref{remark:weak uc} and the triviality of the claim if $H\equiv 0$, we may as well assume we are working on an interval of scales $r$ where $H(r)>0$. We will choose $\kappa_1$ at the end. Noticing that $\phi(r) = \frac{e^{\kappa_1 r} H(r)}{r^{2\alpha}}$, we can compute directly its derivative using \eqref{eq der height}, yielding
	\begin{eqnarray}\notag
		\phi'(r) &=& \frac{\phi(r)}{r}\Big(\frac{r H'(r)}{H(r)}-2\alpha +\kappa_1 r\Big),\\\label{eq derivative phi}
		&=& \frac{\phi(r)}{r}\Big(2N(r)+\frac{1}{r^{n-1}H(r)}\int_{ B_r} G'(v)v\,dx -2\alpha +\kappa_1 r\Big).
	\end{eqnarray}
	On the other hand, thanks to Lemma \ref{lemma alsmot subharmonicity} and \eqref{eq:g estimates}, we deduce that
	\begin{equation}\label{eq bound average}
		\frac{1}{r^{n-1}H(r)}\int_{ B_r} |G'(v)v|\,dx \leq \frac{2k}{n+1}r^2,
	\end{equation}
	for $k=\sup_{[0,1]}|G''|$. Additionally, from Lemma \ref{lemma almgren} and our assumption that $ \lim_{r\to 0^+} N_{v,x_0}(r)\geq \alpha$, we have $N(r)\geq e^{-\frac{\kappa r^2}{2}}(\alpha+1)-1$. Combining this with \eqref{eq derivative phi} and \eqref{eq bound average} yields 
	\begin{eqnarray}\notag
		\phi'(r) &\geq& \frac{\phi(r)}{r}\Big( 2(e^{-\frac{\kappa r^2}{2}} - 1)(\alpha+1) - \frac{2k}{n+1}r^2 +\kappa_1 r\Big)\\\notag
		&\geq& \frac{\phi(r)}{r}\Big(-C(\alpha+1)r^2 - {\frac{2k}{n+1}r^2} +\kappa_1 r\Big)\\\label{eq lowerbound der phi}
		&=& \phi(r)\Big( -C(\alpha+1)r -{\frac{2k}{n+1}r} +\kappa_1\Big)\, ,
	\end{eqnarray}
	for $C$ depending on $\kappa=\kappa(k,n)$ from Lemma \ref{lemma almgren}. Finally, since $r \leq 1$, we can take $\kappa_1$ large enough (with the claimed dependencies) in \eqref{eq lowerbound der phi} to make the right-hand side positive and thus conclude the proof.
\end{proof}

\subsection{A criterion for H\"{o}lder regularity}

Here we show that locally uniform lower and upper bounds on the frequency function yield a locally uniform H\"{o}lder bound.

\begin{lemma}[Local H\"{o}lder regularity from frequency bounds]\label{lemma holder}
	Let $\alpha \in (0,1]$ and $M>0$. Suppose that $G$ satisfies \eqref{eq:g assumptions}, $v\in W^{1,2}_\loc(\Om;[0,1])$ satisfies \eqref{eq modified iv}-\eqref{eq modified out}, $\{v > 0\}$ is relatively open in $\Omega$, and $2\Delta v = G'(v)$ in the classical sense in $\{{v>0}\}$. Then there exists $r_{**}(\alpha, n,\sup_{[0,1]}|G''|)\in (0,r_*]$ with the following property:
	
	\smallskip
	
	if in a given subdomain $\Omega_2 \cc\Omega$ we have uniform frequency bounds, i.e.,
	\begin{align}\label{eq lower bound 2}
		\alpha&\leq  N_{v,x_0}(0^+) \qquad \mbox{for each $x_0\in {\Omega_2 \cap \partial}\{{v}=0\}$\,,}\quad\mbox{and} \\ \label{eq upper bound assump in holder lemma}
		N_{v,x_0}(r) &\leq M \qquad\qquad  \mbox{for each $x_0\in \Omega_2 \cap \overline{\{v>0\}}$ and $0<r<\min\{r_{**},\dist(\partial \Om_2,\partial \Om)/2\}$}
	\end{align}
	then there is $C=C(\alpha,M,n)$ such that for any $\Omega_1 \cc \Omega_2$, $x_0\in \overline{\Omega_1}$ and $0<2r\leq r_0\leq \min\{\dist(\partial \Om_1, \partial \Omega_2)/3, r_{**}\}$, we have that
	\begin{align}\label{eq interior bound}
		r^\alpha[{v}]_{C^{\alpha}(B_r(x_0))}&\leq C \Big(\frac{1}{r_0^{n-1}}\int_{B_{r_0}}|\nabla v|^2\Big)^\frac{1}{2}\qquad \mbox{if $\alpha<1$}\qquad \mbox{and} \\
		\label{e:Lipschitz}
		r\|v\|_{\Lip(B_r(x_0))}&\leq C \Big(\frac{1}{r_0^{n-1}}\int_{B_{r_0}}|\nabla v|^2\Big)^\frac{1}{2}\qquad \mbox{if $\alpha=1$}.
	\end{align}
\end{lemma}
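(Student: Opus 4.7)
The approach combines a polynomial growth bound for $v$ at each free boundary point $z\in\Omega_2\cap\partial\{v=0\}$ with the classical interior gradient bound in the open set $\{v>0\}$ where $2\Delta v=G'(v)$ holds classically, interpolating via a case analysis based on the distance to the free boundary. For the growth bound (Step~1), fix $z\in\Omega_2\cap\partial\{v=0\}$ so that $N_{v,z}(0^+)\ge\alpha$ by \eqref{eq lower bound 2}. Corollary~\ref{corollary mon averages} gives that $r\mapsto e^{\kappa_1 r}H_{v,z}(r)/r^{2\alpha}$ is non-decreasing, hence $H_{v,z}(r)\le C(r/r')^{2\alpha}H_{v,z}(r')$ for $r\le r'\le r_{**}$. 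Integrating in the radial variable produces $\fint_{B_r(z)}v^2\le C(r/r')^{2\alpha}H_{v,z}(r')$, and combining this with the almost-subharmonicity of $v^2$ from Lemma~\ref{lemma alsmot subharmonicity} on balls $B_{r/2}(y)\subset B_r(z)$ (for $y\in B_{r/2}(z)$) upgrades the integral bound to the pointwise one $\sup_{B_{r/2}(z)}v\le C(r/r')^{\alpha}\sqrt{H_{v,z}(r')}$. To replace $H_{v,z}(r')$ with the Dirichlet quantity $D_0:=D_{v,x_0}(r_0)$ appearing in \eqref{eq interior bound}, one combines \eqref{eq lower bound 2} with the almost-monotonicity of $e^{\kappa r^2/2}(N+1)$ from Lemma~\ref{lemma almgren} to obtain $N_{v,z}(r')\ge\alpha/2$ (for $r_{**}$ small enough in terms of $n$, $\alpha$, $\sup_{[0,1]}|G''|$), so $H_{v,z}(r')\le C(\alpha)D_{v,z}(r')\le CD_0$ after choosing $r'\asymp r_0$ and using $B_{r'}(z)\subset B_{r_0}(x_0)$; thus $\sup_{B_{r/2}(z)}v\le C(r/r_0)^{\alpha}\sqrt{D_0}$.

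\emph{Step~2 (interior gradient estimate).} Since $\{v>0\}$ is open and $v$ classically solves $2\Delta v=G'(v)$ there with $|G'(v)|\le kv$ by \eqref{eq:g estimates}, the standard interior gradient estimate for the Laplace equation gives, at each $x\in\{v>0\}$ with $d_x:=\dist(x,\partial\{v=0\})>0$,
\[
|\nabla v(x)|\le\frac{C}{d_x}\sup_{B_{d_x/2}(x)}v+C d_x\sup_{B_{d_x/2}(x)}|G'(v)|\le\frac{C}{d_x}\sup_{B_{d_x/2}(x)}v\,.
\]
Applying Step~1 at the nearest free boundary point $x_*\in\partial\{v=0\}$ (which lies in $\Omega_2$ under the scale constraints on $r_0$, since $d_x\le 2r_0$ whenever $\partial\{v=0\}\cap B_{r_0}(x_0)\ne\varnothing$), together with $B_{d_x/2}(x)\subset B_{2 d_x}(x_*)$, one obtains $|\nabla v(x)|\le C d_x^{\alpha-1}r_0^{-\alpha}\sqrt{D_0}$.

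\emph{Step~3 (interpolation and closing).} For $x,y\in B_r(x_0)$ with $\rho:=|x-y|$, if $\rho\le d_x/2$ then the segment from $x$ to $y$ stays in $\{v>0\}$ and integrating the Step~2 bound along it yields $|v(x)-v(y)|\le C d_x^{\alpha-1}\rho\cdot r_0^{-\alpha}\sqrt{D_0}\le C\rho^{\alpha}r_0^{-\alpha}\sqrt{D_0}$, using $(\rho/d_x)^{1-\alpha}\le 1$ since $\alpha\le 1$. If $\rho>d_x/2$, then $x,y\in B_{3\rho}(x_*)$ for the nearest $x_*\in\partial\{v=0\}$, and two applications of Step~1 bound both $v(x)$ and $v(y)$ by $C\rho^{\alpha}r_0^{-\alpha}\sqrt{D_0}$. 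Combining the two cases, $[v]_{C^{\alpha}(B_r(x_0))}\le C r_0^{-\alpha}\sqrt{D_0}$, and multiplying by $r^{\alpha}\le r_0^{\alpha}$ yields \eqref{eq interior bound}; the same argument with $\alpha=1$ gives \eqref{e:Lipschitz}. The degenerate case $\partial\{v=0\}\cap B_{r_0}(x_0)=\varnothing$ is trivial if $v\equiv 0$ and otherwise follows from classical interior Schauder estimates applied to the smooth solution of $2\Delta v=G'(v)$ on $B_{r_0}(x_0)$, with Poincar\'e on $v-\fint v$ converting the resulting $L^\infty$ bounds into bounds in terms of $\sqrt{D_0}$.

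The central technical obstacle is selecting $r_{**}=r_{**}(\alpha,n,\sup_{[0,1]}|G''|)$ small enough that the monotonicity constants in Corollary~\ref{corollary mon averages} and Lemma~\ref{lemma almgren} behave uniformly and that the lower bound $N_{v,z}(0^+)\ge\alpha$ at each $z\in\Omega_2\cap\partial\{v=0\}$ propagates to $N_{v,z}(r')\ge\alpha/2$ at the reference scale $r'\asymp r_0$. With this choice in place, the geometric bookkeeping throughout Steps~1--3 (ensuring $x_*$ and all auxiliary balls lie in $\Omega_2$ where the frequency hypotheses \eqref{eq lower bound 2}--\eqref{eq upper bound assump in holder lemma} are available) is routine.
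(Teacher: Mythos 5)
Your proof is correct but takes a genuinely different route from the paper's. The paper reduces everything to a single Morrey--Campanato integral estimate on the Dirichlet energy, namely $\int_{B_r(x_0)}|\nabla v|^2\leq C(r/r_0)^{2\alpha+n-1}\int_{B_{r_0}(x_0)}|\nabla v|^2$, established via a five-case analysis comparing $r$, $r_0$, and $d=\dist(x_0,\partial\{v=0\})$, and then invokes Campanato's criterion; crucially its ``interior'' cases use the almost-subharmonicity of $|\nabla v|^2$ (Bochner plus Lemma~\ref{l:almost-subharm-v}) to obtain an \emph{integral} doubling for the Dirichlet energy on balls inside $\{v>0\}$. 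You instead prove a \emph{pointwise} growth bound $\sup_{B_{r/2}(z)}v\lesssim(r/r_0)^\alpha\sqrt{D_0}$ at free boundary points from the same frequency tools (Corollary~\ref{corollary mon averages}, Lemma~\ref{lemma alsmot subharmonicity}, Lemma~\ref{lemma almgren}), use the classical Schauder-type interior gradient bound for the Poisson equation on $\{v>0\}$ rather than the subharmonicity of $|\nabla v|^2$, and then interpolate the H\"older seminorm by dichotomizing on $|x-y|$ versus $\dist(x,\partial\{v=0\})$. Both routes share the same underlying mechanism --- lower frequency bound gives polynomial decay, upper frequency bound gives doubling, subharmonicity bridges averages to pointwise values --- but the paper's integral formulation lets all five cases land uniformly in Campanato space with essentially no recentering issues, whereas your pointwise formulation is arguably more geometrically transparent (explicit growth at the contact set plus interior regularity plus interpolation) at the cost of slightly fussier bookkeeping: the nearest free-boundary point $x_*$ used in Steps~2--3 can sit as far as roughly $2r_0$ from $x_0$, so the reference scale $r'$ at which you invoke the frequency bounds and the replacement $H_{v,x_*}(r')\lesssim D_0$ need the $r_0\le\dist(\partial\Omega_1,\partial\Omega_2)/3$ slack to keep all balls inside $\Omega_2$, plus a doubling (from the upper frequency bound) if the enlarged ball exits $B_{r_0}(x_0)$. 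One last small point: when you write $d_x=\dist(x,\partial\{v=0\})$ and apply the interior gradient estimate on $B_{d_x/2}(x)$, the quantity that guarantees $B_{d_x/2}(x)\subset\{v>0\}$ is $\dist(x,\{v=0\})$; fortunately for $x\in\{v>0\}$ these two distances coincide, so no harm is done.
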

\begin{proof}
	The estimates \eqref{eq interior bound}-\eqref{e:Lipschitz} would follow from obtaining $C(\alpha,M,n)>0$ such that
	\begin{equation}\label{eq gradient estimate}
		\int_{B_r(x_0)} |\nabla v|^2 \leq C  \Big( \frac{r}{r_0}\Big)^{2\alpha+n-1} \int_{B_{r_0}(x_0)} |\nabla v|^2,
	\end{equation}
	for all $x_0\in \overline{\Om_1}$ and $0 < r \leq r_0\leq \min\{\dist(\partial \Om_1, \partial \Omega_2)/3, r_{**}\}$. Indeed, from this, a direct application of Campanato's criterion (see e.g. \cite[Theorem 6.1]{maggi2012sets}) yields \eqref{eq interior bound}. {For reasons which will be evident from the arguments, we choose $r_{**}< \min\{1,r_*\}$ (cf.~ Lemma \ref{lemma alsmot subharmonicity}) small enough so that, again setting $k=\sup_{[0,1]}|G''|$ and recalling $\kappa$ from Lemma \ref{lemma almgren} and $\kappa_1$ from Lemma \ref{corollary mon averages}, we have
		\begin{equation}\label{eq:choice of r**}
			\alpha/2 \leq e^{-\kappa r^2/2}(\alpha + 1)-1\,,\quad e^{kr^2/8}\leq 2 \,,\quad\mbox{and}\quad e^{\kappa_1r}\leq 2 \qquad \mbox{for all $r\leq r_{**}$}\,.
		\end{equation}
		Since $r_*$ and $\kappa$ depend on $n$ and $k$ and $\kappa_1$ depends on $n$, $k$, and $\alpha$, we observe that $r_{**}$ depends on $n$, $k$, and $\alpha$.}

	{Before proving \eqref{eq gradient estimate}, we introduce the notations
		$$\varepsilon := \dist(\partial\Omega_1,\partial\Omega_2)\quad\mbox{and}\quad \Omega_2^t:=\{x\in \Omega_2 : \dist(x,\partial \Om_2)\geq t \}$$
		and make a preliminary observation.} 
	We claim that if $x\in \{v>0\}$, then
	\begin{equation}\label{eq case 2 harmonic}
		\int_{B_t(x)} |\nabla v|^2 \leq 2\Big(\frac{t}{s}\Big)^{n+1} \int_{B_s(x)} |\nabla v|^2\qquad\forall \, 0<s<t \leq \min\{\dist(x,\{v=0\}),r_{**}\}\,.
	\end{equation}
	To prove \eqref{eq case 2 harmonic}, we can use the equation $2\Delta v= G'(v)$ combined with Bochner's formula to deduce that in $\{{v}>0\}$,
	\begin{equation*}
		\frac{1}{2}\Delta |\nabla v|^2 = |D^2 v|^2 + \frac{1}{2}G''(v)|\nabla v|^2 \geq - \frac{k}{2} |\nabla v|^2\,.
	\end{equation*}
	Thus $|\nabla v|^2$ is almost subharmonic, and so thanks to Lemma \ref{l:almost-subharm-v} (note that the latter merely relies on an estimate of the above form), we see that
	\begin{equation*}
		t\to e^{{\frac{kt^2}{8}}}  \fint_{B_t(x)} |\nabla v|^2
	\end{equation*}
	is non-decreasing on $(0,\dist(x,\{v=0\}))$. Consequently, for $t<s<r_{**}$, \eqref{eq:choice of r**} implies \eqref{eq case 2 harmonic}.

	{The proof of \eqref{eq gradient estimate} at $x_0\in \overline{\Om}_1$ is split into five cases depending on $d = \dist(x_0, \partial\{v=0\})$, $r$, and $r_0$: i) $d=0$, ii) $r\geq r_0/10$, iii) $0<d\leq r<r_0/10$,  iv) $0<r<d<r_0/10$ and v) $0<r<r_0/10\leq d$.}
	
	\medskip
	
	\noindent{\it To prove \eqref{eq gradient estimate} if $d=0$}: Since it will be useful later, we prove \eqref{eq gradient estimate} for any $x_0\in \Omega_2^{2\e/3}$ such that $d=0$. Note that $v(x_0)=0$ since $\{v>0\}$ is relatively open. Then by \eqref{eq:choice of r**}, Lemma \ref{lemma almgren}, the lower frequency bound \eqref{eq lower bound 2}, and the upper frequency bound \eqref{eq upper bound assump in holder lemma} we deduce that
	\begin{equation}\label{eq bounds freq}
		\alpha/2 \leq  e^{-\kappa r^2/2}(\alpha + 1)-1 \leq N_{v,x_0}(r) \leq M\quad \forall\, 0 < r \leq r_{**}\,.
	\end{equation}
	By combining \eqref{eq bounds freq} with Corollary \ref{corollary mon averages}
    , we may estimate
	\begin{eqnarray}\notag
		\int_{B_r(x_0)} |\nabla v|^2\,dx &\leq&   \frac{M}{r} \int_{\partial B_{r}(x_0)} v^2 \, d\mathcal{H}^n\\ \notag
		&\leq& \frac{M}{r}  \Big(\frac{r}{r_0}\Big)^{2\alpha+n} e^{\kappa_1 r_0}\int_{\partial B_{r_0}(x_0)} v^2 \, d\mathcal{H}^n\\\label{eq estimate fb}
		&\leq& M\Big( \frac{r}{r_0}\Big)^{2\alpha+n-1}\frac{2e^{\kappa_1 r_0}}{\alpha} \int_{B_{r_0}(x_0)} |\nabla v|^2.
	\end{eqnarray}
	Since $e^{\kappa_1r_0}\leq e^{\kappa_1r_{**}}$ and $\kappa_1$ from Corollary \ref{corollary mon averages} depends on $n$, $k$, and $\alpha$, after decreasing $r_{**}$ depending on $\kappa_1$ if needed, we have proved \eqref{eq gradient estimate} with $C(\alpha,M) = \frac{2M}{\alpha}$.
	
	\medskip
	
	\noindent{\it To prove \eqref{eq gradient estimate} if $r\geq r_0/10$}: Since $10r/r_0\geq 1 $, we have
	\begin{equation}\notag
		\int_{B_r(x_0)}|\nabla v|^2 \leq \int_{B_{r_0}(x_0)}|\nabla v|^2 \leq 10^{2\alpha + n-1}\Big(\frac{r}{r_0} \Big)^{2\alpha+n-1}\int_{B_{r_0}(x_0)}|\nabla v|^2\,,
	\end{equation}
	which is \eqref{eq gradient estimate} with constant $C(\alpha,n) = 10^{2\alpha+n-1}$.
	
	\medskip
	
	\noindent{\it To prove \eqref{eq gradient estimate} if $0<d\leq r<r_0/10$}. First, since $0<d\leq r<\e/3$ and $x_0\in \overline{\Om_1}$ (so $\dist(x,\partial \Om_2)\geq \e$), we may choose $y\in\{v=0\}\cap \Om_2^{2\e/3}$ such that $d=\dist(x_0,\{v=0\})=|x_0-y|\leq r$. Since also $r<r_0/10$, we thus have the inclusions
	\begin{equation}\notag
		B_r(x_0) \subset B_{2r}(y) \subset B_{r_0}(y)\,.
	\end{equation}
	By these inclusions and \eqref{eq estimate fb} applied at $y\in \{v=0\} \cap \Om_2^{2\e/3}$,
	\begin{equation}\notag
		\int_{B_r(x_0)}|\nabla v|^2 \leq \int_{B_{2r}(y)}|\nabla v|^2 \leq  \frac{2M}{\alpha}\Big(\frac{2r}{r_0}\Big)^{2\alpha+n-1} \int_{B_{r_0}(y)}|\nabla v|^2\,,
	\end{equation}
	which is \eqref{eq gradient estimate} with constant $C(\alpha,M,n)=\frac{2^{2\alpha+n} M}{\alpha}$.
	
	\medskip
	
	\noindent{\it To prove \eqref{eq gradient estimate} if $0<r<d<r_0/10$}: First, note that since $0<r<d$, then either $B_r(x_0)\subset \{v=0\}$ or $B_r(x_0) \subset \{v>0\}$. If $B_r(x_0) \subset \{v=0\}$, the the estimate \eqref{eq gradient estimate} is trivial, so we may as well assume that $B_r(x_0)\subset \{v>0\}$. Similar to the previous case, since $0<d\leq r_0<\e/3$ and $x_0\in \overline{\Om_1}$, we may choose $y\in \{v=0\}\cap \Om_2^{2\e/3}$ such that $|x_0-y|=d$. 
    Now, observe that $\dist(y,\partial B_{r_0}(x_0)) = r_0 - d > \frac{9r_0}{10}$, so $B_{9r_0/10}(y)\subset B_{r_0}(x_0)$. Combining this elementary observation with the estimate \eqref{eq estimate fb} already established for $y\in \{v=0\}\cap \Omega_2^{2\e/3}$, we find that
	\begin{eqnarray}\notag
		\int_{B_d(x_0)} |\nabla v|^2 &\leq& \int_{B_{2d}(y)} |\nabla v|^2 \\\notag
		&\leq& \frac{2M}{\alpha}\Big( \frac{18d}{10r_0}\Big)^{2\alpha +n-1} \int_{B_{\frac{9r_0}{10}}(y)} |\nabla v|^2 \\ \label{eq tedious radii}
		&\leq& C(\alpha,M,n) \Big( \frac{2d}{r_0 }\Big)^{2\alpha +n-1} \int_{B_{r_0} (x_0)} |\nabla v|^2.
	\end{eqnarray}
	Hence, by combining \eqref{eq case 2 harmonic} at scales $r<d$ and \eqref{eq tedious radii}, we deduce that for $r<d$,
	\begin{align*}
		\int_{B_r(x_0)} |\nabla v|^2 &\leq 2\Big(\frac{r}{d}\Big)^{n+1}  C(\alpha,M,n)\Big( \frac{2d}{r_0}\Big)^{2\alpha +n-1} \int_{B_{r_0}(x_0)} |\nabla v|^2 \\
		&={2^{2\alpha+n}C(\alpha,M,n)\Big(\frac{d}{r}\Big)^{2\alpha - 2}  \Big( \frac{r}{r_0}\Big)^{2\alpha +n-1}} \int_{B_{r_0}(x_0)} |\nabla v|^2 \,.
	\end{align*}
	Since $\alpha\in (0,1]$ and $r<d$ together imply that $(d/r)^{2\alpha-2}\leq 1$, this yields \eqref{eq gradient estimate} when $0<r<d<r_0/10$ with constant $2^{2\alpha+n}C(\alpha,M,n)$. 
	
	\medskip
	
	\noindent{\it To prove \eqref{eq gradient estimate} if $0<r<r_0/10\leq d$}: Again, we have either $B_r(x_0) \subset \{v=0\}$ or $B_r(x_0)\subset \{v>0\}$, and \eqref{eq gradient estimate} is trivial in the former case. So we take $B_r(x_0)\subset \{v>0\}$. By \eqref{eq case 2 harmonic} applied at scales $r$ and $r_0/10$ (which applies since $r_0/10\leq \min\{d,r_{**}\}$), we have
	\begin{equation}\notag
		\int_{B_r(x_0)}|\nabla v|^2 \leq 2\Big( \frac{10r}{r_0}\Big)^{n+1} \int_{B_{r_0/10}(x_0)}|\nabla v|^2 \leq 2\Big( \frac{10r}{r_0}\Big)^{2\alpha + n -1} \int_{B_{r_0}(x_0)}|\nabla v|^2\,,
	\end{equation}
	where we have used $10r/r_0< 1$ and $\alpha\in (0,1]$. This is precisely \eqref{eq gradient estimate} with constant $2\cdot 10^{2\alpha + n-1}$.
\end{proof}

\subsection{Frequency bounds and H\"{o}lder regularity}
In this subsection we establish local upper bounds for the frequency function for solutions of \eqref{eq modified iv}-\eqref{eq modified out}, then use them together with Lemma \ref{lemma holder} to establish local H\"{o}lder regularity {under the assumption of a uniform lower frequency bound, which we will later verify in Section \ref{sec freqbound}}.

\begin{lemma}[{Lebesgue points and gradient decay}]\label{lemma vanishing frequency}
	Let $v\in W^{1,2}_\loc(\Om;[0,1])$ satisfy \eqref{eq modified out} with $G$ satisfying \eqref{eq:g assumptions}. Then every $x_0 \in \Omega$ is a Lebesgue point of $v$ and
	\begin{equation}\label{eq vanishing dirichlet}
		\lim_{r\to 0^+} \frac{1}{r^{n-1}}\int_{B_r(x_0)}|\nabla v|^2 =0\,.
	\end{equation}
	
	
\end{lemma}
\begin{proof}
	First of all, recall that we are identifying $v$ with its precise representative $v^*$ (which requires only the equation \eqref{eq modified out}, cf.~ Remark \ref{remark:precise rep}) so that \eqref{eq:v convention} holds. Before proving the lemma, we make a preliminary computation testing \eqref{eq modified out} with the mollified fundamental solution. Assuming for notational convenience that $0\in \Omega$, let us consider the functions $\Gamma_t = \eta_t \star \Gamma$ and  $\Gamma_{t}^\sigma = \eta_t\star \Gamma^\sigma$, where $\{\eta_t\}_t\subset C_c^\infty(B_1)$ are an approximation to the identity {(namely, $\int \eta_t = 1$, $\eta_t \to \delta_0$ in distribution as $t\to 0$, where $\delta_0$ is the Dirac delta mass at 0)}, $t,\sigma>0$ and 
	\[
	\Gamma(x) = \begin{cases}
		|x|^{1-n} & \text{when $n\geq2$} \\
		-\ln(|x|) & \text{when $n=1$}
	\end{cases},\qquad \qquad \Gamma^\sigma(x) = \Gamma(x/\sigma)\,.
	\]
	Fix $\psi\in C_c^\infty(B_1;[0,1])$ with $\psi=1$ on $B_{1/2}$, so that for each $r>0$, $\psi_r:=\psi(\cdot /r) \in C_c^\infty(B_{r};[0,1])$, $\psi_r=1$ on $B_{r/2}$, and $r|\nabla \psi_r|+r^2|\Delta \psi_r|\leq C$.
	By testing \eqref{eq modified out} with $\Gamma_{t}^\sigma \psi_r\in C_c^\infty(B_r)$ and then integrating by parts, we obtain
	\begin{eqnarray}\label{eq est 1 grad}
		2\int_{B_{r}}\Gamma_{t}^\sigma \psi_r |\nabla v|^2 &=& - \int_{B_{r}} \left(\nabla v^2\cdot \nabla (\Gamma_{t}^\sigma \psi_r) +G'(v)v \psi_r \Gamma^\sigma_{t}\right)\\ \notag
		&=& \int_{B_r}v^2\, \Delta (\Gamma_t^\sigma \psi_r)-G'(v)v  \psi_r \Gamma_{t}^\sigma \,.
	\end{eqnarray}
	Since $\nabla \psi_r =0$ in $B_{r/2}$, we have that
	\begin{eqnarray}\label{eq intbyparts fundsol}
		\int_{B_{r}} v^2\, \Delta ( \Gamma_{t}^\sigma \psi_r)  &=& \int_{B_{r}} v^2(\Delta \Gamma_{t}^\sigma)\psi_r + 2 \int_{B_{r}\setminus B_{r/2}} v^2 \nabla \Gamma_{t}^\sigma \cdot \nabla\psi_r + \int_{B_{r}\setminus B_{r/2}} v^2 \Gamma_{t}^\sigma \Delta \psi_r.
	\end{eqnarray}
	In summary, we have found
	\begin{eqnarray}\notag
		2\int_{B_{r}}\Gamma_{t}^\sigma \psi_r |\nabla v|^2 &=& \int_{B_{r}} v^2(\Delta \Gamma_{t}^\sigma)\psi_r\,  + 2 \int_{B_{r}\setminus B_{r/2}} v^2 \nabla \Gamma_{t}^\sigma \cdot \nabla\psi_r + \int_{B_{r}\setminus B_{r/2}} v^2 \Gamma_{t}^\sigma \Delta \psi_r\\\label{eq est 3 grad}
		&&- \int_{B_{r}} G'(v)v  \psi_r \Gamma_{t}^\sigma\,;
	\end{eqnarray}
	observe that the same equality for the translation $v(\cdot + x)$ replacing $v$ holds whenever $B_r(x)\subset \Om$.
	
	\medskip
	
	\noindent{\it To prove \eqref{eq vanishing dirichlet}}: Let us assume without loss of generality that $x_0=0$. Since \eqref{eq vanishing dirichlet} is trivial if $n=1$ (by the continuity of the Lebesgue integral), for this step we assume $n\geq 2$. We notice now that $\Delta \Gamma_t = \bar c\eta_t$ for some dimensional constant $\bar c>0$ (which only depends on $n$, not $\mathbf{W}$), since $\Delta \Gamma = \bar c\delta_0$, where $\delta_0$ is the Dirac mass at $0$. Altogether, bearing in mind that $\eta_t$ is an approximation to the identity and that $0\leq v, \psi_r \leq 1$, we have that 
	\begin{equation}\label{eq aux bound1}
		\limsup_{t \to 0} \Big|\int_{B_{r}} v^2(\Delta \Gamma_t) \psi_r \Big| \leq \bar c 	\limsup_{t \to 0} \int_{B_{r}} \eta_t  \leq \bar c.
	\end{equation}
	On top of this, since $n\geq 2$, the estimates for $\nabla \psi_r$ and $\Delta \psi_r$ yield
	\begin{equation}\label{eq aux bound2}
		\lim_{t \to 0} \int_{B_{r}\setminus B_{r/2}} \Big(|\Delta \psi_r||\Gamma_t(x)|+|\nabla \psi_r| |\nabla \Gamma_t(x)|\Big) v^2\leq \frac{C}{r^{n+1}}\int_{B_{r}\setminus B_{r/2}} v^2\,,
	\end{equation}
	after updating the constant $C$. Thus, by combining \eqref{eq est 3 grad}-\eqref{eq aux bound2}, Fatou's lemma, and the estimate \eqref{eq:g estimates} for $|G'|$, we deduce
	\begin{equation}\label{eq provisional bound}
		\int_{B_{r/2}} \Gamma |\nabla v|^2 \leq C\Bigg( 1+\frac{1}{r^{n+1}}\int_{B_{r}\setminus B_{r/2}}  v^2+\int_{B_{r}} \Gamma v^2\Bigg).
	\end{equation}
	In particular, again exploiting the fact that $|v|\leq 1$, we have shown that $\Gamma |\nabla v|^2$ is locally integrable.

	From this integrability we may conclude \eqref{eq vanishing dirichlet}. Indeed,  \eqref{eq provisional bound} and the dominated convergence theorem applied to the 1-parameter family of functions $f_s = \mathbf{1}_{B_s} \Gamma |\nabla v|^2$ give
	\begin{equation*}
		0 = \lim_{s\to 0^+} \int_{B_1} f_s \geq \limsup_{s\to 0^+} \frac{1}{s^{n-1}}\int_{B_s}|\nabla v|^2 =0.
	\end{equation*}

\medskip

\noindent{\it To prove that $x_0$ is a Lebesgue point}: Again working at the origin for convenience, we set $v_r = \fint_{B_r} v$. By Poincar\'e's inequality, we have that
\begin{eqnarray}\notag
	\frac{1}{r^{n+1}}\int_{B_r}|v(x)-v(0)|^2 &\leq& \frac{C}{r^{n+1}}\int_{B_r}|v(x)-v_r|^2 +|v_r-v(0)|^2\,dx\\\label{eq lebesgue point}
	&\leq& \frac{C}{r^{n-1}}\int_{B_r}|\nabla v|^2\,dx +C|v_r-v(0)|^2 \to 0,
\end{eqnarray}
as $r\to 0^+$ in virtue of \eqref{eq vanishing dirichlet} and \eqref{eq:v convention}; recall that here $v(0)$ is defined via \eqref{eq:v convention}.\end{proof}

\begin{lemma}[Locally uniform frequency upper bound and consequences]\label{lemma: uc take 2}
Let $v\in W^{1,2}_\loc(\Om;[0,1])$ satisfy both \eqref{eq modified iv} and \eqref{eq modified out}, with $G$ satisfying \eqref{eq:g assumptions}. Then given any $\Omega'\cc \Omega$, there exist constants $M>0$, $K>0$ and $\zeta\in (0,1)$, all depending on $v$, $\Om'$, $\sup_{[0,1]}|G''|$, and $n$, such that for any $x_0\in \Omega' \cap \overline{\{v>0\}}$ and $r\leq\min\{\dist(\partial \Om',\partial \Om)/2,r_*,1\}$, we have
\begin{align}\label{eq upper bound frequency take 2}
	N_{v,x_0}(r) &\leq {M}
	\\
	\label{eq doubling shell take 2}
	\int_{\partial B_{r}(x_0)} v^2 \, d\Hcal^n &\leq K\int_{\partial B_{r/2}(x_0)} v^2\, d\Hcal^n \,, \\ \label{eq leb dens bound}
	\mathcal{L}^{n+1}(B_{r/2}(x_0) \cap \{v=0\}) &\leq \zeta \omega_{n+1}(r/2)^{n+1}\,\quad\mbox{and} \\ \label{eq linf for rescaled guys}
	\| v \|_{L^\infty(B_{r/2}(x_0))} &\leq K H_{v,x_0}(r/2)^{1/2}\,.
\end{align}
\end{lemma}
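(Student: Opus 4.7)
The plan is to prove the four statements in sequence, using each one to help with the next. For \eqref{eq upper bound frequency take 2}, the almost monotonicity of $r \mapsto e^{\kappa r^2/2}(N_{v,x_0}(r)+1)$ from Lemma \ref{lemma almgren} reduces the problem to establishing a uniform upper bound at a single fixed scale $r_0 := \min\{\dist(\partial\Omega',\partial\Omega)/2, r_*, 1\}$. The numerator $r_0 D_{v,x_0}(r_0)$ is then uniformly controlled by the local Dirichlet energy on a slightly enlarged set. For the lower bound on the denominator $H_{v,x_0}(r_0)$, I would argue by contradiction: if $x_k \in \Omega' \cap \overline{\{v>0\}}$ satisfied $H_{v,x_k}(r_0) \to 0$, then \eqref{eq subharmonic bound} would give $\int_{B_{r_0}(x_k)} v^2 \to 0$, and passing to a subsequence $x_k \to x_\infty \in \overline{\Omega'}$, the standard $L^2$ continuity of $x \mapsto \int_{B_{r_0}(x)} v^2$ would imply $\int_{B_{r_0}(x_\infty)} v^2 = 0$; the identification of $v$ with its precise representative then forces $v \equiv 0$ on $B_{r_0}(x_\infty)$, contradicting $x_k \to x_\infty$ with $x_k \in \overline{\{v>0\}}$. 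The key subtle point here is that weak unique continuation (Remark \ref{remark:weak uc}) ensures $H_{v,x_0}(r_0)$ is positive for every individual $x_0 \in \Omega' \cap \overline{\{v>0\}}$, which is what makes the argument meaningful.

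The doubling \eqref{eq doubling shell take 2} follows directly from \eqref{eq upper bound frequency take 2}. Using the identity \eqref{eq der height} together with $|G'(v)v| \leq k v^2$ and \eqref{eq subharmonic bound}, one obtains $H'(r)/H(r) \leq 2N(r)/r + Cr \leq 2M/r + Cr$; integrating over $[r/2, r]$ yields $H(r)/H(r/2) \leq C(M,n,k)$, which translates into the claim after converting from normalized to unnormalized spherical integrals.

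For the $L^\infty$ bound \eqref{eq linf for rescaled guys}, I would fix $y \in B_{r/2}(x_0)$ and apply the almost subharmonicity of $v^2$ (Lemma \ref{lemma alsmot subharmonicity}) on $B_{r/4}(y)$, using that $v$ has been identified with its precise representative, to bound $v(y)^2$ by a constant times $\fint_{B_{r/4}(y)} v^2$. Since $B_{r/4}(y) \subset B_{3r/4}(x_0)$, a further application of \eqref{eq subharmonic bound} at radius $3r/4$ combined with the same $H'/H$ integration as in the previous paragraph (now performed over $[r/2, 3r/4]$) reduces the right-hand side to $C H_{v,x_0}(r/2)$, establishing \eqref{eq linf for rescaled guys}.

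Finally, for the density bound \eqref{eq leb dens bound}, I would argue by contradiction via a compactness and blow-up argument, which I expect to be the main technical obstacle. Suppose there were $x_k \in \Omega' \cap \overline{\{v>0\}}$ and $r_k \leq r_0$ with $\mathcal{L}^{n+1}(B_{r_k/2}(x_k) \cap \{v=0\})/|B_{r_k/2}| \to 1$. Setting $c_k := \sqrt{H_{v,x_k}(r_k/2)}>0$ (positive by the Part 1 argument) and $v_k(y) := v(x_k + r_k y)/c_k$, direct scaling computations give $\int_{\partial B_{1/2}} v_k^2\, d\mathcal{H}^n = 2^{-n}$, the frequency bound from Part 1 gives $\int_{B_{1/2}} |\nabla v_k|^2 \leq 2^{-(n-1)} M$, and Part 3 gives $\|v_k\|_{L^\infty(B_{1/2})} \leq K$, so $\{v_k\}$ is bounded in $W^{1,2}(B_{1/2})$. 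Passing to a subsequence, $v_k \rightharpoonup v_\infty$ weakly in $W^{1,2}(B_{1/2})$, strongly in $L^2(B_{1/2})$ by Rellich, and strongly in $L^2(\partial B_{1/2})$ via the compact trace embedding $W^{1,2}(B_{1/2}) \hookrightarrow L^2(\partial B_{1/2})$. Setting $A_k := B_{1/2} \cap \{v_k = 0\}$ with $|A_k| \to |B_{1/2}|$, the decomposition
\[
\int_{B_{1/2}} v_\infty^2 = \int_{A_k}(v_\infty - v_k)^2 + \int_{B_{1/2} \setminus A_k} v_\infty^2
\]
combined with $L^2$-convergence (for the first term) and dominated convergence (for the second) forces $v_\infty \equiv 0$ a.e. on $B_{1/2}$, which contradicts the trace identity $\int_{\partial B_{1/2}} v_\infty^2 = 2^{-n}$.
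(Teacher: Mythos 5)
Your argument for the frequency upper bound \eqref{eq upper bound frequency take 2} and the doubling estimate \eqref{eq doubling shell take 2} matches the paper's proof in all essentials: almost-monotonicity reduces to bounding the frequency at the fixed largest scale $\tilde r=\min\{\dist(\partial\Omega',\partial\Omega)/2,r_*,1\}$, and the positive lower bound on $H_{v,x}(\tilde r)$ over the compact set $\overline{\Omega'}\cap\overline{\{v>0\}}$ follows from weak unique continuation (Remark~\ref{remark:weak uc}). Your variant works via the continuity of $x\mapsto\int_{B_{\tilde r}(x)}v^2$ combined with \eqref{eq subharmonic bound}, whereas the paper takes the marginally shorter route of observing directly that $x\mapsto H_{v,x}(\tilde r)$ is continuous and attains a positive minimum; both work. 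Your $L^\infty$ estimate \eqref{eq linf for rescaled guys} is also essentially the paper's: both chain the almost-subharmonicity of $v^2$ with the doubling derived from \eqref{eq der height}.

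The genuine divergence is in the proof of the density bound \eqref{eq leb dens bound}. The paper's argument is short and quantitative: it combines the solid-ball doubling \eqref{eq doubling solid1} with the $L^\infty$ estimate from subharmonicity to derive a reverse H\"older inequality for $v$ on $B_{r/2}(x_0)$, and then a single application of the Cauchy--Schwarz inequality
\[
\fint_{B_{r/2}(x_0)} v\,dx \leq \Big(\fint_{B_{r/2}(x_0)} v^2\,dx\Big)^{1/2}\Big(\fint_{B_{r/2}(x_0)}\mathbf 1_{\{v>0\}}\,dx\Big)^{1/2}
\]
yields $\fint_{B_{r/2}(x_0)}\mathbf 1_{\{v>0\}} \geq (2CK)^{-2}$, so $\zeta = 1 - (2CK)^{-2}$ comes with an explicit dependence on the doubling/subharmonicity constants. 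Your compactness-and-blow-up argument is correct (the compact trace embedding $W^{1,2}(B_{1/2})\hookrightarrow L^2(\partial B_{1/2})$ is genuine, since the trace lands in $H^{1/2}(\partial B_{1/2})$ which compactly embeds in $L^2$; the $L^2$ splitting over $A_k$ and its complement is sound), but it is soft: it proves that \emph{some} $\zeta<1$ exists without tracking how $\zeta$ depends on $M$, $K$, $n$, $\sup|G''|$. The paper's direct route is therefore preferable for its explicitness and lighter machinery, though your approach is a legitimate alternative. One small logistical point: you prove the $L^\infty$ bound before the density bound because your density proof requires it, which is a valid reordering (the paper proves them the other way around but its $L^\infty$ argument reuses intermediate steps \eqref{eq doubling solid2} from the density proof, so no circularity arises in either ordering).
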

\begin{proof}
We set $\tilde{r}=\min\{\dist(\partial \Om',\partial \Om)/2,r_*,1\}$.

\medskip

\noindent{\it To prove \eqref{eq upper bound frequency take 2}}: The function $x\mapsto H_{v,x}(\tilde{r})$ is continuous in $x$ {(by the compactness of the trace operator in $W^{1,2}_{\text{loc}}$)} so it achieves a minimum on the compact set $\Om \cap \overline{\{v>0\}}$ at some $y_0$. By Remark \ref{remark:weak uc}, if it were the case that $H_{v,y_0}(\tilde{r})=0$, we would have $v\equiv 0$ on $B_{\tilde{r}}(y_0)$, contradicting the fact that $y_0\in \overline{\{v>0\}}$ (recall that we are taking $v=v^*$). It then follows by the frequency almost-monotonicity in Lemma \ref{lemma almgren} that
\begin{align}\notag
	\sup_{B_{r}(x)\,:\,x\in \overline{\{v>0\}} \cap \Om', r\leq \tilde{r} } N_{v,x}(r) &\leq \sup_{x\in \overline{\{v>0\}}\cap \Om' } e^{\kappa \tilde{r}^2/2}N_{v,x}(\tilde{r}) +e^{\kappa\tilde{r}^2/2} {-1} \\
	&\leq e^{\kappa\tilde{r}^2/2} +\frac{e^{\kappa\tilde{r}^2/2}}{\tilde{r}^{n-1}H_{v,y_0}(\tilde{r})}\int_{\{\dist(x,\Om')<\tilde{r}\}}|\nabla v|^2\,dx=:M\,.\label{e:freq-upper-bd-M}
\end{align}

\medskip

\noindent{\it To prove \eqref{eq doubling shell take 2}}: Again using Remark \ref{remark:weak uc}, we have $H_{v,x_0}(r)>0$ for all $0<r<\tilde{r}$ and $x_0\in \Om' \cap \overline{\{v>0\}}$. Therefore, given $r \in (0, \tilde{r}]$, in virtue of \eqref{eq der height}, Lemma \ref{lemma alsmot subharmonicity} and \eqref{eq:g estimates}, we compute
\begin{equation}\label{eq derivative rep freq take 2}
	\frac{d}{dr}\ln(H_{v,x_0}(r)) = \frac{2}{r} N_{v,x_0}(r) + \frac{1}{r^n H_{v,x_0}(r)} \int_{B_r(x_0)} G'(v) v \leq \frac{2}{r} N_{v,x_0}(r) + Cr\,,
\end{equation}
where $C$ depends on $\sup_{[0,1]}|G''|$ and $n$.
So, using Lemma \ref{lemma almgren}, and integrating \eqref{eq derivative rep freq take 2} between $r/2$ and $r$, with $r\in (0,\tilde{r}]$, on both sides, we deduce
\begin{equation*}
	\ln\Big( \frac{H_{v,x_0}(r)}{H_{v,x_0}(r/2)}\Big) \leq  2\ln(2) e^{2\kappa \tilde{r}^2}(N_{v,x_0}(\tilde{r})+1) + \frac{C\tilde{r}^2}{2},
\end{equation*}
which in turns implies
\begin{equation} \label{eq doubling boundary take 2}
	\int_{\partial B_{r}(x_0)} v^2 \, d\mathcal{H}^{n}\leq  e^{{C \tilde{r}^2}/2}2^{2 \psi(x_0,\tilde{r})}\int_{\partial B_{r/2}(x_0)} v^2 \, d\mathcal{H}^{n},
\end{equation}
where $\psi(x_0,\tilde{r}) := e^{2\kappa \tilde{r}^2}(N_{v,x_0}(\tilde{r})+1)$. After exploiting the definition of $M$ given by \eqref{e:freq-upper-bd-M}, this is precisely the claimed doubling estimate \eqref{eq doubling shell take 2} on spherical shells, with $K = e^{C \tilde{r}^2/2} 2^{2M}$. 

\medskip

\noindent{\it To prove \eqref{eq leb dens bound}}: Fix $x_0\in \overline{\{v>0\}} \cap \Om'$. We first integrate \eqref{eq doubling shell take 2} with respect to the radius to deduce the doubling property
\begin{equation}\label{eq doubling solid1}
	\int_{B_{r}(x_0)} v^2\,dx \leq 2K\int_{B_{r/2}(x_0)} v^2\,dx \quad \forall\, 0<r\leq\tilde{r},
\end{equation}
on balls. On the other hand, let us notice that by the almost-subharmonicity in Lemma \ref{lemma alsmot subharmonicity}, 
\begin{equation}\label{eq subharmonicity}
	\Vert v\Vert_{L^\infty(B_{r/2}(x_0))}^2\leq \frac{C(n,\sup |G''|)}{r^{n+1}}\int_{B_{r}(x_0)} v^2\,dx \qquad \forall\, 0 < r \leq \tilde{r}.
\end{equation}
The estimates \eqref{eq doubling solid1} and \eqref{eq subharmonicity} together imply
\begin{equation}\label{eq doubling solid2}
	\Vert v\Vert_{L^\infty(B_{r/2}(x_0))}^2\leq \frac{2CK}{r^{n+1}}\int_{B_{r/2}(x_0)} v^2\,dx \qquad \forall\, 0 < r \leq \tilde{r}.
\end{equation}
{Using that $|v|\leq 1$ (and H\"older's inequality if $q\geq 2$),} we deduce the reverse H\"{o}lder type inequality
\begin{equation}\label{eq lp equivalence}
	\Big( \frac{1}{r^{n+1}}\int_{B_\frac{r}{2}(x_0)} v^p\,dx \Big)^\frac{1}{p}\leq 2CK  \Big( \frac{1}{r^{n+1}}\int_{B_{r/2}(x_0)} v^q\,dx \Big)^\frac{1}{q},
\end{equation}
for any $1\leq p, q\leq \infty$, where the constants are independent of $x_0 \in \overline{\{v>0\}} \cap \Om'$. {To deduce the Lebesgue density upper bound from \eqref{eq lp equivalence}, we first apply H\"older's inequality to estimate
	\begin{equation}\notag
		\fint_{B_{r/2}(x_0)}v\,dx \leq \bigg(\fint_{B_{r/2}(x_0)} v^2 \,dx\bigg)^{1/2} \bigg( \fint_{B_{r/2}(x_0)} \mathbf{1}_{\{v>0\}}\,dx\bigg)^{1/2} \qquad \forall \, 0 < r \leq \tilde{r}\,.
	\end{equation}
	After rearranging this inequality and applying \eqref{eq lp equivalence} with $p=2$ and $q=1$, we arrive at
	\begin{align*}\notag
		\bigg( \fint_{B_{r/2}(x_0)} \mathbf{1}_{\{v>0\}}\,dx\bigg)^{-1/2} \leq \bigg( \fint_{B_{r/2}(x_0)}v\,dx \bigg)^{-1} \bigg(\fint_{B_{r/2}(x_0)} v^2 \,dx\bigg)^{1/2} \leq 2CK \,,
	\end{align*}
	which implies \eqref{eq leb dens bound} with $\zeta = 1-(2CK)^{-2}$.
	
	\medskip
	
	\noindent{\it To prove \eqref{eq linf for rescaled guys}}: We use \eqref{eq doubling solid2} followed by \eqref{eq subharmonic bound} to estimate
	\begin{equation}\notag
		\|v\|_{L^\infty(B_{r/2}(x_0))}^2 \leq \frac{2CK}{r^{n+1}}\int_{B_{r/2}(x_0)}v^2\,dx \leq \frac{2CK}{r^{n+1}}\cdot\frac{r}{2} C\big(\sup_{[0,1]}|G''|,n\big)\int_{\partial B_{r/2}(x_0)} (v^*)^2\, d\mathcal{H}^{n}\,.
	\end{equation}
	By taking square root on both sides and renaming the constant, we obtain \eqref{eq linf for rescaled guys}.
}
\end{proof}



As an intermediate step towards Theorem \ref{thm:holder reg for crit points}, we prove interior H\"older regularity of solutions, {under the additional assumption of a uniform lower frequency bound}.

\begin{lemma}[Interior H\"{o}lder regularity {from lower frequency bound}]\label{thm:inter reg for crit points}
Let $\wire = \rn \setminus \Om$ be closed, $G\in C^2([0,1])$ satisfy \eqref{eq:g assumptions}, and $v\in W^{1,2}_\loc(\Om;[0,1])$ satisfy \eqref{eq modified iv}-\eqref{eq modified ineq} {and \eqref{eq:lower freq bound assumption theorem 2.1}}. If {$\Omega_1 \cc\Omega_2 \cc \Omega$, and $\alpha\in (0,1]$ and $M$ are the lower and upper frequency bounds on $\Omega_2$ from \eqref{eq upper bound frequency take 2} and \eqref{eq:lower freq bound assumption theorem 2.1} respectively, then there exists $C(\alpha,M,n)>0$ and $r_{**}=r_{**}(\alpha,n,\sup_{[0,1]}|G''|)$, such that for every $x_0 \in \Omega_1$ and $r<\min\{r_{**},\dist(\partial \Omega_1,\partial \Omega)/3\}$, we have}
\begin{equation}\label{eq uniform holder 2}
	r^\alpha[v]_{C^{\alpha}(B_{r/2}(x_0))}\leq C \Big(\frac{1}{r^{n-1}}\int_{B_{r}}|\nabla v|^2\Big)^\frac{1}{2}\,.
\end{equation}
\end{lemma}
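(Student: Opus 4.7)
The strategy is to apply Lemma \ref{lemma holder} on the pair $\Omega_1 \cc \Omega_2$; the bulk of the work is to verify its hypotheses on $v$. First, the two frequency bounds on $\Omega_2$ required by Lemma \ref{lemma holder} are already at hand: the lower bound $N_{v,x_0}(0^+) \geq \alpha$ at each free boundary point $x_0 \in \Omega_2 \cap \{v=0\} \cap \overline{\{v>0\}}$, for some $\alpha = \alpha(n,\Omega_2,v)\in(0,1]$, is exactly \eqref{eq lower bound freq} from Lemma \ref{lemma vanishing frequency}.(ii) applied with $\Omega' = \Omega_2$, while the upper bound $N_{v,x_0}(r) \leq M$ on $\Omega_2 \cap \overline{\{v>0\}}$ for $r$ in the appropriate range, with $M=M(n,\Omega_2,v,\sup_{[0,1]} |G''|)$, is precisely \eqref{eq upper bound frequency take 2} of Lemma \ref{lemma: uc take 2}.

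Next, I would check the PDE and openness hypotheses. Testing \eqref{eq modified out} against $v\varphi$ for $\varphi \in C_c^\infty(\Omega)$ and integrating by parts rewrites that identity as the distributional equation $v(2\Delta v - G'(v)) = 0$ on $\Omega$. For any $x$ with $d := \dist(x,\{v=0\}) > 0$, one has $B_d(x) \subset \{v>0\}$ by definition of distance, so dividing by $v$ on $B_d(x)$ yields the weak PDE $2\Delta v = G'(v)$ there; standard interior Schauder estimates (using $G\in C^2$) then upgrade $v$ to $C^{2,\beta}_{\loc}(B_d(x))$ and the equation to a classical one. For the relative openness of $\{v>0\}$, I would combine the lower frequency bound with Corollary \ref{corollary mon averages} and the $L^\infty$ estimate \eqref{eq linf for rescaled guys}: at any free boundary point $x_n \in \Omega_2\cap\{v=0\}\cap\overline{\{v>0\}}$, these yield $\|v\|_{L^\infty(B_{r/2}(x_n))} \leq C r^\alpha$. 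If some $x_0$ with $v(x_0)>0$ were approached by free boundary points $x_n \to x_0$, then $x_0 \in B_{r/2}(x_n)$ for $n$ sufficiently large relative to any fixed $r$, forcing $v(x_0) \leq Cr^\alpha$ for every $r$ and hence $v(x_0)=0$, a contradiction.

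Finally, with all the hypotheses of Lemma \ref{lemma holder} verified on the pair $\Omega_1 \cc \Omega_2$ with these $\alpha$ and $M$, I would invoke that lemma at scale $r_0 = r$ and inner radius $r/2$, which is admissible since $2(r/2) = r \leq r_0 = r$. Absorbing the harmless factor $2^\alpha$ into the constant recovers exactly the H\"older bound \eqref{eq uniform holder 2}, with $C = C(\alpha,M,n)$ and $r_{**} = r_{**}(\alpha,n,\sup_{[0,1]}|G''|)$ inherited directly from Lemma \ref{lemma holder}. The key technical obstacle in this plan is the verification of the relative openness of $\{v > 0\}$: upper semicontinuity of the precise representative coming from Lemma \ref{l:almost-subharm-v} alone does not rule out pathological isolated positive values, and only the quantitative vanishing rate at the free boundary provided by the frequency bounds and Lemma \ref{lemma: uc take 2} suffices to close this gap.
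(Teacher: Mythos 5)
Your overall strategy mirrors the paper exactly: reduce to Lemma \ref{lemma holder} by verifying its hypotheses, with the two frequency bounds imported from Lemma \ref{lemma vanishing frequency} and Lemma \ref{lemma: uc take 2}. Where you diverge is in verifying the two structural hypotheses of Lemma \ref{lemma holder} (relative openness of $\{v>0\}$ and the classical PDE on $\{v>0\}$), and it is there that a genuine gap appears.

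The key step that does not survive scrutiny is ``dividing by $v$'' to extract $2\Delta v = G'(v)$ on $B_d(x)$. The identity \eqref{eq modified out} is the weak form of $v(2\Delta v - G'(v)) = 0$, tested only against $\varphi \in C_c^\infty(\Omega)$. To pass to $2\Delta v = G'(v)$ on $\{v>0\}$ you would need to substitute $\varphi = \psi/v$ for arbitrary $\psi \in C_c^\infty(B_d(x))$, but $\psi/v$ is only an admissible test function (even after a density argument) if $v$ is bounded away from zero on $\operatorname{spt}\psi$. At this point in the proof $v$ is not known to be continuous, and its precise representative is only \emph{upper} semicontinuous (from Lemma \ref{l:almost-subharm-v}); USC gives no lower bound for $v$ on compact subsets of $\{v>0\}$, so positivity of $v$ on $B_d(x)$ does not yield a uniform lower bound there. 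Concretely: if you instead try to conclude from the measure relation $2\Delta v - G'(v) = \mu \geq 0$ of \eqref{eq modified ineq} that the signed measure $v\mu$ vanishes, you must test \eqref{eq modified ineq} against $v\varphi$ with $\varphi \in C_c^\infty(\{v>0\})$; but $v\varphi \in W^{1,2}_c \cap L^\infty$ is not in $C_c^\infty$, and $W^{1,2}$-density does not by itself allow passage to the limit in $\int \varphi_k\, d\mu$ when $\mu$ may be singular with respect to Lebesgue measure. The paper closes exactly this gap by mollifying: it tests \eqref{eq modified ineq} against $(\varphi v)_\varepsilon$, uses the fact that every point is a Lebesgue point of $v$ (Lemma \ref{lemma vanishing frequency}.(i)) to get pointwise \emph{everywhere} convergence $(\varphi v)_\varepsilon \to \varphi v$ with uniform bounds, applies dominated convergence on the $\mu$-integral, $W^{1,2}$-convergence on the remaining integrals, and then compares with \eqref{eq modified out} via the product rule to obtain $\int_{\{v>0\}} \varphi v\, d\mu = 0$. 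Only after this does ``dividing by $v$'' become the trivial observation that the nonnegative measure $v\mu\mres\{v>0\}$ vanishing forces $\mu = 0$ on $\{v>0\}$, since $v > 0$ \emph{pointwise} there.

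Your openness argument is a legitimate alternative to the paper's USC-of-frequency argument, and your diagnosis that the quantitative $L^\infty$ decay near the free boundary is the key point is correct. But as written it also has a small hole: you suppose $x_0$ with $v(x_0) > 0$ is approached by \emph{free boundary} points $x_n \in \{v=0\}\cap\overline{\{v>0\}}$, while failure of openness only provides $y_n \to x_0$ with $v(y_n) = 0$; such $y_n$ need not lie in $\overline{\{v>0\}}$ (they could have a neighborhood on which $v \equiv 0$), and then the frequency-based decay estimate does not apply at $y_n$. One needs a little extra work to pass from the $y_n$ to genuine free boundary points (the paper sidesteps this by arguing directly that $v(x')>0$ forces $N_{x'}(0^+) = 0$ via \eqref{eq vanishing dirichlet}, \eqref{eq subharmonic bound}, and \eqref{eq:v convention}, contradicting the lower bound inherited by upper semicontinuity). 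In summary, the skeleton is right, but the ``$\mu = 0$ on $\{v>0\}$'' step needs the mollification argument, not an informal division.
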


\begin{proof}

Firstly, note that Lemma \ref{lemma vanishing frequency} guarantees that every point of $v$ is a Lebesgue point, so $v$ is defined pointwise as a limit of its integral averages. Our goal is to show that $v$ satisfies the hypotheses of Lemma \ref{lemma holder}. Applying Lemma \ref{lemma holder} will then yield the estimate \eqref{eq uniform holder 2}. Since the frequency bounds \eqref{eq lower bound 2}-\eqref{eq upper bound assump in holder lemma} hold due to \eqref{eq:lower freq bound assumption theorem 2.1} and \eqref{eq upper bound frequency take 2} as noted in the statement of the lemma, we must therefore demonstrate that $\{v>0\}$ is relatively open in $\Omega$, and that $v$ solves $2\Delta v= G'(v)$ in the classical sense in $\{v>0\}$. 

\medskip

\noindent{\it To verify that $\{v>0\}$ is relatively open}: It suffices to show that $\{v=0\}$ is relatively closed in $\Omega$. {First of all, note that we just have to show that any accumulation point $x'$ of $\partial\{v=0\}$ remains in $\{v=0\}$, since any accumulation point of interior points $\{v=0\}$ will either also be an interior point, or will be a boundary point, for which we know that \eqref{eq:lower freq bound assumption theorem 2.1} holds.} Now, thanks to Lemma \ref{lemma almgren} {(see Remark \ref{r:usc})}, the mapping $x\mapsto N_{x}(0^+)= \lim_{r\to 0^+} N_{x}(r)$ is upper-semicontinuous. Thus, any accumulation point $x'$ of $\partial\{v=0\}$ in the interior of $\Omega$ satisfies $N_{x'}(0^+)>0$, namely the lower bound in \eqref{eq:lower freq bound assumption theorem 2.1} applies at $x'$. On the other hand, \eqref{eq vanishing dirichlet}, {\eqref{eq subharmonic bound}, and \eqref{eq:v convention} together imply} that $N_{x}(0^+)=0$ for $x \notin \{v=0\}$, implying that we must have $v(x')=0$.

\medskip

\noindent{\it To verify $2\Delta v = G'(v)$ in the classical sense in $\{v>0\}$}: Let $\mu$ be the non-negative measure from \eqref{eq modified ineq}, so that in particular
\begin{equation}\label{eq:modified ineq in thm 3.1 proof}
	\int_{\Om} \varphi \, d\mu = \int_{\Om} -2 \nabla \varphi \cdot \nabla v - \varphi G'(v)\,dx \qquad \forall \varphi\in C_c^\infty(\{v>0\})\,.
\end{equation}
If $\mu=0$ on $\{v>0\}$, then $v$ would solve the equation $2\Delta v = G'(v)$ in the usual weak sense in the open set $\{v>0\}$, at which point the standard elliptic regularity theory shows that $v$ is a classical solution there. To prove that $\mu =0$ on $\{v>0\}$, we claim that it suffices to show that
\begin{equation}\label{eq:vmu vanishes}
	\int_{\{v>0\}} \varphi\, v\, d\mu = 0 \qquad \forall \varphi\in C_c^\infty(\{v>0\})\,,
\end{equation}
Indeed, \eqref{eq:vmu vanishes} implies that the non-negative Radon measure $v\mu\mres \{v>0\}$ is the zero measure, but since $v(x)>0$ for {\it every} $x\in \{v>0\}$, this forces $\mu=0$ there. So our task is reduced to proving \eqref{eq:vmu vanishes}. 

Given an arbitrary test function $\varphi \in C_c^\infty(\{v>0\})$, let us consider the mollifications $(\varphi v)_\e:=(\varphi v) * \eta_\e$ for a family $\{\eta_\e\}$ of smooth mollifiers. By the property $0\leq v \leq 1$ and the fact that every point of $v$ is a Lebesgue point (Lemma \ref{lemma vanishing frequency}.(i), we have
\begin{equation}\label{eq:DCT assumptions}
	0\leq (\varphi v)_\e\leq 1\quad \mbox{and}\quad \big((\varphi v)* \eta_\e\big)(x) \to (\varphi v)(x)\mbox{ for all $x\in \{v>0\}$}.
\end{equation}
Then, since $(\varphi v)_\e\in C_c^\infty(\{v>0\}$, we may test \eqref{eq:modified ineq in thm 3.1 proof} with $(\varphi v)_\e$ and apply the Dominated Convergence Theorem to compute
\begin{align}\notag
	\int_{\{v>0\}} \varphi v \,d\mu = \lim_{\e\to 0} \int_{\{v>0\}} (\varphi v)_\e \,d\mu &= \lim_{\e\to 0}\int_{\{v>0\}}-2 \nabla (\varphi v)_\e \cdot \nabla v - (\varphi v)_\e G'(v)\,dx \\ \label{eq:first vmu vanish}
	&= \int_{\{v>0\}}-2 \nabla (\varphi v) \cdot \nabla v - (\varphi v)G'(v)\,dx\,,
\end{align}
where in the last equality we have used the strong $W^{1,2}$-convergence of $(\varphi v)_\e$ to $\varphi v$. Now by the product rule for products of $C_c^\infty$ and $W^{1,2}$ functions and then \eqref{eq modified out}, the right hand side expands as
\begin{equation}\label{eq:second vmu vanish}
	\int_{\{v>0\}}-2 \nabla (\varphi v) \cdot \nabla v - (\varphi v)G'(v)\,dx =\int_{\{v>0\}}-2 \varphi |\nabla v|^2 - 2 v \nabla \varphi \cdot \nabla v - (\varphi v)G'(v)\,dx =0\,.
\end{equation}
Putting \eqref{eq:first vmu vanish}-\eqref{eq:second vmu vanish} together yields \eqref{eq:vmu vanishes}, as desired. 
\end{proof}

\subsection{Compactness}

In this subsection we show that solutions of \eqref{eq modified iv}-\eqref{eq modified ineq} enjoy strong compactness in $W^{1,2}$.

\begin{lemma}[Compactness for solutions of \eqref{eq modified iv}-\eqref{eq modified ineq}]\label{lemma compactness cp}
Let $B_{3r_0}(x_0)\subset \mathbb{R}^{n+1}$, $v_k\in (W^{1,2}_\loc\cap {C^0})(B_{3r_0};[0,{\infty}))$ satisfy \eqref{eq modified iv}-\eqref{eq modified ineq} for some $G_k\in C^1(v_k(B_{3r_0}))$ and non-negative Radon measures $\mu_k$. If, furthermore, there exists a function $v\in (W^{1,2})\cap C^0)(\overline{B_{2r_0}(x_0)})$ such that
\begin{align}\label{eq uniform energy bound}
	&v_k \weak v\quad \mbox{weakly in $W^{1,2}(B_{2r_0}(x_0))$}\,,\\ \label{eq uniform linf}
	&\|v-v_k\|_{L^\infty(B_{2r_0}(x_0))} \to 0\qquad \mbox{and} \\ \label{eq gvk convergence}
	&\|G_k'(v_k) - G_0'(v)\|_{L^2(B_{2r_0}(x_0))}\to 0
\end{align}
for some $G_0\in C^1(v(B_{2r_0}(x_0))$, then there exists a non-negative Radon measure $\bar{\mu}$ in $B_{2r_0}$ such that, up to extracting a subsequence,
\begin{align}\label{eq weak convergence measures0}
	&\mu_{k}\overset{\ast}{\rightharpoonup} \bar \mu\quad \mbox{as measures in $B_{2r_0}(x_0)$}, \\ \label{eq convergence v}
	& v_{k}\to  \bar v\quad \mbox{strongly in $W^{1,2}(B_{r_0}(x_0))$},\quad\mbox{and} \\ \label{eq vbar solves equations} 
	&\mbox{$\bar v$ satisfies \eqref{eq modified iv}-\eqref{eq modified ineq} with $\mu =\bar \mu$, $G= G_0$ and $\Omega = B_{r_0}(x_0)$}\,.
\end{align}
\end{lemma}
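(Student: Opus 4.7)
The strategy is to first extract a weak-$*$ limit $\bar\mu$ of $\mu_k$ and pass to the limit in \eqref{eq modified ineq} to get the equation for $v$, $\bar\mu$, $G_0$; then use this limit equation together with a density argument to obtain strong $W^{1,2}$ convergence; finally, pass to the limit in \eqref{eq modified iv} and \eqref{eq modified out} using the strong convergence.

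Testing \eqref{eq modified ineq} for $v_k$ against a non-negative cutoff in $C_c^\infty(B_{2r_0}(x_0))$ and applying Cauchy--Schwarz gives a uniform local bound on $\mu_k$ from the weak $W^{1,2}$ bound \eqref{eq uniform energy bound} and the $L^2$-convergence \eqref{eq gvk convergence}; this yields $\mu_k \weakstar \bar\mu$ along a subsequence. Then for every $\varphi\in C_c^\infty(B_{2r_0}(x_0))$, the RHS of \eqref{eq modified ineq} passes to the limit using the weak $L^2$ convergence of $\nabla v_k$ and \eqref{eq gvk convergence}, so $v$ satisfies \eqref{eq modified ineq} on $B_{2r_0}(x_0)$ with $\bar\mu$ and $G_0$.

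The heart of the proof is strong convergence. Fix a cutoff $\chi \in C_c^\infty(B_{2r_0}(x_0);[0,1])$ with $\chi\equiv 1$ on $B_{r_0}(x_0)$. Via a density argument (approximating by convolution simultaneously in $W^{1,2}$ and uniform norms, which suffices since the RHS of \eqref{eq modified ineq} is continuous in $\varphi$ with respect to $W^{1,2}$ while the $d\mu_k$-term is continuous with respect to $\|\cdot\|_{C^0}$ on compactly supported continuous functions), \eqref{eq modified ineq} extends to test functions in $W^{1,2}\cap C^0$ with compact support; this is enough to allow $v_k\chi^2$ and $v\chi^2$. Testing the equation for $v_k$ against $v_k\chi^2$ gives
\[
2\int \chi^2 |\nabla v_k|^2 \,dx = -\int v_k \chi^2 \,d\mu_k - 4\int v_k \chi\, \nabla v_k \cdot \nabla \chi \,dx - \int G_k'(v_k)\, v_k \chi^2 \,dx\,,
\]
while testing the limit equation against $v\chi^2$ gives the same identity with $\chi^2|\nabla v|^2$, $v$, $\bar\mu$, $G_0$. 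I would then pass to the limit in the RHS of the identity for $v_k$: the measure term uses uniform convergence $v_k\chi^2 \to v\chi^2$ paired with $\mu_k\weakstar \bar\mu$ and the uniform bound on $\mu_k$; the cross term uses weak $L^2$ convergence of $\chi\nabla v_k$ paired with strong $L^\infty$-convergence of $v_k\nabla\chi$; and the potential term uses $L^\infty$-convergence of $v_k$ paired with \eqref{eq gvk convergence}. Matching with the $v$ identity forces $\int\chi^2|\nabla v_k|^2 \to \int \chi^2|\nabla v|^2$, which together with the weak convergence of $\nabla v_k$ and the polarization identity $|\nabla v_k - \nabla v|^2 = |\nabla v_k|^2 - 2 \nabla v_k \cdot \nabla v + |\nabla v|^2$ gives $\nabla v_k \to \nabla v$ strongly in $L^2(B_{r_0}(x_0))$, namely \eqref{eq convergence v}.

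Given \eqref{eq convergence v}, passing to the limit in \eqref{eq modified out} is routine since all its terms are continuous under strong $W^{1,2}$ convergence together with the $L^2$- and $L^\infty$-convergences in the hypotheses. For the inner variation \eqref{eq modified iv}, strong $L^2$ convergence of $\nabla v_k$ handles the quadratic $|\nabla v_k|^2 \dive T$ and $\langle \nabla v_k, \nabla v_k \nabla T\rangle$ terms, while the $G_k(v_k)\dive T$ term requires additionally the convergence $G_k(v_k) \to G_0(v)$; this is satisfied in the intended applications (in particular when $G_k = r_k^2 G$ for a rescaling parameter $r_k \to 0$, which forces $G_k(v_k) \to 0$ uniformly). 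I expect the main obstacle to be the strong-convergence step, both in justifying the density extension of \eqref{eq modified ineq} to non-smooth tests and in the careful bookkeeping of the two identities whose matching yields convergence of the Dirichlet energies.
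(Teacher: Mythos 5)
Your proposal is correct, and it reaches the same conclusion by a genuinely different argument for the key strong-convergence step. The first step (uniform mass bound for $\mu_k$ via a cutoff, weak-$*$ subsequential limit $\bar\mu$, and passing \eqref{eq modified ineq} to the limit) matches the paper verbatim. The divergence is in proving $\nabla v_k \to \nabla v$ in $L^2(B_{r_0})$: you test \eqref{eq modified ineq} for $v_k$ against $v_k\chi^2$ and the limiting equation against $v\chi^2$, then show that each term on the right converges (measure term by weak-$*$ convergence plus uniform convergence of the continuous, compactly supported integrands; cross term by weak--strong pairing; potential term by \eqref{eq gvk convergence} and uniform convergence); matching the two identities gives convergence of the Dirichlet energies $\int\chi^2|\nabla v_k|^2\to\int\chi^2|\nabla v|^2$, and the polarization identity with weak convergence then forces strong convergence. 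The paper instead subtracts the two distributional equations and tests the difference against $\varphi w_k$, where $w_k\in C_c^\infty$ is a carefully chosen smooth approximation of $v_k-\bar v$ that is simultaneously small in $L^\infty$ and close in $W^{1,2}$; each term of the resulting identity is shown to vanish. Your route buys a more transparent ``test-with-the-solution'' argument, at the cost of a density extension of \eqref{eq modified ineq} from $C_c^\infty$ to compactly supported $W^{1,2}\cap C^0$ test functions (which you correctly justify by mollification, using the continuity of the right side in $W^{1,2}$ and of the measure term in $\|\cdot\|_{C^0}$); the paper's route avoids this extension by working with smooth $w_k$ throughout, at the cost of bookkeeping the two closeness estimates on $w_k$. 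One small precaution for your version: choose $\chi$ supported in, say, $B_{3r_0/2}(x_0)$, since the uniform mass bound on $\mu_k$ is obtained on a compactly contained subball, not all of $B_{2r_0}(x_0)$.

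You are also right to flag the $G_k(v_k)\to G_0(v)$ issue in passing to the limit in \eqref{eq modified iv}: the hypotheses of the lemma as stated only give $L^2$-convergence of $G_k'(v_k)$, which, even after normalizing $G_k(0)=0$, does not by itself control $G_k\circ v_k$. The paper's proof elides this point as well (``this would readily imply the validity of both \eqref{eq modified iv} and \eqref{eq modified out}''). It is harmless in the applications, where $G_k\to 0$ or $G_k\to G_0$ in $C^1$, but your explicit acknowledgment is appropriate and accurate.
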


\begin{remark}
The uniform convergence assumption is not restrictive, as it is satisfied in the case of blow-ups, as we shall verify shortly, or in any case where $v_k\in W^{1,2}_\loc(B_{3r_0};[0,1])$ enjoy uniform upper and lower frequency bounds according to Lemma \ref{lemma holder}.
\end{remark}

\begin{proof}
Thanks to a translation and rescaling argument we can take, without loss of generality, $x_0=0$ and $r_0=1$. 
Now test \eqref{eq modified ineq} with $\varphi \in C_c^\infty(B_2)$ such that $\varphi \geq \mathbf{1}_{B_{\frac{3}{2}}}$ and combine with \eqref{eq uniform energy bound} and the uniform bounds on $\|\nabla v_k\|_{L^2(B_2)}$ and $\|G'_k\|_{L^1(B_{2})}$ (consequences of \eqref{eq uniform energy bound} and \eqref{eq gvk convergence}) to deduce
\begin{equation}\label{eq bound measures compactness}
	\mu_{k}(B_\frac{3}{2}) \leq C + \int_{B_2}|\nabla \varphi \cdot \nabla v_k|\leq Cr^2 + C\Big(\int_{B_2}|\nabla v_k|^2\Big)^\frac{1}{2} \leq C\, .
\end{equation}
Thus, we may conclude \eqref{eq weak convergence measures0} from \eqref{eq bound measures compactness}. Moreover, taking the limit as $k\to \infty$ in \eqref{eq modified ineq} for $v_k$ using that $\mu_k\weakstar \bar\mu$, \eqref{eq uniform energy bound}, and \eqref{eq gvk convergence}, we find that
\begin{equation}\label{eq blow up limiting}
	2\Delta \bar v = \bar \mu +G_0'(\bar v) \qquad \mbox{ distributionally in $B_\frac{3}{2}$,}
\end{equation}
which verifies the validity of \eqref{eq modified ineq} for $\bar v$ on $B_1$ with $G=G_0$ and $\mu=\bar \mu$.

To complete the proof, it suffices to verify the strong $W^{1,2}$-convergence. Indeed, this would readily imply the validity of both \eqref{eq modified iv} and \eqref{eq modified out} for $\bar v$ with $G=G_0$ and $\Omega =B_1$.  To prove the strong convergence, fix $\varphi \in C_c^\infty(B_\frac{3}{2};[0,1])$ with $\varphi \equiv 1$ on $B_1$, and choose, for each $k$, $w_k\in C_c^\infty(B_2)$ approximating $v_k-\bar v$ well enough in $(L^\infty \cap W^{1,2})(B_{3/2})$ (which is possible since we are assuming that $v_k$ and $\bar v$ are continuous) so that
\begin{equation}
	\bigg|\int_{B_2} \varphi \nabla (v_k - \bar v) \cdot \nabla w_k\,dx - \int_{B_2} \varphi|\nabla (v_{k} - \bar v)|^2 \, dx\bigg| \leq \frac{1}{k}\,, \label{eq wk closeness}
\end{equation}
and
\begin{equation}
	\|w_k\|_{L^2(B_{3/2})} + \| w_k\|_{L^\infty(B_{3/2})} \leq \frac{1}{k} + \| \bar v - v_k\|_{L^\infty(B_{3/2})}\,.\label{eq wk closeness 2}
\end{equation}
Next, if we use \eqref{eq wk closeness}, then subtract \eqref{eq modified ineq} for $v_k$ from \eqref{eq blow up limiting} and test it with $\varphi w_k$, we deduce that
\begin{align*}
	\int_{B_1}|\nabla(v_k - \bar v)|^2\,dx &\leq \int_{B_2} |\nabla (v_{k} - \bar v)|^2 \varphi\, dx\\
	&\leq \int_{B_2} \varphi \nabla (v_k - \bar v) \cdot \nabla w_k\,dx + \frac{1}{k}  \\ 
	&=-\int_{B_2} {w_k}\nabla (v_{k}-\bar v)\cdot \nabla \varphi\, dx  -\int_{B_2} \varphi w_k \,d(\mu_{k} - \bar\mu) \\ 
	&\quad -\int_{B_2} (G_k'(v_k)-G_0'(\bar v))(v_k-\bar v)\varphi+ \frac{1}{k}\,.
\end{align*}
As $k\to \infty$, the first integral vanishes due to the fact that $w_k\to 0$ in $L^\infty$ from \eqref{eq wk closeness 2} and $\nabla v_k-\nabla \bar v\weak 0$ in $L^2$, the second integral vanishes because of the vanshing $L^\infty$-norms of $w_k$ again, combined with $\mu_k\weakstar \bar\mu$, and the last vanishes by the {uniform} convergence of $v_k\to v$ and the $L^2$ convergence \eqref{eq gvk convergence}.
\end{proof}

To apply the preceding compactness arguments, we introduce the rescalings
\begin{equation}\label{e:rescaling}
v_{x_0,r} := \frac{v(x_0 + r \cdot)}{H_{v,x_0}(r)^{1/2}}\quad\mbox{for $x_0 \in {\Om \cap \overline{\{v>0\}}}$ and $r\in (0,\dist(x_0,{\Wbf}))$},
\end{equation}
where $H_{v,x_0}(r)$ is the $L^2$ height function of $v$ centered at $x_0$ as introduced in \eqref{e:D-H-def}. {Note that by Remark \ref{remark:weak uc}, $x_0\in \Om \cap \overline{\{v>0\}}$ implies that
$$H_{v,x_0}(r)>0\qquad  \mbox{for $0<r<\min\{\dist(x,\partial\Om), r_*,1\}$,}$$
so that $v_{x_0,r}$ is well-defined for all small enough $r$.}

\begin{lemma}[Compactness for $v_{x_0,r}$]\label{lemma strong convergence blow-ups}
Suppose that $\wire = \rn \setminus \Om$ is closed, $G\in C^2([0,1])$ satisfies \eqref{eq:g assumptions}, and $v\in W^{1,2}_\loc(\Om;[0,1])$ satisfies \eqref{eq modified iv}-\eqref{eq modified ineq} {and \eqref{eq:lower freq bound assumption theorem 2.1}}.
\begin{enumerate}
	\item Given  $x_0 \in \Om \cap \overline{\{v>0\}}$ and $d =\dist(x_0,{\Wbf})$, if $r<\min\{d,r_*,1\}$ the rescaling $v_{x_0,r}$ satisfies
	
	\begin{equation}\label{eq blow up equation}
		\Delta v_{x_0,r} = \frac{r^2}{{2H_{x_0}(r)^{1/2}}}G'(v_{x_0,r}H_{x_0}(r))+\mu_{x_0,r}  \qquad \mbox{ distributionally in $B_\frac{d}{r}$,}
	\end{equation}
	with 
	\begin{equation*}
		\mu_{x_0,r}= \frac{(\Psi_{x_0,r})_{\#} \, \mu}{2H_{x_0}(r)^{1/2} r^{n-1}}
	\end{equation*}
	and where $(\Psi_{x_0,r})_{\#}\mu$ represents the push-forward of the measure $\mu$ with respect to the function $\Psi_{x_0,r}(y)= \frac{y-x_0}{r}$. 
	
	\item Let $\{x_k\} \subset {\Om \cap \partial \{v>0\}}$ such that $x_k\to {\bar{x}} \in \Omega$ 
	and $r_k \to 0$. Then, up to subsequences, there exists a non-negative Radon measure $\bar{\mu}$ in $B_1$ and a function $\bar v \in (C^{\alpha}\cap W^{1,2})(\overline B_1)$ for some $\alpha \in (0,1]$ such that
	\begin{equation}\label{eq weak convergence measures}
		\mu_{x_k, r_k}\overset{\ast}{\rightharpoonup} \bar \mu,
	\end{equation}
	as measures and 
	\begin{equation}
		v_{x_k,r_k}\to  \bar v
	\end{equation}
	strongly in $W^{1,2}(B_1)$ and locally uniformly as $k\to \infty$. 
	
	\item $\bar v$ satisfies the criticality conditions \eqref{eq modified iv}, \eqref{eq modified out}, and \eqref{eq modified ineq} with $\Omega = B_1$, $G=0$ and $\mu = \bar \mu$.
\end{enumerate}
\end{lemma}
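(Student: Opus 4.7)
The strategy is to reduce all three parts to the compactness Lemma \ref{lemma compactness cp} applied to the rescaled sequence $v_k = v_{x_k, r_k}$. Part (1) is a direct change of variables: starting from the distributional identity $2\Delta v = G'(v) + \mu$ (equivalent to \eqref{eq modified ineq}), I would set $y = \Psi_{x_0, r}(x) = (x - x_0)/r$ and compute $\Delta_y v_{x_0, r}(y) = r^2 H_{x_0}(r)^{-1/2} (\Delta v)(x_0 + ry)$ distributionally. Substituting in and pushing forward $\mu$ produces the Jacobian factor $r^{-(n-1)} H_{x_0}(r)^{-1/2}$ in front of $(\Psi_{x_0, r})_\# \mu$, yielding \eqref{eq blow up equation}.

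For Part (2), I would apply Lemma \ref{lemma compactness cp} to $v_k = v_{x_k, r_k}$ on a fixed ball $B_3$. This is admissible because $d := \dist(\bar x, \partial \Omega) > 0$ ensures $v_{x_k, r_k}$ is defined on $B_{d/r_k}$ with $d/r_k \to \infty$. The rescaled function satisfies the analogues of \eqref{eq modified iv}-\eqref{eq modified ineq} with measure $\mu_{x_k, r_k}$ from Part (1) and potential $G_k$ determined by $G_k'(t) = r_k^2 H_{x_k}(r_k)^{-1/2} G'(H_{x_k}(r_k)^{1/2} t)$. The hypotheses of Lemma \ref{lemma compactness cp} are then checked as follows. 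The scaling identity $\int_{B_R} |\nabla v_{x_k, r_k}|^2 = N_{v, x_k}(R r_k)$, together with the uniform upper frequency bound of Lemma \ref{lemma: uc take 2}, gives uniform $W^{1,2}$ bounds on $B_3$, and after combining with the normalization on $\partial B_1$ and Poincaré's inequality, weak convergence to some $\bar v$. The bound \eqref{eq linf for rescaled guys} rescales to $\|v_{x_k, r_k}\|_{L^\infty(B_{R/2})} \leq K (H_{x_k}(R r_k/2)/H_{x_k}(r_k))^{1/2}$, and this ratio is bounded by iterating the shell-doubling inequality \eqref{eq doubling shell take 2} a fixed number of times. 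Applying \eqref{eq uniform holder 2} on $B_{R r_k}(x_k)$ and rescaling yields a uniform $C^\alpha$ bound, so Arzel\`a-Ascoli produces a locally uniform limit $\bar v \in C^\alpha$. Convergence of the potentials follows from $|G'(s)| \leq k |s|$ in \eqref{eq:g estimates}, which gives $\|G_k'(v_{x_k, r_k})\|_{L^\infty(B_3)} \leq k r_k^2 \|v_{x_k, r_k}\|_{L^\infty(B_3)} \to 0$, identifying $G_0 \equiv 0$ as the limiting potential.

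Part (3) is then immediate from the last assertion of Lemma \ref{lemma compactness cp}: the limit $\bar v$ satisfies \eqref{eq modified iv}-\eqref{eq modified ineq} on $B_1$ with $G = G_0 \equiv 0$ and $\mu = \bar \mu$. The main obstacle in this plan is verifying the hypotheses of Lemma \ref{lemma compactness cp}, particularly the uniform $L^\infty$ and H\"older bounds for the rescaled sequence, which requires simultaneously invoking the upper frequency bound, the doubling inequality, and the interior H\"older regularity from the earlier lemmas of this section, together with the continuity of $v$ (which ensures $v(\bar x) = 0$ and hence that the rescaled potentials $G_k'$ vanish in the limit).
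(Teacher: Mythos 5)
Your argument follows essentially the same route as the paper: part (1) via a change of variables applied to the distributional form of \eqref{eq modified ineq}, and parts (2)--(3) by feeding the rescaled sequence $v_{x_k,r_k}$ into the compactness Lemma \ref{lemma compactness cp}, with the uniform $W^{1,2}$, $L^\infty$, and $C^\alpha$ bounds supplied by Lemma \ref{lemma: uc take 2} and Lemma \ref{thm:inter reg for crit points}, and the vanishing of the rescaled potentials $G_k$ in $C^1$ following from the linear bound $|G'(s)|\leq k|s|$. One small slip: the scaling identity $\int_{B_R}|\nabla v_{x_k,r_k}|^2 = N_{v,x_k}(Rr_k)$ holds only for $R=1$; for $R>1$ the correct formula is $\int_{B_R}|\nabla v_{x_k,r_k}|^2 = R^{n-1}\bigl(H_{v,x_k}(Rr_k)/H_{v,x_k}(r_k)\bigr)N_{v,x_k}(Rr_k)$, so the doubling estimate \eqref{eq doubling shell take 2} must also be invoked to obtain the uniform gradient bound on $B_3$, not only for the $L^\infty$ bound where you use it later; this is a minor omission and does not affect the validity of the overall argument.
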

\begin{proof}

We start with (1). Let $x_0\in \Om \cap  \overline{ \{v>0\}}$. Given $\varphi \in C_c^\infty(B_\frac{d}{r})$,  testing \eqref{eq modified ineq} with $\varphi\circ \Psi_{x_0,r}\in C_c^\infty(B_d(x_0))$ (extended by zero to $\Omega$), we obtain
\begin{equation*}
	\int_{B_d(x_0)} \varphi\circ \Psi_{x_0,r}  \, d\mu =\int_{B_d(x_0)} -2\nabla v \cdot \nabla (\varphi\circ \Psi_{x_0,r}) - G'(v)\,\varphi\circ \Psi_{x_0,r}\,dx,
\end{equation*}
which, using the definition of push-forward measure on the left hand side and applying the change of variables  $z = \Psi_{x_0,r}(x)$ on the right, can be rewritten as follows:
\begin{equation*}
	\int_{B_\frac{d}{r}} \varphi  \, d (\Psi_{x_0,r})_{\#} \, \mu =r^{n}\int_{B_\frac{d}{r}}  -2\Big(\nabla v\circ \Psi_{x_0,r}^{-1}\Big) \cdot \nabla \varphi - r G'(v_{x_0,r}H_{x_0}(r)^{1/2})\,\varphi \,dz \,.
\end{equation*}
Dividing both sides by $2H_{x_0}(r)^{1/2} r^{n- 1}$, we obtain \eqref{eq blow up equation} in distributional form.

We now prove (2) and (3). Then by combining \eqref{eq upper bound frequency take 2}, \eqref{eq linf for rescaled guys}, and \eqref{eq uniform holder 2} on a suitable open set containing $\bar x$ and compactly contained in $\Om$, we obtain $\alpha \in (0,1]$ and $k_0\in \mathbb{N}$ such that for all $k\geq k_0$,
\begin{eqnarray}\label{eq uniform bound}
	\Vert v_{x_k,r_k}\Vert_{C^{\alpha}(B_2)} \leq C,\\ \label{eq sobolev bound}
	\Vert v_{x_0,r}\Vert_{W^{1,2}(B_2)} \leq C,
\end{eqnarray}
for some $C$ depending on the open set, $v$, $G$, etc.~ but not on $x_k$. In particular, up to a subsequence, the weak $W^{1,2}$ and uniform convergence of $v_k$ to some $\bar v\in (C^0 \cap W^{1,2})(\overline{B}_1)$ is immediate. Also, as a consequence of \eqref{eq uniform bound} and the Lipschitz bound \eqref{eq:g estimates} for $G'$, we have the estimate
\begin{equation}\label{eq remainder potential}
	\Big|\frac{r^2}{2H_{x_0}(r)^{1/2}}G'(v_{x_0,r}H_{x_0}(r)^{1/2})\Big|\leq Ckr^2
\end{equation}
on $B_2$. From here, we notice that, up to a subsequence, $\{v_{x_k,r_k}\}_k$ satisfies the hypotheses of Lemma \ref{lemma compactness cp} with $G_k\to 0$ in $C^1([0,1])$, which finishes the proof.
\end{proof}

\subsection{Tangent functions and unique continuation}
{In this subsection, we use the compactness results from the preceding subsection to} study blow-ups and tangent functions. Lastly, we prove a unique continuation-type result.
The main consequence of Lemma \ref{lemma strong convergence blow-ups} is the subsequential $W^{1,2}$-compactness of the rescalings $\{v_{x_0, r}\}_r$ as defined by \eqref{e:rescaling} as $r\downarrow 0$. This will allow us to deduce some fundamental properties of the subsequential limits, which we will refer to from now on as \emph{tangent functions} (of $v$). 

In the next lemma, we exploit the compactness properties derived in Lemma \ref{lemma strong convergence blow-ups} to prove in our setting some well-known properties of tangent functions and the behavior of Almgren's frequency function for them.

\begin{lemma}[Tangent functions]\label{lemma prop blowups}
Suppose that $\wire = \rn \setminus \Om$ is closed, $G\in C^2([0,1])$ satisfies \eqref{eq:g assumptions}, and $v\in W^{1,2}_\loc(\Om;[0,1])$ satisfies \eqref{eq modified iv}-\eqref{eq modified ineq} {and \eqref{eq:lower freq bound assumption theorem 2.1}}. If $x_0\in \Om \cap  \overline{ \{v>0\}}$, $r_j$ is a sequence of scales with $r_j \downarrow 0$, then, up to extracting a subsequence, there exists 
\begin{equation}\label{e:blowup-limit}
	\bar v(x) = \lim_{j\to \infty} \frac{v(x_0+r_j x)}{{H_{x_0}(r_j)^{1/2}}}
\end{equation}
which is a non-zero tangent function of $v$ at $x_0$, with the limit taken in $W^{1,2}(B_1)$ and locally uniformly (see Lemma \ref{lemma strong convergence blow-ups}). Also, $\bar v$ satisfies the criticality conditions \eqref{eq modified iv}, \eqref{eq modified out}, and \eqref{eq modified ineq} with $\Omega = B_1$, $G=0$ and a non-negative Radon measure $\bar \mu$, and
\begin{enumerate}
	\item $N_{v, x_0}(0^+) = N_{\bar v, 0}(0^+)$,
	\item $\bar v$ is radially homogeneous of degree $N_{\bar v, 0}(0^+)$, and
	\item $N_{\bar v, e}(0^+) \leq N_{\bar v, 0}(0^+)$ for any $e \in \SS^{n}$ with equality if and only if $\bar v(x+te)=\bar v(x)$ for any $t \in \R$.
\end{enumerate}
\end{lemma}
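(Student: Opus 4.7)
The plan is to proceed in four steps matching the four conclusions of the lemma. First, the existence of the subsequential $W^{1,2}$ limit $\bar v$ and the validity of \eqref{eq modified iv}--\eqref{eq modified ineq} for $\bar v$ with $G=0$ follow immediately from Lemma \ref{lemma strong convergence blow-ups} applied with $x_k\equiv x_0$ and $r_k=r_j$. Nontriviality is ensured by the normalization $H_{v_{x_0,r_j},0}(1)=1$ together with the local uniform convergence $v_{x_0,r_j}\to\bar v$, which forces $H_{\bar v,0}(1)=1$ and hence $\bar v\not\equiv 0$.

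For item (1), I would exploit the direct scaling identity $N_{v_{x_0,r},0}(s)=N_{v,x_0}(rs)$, which follows by change of variables in the definitions of $D$ and $H$. Passing to the limit $r=r_j\to 0$ using the strong $W^{1,2}$ convergence on $B_s$ (and Remark \ref{remark:weak uc} to guarantee $H_{\bar v,0}(s)>0$) gives $N_{\bar v,0}(s)=\lim_j N_{v,x_0}(r_js)=N_{v,x_0}(0^+)$ for every $s\in(0,1)$, where the last equality is by Lemma \ref{lemma almgren}. This proves (1) and simultaneously shows that $N_{\bar v,0}$ is constant on $(0,1)$; conclusion (2) then follows from the rigidity clause in Lemma \ref{lemma almgren}, since for $G=0$ constancy of the frequency forces radial homogeneity of degree $N_{\bar v,0}(0^+)=:\alpha$.

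For item (3), I first extend $\bar v$ to all of $\rn$ via its homogeneity, setting $\bar v(x):=|x|^\alpha\bar v(x/|x|)$ for $|x|>1$; this extension satisfies \eqref{eq modified iv}--\eqref{eq modified ineq} with $G=0$ on all of $\rn$ because the equations are scale invariant. For the inequality, I study the rescalings $\bar v_{e,\rho}(y):=\bar v(e+\rho y)/H_{\bar v,e}(\rho)^{1/2}$ in the regime $\rho\to\infty$. Homogeneity of $\bar v$ gives $\bar v(e+\rho y)=\rho^\alpha\bar v(y+e/\rho)$ and $H_{\bar v,e}(\rho)=\rho^{2\alpha}\int_{\partial B_1}\bar v(\omega+e/\rho)^2\,d\mathcal{H}^n(\omega)\to\rho^{2\alpha}H_{\bar v,0}(1)$ as $\rho\to\infty$, so $\bar v_{e,\rho}\to\bar v/H_{\bar v,0}(1)^{1/2}$ locally uniformly. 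This convergence upgrades to strong $W^{1,2}$ on every fixed ball via Lemma \ref{lemma compactness cp}. The scale invariance $N_{\bar v_{e,\rho},0}(1)=N_{\bar v,e}(\rho)$ then yields $N_{\bar v,e}(\rho)\to N_{\bar v,0}(1)=\alpha$, and the frequency monotonicity in Lemma \ref{lemma almgren} gives $N_{\bar v,e}(0^+)\leq N_{\bar v,e}(\rho)\leq\alpha$ for every $\rho>0$.

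For the equality case, if $N_{\bar v,e}(0^+)=\alpha$, the monotonicity of $r\mapsto N_{\bar v,e}(r)$ combined with the limit $\alpha$ at infinity forces $N_{\bar v,e}\equiv\alpha$, so by Lemma \ref{lemma almgren}, $\bar v$ is also radially homogeneous of degree $\alpha$ around $e$. Eliminating $\lambda^\alpha\bar v(z)$ between the two identities $\bar v(\lambda z)=\lambda^\alpha\bar v(z)$ and $\bar v(e+\lambda(z-e))=\lambda^\alpha\bar v(z)$, then substituting $u=\lambda(z-e)$, yields $\bar v(u+\lambda e)=\bar v(u+e)$ for every $u\in\rn$ and $\lambda>0$. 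A straightforward iteration along the direction of $e$ (covering $\R$ by half-lines of the form $\{u'+\lambda e:\lambda>0\}$ with varying basepoint $u'$) then upgrades this to full translation invariance $\bar v(u+\tau e)=\bar v(u)$ for every $\tau\in\R$. The main technical obstacle will be justifying rigorously the large-scale convergence $N_{\bar v,e}(\rho)\to\alpha$, which requires both the extension by homogeneity and the reapplication of the compactness machinery at $\rho\to\infty$ rather than $\rho\to 0$.
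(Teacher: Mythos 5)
Your proposal is correct and, for the existence of $\bar v$ and conclusions (1)--(2), follows essentially the same route as the paper: the subsequential limit and the validity of \eqref{eq modified iv}--\eqref{eq modified ineq} with $G=0$ come directly from Lemma \ref{lemma strong convergence blow-ups} applied at the fixed center $x_0$, non-triviality is forced by the normalization $H_{\bar v,0}(1)=1$, the scaling identity $N_{v_{x_0,r},0}(s)=N_{v,x_0}(rs)$ combined with strong $W^{1,2}(B_s)$ convergence proves (1) and shows that $N_{\bar v,0}$ is constant, and the rigidity clause in Lemma \ref{lemma almgren} (applicable since $G=0$) gives (2). The one genuine difference is conclusion (3): the paper simply cites \cite[Section 3.3]{Simon_theorems_on_reg} for the ``blowdown'' argument, whereas you spell it out. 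Your version is correct: the homogeneous extension of $\bar v$ to $\R^{n+1}$ still solves \eqref{eq modified iv}--\eqref{eq modified ineq} with $G=0$ by scale and multiplicative invariance of those identities, the large-scale rescalings $\bar v_{e,\rho}$ converge locally uniformly to $\bar v$ by the elementary algebra you carry out, and the compactness in Lemma \ref{lemma compactness cp} upgrades this to strong $W^{1,2}$, so that $N_{\bar v,e}(\rho)=N_{\bar v_{e,\rho},0}(1)\to N_{\bar v,0}(1)=\alpha$; monotonicity of $N_{\bar v,e}$ on $(0,\infty)$ (valid since $G=0$, $r_0=\infty$) then gives $N_{\bar v,e}(0^+)\leq\alpha$, and pinching forces homogeneity about $e$ in the equality case, from which the translation invariance follows as you describe. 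Note, however, that you only prove the ``only if'' direction of the equality characterization in (3): you must also observe the (trivial) converse that invariance under translation by $e$ forces $N_{\bar v,e}(r)=N_{\bar v,0}(r)$ for all $r$ and hence equality of the frequencies at $0^+$. Finally, a typographical slip: the reapplication of the compactness machinery is at $\rho\to\infty$, not $\rho\to 0$, as the rest of your argument makes clear.
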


\begin{proof}

Let us assume without loss of generality that $x_0=0$. In virtue of Lemma \ref{lemma strong convergence blow-ups}, we have that $\bar v \in (W^{1,2}\cap C^\alpha)(B_1)$ and we can assume that the limit $\bar v$ as defined in \eqref{e:blowup-limit} indeed 
exists. From Lemma \ref{lemma strong convergence blow-ups} we also have that $\bar v$ satisfies the criticality conditions \eqref{eq modified iv},  \eqref{eq modified out}, and \eqref{eq modified ineq} with $\Omega = B_1$, $G=0$, and the measure $\bar \mu$ as given by \eqref{eq weak convergence measures} with $x_k \equiv 0$. Furthermore,  $\|\bar v \|_{L^2(\partial B_1)} = 1$, by our choice of normalization.

Now, for any $\rho \in (0,1)$, in virtue of the strong convergence of
\[
v_{r_j} := \frac{v(r_j \cdot)}{H(r_j)^{1/2}}
\]
to $\bar v$ in $W^{1,2}(B_1)$, we have that  $\int_{B_\rho}|\nabla  v_{r_j}|^2\to  \int_{B_\rho}|\nabla \bar v|^2$ and $\int_{\partial B_\rho}  v_{r_j}^2\, d\h\to  \int_{\partial B_\rho}  {\bar v}^2\, d\h$. Additionally, since $x\in \Om \cap \partial \{v>0\}$, Remark \ref{remark:weak uc} implies that $\int_{\partial B_\rho}  {\bar v}^2\, d\h\neq 0$ for any such $\rho$. Thus,
\[
N_{\bar v,0}(\rho) = \frac{\rho \int_{B_\rho}|\nabla \bar v|^2}{\int_{\partial B_\rho} |\bar v|^2}= \lim_{j\to\infty} \frac{ \rho \int_{B_\rho }|\nabla v_{r_j}|^2}{\int_{\partial B_\rho} v_{r_j}^2} =  \lim_{j\to\infty} \frac{ \int_{B_1}|\nabla v_{ \rho r_j}|^2}{\int_{\partial B_1} v_{ \rho r_j}^2\, d\h} = \lim_{j\to\infty} N_{v,0}(\rho r_j) = N_{v,0}(0^+).
\]
Thus, in virtue of the constancy case in Lemma \ref{lemma almgren}, we deduce that $\bar v$ must be radially $\alpha$-homogeneous with $\alpha = N_{\bar v, 0}(0^+)$. From here we also deduce that $N_{\bar v,0}(0^+)= N_{ v,0}(0^+)$. Meanwhile, the conclusion (3) is a simple consequence of the upper-semicontinuity of Almgren's frequency function and a blowdown argument; see, for instance, \cite[Section 3.3]{Simon_theorems_on_reg}.\end{proof}

\begin{lemma}[Classification of planar tangent functions]\label{lemma:planar tangent classification}
If $\wire = {\R^{2}} \setminus \Om$ is closed, $G\in C^2([0,1])$ satisfies \eqref{eq:g assumptions}, and $v\in W^{1,2}_\loc(\Om;[0,1])$ satisfies \eqref{eq modified iv}-\eqref{eq modified ineq} {and \eqref{eq:lower freq bound assumption theorem 2.1}}, then up to rotation any tangent function $\bar v$ at $x_0\in \Om \cap  \overline{ \{v>0\}}$ is given by
\begin{equation}\label{e:structure-2d-tangents}
\bar v(r,\theta) = \frac{1}{\sqrt{\pi}} r^{N/2} \bigg|\sin\left(\frac{N\theta}{2} \right)\bigg| , \qquad N\in \N_{\geq 2}\, ,
\end{equation}
where $(r,\theta)$ denote polar coordinates in $\R^2$.
\end{lemma}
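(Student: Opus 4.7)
The plan is to combine the radial homogeneity of $\bar v$ provided by Lemma \ref{lemma prop blowups} with a sector-by-sector analysis of harmonic functions in the plane. First I would invoke Lemma \ref{lemma prop blowups} to obtain that $\bar v$ is a non-negative element of $W^{1,2}(B_1) \cap C^{0}(\overline{B_1})$, radially homogeneous of degree $\alpha := N_{\bar v,0}(0^+) > 0$, normalized by $\|\bar v\|_{L^2(\partial B_1)} = 1$, and solving \eqref{eq modified iv}-\eqref{eq modified ineq} with $G \equiv 0$ and a non-negative Radon measure $\bar\mu$. The first step is to extract the structural observation that $\bar v$ is subharmonic, harmonic on $\{\bar v > 0\}$, and that $\bar\mu$ is concentrated on $\{\bar v = 0\}$: the subharmonicity $2\Delta \bar v = \bar\mu \geq 0$ is immediate from \eqref{eq modified ineq} with $G = 0$, while testing \eqref{eq modified out} and integrating by parts yields $\Delta(\bar v^2) = 2|\nabla \bar v|^2$ distributionally, which combined with the formal identity $\Delta(\bar v^2) = 2|\nabla \bar v|^2 + 2\bar v \Delta \bar v$ forces $\bar v \bar\mu = 0$.

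Once this is in place, I would pass to polar coordinates and write $\bar v(r,\theta) = r^\alpha f(\theta)$ for some non-negative $f \in W^{1,2}(\mathbb{S}^1) \subset C^0(\mathbb{S}^1)$. Decomposing $\{f > 0\}$ into its maximal open arcs $\{I_j\}$, the harmonicity of $\bar v$ on each open sector and the homogeneous ansatz yield the ODE $f'' + \alpha^2 f = 0$ on $I_j$ with boundary values $f|_{\partial I_j} = 0$. Positivity of $f$ on the interior then forces $|I_j| = \pi/\alpha$ and $f|_{I_j}(\theta) = A_j \sin(\alpha(\theta - \theta_j))$ for some $A_j > 0$, with $I_j = (\theta_j, \theta_j + \pi/\alpha)$. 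The next step is to extract a transmission identity at each endpoint $\theta_j$ from the inner variation \eqref{eq modified iv}: for $G = 0$, \eqref{eq modified iv} is equivalent to the distributional vanishing of $\mathrm{div}\, T^{\mathrm{SE}}$, where $T^{\mathrm{SE}}_{ij} := |\nabla \bar v|^2 \delta_{ij} - 2 \partial_i \bar v \partial_j \bar v$, so computing the normal traction $T^{\mathrm{SE}}_{ij} \nu_j$ from both sides of the ray $\{\theta = \theta_j\}$ and matching gives the equipartition $|\partial_\nu \bar v|^+ = |\partial_\nu \bar v|^-$ across every such ray.

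From the equipartition I would then argue that $\{f = 0\}$ admits no components of positive length: if an endpoint $\theta_j$ of an arc $I_j$ were adjacent to a non-trivial interval of $\{f = 0\}$, the zero side would contribute $|\partial_\nu \bar v| = 0$ whereas $I_j$ contributes $A_j \alpha > 0$, a contradiction. Hence $\{f = 0\}$ is a finite set of $N$ points, the corresponding $N$ arcs tile $\mathbb{S}^1$, and summing widths yields $N\pi/\alpha = 2\pi$, i.e.\ $\alpha = N/2$. Equipartition across each interior ray forces $A_j = A_{j+1}$, so $A_j \equiv A$; the normalization $\int_{\mathbb{S}^1} f^2\, d\mathcal{H}^1 = 1$ then determines $A = 1/\sqrt{\pi}$, and a rotation fixing $\theta_1 = 0$ produces the claimed form. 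I expect the most delicate point to be the exclusion of $N = 1$, which a priori admits the cracktip tangent $\bar v = \frac{1}{\sqrt{\pi}} r^{1/2}|\sin(\theta/2)|$ of frequency $1/2$ as a solution of all the equipartition and Dirichlet conditions above; ruling it out appears to require invoking either the improved lower frequency bound $\alpha \geq 1$ at free boundary points (the very content of the Lipschitz conclusion of Theorem \ref{thm:holder reg for crit points}.(i)) or a separate planar non-degeneracy argument, yielding $N \in \mathbb{N}_{\geq 2}$.
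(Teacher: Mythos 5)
Your argument tracks the paper's almost step for step: you use the homogeneity from Lemma \ref{lemma prop blowups}, establish harmonicity of $\bar v$ on $\{\bar v>0\}$ and the support of $\bar\mu$ inside $\{\bar v=0\}$, solve the ODE on each maximal positivity arc, and use the transmission/equipartition condition coming from the inner variation \eqref{eq modified iv} to rule out a zero set of positive length and to force all the amplitudes $A_j$ to coincide. The stress-energy-tensor formulation is exactly the device the paper invokes (it cites the one-sided normal derivative condition from \cite[Theorem 2.4]{ACF} / \cite[Proposition 1.4]{maggi2023hierarchy}, but this is the same computation).

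The one place you leave a genuine gap is the exclusion of $N=1$, and I want to flag your proposed way of filling it as problematic. You suggest ruling out the crack-tip $\bar v=\tfrac{1}{\sqrt\pi}r^{1/2}|\sin(\theta/2)|$ by ``invoking the improved lower frequency bound $\alpha\geq 1$'' or Theorem \ref{thm:holder reg for crit points}.(i). But that lower bound is the content of Proposition \ref{p:tangents}, whose induction on dimension uses \emph{this} lemma as its base case, and the Lipschitz regularity of Theorem \ref{thm:holder reg for crit points}.(i) is in turn derived from Proposition \ref{p:tangents}; citing either here would be circular. The paper's exclusion is self-contained within the lemma and uses only material already available at this point: if $N=1$, then $N_{\bar v,0}(0^+)=\tfrac12$, whereas $\bar v$ is smooth (indeed affine to leading order) at every $te_1$ with $t\in(0,1)$, so $N_{\bar v,te_1}(0^+)=1$. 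This contradicts the upper semicontinuity of $x\mapsto N_{\bar v,x}(0^+)$, which follows from Lemma \ref{lemma almgren} and Lemma \ref{lemma prop blowups} and is not downstream of Proposition \ref{p:tangents}. You should replace the appeal to the $\alpha\geq 1$ bound with this direct upper-semicontinuity argument, or with some other non-circular planar non-degeneracy argument, to close the proof.
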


\begin{remark}
    {An immediate consequence of Lemma \ref{lemma:planar tangent classification}, when combined with Lemma \ref{lemma prop blowups} and Lemma \ref{lemma holder}, is that planar solutions of \eqref{eq modified iv}-\eqref{eq modified ineq} {satisfying in addition \eqref{eq:lower freq bound assumption theorem 2.1}} (with $G\in C^2([0,1])$ satisfying \eqref{eq:g assumptions}) are Lipschitz.}
\end{remark}

\begin{proof}
If $\bar v$ is any tangent function to $v$ at $x_0$, thanks to Lemma \ref{lemma prop blowups} we have that $\bar v\Delta \bar v =0$ weakly in $B_1$ (recall that this is \eqref{eq modified out} with $G=0$). {Furthermore, the homogeneity of $\bar v$ and the fact that $v\in W^{1,2}(\Om)$ together imply that $\restr{\bar v}{\partial B_r}$ belongs to $W^{1,2}(\partial B_r)$ for every $0<r<1$ instead of just almost every $r$. Thus by the Morrey-Sobolev embedding in one dimension and the homogeneity, $\bar v$ is continuous on $B_1$ and thus $\{\bar v=0\}$ is closed. As a consequence, $\Delta \bar v = 0$ on the open set $\{\bar v>0\}$ in the classical sense}, and writing the equation in polar coordinates $(r,\theta)$ yields
$$\Delta \bar v = \partial_{rr} \bar v + r^{-1} \partial_r \bar v + r^{-2}\partial_{\theta \theta} \bar v = 0 \qquad \text{on $B_1\setminus \{\bar v=0\}$.}$$
In virtue of Lemma \ref{lemma prop blowups}, we can exploit the radial homogeneity of $\bar v$, {the degree of which we denote by $\alpha>0$}, to conclude that we have
\[
r^{-2} [\alpha^2 \bar v + \partial_{\theta \theta} \bar v] = 0\, ,
\]
in any open, convex cone $\Ccal$ formed from a single connected component of $\R^2\setminus \{\bar v = 0\}$. Solving this ODE in $\theta$, we obtain
\begin{equation}\label{eq cone formula}
\bar v(r,\theta) = r^\alpha[a \sin (\alpha \theta) + b\cos(\alpha \theta)] \quad \text{in $\Ccal$\,,}
\end{equation}
for some $a,b\in \R$. Up to rotation, we may without loss of generality assume that $\bar v=0$ when $\theta = 0$. Thus, $b=0$. Furthermore, observe that the exponent $\alpha$ is the radial homogeneity of $\bar v$, so is \emph{the same} for any such convex cone that is a connected component of $\R^2\setminus \{\bar v = 0\}$. \\

We claim that $\{\bar v = 0\}$ consists of finitely many half-lines emanating from the origin. Indeed, observe that we have already demonstrated the fact that $\bar v$ has radial homogeneity of fixed degree $\alpha$ in each open, convex, connected conical component of $\R^2 \setminus \{\bar v = 0\}$. This in particular implies that the angle of any such conical component must be an integer multiple of $\frac{\pi}{\alpha}$, in order to ensure that $\bar v=0$ on the boundary of the cone. This in turn implies that there are only finitely many such connected components. Their complement will thus consist of finitely many closed, convex cones $K_1,\dots, K_N$, on each of which $\bar v = 0$. {By a standard argument based on the inner variational equation \eqref{eq modified iv} (see e.g. \cite[Theorem 2.4]{ACF} or \cite[Proposition 1.4]{maggi2023hierarchy}), on $\partial K_i\setminus \{0\}$ for each $i$ we have the transmission condition 
$$
|\partial_\nu^- u| = |\partial_\nu^+ u|\,,
$$
for the one-sided normal derivatives of $u$. If $K_i$ had non-empty interior for some $i$, this gives a contradiction, since, coming from the side where $\bar v >0$, the one-sided derivative normal derivative does not vanish (as is seen by direct computation using \eqref{eq cone formula}). So each $K_i$ must have empty interior and be a half-line.} This observation combined with the periodicity of $\sin(\alpha \theta)$ and the fact that $\bar v$ is given by \eqref{eq cone formula} on connected components of $\{\bar v>0\}$ implies that $\alpha = \frac{N}{2}$, with $N\geq 1$. {Additionally, $a=\frac{1}{\sqrt{\pi}}$ since $\bar v\geq 0$ and $\Vert \bar v\Vert_{L^2(\partial B_1)}=1$.} \\

We complete the proof by observing that $\bar v(r,\theta) =\frac{1}{\sqrt{\pi}}r^\frac{1}{2}\sin (\tfrac{\theta}{2})$ cannot arise as a tangent function. This is the case because $N_{\bar v, 0}(0^+) =\frac{1}{2}$ whereas $N_{\bar v, te_1}(0^+)=1$ for any $t\in (0,1)$, simply because $\bar v$ is Lipschitz at any of those points (this can be explicitly verified). This clearly contradicts the upper semicontinuity of $x\mapsto N_{\bar v, x}(0^+)$.\end{proof}

Finally, we prove a sort of unique continuation result for solutions of \eqref{eq modified iv}-\eqref{eq modified ineq} that, roughly speaking, says the free boundary $\{v=0\}$ is Lebesgue negligible if $v$ is non-constant.

\begin{lemma}[Unique continuation]\label{lemma:unique continuation}
Let $v\in W^{1,2}_\loc(\Om;[0,1])$ satisfy \eqref{eq modified iv}-\eqref{eq modified ineq} with $G$ satisfying \eqref{eq:g assumptions} and {\eqref{eq:lower freq bound assumption theorem 2.1}}. Then for any connected component $\Omega'$ of $\Omega$, either $\Lcal^{n+1}( \{v=0\}\cap\Omega')=0$ or $v=0$ $\Lcal^{n+1}$-a.e.~ in $\Omega'$. As a consequence, if $\restr{v}{\Om'}$ is not the zero function, then $\Om' \cap \overline{\{v>0\}}=\Om'$ and the upper frequency bound \eqref{eq upper bound frequency take 2}, doubling estimate \eqref{eq doubling shell take 2}, and the $L^\infty$-bound \eqref{eq linf for rescaled guys} hold on {any $U \cc \Omega'$} with constants independent of $x\in U$.
\end{lemma}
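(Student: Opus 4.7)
The plan is to establish the dichotomy by a connectedness argument. Set $W := \mathrm{int}(\{v=0\})\cap \Omega'$ and $V := \overline{\{v>0\}}\cap \Omega'$, so that $\Omega' = W \sqcup V$ with $W$ open and $V$ relatively closed in $\Omega'$. The Lebesgue density bound \eqref{eq leb dens bound} from Lemma \ref{lemma: uc take 2}, applied at every point of $V$ and combined with the Lebesgue density theorem, gives $\mathcal{L}^{n+1}(\{v=0\}\cap V) = 0$, and hence $\mathcal{L}^{n+1}(\{v=0\}\cap \Omega') = \mathcal{L}^{n+1}(W)$. It therefore suffices to show that $W$ is \emph{also} relatively closed in $\Omega'$: by connectedness this will force $W=\emptyset$ (the first alternative) or $W=\Omega'$ (the second). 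The ``as a consequence'' statement then follows from the first alternative, because a relatively closed subset of $\Omega'$ with vanishing Lebesgue measure must have empty interior, so $\Omega' \cap \overline{\{v>0\}} = \Omega'$ and Lemma \ref{lemma: uc take 2} applies with uniform constants on all of $\Omega'$.

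To prove the closedness of $W$, I would argue by contradiction via a two-step blow-up. Suppose $x_k \in W$ with $x_k \to x_0 \in V \cap \partial W$ and set $\rho_k := \mathrm{dist}(x_k,\partial W) \leq |x_k - x_0| \to 0$. Choose $p_k \in \partial W$ realizing this distance; then $p_k \to x_0$, $v(p_k)=0$ by continuity of $v$, and $p_k \in \overline{\{v>0\}}$ because $p_k \notin W$. Applying Lemma \ref{lemma strong convergence blow-ups} to the rescalings $v_{p_k,\rho_k}$ yields, along a subsequence, strong $W^{1,2}(B_1)$ and locally uniform convergence to a nontrivial limit $\bar w$ satisfying \eqref{eq modified iv}-\eqref{eq modified ineq} with $G=0$. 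Since $B_{\rho_k}(x_k) \subset \{v=0\}$ rescales to $B_1(\xi_k)$ with $\xi_k \in \mathbb{S}^n$, passing to a further subsequence with $\xi_k \to \xi$, the limit satisfies $\bar w \equiv 0$ on $B_1(\xi)$.

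A second blow-up of $\bar w$ at the origin produces the profile that yields the contradiction. The uniform frequency lower bound along $\{p_k\}$ from Lemma \ref{lemma vanishing frequency}, combined with the monotonicity in Corollary \ref{corollary mon averages} (rescaled), forces $H_{\bar w,0}(r) > 0$ for all small $r$, so that $0 \in \overline{\{\bar w>0\}}$. Lemma \ref{lemma prop blowups} then delivers a tangent $\bar{\bar w}$ of $\bar w$ at $0$ that is nontrivial, nonnegative, and radially homogeneous of some degree $N \geq \alpha > 0$. Since $0 \in \partial B_1(\xi)$, rescaling $B_1(\xi)$ about $0$ produces the half-space $\{y\cdot\xi>0\}$, on which $\bar{\bar w}$ must vanish identically. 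In the planar case $n=1$ this directly contradicts Lemma \ref{lemma:planar tangent classification}, whose tangent profiles have only finitely many zero rays on $\mathbb{S}^1$. In general, the contradiction is expected from the inner-variation transmission condition $|\partial_\nu^+\bar{\bar w}|=|\partial_\nu^-\bar{\bar w}|$ across $\partial\{\bar{\bar w}>0\}$ together with the Hopf lemma applied to $\bar{\bar w}$, which is harmonic on $\{\bar{\bar w}>0\}$: at a regular point of the flat boundary $\{y\cdot\xi=0\}$, the one-sided normal derivative from the vanishing side is $0$ while from the positive side it must be strictly signed. The main technical obstacle is making this final step rigorous in dimensions $n+1 \geq 3$, where one must locate a regular free boundary point at which Hopf applies, or alternatively exploit the radial homogeneity to reduce to an eigenfunction problem on $\mathbb{S}^n$ that rules out a nontrivial nonnegative eigenfunction vanishing on a half-sphere.
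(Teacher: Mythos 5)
Your argument takes a genuinely different route from the paper's. The paper never introduces $W=\mathrm{int}(\{v=0\})\cap\Omega'$; it assumes for contradiction that $0<\Lcal^{n+1}(\{v=0\}\cap\Omega')<\Lcal^{n+1}(\Omega')$, invokes Federer's criterion to locate a point of $\partial^e\{v=0\}\cap\Omega'$, and uses the density bound \eqref{eq leb dens bound} to extract a tangent function $\bar v$ with $0<\Lcal^{n+1}(B_1\cap\{\bar v=0\})<\Lcal^{n+1}(B_1)$. It then runs a Federer dimension reduction: pick $y\neq 0$ in $\partial^e\{\bar v>0\}$, use Lemma \ref{lemma prop blowups}(3) to produce a further tangent invariant along $y$, descend one dimension, and terminate with the planar classification of Lemma \ref{lemma:planar tangent classification}. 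Your packaging via the open--closed set $W$ and the density argument on $V=\overline{\{v>0\}}\cap\Omega'$ is correct (after exhausting $\Omega'$ by compactly contained sets, since the constants in Lemma \ref{lemma: uc take 2} are only locally uniform), and your nearest-point blow-up producing a second-level tangent $\bar{\bar w}$ vanishing on a half-space is a nice idea. Note in passing that the first-level limit $\bar w$ is not a tangent function of $v$ (the centers $p_k$ move), hence need not be homogeneous, which is precisely why your second blow-up is required.

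The genuine gap is the one you flag yourself: closing the contradiction when $n+1\geq 3$. Neither of your two suggested routes works as stated. The transmission/Hopf argument needs a point of $\partial\{\bar{\bar w}>0\}$ with an interior tangent ball (or $C^1$ regularity), and the structure of the free boundary near frequency-one points is established only in Section \ref{s:fb-structure} (Proposition \ref{lemma density components}, Theorem \ref{t:reg-set}); that analysis in turn relies on the Lipschitz theory which uses this very unique continuation lemma, so the argument would be circular. Moreover, $\{\bar{\bar w}>0\}$ may not meet $\{y\cdot\xi=0\}$ at all, since $\{\bar{\bar w}=0\}$ could strictly contain the closed half-space, so there may be no ``flat'' free boundary point at which Hopf applies. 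The spherical eigenvalue route is likewise not immediate once $\{\bar{\bar w}>0\}\cap\mathbb{S}^n$ is permitted arbitrarily many components. The cleanest repair is to notice that the half-space information is more than you need: $\bar{\bar w}$ is a nontrivial, radially homogeneous solution of \eqref{eq modified iv}--\eqref{eq modified ineq} with $G=0$ satisfying $0<\Lcal^{n+1}(B_1\cap\{\bar{\bar w}=0\})<\Lcal^{n+1}(B_1)$, which is exactly the input to the paper's dimension reduction. Pick $y\neq 0$ in $\partial^e\{\bar{\bar w}>0\}\cap B_1$, extract a further tangent invariant along $y$ by Lemma \ref{lemma prop blowups}(3), and induct on the dimension down to $n=1$, where Lemma \ref{lemma:planar tangent classification} closes the argument since its profiles vanish only on finitely many rays.
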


\begin{proof}[Proof of Lemma \ref{lemma:unique continuation}]
The validity of \eqref{eq upper bound frequency take 2}, \eqref{eq doubling shell take 2}, and \eqref{eq linf for rescaled guys} on $U\cc \Om'$ if $\restr{v}{\Om'}$ is not the zero function follow immediately from Lemma \ref{lemma: uc take 2} since $\Om' \cap \overline{\{v>0\}}=\Om'$. So we prove that either $\Lcal^{n+1}( \{v=0\}\cap\Omega')=0$ or $v=0$ $\Lcal^{n+1}$-a.e.~ in $\Omega'$. Suppose, for contradiction, that for some connected component $\Om'$ of $\Om$,
\begin{equation}\label{eq uc contra assumption}
0<\mathcal{L}^{n+1}(\Om' \cap \{v=0\})<\mathcal{L}^{n+1}(\Om')\,.
\end{equation}
Then, {since $\Omega'$ is connected,} the perimeter $P(\{v=0\};\Om')$ of $\{v=0\}$ in $\Omega'$ is either infinity or strictly positive; it cannot be zero. Letting
\begin{equation}\notag
\Om' \cap \partial^e \{v=0\} = \{x\in \Om' : x\notin \{v=0\}^\one \cup \{v=0\}^\zero \}
\end{equation}
denote the essential boundary of $\{v=0\}$ relative to $\Om$, where $\{v=0\}^{(i)}$ denote the points in $\{v=0\}$ of Lebesgue density $i$, we claim that it is non-empty. Indeed, if it were empty, then by Federer's criterion for sets of finite perimeter, we must have $P(\{v=0\};\Om')=0$, which is impossible. So there exists $x\in \Om' \cap \pa^e \{v=0\}$. By the containment $\Om' \cap \partial^e \{v=0\}\subset \Om' \cap \partial \{v>0\}$ and \eqref{eq leb dens bound}, we have
\begin{equation}\notag
\limsup_{r\to 0} \frac{\mathcal{L}^{n+1}(\{v=0\} \cap B_r(x))}{\omega_{n+1}r^{n+1}}< 1\,.
\end{equation}
Since the limsup vanishing would imply that $x\in \{v=0\}^\zero$ (against $x\in \partial^e \{v=0\}$) there must exist $r_j\to 0$ and $\beta\in (0,1)$ such that
\begin{equation}\notag
\limsup_{j\to \infty} \frac{\mathcal{L}^{n+1}(\{v=0\} \cap B_{r_j}(x))}{\omega_{n+1}r_j^{n+1}}= \beta \in (0,1)\,;
\end{equation}
by restricting to a further subsequence using Lemma \ref{lemma prop blowups}, we may obtain a tangent function $\bar v$ such that 
\begin{equation}\label{eq bad leb density}
0<\mathcal{L}^{n+1}(B_1 \cap \{\bar v=0\})<\mathcal{L}^{n+1}(B_1)\,.
\end{equation}
We use \eqref{eq bad leb density} to obtain a contradiction, first in two dimensions and then in higher dimensions.

\medskip

\noindent{\it Contradiction in 2D}: The equation \eqref{eq bad leb density} directly contradicts the classification of tangent functions in Lemma \ref{lemma:planar tangent classification}, since it implies that $\bar v$ has Lebesgue non-trivial zero set.

\medskip

\noindent{\it Contradiction in higher dimensions}:
By the same perimeter argument as above, \eqref{eq bad leb density} implies the existence of $y\in B_1 \cap \partial^e \{v>0\}$, and again \eqref{eq leb dens bound} implies the existence of $s_j\to 0$ such that
\begin{equation}\notag
\limsup_{j\to \infty} \frac{\mathcal{L}^{n+1}(\{\bar v =0\} \cap B_{s_j}(y))}{\omega_{n+1}s_j^{n+1}} \in (0,1)\,.
\end{equation}
Up to a further subsequence, we therefore have a non-zero tangent function $w$ to $\bar v$ at $y$ with $\mathcal{L}^{n+1}$-nontrivial zero set. Furthermore, since $N_{\bar v,ty}(0^+)$ is constant for $t\in (0,\infty)$, parts one and three of Lemma \ref{lemma prop blowups} show that $w$ is independent of $y$, {namely, it is translation-invariant in the $y$-direction}. Thus the restriction $w:y^\perp \to \mathbb{R}$ is a homogeneous solution of \eqref{eq modified iv}-\eqref{eq modified ineq} with $G=0$ and $\mu=0$ in $\mathbb{R}^n$, and $\mathcal{L}^n$-nontrivial zero set. By induction, since there is no such solution in $\mathbb{R}^2$, it is impossible in $\mathbb{R}^{n+1}$ and we have a contradiction.
\end{proof}

\subsection{Sharp frequency lower bound and the proof of Theorem \ref{thm:holder reg for crit points}}






Our final step is to improve the initial H\"older regularity to Lipschitz regularity via a blow-up analysis. In this order of ideas, given any $x_0 \in \{v=0\}$, we recall the blowups
\begin{equation}\notag
v_{x_0,r} := \frac{v(x_0 + r \cdot)}{H_{x_0}(r)^{1/2}}.
\end{equation}
from \eqref{e:rescaling} for $r\in (0,\dist(x_0,{\Wbf}))$, where  $H_{x_0}(r)$ is the $L^2$ height function of $v$ centered at $x$ as introduced in \eqref{e:D-H-def}. The next result is a classification of tangent functions which, in particular, completes our regularity analysis.

\begin{proposition}\label{p:tangents} 
Let $\wire = \rn \setminus \Om$ be closed, $G\in C^2([0,1])$ satisfy \eqref{eq:g assumptions}, and $v\in W^{1,2}_\loc(\Om;[0,1])$ satisfy \eqref{eq modified iv}-\eqref{eq modified ineq} {and \eqref{eq:lower freq bound assumption theorem 2.1}}. Then, $N_{v,x_0}(0^+)\geq 1$ for any $x_0\in \{v=0\}$ {such that $v\not\equiv 0$ on the connected component of $\Omega$ containing $x_0$}.  Moreover, {for any sequence $\{v=0\}\ni x_k \to x_0$ with $N_{v,x_0}(0^+) = 1$, any subsequential limit
\begin{equation}\label{e:diagonal-blowp}
    \bar v := \lim_{k\to\infty} \frac{v(x_k + r_k \cdot)}{H_{x_k}(r_k)^{1/2}}
\end{equation}
satisfies}
\begin{equation}\label{e:linear-tangent}
    \bar v(x) = \frac{1}{\sqrt{\omega_{n+1}}} |x\cdot e|\, ,
\end{equation}
for some $e \in \mathbb{S}^{n}$ and where $\omega_{n+1}$ is the Euclidean volume of the unit ball in $\R^{n+1}$. {In particular, \eqref{e:linear-tangent} holds true for any tangent function $\bar v$ to $v$ at $x_0$.}
\end{proposition}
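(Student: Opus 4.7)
My approach is first to classify all homogeneous tangent functions $\bar v$ of $v$ at $x_0$, which simultaneously yields the lower bound $N_{v,x_0}(0^+)\geq 1$ and the explicit form \eqref{e:linear-tangent} in the equality case, and then to upgrade the classification to the diagonal blow-ups of \eqref{e:diagonal-blowp} using the almost-monotonicity of the frequency. By Lemma \ref{lemma prop blowups} any tangent function $\bar v$ at $x_0$ is non-negative, $\alpha$-homogeneous for $\alpha := N_{v,x_0}(0^+)>0$, normalized by $\|\bar v\|_{L^2(\partial B_1)}=1$, and satisfies \eqref{eq modified iv}--\eqref{eq modified ineq} on $\R^{n+1}$ with $G\equiv 0$. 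Combining \eqref{eq modified out} (which gives $\bar v\,\Delta\bar v = 0$ weakly) with \eqref{eq modified ineq} shows that $\bar v$ is harmonic on $\{\bar v>0\}$ while $\Delta\bar v = \bar\mu\geq 0$ is a non-negative measure concentrated on $\{\bar v=0\}$. Restricting to the sphere, $g := \bar v|_{\mathbb{S}^n}$ is a non-negative $H^1(\mathbb{S}^n)$-function which on each connected component of $\{g>0\}$ is a first Dirichlet eigenfunction with eigenvalue $\alpha(\alpha+n-1)$.

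The crux is to show that $\{\bar v>0\}$ has \emph{at least two} connected components. If instead $\Omega := \{\bar v>0\}$ were a single cone, then testing the inner variation \eqref{eq modified iv} against smooth compactly supported vector fields and integrating by parts inside $\Omega$ produces a distributional boundary identity of the form $\int_{\partial\Omega}|\nabla\bar v|^2\langle T,\nu\rangle\,d\mathcal{H}^n=0$, forcing $|\nabla\bar v|$ to vanish on $\partial\Omega$. Combined with $\bar v|_{\partial\Omega}=0$ and harmonicity in $\Omega$, unique continuation then gives $\bar v\equiv 0$ in $\Omega$, contradicting the normalization. The degenerate case $\{g=0\}=\emptyset$ is ruled out separately, since a non-negative harmonic function on $\R^{n+1}$ of positive homogeneous degree must vanish identically by Liouville combined with removability of the possible point singularity at the origin. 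With at least two disjoint components $U_1,U_2\subset \mathbb{S}^n$ each supporting a first eigenfunction with eigenvalue $\alpha(\alpha+n-1)$, the Friedland--Hayman inequality $\alpha_1+\alpha_2\geq 2$ yields $2\alpha\geq 2$, i.e., $\alpha\geq 1$. Equality $\alpha=1$ forces each $U_i$ to be a hemisphere, and since disjoint hemispheres on $\mathbb{S}^n$ are necessarily antipodal, $g=c|x\cdot e|$ on $\mathbb{S}^n$ for some $e\in\mathbb{S}^n$; the transmission condition across the equator equalizes the two constants, and the normalization $\int_{\mathbb{S}^n}(x\cdot e)^2\,d\mathcal{H}^n=\omega_{n+1}$ pins down $\bar v(x)=\omega_{n+1}^{-1/2}|x\cdot e|$.

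For the diagonal blow-up \eqref{e:diagonal-blowp} at $\{v=0\}\ni x_k\to x_0$, the upper semicontinuity of $x\mapsto N_{v,x}(0^+)$ (a consequence of the almost-monotonicity in Lemma \ref{lemma almgren}) combined with the lower bound just proved yields $N_{v,x_k}(0^+)\to 1$; the almost-monotonicity then gives $N_{v,x_k}(r_k s)\to 1$ for every fixed $s>0$. Any subsequential limit $\bar v$ produced by Lemma \ref{lemma strong convergence blow-ups} therefore satisfies the $G=0$ equations on $B_1$, and the strong $W^{1,2}$ convergence together with the continuity of the trace operator transfer the frequency convergence, so that $N_{\bar v,0}(s)\equiv 1$. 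The constancy case in Lemma \ref{lemma almgren} with $G=0$ then makes $\bar v$ $1$-homogeneous, and the classification from the previous paragraph applies to give \eqref{e:linear-tangent}.

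\emph{Main obstacle.} The principal technical step I expect to require care is the rigorous reduction of the inner variation \eqref{eq modified iv} to the boundary vanishing $|\nabla\bar v|^2=0$ on $\partial\{\bar v>0\}$ without any a priori regularity of the free boundary. I would handle this by first working with the smooth approximating superlevel sets $\{\bar v>\delta\}$ in $\Omega$, exploiting the divergence-free stress-energy tensor $|\nabla\bar v|^2 I-2\nabla\bar v\otimes\nabla\bar v$ on these smooth domains, and then passing to the limit $\delta\to 0^+$ using the almost-subharmonicity and doubling tools from Lemmas \ref{lemma alsmot subharmonicity} and \ref{lemma: uc take 2} to control boundary traces.
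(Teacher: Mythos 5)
Your proposal takes a genuinely different route from the paper. The paper proves the lower frequency bound by Federer dimension reduction: it introduces the class $\tau_{n+1}$ of homogeneous blow-ups, shows by compactness that the minimal frequency $m_{n+1}$ is attained, uses removability of the origin plus the minimum principle to show the minimizer cannot have an isolated zero (so there is a whole ray of zeros), blows up at a point of that ray to split off a translation-invariant direction, and then inducts down to the base case $n=1$ handled by the explicit planar classification of Lemma \ref{lemma:planar tangent classification}. You instead pass directly to the sphere, identify $\bar v|_{\mathbb{S}^n}$ as a first Dirichlet eigenfunction on each nodal domain with eigenvalue $\alpha(\alpha+n-1)$, and invoke the Friedland--Hayman inequality for two disjoint spherical domains to conclude $\alpha\geq 1$, with the equality case handing you the hemisphere/linear structure in one shot. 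This is an attractive argument in the spirit of Alt--Caffarelli--Friedman and the optimal partition literature, and your treatment of the diagonal blow-up in the final paragraph (frequency pinching via Lemma \ref{lemma almgren}, upper semicontinuity, and strong $W^{1,2}$ convergence) coincides with the paper's Step~2.

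There is, however, a genuine gap, sitting exactly where you flag the main obstacle: the claim that $\{\bar v>0\}$ has at least two connected components. Your argument that a single component leads to $|\nabla\bar v|\equiv 0$ on $\partial\{\bar v>0\}$ rests on an integration by parts of the stress-energy tensor against the free boundary followed by Cauchy-data unique continuation, and both steps require regularity of $\partial\{\bar v>0\}$ that is not available a priori. Working on the smooth superlevel sets $\{\bar v>\delta\}$ recovers the divergence-free identity in the interior, but passing the boundary trace integrals to the $\delta\to 0^+$ limit, and then concluding $\bar v\equiv 0$ from vanishing Cauchy data on a potentially wild cone boundary (one whose spherical cross-section need not satisfy an interior-ball condition at any point), are not resolved by the doubling and almost-subharmonicity tools you cite. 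This is precisely the circularity that the paper's dimension reduction is designed to sidestep: the paper only ever applies the stress-tensor transmission argument in dimension two, where the blow-up zero set is explicitly a finite union of rays (Lemma \ref{lemma:planar tangent classification}), and the higher-dimensional lower bound follows from the inductive reduction without ever needing a two-component statement. To close the gap you would need to either independently establish partial regularity of $\partial\{\bar v>0\}$ for homogeneous blow-ups---which in this framework is a consequence, not a precursor, of the frequency lower bound---or replace the two-component step with a dimension reduction as the paper does.
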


\begin{proof}[Proof of Proposition \ref{p:tangents}]

The proof is divided into steps.

\medskip

\noindent {\it Step 1.} In this step we carry out a dimension reduction argument to show that $N_{\bar v, 0}(0^+)\geq 1$ in all dimensions for any tangent function $\tilde{v}$.

Let us start by noticing that {any non-zero homogeneous solution $v$ of \eqref{eq modified iv}-\eqref{eq modified ineq} with $G=0$ and $\Omega={\mathbb{R}^{n+1}}$ satisfies $\inf_{\mathbb{R}^{n+1}}N_{v,x}(0^+)=N_{v,0}(0^+)$ and thus satisfies a uniform lower frequency bound on all of space.} Therefore, the class of {non-zero} homogeneous solutions of \eqref{eq modified iv}-\eqref{eq modified ineq} with $G=0$ and $\Omega={\mathbb{R}^{n+1}}$ is closed under taking tangent functions at any point, in virtue of Lemma \ref{lemma prop blowups} {(note that by Lemma \ref{lemma:unique continuation}, any such solution $\bar v$ satisfies $\overline{\{\bar v>0\}}=\mathbb{R}^{n+1}$, so Lemma \ref{lemma prop blowups} holds at every point)}. This class contains, in particular, all tangent functions to $v$. Let us denote this class as $\tau_{n+1}$ and let us define
\begin{equation*}
m_{n+1} = \inf_{\bar v \in \tau_{n+1}}\inf_{x \in B_1} N_{\bar v, x}(0^+)\, ,
\end{equation*}
which, thanks to the closure of $\tau_{n+1}$ with respect to blow-ups and property (1) in Lemma \ref{lemma prop blowups}, can be written as
\begin{equation}\label{eq minimal freq}
m_{n+1} = \inf_{\bar v \in \tau_{n+1}} N_{\bar v, 0}(0^+)\, .
\end{equation}

Our goal is to show, by induction on the dimension $n + 1$, that $m_{n+1}\geq 1$. Let us notice that by Lemma \ref{lemma:planar tangent classification} the base case $n=1$ is already covered. Let us assume now that $n\geq 2$. Suppose that for every dimension $\bar n \leq n$,
\[
m_{\bar{n}} \geq 1.
\]
First of all, we claim that \eqref{eq minimal freq} is attained for some $\bar v \in \tau_{n+1}$. Indeed, if $\bar v_k$ is an infimizing sequence for \eqref{eq minimal freq}, by homogeneity of $\bar v_k$ we have 
\begin{equation}\label{eq bound freq}
N_{\bar v_k, 0}(1) = N_{\bar v_k, 0}(0^+)\to m_{n+1} 
\end{equation}
as $k \to \infty$. In particular, the functions $\tilde v_k = \frac{\bar v_k}{H_{\bar v_k}(1)}$ satisfies the hypotheses of Lemma \ref{lemma compactness cp}, implying that $\tilde v_k $ converges locally strongly in $W^{1,2}$, up to subsequences, to a {non-zero} function $\bar v_0 \in \tau_{n+1}$ such that $N_{\bar v_0, 0}(0^+)= N_{\bar v_0, 0}(1)=m_{n+1}.$


Now take $\bar v_0$ attaining \eqref{eq minimal freq}; we claim that $\bar v_0$ is translation-invariant along some line through the origin. In other words, up to rotation, $\bar v_0(x_1,\dots, x_{n+1}) = w(x_1,\dots, x_{n})$ for an $m_{n+1}$-homogeneous solution $w$ of \eqref{eq modified iv}-\eqref{eq modified ineq} on $\R^{n+1}$. Observe that after proving this, the inductive hypothesis would imply that $ m_{\bar{n}} \geq 1$. Turning into the proof of the claim, let us notice that $0$ cannot be an isolated zero for $\bar v_0$, otherwise $\bar v_0$ would be a continuous function in $B_1$, harmonic in $B_1 \setminus \{0\}$, which implies that $\bar v_0$ is harmonic in $B_1$ yielding a contradiction to the minimum principle- since $\bar v_0(0) = 0$. Hence, we have deduced the existence of a ray of zeros with frequency greater or equal than  $m_{\bar{n}}$ which combined with Lemma \ref{lemma prop blowups} proves the claim.

\medskip

\noindent {\it Step 2.} {We complete the proof by characterizing limiting functions $\bar v$ given by \eqref{e:diagonal-blowp} whenever $N_{v, x_0}(0^+)=1$. Given such a function $\bar v$, we begin by demonstrating that $\bar v$ is still radially 1-homogeneous in this case, despite the varying centers. In light of Lemma \ref{lemma strong convergence blow-ups} and Lemma \ref{lemma almgren}, it suffices to demonstrate that $r\mapsto N_{\bar v, 0}(r)$ is identically equal to 1. Fix $\eps> 0$ arbitrarily. Since $N_{v, x_0}(0^+)=1$, the absolute continuity of $N$ guarantees that there exists $\bar\rho \in (0,\dist(x_0,\partial\Omega))$ such that
\[
    N_{v,x_0}(\rho) \leq 1+ \frac{\eps}{4}\qquad \forall \rho\in (0,\bar\rho]\,.
\]
In particular, when combined with the Lemma \ref{lemma strong convergence blow-ups}, we have
\begin{equation}\label{e:pinched-freq}
    N_{v,x_k}(\bar\rho) \leq 1+\frac{\eps}{2}\,,
\end{equation}
for every $k$ sufficiently large. Now for any given $r>0$, up to taking $k$ even larger if necessary so that $r_k \leq \frac{\bar\rho}{r}$, we further have
\[
    N_{v,x_k}(r_k r) + 1 \leq e^{\tfrac{\kappa (\bar\rho^2- r_k r)}{2}}(N_{v,x_k}(\bar\rho) + 1)\,.
\]
By further decreasing $\bar\rho$ if necessary and combining with \eqref{e:pinched-freq}, we can therefore ensure that
\[
    N_{v,x_k}(r_k r) \leq 1+\eps\,,
\]
for all $k$ sufficiently large. Letting $v_k:=\frac{v(x_k + r_k\cdot)}{H_{x_k}(r_k)^{1/2}}$, we have $N_{v,x_k}(r_k r)=N_{v_k,0}(r)$, and so Lemma \ref{lemma strong convergence blow-ups} guarantees that $N_{\bar v,0}(r) \leq 1+\eps$. Since $\eps>0$ is arbitrary, we deduce that $N_{\bar v,0}(r) \leq 1$.

Now, in Step 1 we verified that $N_{\bar v,0}(0^+) \geq 1$, which, combined with the monotonicity in the $G=0$ case of Lemma \ref{lemma almgren} yields $N_{\bar v,0}(r) \geq 1$. The desired conclusion that $N_{\bar v, 0}\equiv 1$ follows. 

Thus, any such limit $\bar v$ lies in the class $\tau_{n+1}$ and attains $m_{n+1}$ as in \eqref{eq minimal freq}. The argument in Step 1, iterated inductively, in fact implies that up to rotation, $\bar v = \bar w(x_1,x_2)$ for a $1$-homogeneous function $\bar w$ satisfying \eqref{eq modified iv}-\eqref{eq modified ineq} with $G=0$ in $B_1 \subset \R^2$. Hence, by the classification in $\R^2$, we find that $\bar v$ must be a rotation of $L |x_1|$. Finally, since $\Vert \bar v\Vert_{L^2(\partial B_1)}=1$, we have that $L=\frac{1}{\sqrt{\omega_{n+1}}}$.}
\end{proof}

We conclude with the proof of the main theorem of this section.

\begin{proof}[\textit{Proof of Theorem \ref{thm:holder reg for crit points}}]
First of all, by Lemma \ref{lemma:unique continuation}, $\{v=0\}\cap {\Om}'$ is Lebesgue null whenever ${\Om}'\subset \Om$ is a connected component on which $v\not\equiv 0$. So we may as well assume that $\overline{\{v>0\}}\cap \Om=\Om$, since the conclusions are trivial when $v\equiv 0$ on a given connected component of $\Omega$.
To prove item (i), thanks to Proposition \ref{p:tangents}, we note that  $N_{v,x_0}(0^+)\geq 1$ for any $x_0\in \{v=0\}$, thus a direct application of Lemma \ref{lemma holder} implies local Lipschitz continuity for $v$ together with the estimate \eqref{eq uniform lip 2}. {To prove (ii), by the monotonicity of the frequency and the local frequency bound (Lemma \ref{lemma: uc take 2}), it suffices to show that (up to renaming $r_{**}$)
\begin{equation}\label{eq limsup freq}
    \limsup_{|x|\to \infty}N_{v,x}(r_{**})< \infty\,.
\end{equation}
Now since $\nabla v \in L^2$, $r_{**}D_{v,x}(r_{**})$ decays uniformly as $|x|\to \infty$. Furthemore, since $\mathcal{L}^{n+1}(\{v<t\})<\infty$ for all $t\in (0,1)$ (in particular for $t=1/2$), Chebyshev's inequality yields
\begin{equation}\notag
    \int_{B_{r_{**}}(x)}v^2\,dy \geq \mathcal{L}^{n+1}(\{v>1/2\} \cap B_{r_{**}(x)})/4 \to \omega_{n+1}r_{**}^{n+1}/4\qquad \mbox{uniformly as $|x|\to \infty$}
\end{equation}
also. Thus by Fubini's theorem, there exists $c>0$ such that for all large enough $|x|$, $H_{v,x}(r)>c$ for some $r\in (r_{**}/2,r_{**})$. After replacing $r_{**}$ with $r_{**}/2$, these two observations and the monotonicity of $N$ imply \eqref{eq limsup freq}.
}
\end{proof}

\section{Regularity and structure of the free boundary}\label{s:fb-structure}

In this section, we begin our description of the structure of the free boundary $\{u=1\}$ for solutions $u\in W^{1,2}_\loc(\Omega;[0,1])$ of \eqref{eq innervariation}-\eqref{eq:differential inequality} {satisfying an additional lower frequency bound}. In order to carry out our analysis of the free boundary, we crucially rely on the following proposition, which establishes a local separation property for the set $\{v>0\}$ of $v=1-u$, into two components near points of frequency 1.

\begin{proposition}\label{lemma density components}
    Let $v\in W^{1,2}_\loc(\Omega;[0,1])$ be a solution of \eqref{eq modified iv}-\eqref{eq modified ineq} {satisfying \eqref{eq:lower freq bound assumption theorem 2.1}} and suppose that $x_0\in\{v=0\}$. {In addition, suppose that $N_{x_0}(0^+) = 1$ and that there exists $R_0 > 0$ such that for each $y\in B_{R_0}(x_0)\cap \{v=0\}$, $N_y(0^+)=1$.}

Then there exists {$r_0\in (0,\tfrac{R_0}{2})$} (depending on $x_0$) such that $\{v>0\}\cap B_{r_0}(x_0)$ has exactly 2 connected components.
\end{proposition}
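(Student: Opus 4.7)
The proof hinges on the local regularity of $\{v=0\}$ near $x_0$ under the uniform frequency-1 assumption. By Proposition~\ref{p:tangents}, for every $y \in B_{R_0}(x_0) \cap \{v=0\}$, every tangent function of $v$ at $y$ has the linear form $\bar v_y(x) = \tfrac{1}{\sqrt{\omega_{n+1}}}|x \cdot e_y|$ for some $e_y \in \mathbb{S}^n$, and in particular $\{\bar v_y > 0\}$ consists of exactly two open half-spaces. The strategy is to promote this pointwise flatness to a uniform local Lipschitz hypersurface structure for $\{v=0\}$ near $x_0$, from which the two-component statement follows at once.

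Concretely, let $e = e_{x_0} \in \mathbb{S}^n$ be a tangent direction of $v$ at $x_0$. I would first show that there exists $r_0 \in (0, R_0)$ such that for every $y \in B_{r_0}(x_0) \cap \{v=0\}$ the tangent direction $e_y$ is within a small prescribed angle of $\pm e$, and that $y \mapsto e_y$ is continuous (modulo sign). The argument proceeds by contradiction and compactness: if $y_k \to x_0$ in $\{v=0\}$ with $e_{y_k}$ failing to accumulate in $\{\pm e\}$, one extracts, via the blow-up compactness of Lemma~\ref{lemma strong convergence blow-ups}, a tangent function of $v$ at $x_0$ whose zero set is not $e^\perp$, contradicting Proposition~\ref{p:tangents}.

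Combining the uniform closeness of $e_y$ to $\pm e$ with the uniform Lipschitz bound of Theorem~\ref{thm:holder reg for crit points} and a standard implicit-function or Reifenberg-flat type argument then shows that $\{v=0\} \cap B_{r_0}(x_0)$ is a Lipschitz graph over $e^\perp + x_0$. Such a graph separates $B_{r_0}(x_0)$ into exactly two open connected components $U_+$ and $U_-$. Since $\{v>0\} \cap B_{r_0}(x_0) \subseteq U_+ \cup U_-$, and neither $U_\pm$ is contained in $\{v=0\}$ thanks to the measure-density estimate \eqref{eq leb dens bound} combined with the unique continuation of Lemma~\ref{lemma:unique continuation}, we conclude that $\{v>0\} \cap B_{r_0}(x_0)$ has exactly two connected components.

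The main obstacle is the uniqueness and continuity of the tangent directions $e_y$ carried out in the second paragraph. This is the familiar uniqueness-of-tangents problem in free boundary theory, and in this flat-tangent setting likely requires an epiperimetric inequality or a Monneau-type monotonicity formula at frequency $1$, in the spirit of \cite{CL2007, CL2008, TT}. Everything else in the plan is essentially a packaging of results already established in Sections 3, once that regularity of the free boundary is in hand.
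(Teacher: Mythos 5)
Your plan is structurally close to the paper's in spirit (blow-up compactness $\Rightarrow$ flatness of $\{v=0\}$ $\Rightarrow$ two-sided separation), but the key intermediate step you propose is strictly stronger than what the paper proves, and you have correctly identified that it requires machinery the paper does not develop.

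The gap is in your second paragraph. You want a fixed $e\in\mathbb{S}^n$ and a radius $r_0$ so that every tangent direction $e_y$ for $y\in B_{r_0}(x_0)\cap\{v=0\}$ lies close to $\pm e$, with $y\mapsto e_y$ continuous. But to even write ``$e_y$'' you are already assuming that the tangent function at $y$ is \emph{unique}, and the contradiction you propose (``extract a tangent function of $v$ at $x_0$ whose zero set is not $e^\perp$, contradicting Proposition~\ref{p:tangents}'') would only be a contradiction if the tangent at $x_0$ were unique. Proposition~\ref{p:tangents} classifies the \emph{form} of any tangent at a frequency-$1$ point — linear with some direction — but says nothing about the direction being independent of the sequence of scales. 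Different subsequences $r_j\downarrow 0$ could a priori give different $e$'s, and the diagonal blow-ups along $y_k\to x_0$ at scales $r_k$ need not reproduce the direction you fixed. You flag this honestly in your last paragraph (epiperimetric/Monneau), but that is a substantial additional piece of analysis that the paper never carries out.

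The paper sidesteps uniqueness entirely. Step~1 of its proof establishes only $(\delta,r_0)$-\emph{Reifenberg flatness}: for each center $x$ and each small scale $r$ there is \emph{some} plane $L_{x,r}$ that is $\delta r$-close to $\{v=0\}$ in $B_r(x)$, with $L_{x,r}$ allowed to rotate as $r\to 0$. This weaker statement follows directly from the compactness of diagonal blow-ups (Lemma~\ref{lemma strong convergence blow-ups}) and the classification in Proposition~\ref{p:tangents}, with no uniqueness needed. Step~2 then runs the standard Reifenberg topological-disk covering argument (covering by dyadically shrinking balls and continuously extending a $\pm 1$ sign function across overlaps) to conclude that $\{v>0\}\cap B_{r_0}(x_0)$ has exactly two components. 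So the correct fix for your plan is not to try to prove that $\{v=0\}$ is a Lipschitz graph (which would essentially be a much stronger free-boundary regularity theorem), but to aim for Reifenberg flatness only and replace the implicit-function-theorem separation by the Reifenberg separation argument, as in \cite[Proposition 5.4]{TT}.
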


\begin{remark}
    The additional requirement that $\{y: N_y(0^+) = 1\}$ is relatively open in $B_{R_0}(x_0)\cap \{v=0\}$ in Proposition \ref{lemma density components} will shortly become superfluous; see Corollary \ref{corollary decomp fb}.
\end{remark}

\begin{proof}
    The argument follows the same reasoning as that in the proof of \cite[Proposition 5.4]{TT}. We provide an outline here for the purpose of clarity, and refer the reader to \cite{TT} for more details.

\medskip

    \noindent\textit{Step 1.} We claim that for any $\delta \in (0,1)$, there exists $r_0=r_0(x_0,\delta)\in (0,\tfrac{R_0}{2})$ such that $\{v=0\}\cap B_{R_0/2}(x_0)$ is $(\delta,r_0)$-Reifenberg flat, namely for each $x\in \{v=0\}\cap B_{R_0/2}(x_0)$ and $r\in (0,r_0]$, there exists an $n$-dimensional linear subspace $L_{x,r}$ such that
    \begin{equation}\label{e:RF}
        d_H(\{v=0\}\cap B_{r}(x), (x + L_{x,r})\cap B_r(x)) \leq \delta r\,,
    \end{equation}
    where $d_H$ denotes the Hausdorff distance. To see that this claim holds, we argue by contradiction. Namely, suppose that there exists $\delta> 0$ such that for some sequence $r_k \downarrow 0$ and $\{v=0\}\cap B_{R_0/2}(x_0) \ni x_k \to \bar x$ with $N_{\bar x}(0^+)=1$, the rescalings
    \[
        v_{x_k,r_k}(x) := \frac{v(x_k + r_k x)}{H_{x_k}(r_k)^{1/2}}
    \]
    satisfy
    \begin{equation}\label{e:nonplanar-zero-set}
        d_H(\{v_{x_k,r_k}=0\}\cap B_1, L\cap B_1) > \delta\,,
    \end{equation}
    for any $n$-dimensional linear subspace $L$. Applying Lemma \ref{lemma strong convergence blow-ups} and Lemma \ref{p:tangents} and recalling that $N_{\bar x}(0^+)=1$ in light of the hypothesis, we conclude that $v_{x_k,r_k} \to \bar v$ in $W^{1,2}(B_1)$ and locally uniformly, where $v(x) = \tfrac{1}{\omega_{n+1}}|x\cdot e|$ for some $e\in \Sbb^n$. In particular, $\{\bar v=0\} = L_0 \cap B_1$ for some $n$-dimensional linear subspace $L_0$. This implies that
    \begin{equation}\label{e:zeros-conv-d_H}
        d_H(\{v_{x_k,r_k}=0\}\cap B_1, L_0\cap B_1) \to 0\,.
    \end{equation}
    Indeed, this can be proven directly from the definition; one inclusion is a mere consequence of the uniform convergence, while the other is due to the fact that there must be zeros of $v_{x_k,r_k}$ converging to each zero of $\bar v$, in light of the minimum principle for harmonic functions. {More precisely, if there is a zero $\bar x \in L_0 \cap B_1$ of $\bar v$ which has a neighborhood around it containing no zeros of $v_{x_k,r_k}$ for all $k$ sufficiently large, then $v$ has an isolated zero at $\bar x$, which violates the minimum principle.} The validity of \eqref{e:zeros-conv-d_H}, however, directly contradicts \eqref{e:nonplanar-zero-set}. 

\medskip

\noindent\textit{Step 2.} We may now exploit the local Reifenberg flatness of $\{v=0\}$ around $x_0$ to deduce the local separation property as follows. By Step 1, given a fixed absolute $\delta \in (0,\tfrac{1}{4})$, there exists a linear $n$-dimensional subspace $L_{x_0,r_0}$ such that \eqref{e:RF} holds with $x=x_0$ and $r=r_0(x_0,\delta)$. Thus, letting $B_0^\pm$ denote the two connected components of $B_{r_0}(x_0)\setminus B_{\delta r_0}(x_0+L_{x_0,r_0})$, {where the latter denotes the open neighborhood of radius $\delta r_0$ around the affine subspace $x_0+L_{x_0,r_0}$}, there exist two connected components $D^\pm$ of $\{v>0\}\cap B_{r_0}(x_0)$ such that $B_0^+ \subset D^+$ and $B_0^-\subset D^-$. Define a function $\epsilon: B_0^+\cup B_0^- \to \{+1,-1\}$ by
    \[
        \epsilon =
        \begin{cases}
            +1 & \text{in $B_0^+$} \\
            -1 & \text{in $B_0^-$}\,.
        \end{cases}
    \]

    We may now cover $B_{\delta r_0}(x_0+L_{x_0,r_0})$ by a finite number of balls $B_{r_0/2}(x_i)$, $i=1,\dots N$, with $x_i \in \{v=0\}$, and apply the conclusion of Step 1 to each of these balls. Proceeding as above and exploiting overlaps, this implies that
    \[
        \bigcup_{i=1}^N B_{r_0/2}(x_i)\setminus B_{\delta r_0/2}(x_i+L_{x_i,r_0/2})
    \]
    consists of two mutually disjoint connected components $B_1^+$ and $B_1^-$, which are respectively contained in $D^+$ and $D^-$. Moreover, we may continuously extend $\epsilon$ to $B_0^+\cup B_0^-\cup B_1^+\cup B_1^-$. We may now proceed iteratively, using balls of radius $\frac{r_0}{2^k}$ at the $k$-th stage of the iteration, at each stage extending $\epsilon$ continuously to the pair of mutually disjoint connected components $\bigcup_{j=0}^k B_j^+ \cup \bigcup_{j=0}^k B_j^-$ formed at each stage. A final application of the Reifenberg property at the nearest point in $\{v=0\}\cap B_{r_0}(x_0)$ to an arbitrary given point in $\{v>0\}\cap B_{r_0}(x_0)$, at a scale comparable to the distance between these two points, guarantees that $\epsilon$ extends continuously to the entirety of $\{v>0\}\cap B_{r_0}(x_0)$; the conclusion follows (see \cite{TT} for more details).
\end{proof}

We now characterize points $x_0\in \{v=0\}$ with $N_{v,x_0}(0^+) >1$.

\begin{proposition}\label{p: freq gap}
Let $v\in W^{1,2}_\loc(\Omega)$ be a solution of \eqref{eq modified iv}-\eqref{eq modified ineq} {satisfying \eqref{eq:lower freq bound assumption theorem 2.1}}. Suppose that $x_0\in \partial \{v>0\}\subset \Omega$ and that $N_{v,x_0}(0^+)> 1$. Then, any tangent function $\bar v$ at $x_0$ satisfies the following dichotomy. Either: 
\begin{itemize}
\item[(1)] there exists $e_0\in \{\bar v=0\}\cap \partial B_1$ with $N_{\bar v, e_0}(0^+) \geq \frac{3}{2}$, or
\item[(2)] $\bar v = |h|$ where $h$ is a homogeneous harmonic polynomial of degree at least 2.
\end{itemize}
In particular, we have $N_{v,x_0}(0^+)\geq \frac{3}{2}$.
\end{proposition}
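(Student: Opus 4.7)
The statement reduces to a claim about a tangent function: by Lemma \ref{lemma prop blowups}, any such $\bar v$ is $\alpha$-homogeneous with $\alpha = N_{v, x_0}(0^+) > 1$ and solves \eqref{eq modified iv}-\eqref{eq modified ineq} on $\R^{n+1}$ with $G \equiv 0$, so the task is to establish the dichotomy for $\bar v$. The ``in particular'' clause then follows automatically: in case (1), $\alpha$-homogeneity of $\bar v$ makes $t \mapsto N_{\bar v, t e_0}(0^+)$ constant on $(0, \infty)$, and upper-semicontinuity of $y \mapsto N_{\bar v, y}(0^+)$ at the origin (via the almost-monotonicity in Lemma \ref{lemma almgren}) forces $N_{\bar v, 0}(0^+) \geq 3/2$; in case (2), $\alpha$ equals the degree of $h$, so $\alpha \geq 2$.

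To prove the dichotomy I argue by induction on $n+1$, the base case $n+1=2$ being Lemma \ref{lemma:planar tangent classification}, which shows that the set of possible frequencies is $\{N/2 : N \in \N_{\geq 2}\}$. Assume (1) fails, i.e.\ $N_{\bar v, e_0}(0^+) < 3/2$ at every $e_0 \in \{\bar v = 0\} \cap \partial B_1$. For such an $e_0$, part (3) of Lemma \ref{lemma prop blowups} guarantees that any tangent to $\bar v$ at $e_0$ is translation-invariant along $e_0$, so after quotienting by that line it becomes a homogeneous solution of the same system in $\R^n$. The inductive hypothesis applied to this reduced tangent, together with Proposition \ref{p:tangents} which rules out frequencies below $1$, forces $N_{\bar v, e_0}(0^+) = 1$. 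By the constancy of $N_{\bar v, \cdot}(0^+)$ along rays from the origin, $N_{\bar v, y}(0^+) = 1$ for every $y \in \{\bar v = 0\} \setminus \{0\}$.

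I will then invoke Proposition \ref{lemma density components} at every such $y$ to produce, locally around each, an exact 2-component decomposition of $\{\bar v > 0\}$ together with (from the proof of that proposition) Reifenberg flatness of the zero set, so $\{\bar v = 0\} \setminus \{0\}$ is locally a topological $n$-manifold. I then globalize this local 2-sidedness to a consistent $\pm$-labeling of the connected components of $\{\bar v > 0\}$ away from the origin, defining $h := \bar v \chi_{U^+} - \bar v \chi_{U^-}$. The inner-variation transmission identity $|\partial_\nu^+ \bar v| = |\partial_\nu^- \bar v|$ across the regular part of the zero set, combined with the harmonicity of $\bar v$ on $\{\bar v > 0\}$, shows that $h$ is harmonic on $\R^{n+1} \setminus \{0\}$. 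Since $h$ is $\alpha$-homogeneous with $\alpha > 1$, it has controlled growth at the origin and a standard removable-singularity argument extends $h$ to an $\alpha$-homogeneous harmonic function on $\R^{n+1}$, which must be a harmonic polynomial. Thus $\alpha \in \N$, and because $\alpha > 1$ we obtain $\alpha \geq 2$, yielding (2) with $\bar v = |h|$.

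\textbf{Main obstacle.} The delicate step is the global unfolding: extracting a single-valued consistent choice of sign on $\{\bar v > 0\}$ from the local 2-component structure. This is a monodromy-type issue — one must check that transporting the sign label around any loop in $\{\bar v > 0\} \cup (\{\bar v = 0\} \setminus \{0\})$ returns the same sign. The plan is to exploit the local 2-sided splitting provided by Proposition \ref{lemma density components} together with the continuity of this splitting in the base point, so that along any continuous path the sign can only change by an even number of crossings of the smooth codimension-one zero set; this is precisely the step that can be obstructed (as in odd-prong planar junctions, which is exactly why the 2D base case must be handled by the explicit classification rather than by unfolding).
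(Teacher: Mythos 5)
Your skeleton matches the paper's: reduce to the tangent function (Lemma \ref{lemma prop blowups}), induct on $n$ with Lemma \ref{lemma:planar tangent classification} as the base case, dimension-reduce at a boundary point of frequency $>1$, and in the remaining case write $\bar v = |h|$ for a harmonic polynomial. You also correctly locate the crux: producing a globally consistent $\pm$-labeling of the connected components of $\{\bar v > 0\}$. But the proposal leaves that crux unresolved. The assertion that ``along any continuous path the sign can only change by an even number of crossings'' is not an argument and, stated baldly, is not even clearly true: a priori three or more nodal domains could meet along a common codimension-two set, and a loop around that set would have odd parity. This is exactly what the paper's Lemma \ref{l:harmonic-polyn} supplies. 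There the components of $\{\bar v>0\}\cap \partial B_1$ are shown to form a \emph{bipartite} graph, and the nontrivial step --- the claim \eqref{eq:no common boundary with U and V} --- uses the Jordan--Brouwer separation theorem on $\SS^n$ (for $n\geq 2$) to rule out two neighbors of a given component sharing a common boundary with each other, which would produce an odd cycle. Jordan--Brouwer fails on $\SS^1$, which is precisely why your observation about odd-prong planar junctions is correct and why $n=1$ must be handled by the explicit planar classification. That two-colorability argument is the missing content of your proof, not a routine globalization.

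A secondary gap: once the labeling exists, the paper obtains harmonicity of $h$ not directly from the transmission condition as you propose, but via the odd-reflection Lemma \ref{l:reflection}, proved by comparing the Radon measures $\mu_1,\mu_2$ generated by the one-sided Laplacians through the frequency-$1$ blow-up classification (Proposition \ref{p:tangents}) and Lebesgue--Besicovitch differentiation. Your route through $|\partial_\nu^+\bar v|=|\partial_\nu^-\bar v|$ is a plausible alternative sketch, but it still requires justifying that matching one-sided normal derivatives across the (now $C^{1,\alpha}$) interface suffices to glue the two harmonic pieces into a weakly, hence strongly, harmonic $h$. In short: right architecture and correctly identified obstacle, but the obstacle itself --- bipartiteness via Jordan--Brouwer and the reflection lemma --- is what the paper proves and what your proposal still owes.
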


{The proof of Proposition \ref{p: freq gap} follows a very similar line of reasoning to that of \cite[Proof of Lemma 4.2]{SoaveTerracini}; however, we provide a proof here for clarity and due to the fact that we learned of the result \cite[Lemma 4.2]{SoaveTerracini} after this article was completed.} In order to prove Proposition \ref{p: freq gap}, we require the following key characterization of radially homogeneous minimizers of our variational problem.

\begin{lemma}\label{l:harmonic-polyn}
Suppose that $v$, $x_0$ and $\bar v$ are as in Proposition \ref{p: freq gap}. Moreover, suppose that $n \geq 2$ and that $N_{\bar v, e}(0^+)=1$ for every $e\in \{\bar v=0\}\cap \partial B_1$. Then $\bar v = |h|$ for a harmonic polynomial $h$.
\end{lemma}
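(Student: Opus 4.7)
The approach is to construct a harmonic polynomial $h$ on $\R^{n+1}$ such that $|h|=\bar v$, by odd reflection of $\bar v$ across its zero set $Z:=\{\bar v=0\}$ and using homogeneity to identify the resulting harmonic function as a polynomial.

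First, I will exploit the radial $\alpha$-homogeneity of $\bar v$ (with $\alpha := N_{v,x_0}(0^+) > 1$) to propagate the frequency-$1$ hypothesis from $Z \cap \partial B_1$ to all of $Z \setminus \{0\}$: a direct scaling computation gives $N_{\bar v, te}(r) = N_{\bar v, e}(r/t)$ for every $t > 0$, hence $N_{\bar v, y}(0^+) = 1$ for every $y \in Z \setminus \{0\}$. Consequently, Proposition \ref{lemma density components} applies at every such $y$, yielding $r_y > 0$ and exactly two connected components $D_y^+, D_y^-$ of $\{\bar v > 0\} \cap B_{r_y}(y)$; and Proposition \ref{p:tangents} forces every tangent to $\bar v$ at $y$ to be of the form $\tfrac{1}{\sqrt{\omega_{n+1}}}|x \cdot \nu|$ for some $\nu \in \SS^n$.

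Next, I will argue that $Z \setminus \{0\}$ is a smooth $n$-dimensional hypersurface. The linear tangents at every zero point, combined with the Reifenberg flatness established in Step 1 of the proof of Proposition \ref{lemma density components} and the harmonicity of $\bar v$ on each $D_y^\pm$, allow one to apply standard free boundary regularity techniques---analogous to those used in Section \ref{s:fb-structure} for the regular part $\Rcal(u)$ of the free boundary---to deduce that $Z \setminus \{0\}$ is locally a real analytic $n$-dimensional manifold. On this smooth hypersurface, the inner variational equation \eqref{eq modified iv} yields the transmission condition $|\partial_\nu^+ \bar v| = |\partial_\nu^- \bar v|$ (cf.~ the proof of Lemma \ref{lemma:planar tangent classification}).

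Finally, I will perform the odd reflection. For each $y \in Z \setminus \{0\}$, define $h_y$ in $B_{r_y}(y)$ by $h_y = \bar v$ on $D_y^+$, $h_y = -\bar v$ on $D_y^-$, and $h_y = 0$ on $Z$; by the transmission condition together with the smoothness of $Z$, $h_y$ is $C^1$ across $Z$ and hence harmonic in $B_{r_y}(y)$. I then glue these local definitions to a function $h$ on $\R^{n+1}\setminus \{0\}$ by a global $2$-coloring of the components of $\R^{n+1}\setminus Z$, assigning opposite signs to adjacent components; the existence of this coloring is where the hypothesis $n\geq 2$ enters, as it guarantees $\pi_1(\R^{n+1}\setminus \{0\}) = 0$, so that every loop in $\R^{n+1}\setminus \{0\}$ is contractible and, by a standard transversality argument against the smooth hypersurface $Z$, intersects $Z$ an even number of times. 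The Lipschitz bound from Theorem \ref{thm:holder reg for crit points} makes $h$ bounded near $0$, so removable singularity (valid since $n+1 \geq 3$) yields a globally defined harmonic function $h:\R^{n+1}\to \R$. Since $h$ is $\alpha$-homogeneous, its restriction to $\partial B_1$ is an eigenfunction of $-\Delta_{\SS^n}$ with eigenvalue $\alpha(\alpha+n-1)$, which forces $\alpha$ to be a non-negative integer (necessarily $\geq 2$) and $h$ to be a harmonic polynomial of degree $\alpha$, giving $\bar v=|h|$. The main technical obstacle is carrying out the free boundary regularity step and verifying the global topological consistency of the $2$-coloring.
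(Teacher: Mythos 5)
Your proof is correct and follows the same overall skeleton as the paper — propagate $N=1$ to all of $\{\bar v=0\}\setminus\{0\}$ by homogeneity, establish smoothness of the nodal set via the transmission condition and implicit function theorem, perform a local odd reflection, and patch to a global $2$-coloring yielding a harmonic, homogeneous function $h$ with $|h|=\bar v$ — but the key topological step is carried out by a genuinely different route. The paper constructs the $2$-coloring by exhaustion on $\partial B_1$: starting from one nodal component of $\{\bar v>0\}\cap\partial B_1$, it colors successive ``generations'' of neighboring components and shows, via the Jordan–Brouwer separation theorem on $\SS^n$ (which fails on $\SS^1$, hence the restriction $n\geq 2$), that no two same-colored components of the same generation share a boundary face, so the inductive coloring is consistent. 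You instead pass directly to $\R^{n+1}\setminus\{0\}$, observe that it is simply connected precisely when $n\geq 2$, and define the sign of $h$ at a point by the parity of the number of transversal intersections of any path from a fixed base point with the smooth, properly embedded nodal hypersurface; well-definedness follows from the $\Z/2$ intersection pairing on $H_1(\R^{n+1}\setminus\{0\};\Z/2)=0$. This is a top-down argument where the paper's is bottom-up; yours is more conceptually transparent, while the paper's avoids invoking intersection theory explicitly and stays on the compact sphere. Both ultimately use the same topological input (triviality of the first $\Z/2$-homology of $\SS^n$, $n\geq 2$), and both correctly see that this is exactly where $n\geq 2$ is needed. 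Two small cosmetic remarks: the removable singularity step works in every dimension $n+1\geq 2$ (not only $n+1\geq 3$) since bounded harmonic functions have removable point singularities; and the conclusion that $\alpha$ is a non-negative integer can be obtained slightly more simply from the real analyticity of $h$ near the origin plus homogeneity, without appealing to the spherical spectrum.
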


We remark that tangent functions of the type (2) in Proposition \ref{p: freq gap} do indeed exist. In \cite{lewy1977mininum} it is shown that homogeneous harmonic polynomials of even degree must have at least three nodal domains and, moreover, that for every $k$ even there exist a harmonic polynomial of degree $k$ with exactly 3 nodal domains in $\SS^2$. Similarly, Lewy showed that for any $k$ odd there exists a polynomial of degree $k$ with exactly 2 nodal domains in $\SS^2$ (see \cite[Figure 1]{badger2011harmonic} for an explicit example of the latter).

Our proof of Lemma \ref{l:harmonic-polyn} relies on the following reflection property for $v$ locally around points with frequency 1, which is useful in its own right.

\begin{lemma}\label{l:reflection}
Suppose that $\Wbf$, $G$ and $v$ are as in Lemma \ref{thm:inter reg for crit points}. In addition, suppose that for some $B_{\rho_0}(x_0) \subset \Omega$ centered at a point $x_0\in \Rcal(u)$, there are exactly two connected components $B^\pm$ of $\{v>0\}\cap B_{\rho_0}(x_0)$ and suppose that $N_{v,y}(0^+) = 1$ for every $y\in \{v=0\}\cap B_{\rho_0}(x_0)$. Then the function $\tilde v := v\mathbf{1}_{\overline{B}^+} - v\mathbf{1}_{B^-}$ is a weak solution of
\begin{equation}\label{e: reflection}
\Delta \tilde v = \frac{1}{2}\tilde H(\tilde v) \mbox{  in  $B_{\rho_0}(x_0)$}\,,
\end{equation}
where $\tilde H$ is the odd reflection of $G'$, i.e.,
\begin{equation}\label{eq odd reflection}
\tilde H(t) = 
\begin{cases}
	G'(t), \mbox{ if $t\in[0,1]$},\\
	-G'(-t), \mbox{if $t \in [-1,0)$}\,.
\end{cases}
\end{equation}
\end{lemma}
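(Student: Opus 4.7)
The strategy is to combine the classical PDE $2\Delta v = G'(v)$ that $v$ satisfies inside each of $B^\pm$ with a transmission condition across the free boundary $\{v=0\}\cap B_{\rho_0}(x_0)$ extracted from the frequency-$1$ hypothesis, and then to pass to the limit in a cutoff approximation that isolates the free boundary.

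By Theorem \ref{thm:holder reg for crit points}, $v \in \mathrm{Lip}(B_{\rho_0}(x_0))$, and since $v \equiv 0$ on the common boundary $\{v=0\} \cap B_{\rho_0}(x_0)$ of $B^+$ and $B^-$, the odd reflection $\tilde v$ is itself Lipschitz on $B_{\rho_0}(x_0)$. From the argument in the proof of Lemma \ref{thm:inter reg for crit points}, the measure $\mu$ in \eqref{eq modified ineq} vanishes on $\{v > 0\}$, so elliptic regularity yields $v \in C^\infty(B^\pm)$ with $2\Delta v = G'(v)$ classically on each side; since $\tilde H(t) = -G'(-t)$ for $t<0$, this translates into $2\Delta \tilde v = \tilde H(\tilde v)$ classically on each of $B^\pm$. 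Meanwhile, the hypothesis $N_{v,y}(0^+)=1$ at every $y \in \{v=0\}\cap B_{\rho_0}(x_0)$ together with Proposition \ref{p:tangents} forces every tangent function at such a $y$ to be of the form $\tfrac{1}{\sqrt{\omega_{n+1}}}|x\cdot e_y|$, so the one-sided limits of $|\nabla v|$ from $B^+$ and from $B^-$ at $y$ coincide. This is precisely the weak transmission condition underlying \eqref{ace modified} and is what makes the odd reflection $\tilde v$ a weak solution across $\{v=0\}$.

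To implement this, I would test with the cutoff $\eta_\delta(x):=\eta(v(x)/\delta)$, where $\eta\in C^\infty(\mathbb{R};[0,1])$ vanishes on $(-\infty,1]$ and equals $1$ on $[2,\infty)$. For arbitrary $\varphi \in C_c^\infty(B_{\rho_0}(x_0))$, the Lipschitz function $\varphi\eta_\delta$ is supported in $\{v\geq \delta\}\subset B^+\cup B^-$ and is (after mollification) an admissible test function for the classical equation on each side. Summing the $B^+$ and $B^-$ contributions and using $\nabla \tilde v = \pm \nabla v$ and $\tilde H(\tilde v) = \pm G'(v)$ on $B^\pm$, respectively, yields
\[
\int_{B_{\rho_0}(x_0)} \nabla \tilde v \cdot \nabla(\varphi \eta_\delta)\,dx + \frac{1}{2}\int_{B_{\rho_0}(x_0)} \tilde H(\tilde v)\,\varphi \eta_\delta\,dx = 0.
\]
As $\delta \downarrow 0$, dominated convergence reduces the proof to the vanishing of $\int \varphi\,\nabla \tilde v \cdot \nabla \eta_\delta\,dx$ in the limit. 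Using $\nabla \eta_\delta = \delta^{-1}\eta'(v/\delta)\nabla v$, the identity $\nabla \tilde v \cdot \nabla v = \pm |\nabla v|^2$ on $B^\pm$, and the coarea formula, this integral becomes
\[
\int_0^\infty \eta'(s)\left[\int_{\{v=\delta s\}\cap B^+} - \int_{\{v=\delta s\}\cap B^-}\right] \varphi\,|\nabla v|\,d\mathcal{H}^n\,ds,
\]
and the bracketed difference tends to $0$ as $\delta \to 0$ by the matching of one-sided gradients from the previous paragraph.

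\textbf{The main obstacle} is to turn this last display into a rigorous limit: one needs uniform control on the $\mathcal{H}^n$-measure of the level sets $\{v=\delta s\}\cap B^\pm$ in $\delta$ and $s$, together with their convergence to $\{v=0\}\cap B_{\rho_0}(x_0)$ and the uniform convergence of $|\nabla v|$ on each side to the common boundary value, in order to apply dominated convergence. This will be obtained by combining the local separation and Reifenberg flatness of $\{v=0\}$ from Proposition \ref{lemma density components} with the linear, non-degenerate profile of $v$ along the normal direction supplied by Proposition \ref{p:tangents}, which together produce a uniform graph parametrization of $\{v=\delta s\}\cap B^\pm$ over (a small piece of) $\{v=0\}\cap B_{\rho_0}(x_0)$.
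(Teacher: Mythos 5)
Your strategy has the right intuition---the odd reflection works because the one-sided normal derivatives of $v$ match across $\{v=0\}$---but the implementation has a genuine gap that is circular with what this lemma is used to prove. The difficulty is in the final step: you need
\[
\int_{\{v=\delta s\}\cap B^+} \varphi\,|\nabla v|\,d\mathcal{H}^n - \int_{\{v=\delta s\}\cap B^-} \varphi\,|\nabla v|\,d\mathcal{H}^n \;\longrightarrow\; 0
\]
uniformly enough in $s$ as $\delta\to 0$. This requires (a) convergence of the level surfaces $\{v=\delta s\}\cap B^\pm$ to $\{v=0\}$ as rectifiable varifolds with uniformly bounded mass, and (b) convergence of $|\nabla v|$ restricted to these level sets to a common trace on $\{v=0\}$ from both sides. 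Neither is available at this point in the paper. The blow-up classification in Proposition \ref{p:tangents} is a pointwise, scale-by-scale statement: it tells you $v_{y,r}\to L|x\cdot e_y|$ along subsequences as $r\to 0$ for each fixed $y$, but it gives no uniformity in $y$ and no information about the trace of $\nabla v$ along approximate level sets. Proposition \ref{lemma density components} gives Reifenberg flatness of $\{v=0\}$, which is a topological statement; it does not supply the Lipschitz graph parametrization or the uniform nondegeneracy of $\nabla v$ that your closing paragraph invokes. That uniform regularity near the free boundary is, in effect, the $C^1$ (indeed analytic) regularity of $\{v=0\}$ established in Theorem \ref{t:reg-set}---but that theorem relies on the present lemma, so using it here is circular.

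The paper sidesteps all of this. Via Lemma \ref{lemma restriction subsol}, the restriction $v_i$ of $v$ to each component $B^\pm$ satisfies $2\Delta v_i = G'(v_i)+\mu_i$ for some nonnegative measure $\mu_i$ supported on $\{v=0\}$, so that $\Delta\tilde v = \tfrac12\tilde H(\tilde v)+\mu_1-\mu_2$ holds immediately with no need for level-set approximation. The content of the lemma then reduces to $\mu_1=\mu_2$, which is proved by the Lebesgue--Besicovitch differentiation theorem: one shows $\mu_1(B_r(y))/\mu_2(B_r(y))\to 1$ at every $y$, and this ratio limit follows from exactly the pointwise blow-up information you already identified---$v_{1}$ and $v_{2}$ rescale to $L(x\cdot e)_+$ and $L(x\cdot e)_-$, whose Laplacians produce the same surface measure $L\,\mathcal{H}^n\mres\{x\cdot e=0\}$, and this measure gives no mass to $\partial B_1$. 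Since the differentiation theorem only needs the ratio limit at each point of $\mathrm{supp}(\mu_i)$, no uniformity, no level-set geometry, and no trace theory for $\nabla v$ are required. If you want to salvage your cutoff approach, you would need to first establish the missing quantitative free-boundary regularity, which would be substantially more work than the measure-theoretic route the paper takes.
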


In order to prove Lemma \ref{l:reflection}, we require the following basic property of $v$ restricted to its connected components.

\begin{lemma}\label{lemma restriction subsol}
Let $v\in W^{1,2}_\loc(\Omega;[0,1])$ satisfy \eqref{eq modified iv}-\eqref{eq modified ineq} {and \eqref{eq:lower freq bound assumption theorem 2.1}}, with $G$ satisfying \eqref{eq:g assumptions}. Let $x_0\in\{v=0\}$, let $r>0$ be such that $B_r(x_0) \subset \Omega$, and let $D \subset B_{r}(x_0)$ be an open set such that $v=0$ on $\partial D \cap B_r(x_0)$. Then, the function
\begin{equation*}
v_1(x) =
\begin{cases}
	v(x), \qquad  x\in D,\\
	0, \qquad x\in B_{r}(x_0)\setminus D
\end{cases}
\end{equation*}
is Lipschitz in $B_r(x_0)$ and satisfies, in the sense of distributions, the equation
\begin{equation}\label{eq subsolution}
2 \Delta v_1 = G'(v_1) +\mu_1, \quad \mbox{in $B_r(x_0)$}
\end{equation}
for some non-negative Radon measure $\mu_1$. 
\end{lemma}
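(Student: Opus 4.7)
The plan is to establish Lipschitz regularity of $v_1$ directly from the Lipschitz regularity of $v$ (obtained in Theorem~\ref{thm:holder reg for crit points}) and the vanishing of $v$ on $\partial D \cap B_r(x_0)$, and then derive \eqref{eq subsolution} by testing the equation \eqref{eq modified ineq} satisfied by $v$ against a Lipschitz cutoff tailored to $v_1$ itself.

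For the Lipschitz bound, denote by $L$ the Lipschitz constant of $v$ on a neighborhood of $B_r(x_0)$. Given $x \in D$ and $y \in B_r(x_0) \setminus D$, the segment $[x,y] \subset B_r(x_0)$ meets $\partial D \cap B_r(x_0)$ at some point $z$ by connectivity, and since $v(z)=0$ we have
$$|v_1(x)-v_1(y)|=|v(x)-v(z)|\leq L\,|x-z|\leq L\,|x-y|\,.$$
The remaining cases are trivial, so $v_1\in\Lip(B_r(x_0))$, with $\nabla v_1=\mathbf{1}_D\nabla v$ almost everywhere.

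To prove \eqref{eq subsolution}, it suffices to show that for every $\varphi\in C^\infty_c(B_r(x_0);[0,\infty))$,
$$J(\varphi):=\int_{B_r(x_0)}\bigl(-2\nabla v_1\cdot\nabla\varphi-G'(v_1)\varphi\bigr)\,dx\geq 0\,;$$
the Riesz representation theorem will then supply the non-negative Radon measure $\mu_1$. Since $\nabla v_1=\mathbf{1}_D\nabla v$ and $G'(0)=0$, one has $J(\varphi)=\int_D(-2\nabla v\cdot\nabla\varphi-G'(v)\varphi)\,dx$. I would then test \eqref{eq modified ineq} (extended to Lipschitz compactly supported test functions by a standard density argument, using that $\mu$ is Radon and $v\in W^{1,2}_\loc$) against $\varphi\, h_\varepsilon(v_1)\geq 0$, where $h_\varepsilon(t):=\min(t/\varepsilon,1)$. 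Because $h_\varepsilon(v_1)$ vanishes outside $D$, expanding $\nabla(\varphi\, h_\varepsilon(v_1))=h_\varepsilon(v_1)\nabla\varphi+\varphi h'_\varepsilon(v_1)\nabla v_1$ and rearranging yields
$$0\leq\int\varphi\,h_\varepsilon(v_1)\,d\mu+2\int_D\varphi\, h'_\varepsilon(v)|\nabla v|^2\,dx=\int_D\bigl[-2\nabla v\cdot\nabla\varphi-G'(v)\varphi\bigr]h_\varepsilon(v)\,dx\,.$$
Letting $\varepsilon\to 0^+$, one notes that $\nabla v=0$ a.e.\ on $\{v=0\}$ by Stampacchia's theorem and $G'(0)=0$, so the integrand $-2\nabla v\cdot\nabla\varphi-G'(v)\varphi$ vanishes a.e.\ on $\{v=0\}$; combined with $h_\varepsilon(v)\to\mathbf{1}_{\{v>0\}}$ pointwise, the dominated convergence theorem gives
$$\lim_{\varepsilon\to 0^+}\int_D[-2\nabla v\cdot\nabla\varphi-G'(v)\varphi]\,h_\varepsilon(v)\,dx=J(\varphi)\,,$$
and since the right-hand side of the earlier identity is $\geq 0$ for each $\varepsilon$, the inequality $J(\varphi)\geq 0$ follows.

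The main subtlety lies in the construction of the cutoff. A naive smooth approximation $\xi_k\nearrow\mathbf{1}_D$ would introduce a cross-term $\int\varphi\nabla v\cdot\nabla\xi_k$ whose sign cannot be controlled without structural information on $\partial D$. The choice $h_\varepsilon(v_1)$ is engineered so that its gradient is proportional to $\nabla v_1$, producing the cross-term $-2\int_D\varphi\, h'_\varepsilon(v)|\nabla v|^2\leq 0$, which has the right sign to be moved to the positive side of the inequality and harmlessly discarded in the limit.
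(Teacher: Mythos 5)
Your proof is correct. It achieves the same key inequality as the paper's but through a subtly different route: the paper integrates by parts on the level sets $\{v_1>t\}$ for a.e.\ $t>0$, uses that $2\Delta v_1 = G'(v_1)$ classically on $\{v_1>t\}\subset\{v>0\}$ (via the interior regularity of Lemma~\ref{thm:inter reg for crit points}), observes that the co-normal boundary term $\int_{\partial^*\{v_1>t\}}|\nabla v_1|\varphi$ has the good sign, and then sends $t\downarrow 0$; you instead test the weak differential inequality \eqref{eq modified ineq} directly against the Lipschitz cutoff $\varphi\,h_\varepsilon(v_1)$, where the cross-term $2\int_D\varphi\,h'_\varepsilon(v)|\nabla v|^2$ plays precisely the role of that boundary term (indeed, by the coarea formula, your cross-term is the average over $t\in(0,\varepsilon)$ of the paper's boundary integral). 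Your route has the modest advantage of invoking only the weak inequality \eqref{eq modified ineq}, Stampacchia's lemma, and $G'(0)=0$, rather than the classical PDE in $\{v>0\}$, so it is slightly more self-contained; both, however, lean on the Lipschitz continuity of $v$ from Theorem~\ref{thm:holder reg for crit points}, and your segment argument for the Lipschitz bound on $v_1$ makes explicit what the paper states without proof. The extension of \eqref{eq modified ineq} to compactly supported Lipschitz test functions is indeed standard (mollify, using $\nabla v\in L^2_\loc$, the boundedness of $G'(v)$, and the Radon property of $\mu$), and the dominated-convergence passage $\varepsilon\to0^+$ is airtight since the integrand is bounded by an $L^1$ majorant independent of $\varepsilon$ and vanishes a.e.\ on $D\cap\{v=0\}$.
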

\begin{proof}[Proof of Lemma \ref{lemma restriction subsol}]
As usual, we may assume without loss of generality that $x_0=0$. In virtue of the Lipschitz regularity of $v$ proved in Theorem \ref{thm:holder reg for crit points}, we immediately have that $v_1$ is also Lipschitz continuous.

Let us notice that \eqref{eq subsolution} amounts to showing that $2\Delta v_1 - G'(v_1) \geq 0$ in the sense of distributions, {in light of the correspondence between monotone linear functionals on $C_c^\infty(\Om)$ and non-negative Radon measures (see e.g. \cite[pg 53]{EG})}.  On the other hand, since $v_1$ and $v$ agree on the open set $D$ and we have the validity of \eqref{eq modified ineq} for $v$, it suffices to show that $2\Delta v_1 - G'(v_1) \geq 0$ nearby $\partial D \cap B_r$. Let $y_0 \in \partial D \cap B_r$ and let $\varphi \in C^\infty_c(B_r)$ be a non-negative test function supported on a neighborhood of $y_0$. Since $v_1$ is Lipschitz, and satisfies $2\Delta v_1=G'(v_1)$ in $D$, we have that for almost every $t\in (0, \infty)$, $\{v_1>t\}$ is a set of finite perimeter and the integration by parts formula
\begin{eqnarray}\notag
- 2\int_{\{v_1>t\}} \nabla v_1\cdot \nabla \varphi  &=& 2\int_{\partial^*\{v_1>t\}} |\nabla v_1| \varphi + 2\int_{\{v_1>t\}} \Delta v_1 \varphi\\ \label{eq int by parts}
&\geq& \int_{\{v_1>t\}} G'(v_1) \varphi
\end{eqnarray}
holds. Thus, taking a sequence $t_k \downarrow 0$ such that \eqref{eq int by parts} holds, we deduce that
\begin{eqnarray}\notag
-2\int_{B_r} \nabla v_1\cdot \nabla \varphi = - 2\int_{\{v_1>0\}} \nabla v_1\cdot \nabla \varphi \geq \int_{\{v_1> 0\}} G'(v_1) \varphi = \int_{B_r} G'(v_1) \varphi,
\end{eqnarray}
where we have used that $\nabla v_1 =0$ $\Lcal^{n+1}$-a.e. in $\{v_1=0\}$, and $G'(0)=0$.

\end{proof}

\begin{proof}[Proof of Lemma \ref{l:reflection}]
First of all, observe that $\tilde v$ is Lipschitz in light of Theorem \ref{thm:holder reg for crit points}. Furthermore, Lemma \ref{lemma restriction subsol} {may be applied with $D= B^\pm$ and the associated component functions $v_1$ for $B^+$ and $v_2$ for $B_-$. This produces two non-negative Radon measures $\mu_1$ and $\mu_2$ supported in $\{v=0\} \cap B_{\rho_0}(x_0)$ so that \eqref{eq subsolution} holds for $v_1$ and $v_2$ respectively. In particular,}
\begin{equation}\label{e: reflection 2}
\Delta \tilde v = \frac{1}{2}\tilde H(\tilde v) +\mu_1-\mu_2 \mbox{  in  $B_{\rho_0}(x_0)$},
\end{equation}
in the sense of distributions. So, showing \eqref{e: reflection} amounts to proving $\mu_1 =\mu_2$ which, in virtue of the Lebesgue-Besicovitch differentiation theorem (see, e.g., \cite[Theorem 5.8]{maggi2012sets}) is equivalent to showing
\begin{eqnarray}\label{e: radonder1}
&&\lim_{r\to 0^+} \frac{\mu_1(B_r(y))}{\mu_2(B_r(y))}=1 \mbox{ for $y \in$ supp$(\mu_2)$\,, }\\\label{e: radonder2}
&&\lim_{r\to 0^+} \frac{\mu_2(B_r(y))}{\mu_1(B_r(y))}=1 \mbox{ for $y \in$ supp$(\mu_1)$\,. }
\end{eqnarray}

We will show \eqref{e: radonder1} since the argument for \eqref{e: radonder2} is completely analogous. Let $y \in$ supp$(\mu_2)$ and consider a sequence $\{r_k\}$ with $r_k\to 0^+$ as $k\to \infty$. We will show that, up to taking a further subsequence (which we won't relabel), we have that
\begin{equation}\label{eq convergence of density}
\lim_{k\to \infty} \frac{\mu_1(B_{r_k}(y))}{\mu_2(B_{r_k}(y))}=1\,.
\end{equation}
Since the sequence $\{r_k\}$ is arbitrary, the desired conclusion follows immediately. With this goal in mind, recalling the $L^2$ height function $H_y(r)$ of $v$ centered at $y$ as introduced in \eqref{e:D-H-def}, {we proceed as follows. For $i=1,2$ and $y$ fixed as above, consider the rescaled functions $v_{i,r}(x) = \frac{v_i(y+rx)}{H_y(r)^{1/2}}$, and the rescaled measures $\mu_{i,r}$ given by $\mu_{i,r}(E) = \frac{\mu_i(rE+y)}{H_y(r)^{1/2}r^{n-1}}$ for any Borel set $E$.} Here, we take $r \in (0,\rho_0 - |y-x_0|)$. Clearly we may then rewrite \eqref{eq convergence of density} as
\begin{equation}\label{eq convergence rescaled}
\lim_{k\to \infty} \frac{\mu_{1,r_k}(B_{1}(y))}{\mu_{2,r_k}(B_{1}(y))}=1
\end{equation}
In addition, recall that by analogous reasoning to that in the proof of Lemma \ref{lemma strong convergence blow-ups}, the rescalings satisfy
\begin{equation}\label{eq rescaled equation}
\Delta v_{i,r} = \frac{r^2}{2H_y(r)^{1/2}}G'(v_{i,r}H_y(r)^{1/2})+\mu_{i,r}
\end{equation}
in the sense of distributions for $i=1,2$, together with the estimate
\begin{equation}\label{eq remainder potential 2}
\left|\frac{r^2}{2H_y(r)^{1/2}}G'(v_{i,r}H_y(r)^{1/2})\right|\leq Cr^2\,.
\end{equation}
On the other hand, since $v_1$ and $v_2$ have disjoint supports, we have that for $r$ small enough
\begin{equation}\label{eq Dirbound rescalings}
\int_{B_2}|\nabla v_{i,r}|^2 \leq  \int_{B_2}|\nabla v_{y,r}|^2 \leq  C\, ,
\end{equation}
where $v_{y,r}(x) = \frac{v(y+rx)}{H_y(r)^{1/2}}$ and where we have used \eqref{eq doubling shell take 2} and the almost monotonicity of the frequency function proved in Lemma \ref{lemma almgren}. We now proceed as in the proof of Lemma \ref{lemma compactness cp} to conclude the weak convergence (up to subsequence) of $\mu_{i,r_k}$ and $v_{i,r_k}$. More precisely, let $\varphi \in C_c^\infty(B_2)$ with $\varphi \geq \mathbf{1}_{B_{\frac{3}{2}}}$, testing \eqref{eq rescaled equation} and combining it with \eqref{eq remainder potential 2} and \eqref{eq Dirbound rescalings}, we deduce
\begin{equation*}
\mu_{i,r}(B_\frac{3}{2}) \leq Cr^2 + \int_{B_2}|\nabla \varphi \cdot \nabla v_{i,r}|\leq Cr^2 + \Big(\int_{B_2}|\nabla v_{i,r}|^2\Big)^\frac{1}{2} \leq C,
\end{equation*}
for $i=1,2$ and for $r$ small enough. So, up to extracting a subsequence of $\{r_k\}$, there exist $\tilde \mu_i$ and $\tilde v_i$ such that $\mu_{i,r_k}\overset{\ast}{\rightharpoonup} \tilde \mu_{i}$  as Radon measures in $B_\frac{3}{2}$ and that $v_{i,r_k} \rightharpoonup \tilde v_i$ weakly in $W^{1,2}(B_\frac{3}{2})$ {and locally uniformly} as $k\to \infty$ for $i=1,2$. However, since {$N_{y}(0^+) = 1$ for every $y\in\{v=0\}\cap B_{\rho_0}(x_0)$}, {the local uniform convergence and} Proposition \ref{p:tangents} implies that (up to taking a new subsequence) $\tilde v_1(x) = L (x\cdot e)_+$ and $\tilde v_2(x) = L (x\cdot e)_-$ for some $L>0$ and some $e \in \mathbb{S}^n$. Furthermore, by weak convergence, we have that
\begin{equation}\label{eq limiting equation}
\Delta \tilde v_{i} = \tilde \mu_i
\end{equation}
holds in the sense of distributions for $i=1,2$. From here, since $\tilde v_1(x)-\tilde v_2(x) = \frac{1}{\sqrt{|\omega_{n+1}|}} \, (x\cdot e)$, we deduce that $\tilde \mu_1= \tilde \mu_2$. In addition, by the particular form of $\tilde v_i$, we deduce from \eqref{eq limiting equation}  that $\tilde \mu_i =  \frac{1}{\sqrt{|\omega_{n+1}|}} \, \h \mres \{x\in B_{\frac{3}{2}} : x\cdot e=0\}$ and, thus, $\tilde\mu_i(\partial B_1)=0$. From here \eqref{eq convergence rescaled} follows immediately. 
\end{proof}

\begin{proof}[Proof of Lemma \ref{l:harmonic-polyn}]
We will demonstrate that we may identify the set of connected components of $\{\bar v >0 \}$ with the set of vertices for a bipartite graph, when $n \geq 2$. Once we show this, we may conclude as follows. Recall that every bipartite graph is two-colorable. Let $\Fcal_1, \Fcal_2$ denote the two mutually disjoint subsets of connected components of $\{\bar v >0 \}$, each corresponding to the set of vertices of the same color in the graph. Define
\[
h=\begin{cases}
\bar v & \text{on every connected component in $\Fcal_1$,} \\
-\bar v & \text{on every connected component in $\Fcal_2$,} \\
0 & \text{on $\{\bar v=0\}$.}
\end{cases}
\]
Observe that by construction, $\bar v = |h|$. Thus, we just need to verify that $h$ is a harmonic polynomial. To see this, first of all notice that the harmonicity of $h$ follows immediately from Lemma \ref{l:reflection}. Indeed, this is due to the fact that the hypotheses of the lemma guarantee that $\{\bar v=0\}\cap B_1\setminus \{0\}\subset \{y: N_{\bar v,y}(0^+) = 1\}$, combined with the bipartite graph property of the connected components of $\{\bar v > 0\}$, and the fact that $\{0\}$ forms a capacity zero subset of $B_1$. Moreover, note that the function $\tilde H$ given by \eqref{eq odd reflection} associated to the tangent function $\bar v$ vanishes identically (see Lemma \ref{lemma strong convergence blow-ups}). To see that $h$ is a polynomial, we simply exploit the radial homogeneity of $\bar v$, together with the well-known classification of radially homogeneous harmonic functions {(see, for instance, \cite[Chater III]{Stein-SI})}.

It now remains to prove the aforementioned claim that the connected components of $\{\bar v >0 \}$ identify with the set of vertices of a bipartite graph. Note that this claim crucially requires $n\geq 2$, and is false when $n=1$. {First, note our assumption that $N_{x}(0^+)=1$ for all $x\in \{\bar v = 0\}\cap \partial B_1$ combined with the radial homogeneity of $v$ implies that $N_{x}(0^+)=1$ for all $x\in \{\bar v = 0\}\setminus \{0\}$. Then we can apply Proposition \ref{lemma density components}, Lemma \ref{l:harmonic-polyn}, and the classification of frequency one blowups to conclude that $\{\bar v=0\}\setminus \{0\}$ locally coincides with the zero set of a harmonic function with non-vanishing gradient on its nodal set. As a consequence, the Implicit Function Theorem yields that $\{\bar v=0\}\cap \partial B_1$ is a smooth (even analytic), embedded $(n-1)$-manifold.} The coloring can be done now by exhaustion as follows. Suppose, without loss of generality, that the two colors are red and blue. Consider a connected component $U_0$ of $\partial B_1 \cap \{\bar v > 0\}$, and assign this the color red. We assign each connected component of $\{\bar v>0\}$ neighboring $U_0$ the color blue, {and call these $\{U_1^i\}_i$. We claim that
\begin{equation}\label{eq:no common boundary with U and V}
    \mbox{if $i\neq j$, then $\partial^{\partial B_1} U_1^i \cap \partial^{\partial B_1} U_1^j = \varnothing$.}
\end{equation}
Assume for contradiction that \eqref{eq:no common boundary with U and V} did not hold for some $U_1^i$ and $U_1^j$. Then by the smoothness of $\{\bar v = 0\}$, their common boundary is also smooth, and so we can choose a smooth connected component of $\partial^{\partial B_1} U_1^i \cap \partial^{\partial B_1} U_1^j$ and call it $M$. By the Jordan-Brouwer separation theorem on $\partial B_1$ (which follows for $n\geq 2$ from e.g.~ the statement on $\mathbb{R}^{n}$ \cite[pg 89]{GP} and a stereographic projection, but does not hold on $\mathbb{S}^1$), denoting by $A$ and $B$ the open sets on $\partial B_1$ with $\partial^{\partial B_1} A \cap \partial^{\partial B_1} B=M$, we have, up to relabelling, $U_1^i \subset A$ and $U_1^j \subset B$. Also, since $U_0$ is open and connected and does not intersect $M$, we must have either $U_0 \subset A$ or $U_0 \subset B$. But either case leads to a contradiction: if $U_0 \subset A$, it cannot border $U_1^j$ since $U_1^j\subset B$ and $M \cap \partial U_0=\varnothing$ by the smoothness of $\{\bar v=0\}$, with a similar contradiction if $U_0 \subset B$. Next we color in red every open connected component of $\{\bar v >0\}$ bordering some $U_1^i$; this is well-defined, since by \eqref{eq:no common boundary with U and V} no blue sets share a common boundary.} We can now proceed inductively in this manner, exhausting all of the connected components of $\{\bar v > 0\}\cap \partial B_1$ (of which there are finitely many according to the smooth embeddedness of $\{\bar v=0\} \cap \pa B_1$).
\end{proof}

\begin{proof}[Proof of Proposition \ref{p: freq gap}]




Fix $x_0\in \{v=0\}$ with $N_{v,x_0}(0^+)>1$ and consider any tangent function $\bar v$ at $x_0$. First of all, recall from Lemma \ref{lemma prop blowups}, $\bar v$ is radially homogeneous of degree $\alpha := N_{v,x_0}(0^+) = N_{\bar v,0}(0^+) > 1$. 

We proceed to argue by induction on $n$, for solutions of \eqref{eq modified iv}-\eqref{eq modified ineq}, which in particular includes all tangent functions $\bar v$, in light of Lemma \ref{lemma strong convergence blow-ups}. Let us begin with the base case $n=1$. In this case, the classification of Lemma \ref{lemma:planar tangent classification} immediately implies that the alternative (1) holds and $N_{v,x_0}(0^+) \geq \tfrac{3}{2}$. Note that in this case, the alternative (2) is impossible, since there are exactly two connected components of $\{\bar v>0\}\cap B_1$ if and only if $\bar v(x) = \frac{1}{\sqrt{\pi}} |x\cdot e|$ for some $e\in \mathbb{S}^1$, in which case $N_{v, x_0}(0^+) = 1$.

Now fix $n \geq 2$ and suppose that the conclusions of the proposition hold (including the lower frequency bound) in every dimension $m+1 \leq n$, in place of $n+1$. Let $x_0$, $v$ be as in the statement of the proposition. {In particular, we suppose that $N_{v,x_0}(0^+) > 1$.} There are two possibilities. Either
\begin{itemize}
\item[(a)] there exists $e_0\in \{\bar v=0\}\cap \partial B_1$ with $N_{\bar v, e_0}(0^+) > 1$, or
\item[(b)] for every $e\in \{\bar v=0\}\cap \partial B_1$, $N_{\bar v, e}(0^+) = 1$.
\end{itemize}
In case (a), by Lemma \ref{lemma prop blowups}, any tangent function $\bar w$ of $\bar v$ at $e_0$ is translation-invariant in the direction $e_0$ and thus identifies with a solution of \eqref{eq modified iv}-\eqref{eq modified ineq} with $G=0$ that is a function of $n$ real variables. Since we additionally have $N_{\bar v, 0}(0^+) \geq N_{\bar v, e_0}(0^+) = N_{\bar w, 0}(0^+)$, $\bar w$ satisfies the hypotheses of the proposition at the origin (where any tangent function of it will be itself). The inductive hypothesis therefore allows us to conclude in this case. In case (b), we simply apply Lemma \ref{l:harmonic-polyn}, which implies that $N_{\bar v,0}(0^+) = N_{h,0}(0^+) \geq 2$.

When $n=2$ and (2) holds, notice that the alternative (b) from the above dichotomy must hold. Indeed, if $N_{\bar v, e_0}(0^+)>1$ for some $e_0\in \{\bar v=0\}\cap \partial B_1$, then the tangent function $\bar w$ at $e_0$ as above will be a function of $2$ real variables. However, we additionally have exactly two connected components of $\{\bar v>0\}\cap B_1(e_0)$, which in turn implies that $\{\bar w > 0\}$ has exactly two connected components. This, combined with the fact that $N_{\bar w, 0}(0^+) = N_{\bar v, e_0}(0^+) > 1$ is in contradiction with the classification given by Proposition \ref{p:tangents}.\end{proof}

\section{Minimizers and their regularity: Proofs of Theorems \ref{thm:main regularity theorem} and \ref{thm:main existence theorem}}\label{s:proof-main-theorems}

\subsection{Competition class and cup competitors}\label{s:prelim}
Here we derive the criticality conditions \eqref{eq innervariation}-\eqref{eq:differential inequality} for minimizers to our variational problem over a larger class of ``admissible" functions in $W^{1,2}_\loc(\Omega;[0,1])$ which, together with their ``cup competitors" (see Definition \ref{d:cup-comp} below), satisfy the spanning condition in a suitable sense. This class will in particular contain any minimizer of \eqref{new model intro} and \eqref{new model volume constrained}.


Let us recall the generalized spanning condition from \cite{maggi2023plateau,maggi2023hierarchy}. We begin with the following definition of spanning for closed sets from \cite[Definition 3]{DGM17}, which is a slight generalization of the one from \cite[pg 359]{HP16} and has stimulated much recent progress on the Plateau problem; see e.g.~ \cite{DGM17,DPDRG,HP16b,DLDRG19,HP17, FangKo,DR,DPDRG2}. 

\begin{definition}[Homotopic spanning for closed sets]\label{def:span closed}
A {\bf spanning class} $\mathcal{C}$ is a family of smooth embeddings of $\mathbb{S}^1$ into $\Omega$ which is closed by homotopy relative to $\Om$, and a relatively closed set $K\subset \Omega$ is said to be {\bf $\mathcal{C}$-spanning} $\wire$ if $K \cap \gamma\neq \varnothing$ for every $\gamma\in \mathcal{C}$.
\end{definition}

To generalize this, we recall the notion of $\mathcal{C}$-spanning introduced by the first two authors and F. Maggi in \cite[Definition B]{maggi2023plateau} and applied to the Allen-Cahn energy in \cite{maggi2023hierarchy}. It uses the notion of measure theoretic connectedness introduced in \cite{CCDPMgauss,CCDPMsteiner}. A Borel set $K$ {\bf essentially disconnects} another Borel set $G$ if there exist Borel $G_1,G_2 \subset G$ such that
\begin{equation}\label{def:ess disconn}
\mathcal{L}^{n+1}\big(G \Delta (G_1 \cup G_2)\big)=0,\quad \mathcal{L}^{n+1}(G_1)\mathcal{L}^{n+1}(G_2)>0, \quad \h\big((G^\one \cap \pa^e G_1 \cap \pa^e G_2 )\setminus K \big) =0\,.
\end{equation}
Here, for any Borel $B \subset \mathbb{R}^{n+1}$, $B^{\scriptstyle{(t)}}$ is the set of points of Lebesgue density $t\in [0,1]$ and $\pa^e B$ is the essential boundary of $B$, or $\mathbb{R}^{n+1} \setminus (B^\zero \cup B^\one)$. We also denote by $B_1^n$ the ball of radius one in $\mathbb{R}^n$.

\begin{definition}[Homotopic spanning for Borel sets]\label{def:span borel}
Given a spanning class $\mathcal{C}$, the associated {\bf tubular spanning class} $\mathcal{T}(\mathcal{C})$ is the family of all triples $(\gamma, \Psi , T)$ where $\gamma \in \mathcal{C}$, 
\begin{equation}\notag
\mbox{$\Psi:\mathbb{S}^1 \times  \overline{B_1^n}\to \Omega$ is a diffeomorphism with $\restr{\Psi}{\mathbb{S}^1 \times \{0\}}=\gamma$},
\end{equation}
and $T=\Psi(\mathbb{S}^1 \times B_1^n)$. A Borel set $K\subset \Omega$ is {\bf $\mathcal{C}$-spanning} $\wire$ if for every $(\gamma,\Psi, T)\in \mathcal{T}(\mathcal{C})$, $\mathcal{H}^1$-a.e. $s\in \mathbb{S}^1$ has the following property: for $\mathcal{H}^n$-a.e. $x\in \Psi(\{s\}\times B_1^n)$, there exists a partition $\{T_1,T_2\}$ of $T$ with $x\in\partial^e T_1 \cap \partial^e T_2$ and such that $K \cup  \Psi(\{s\}\times B_1^n)$ essentially disconnects $T$ into $\{T_1,T_2\}$.
\end{definition}

\begin{remark}[Consistency of Definitions \ref{def:span closed}-\ref{def:span borel}]\label{remark:consistency of spanning defs}
The previous two definitions are consistent because for any relatively closed $K\subset \Omega$, it is $\mathcal{C}$-spanning according to the former if and only if it is $\mathcal{C}$-spanning according to the latter \cite[Theorem A.1]{maggi2023plateau}.
\end{remark}

The $\h$-stability of the class of $\mathcal{C}$-spanning sets \cite[Page 8]{maggi2023plateau} is the key property that allows for an acceptable definition of $\mathcal{C}$-spanning for the $1$-level set of $u\in W^{1,2}(\Om;[0,1])$. Recall the definition of precise representative $u^*$ from \eqref{eq: precise rep intro}.

\begin{definition}[Generalization of \eqref{eq:HP spanning}]\label{def:gen of spanning}
For $u\in W^{1,2}_\loc(\Om;[0,1])$, the reformulation of \eqref{eq:HP spanning} is 
\begin{equation}\label{eq:c spanning for functions}
\{u^* \geq t\} \mbox{ is $\mathcal{C}$-spanning according to Definition \ref{def:span borel} for all $t\in (1/2,1)$}\,.
\end{equation}
\end{definition}

\begin{remark}[Consistency of Definition \ref{def:gen of spanning} and \eqref{eq:HP spanning}]\label{r:consistency-spanning-cts}
    Note that when $u$ is additionally continuous, the generalized spanning condition \eqref{eq:c spanning for functions} is equivalent to ``$\{u=1\}\cap \gamma \neq \varnothing$ for all $\gamma \in \mathcal{C}$" (see Lemma \ref{lemma:equiv for continuous functions}), therefore it indeed generalizes \eqref{eq:HP spanning}. Also, \eqref{eq:c spanning for functions} is preserved under uniform Dirichlet energy bounds, proven in \cite{maggi2023hierarchy} and recalled in Theorem \ref{thm:prelim compactness thm}. Lastly, since supersets of $\mathcal{C}$-spanning sets are $\mathcal{C}$-spanning, choosing some other lower bound than $1/2$ does not change whether the condition holds for some $u$, as it is only those super-level sets where $u$ takes values arbitrarily close to $1$ that matter.
\end{remark}

In order to define our admissible class of functions, we wish to make sense of cup competitors for functions $u\in W^{1,2}_\loc(\Om;[0,1])$ satisfying \eqref{eq:c spanning for functions} (cf. \cite[Definition 1]{DGM17} for the analogue for closed sets). To do this, we must first introduce connected components in a measure-theoretic sense. 

\begin{lemma}[Essentially connected components of $\{v>0\}$]\label{lemma connected 2}
    Let $u\in W^{1,2}_{\rm loc}(\Omega;[0,1])$ satisfy \eqref{eq:c spanning for functions} and let $v=1-u$. If $U \subset\!\subset \Omega$ is open and has finite perimeter, then either $|\{v^*>0 \}\cap U|=0$ or there exist $T \subset(0,1)$ with $|T|=1$ and partition $\{C_i\}_i$ of $U \cap \{v>0\}$ (up to Lebesgue null sets) such that for any $t_j\searrow 0$ with $\{t_j\}\subset T$,
    \begin{equation}\notag
        \mbox{$|C_i \cap C_j|=0$ for $i\neq j$,\quad $|C_i|>0$, \quad $C_i$ is not essentially disconnected by $\{v^*=0\}$}\,,
    \end{equation}    
and each $C_i$ is an $L^1$-limit of sets of finite perimeter $\{F_j^i\}_{j\geq j_i}$ with $|F_{j}^i \setminus F_{j+1}^i|=0 $, $F_j^i \subset \{v^*>t_j\}$ and $\partial^* F_j^i \cap U \subset \{v^* = t_j\}$ such that $\{v^* \leq t_j\}$ does not essentially disconnect $F_j^i$ for all $j\geq j_i$. Furthermore, if $t\in T$ and $F_t\subset \{v^* > t\}$ is not essentially disconnected by $\{v^* = t\}$, then there is unique $C_i$ such that
\begin{equation}\label{eq:ci ft dichotomy}
|F_t \setminus C_i|=0\,;
\end{equation}
up to null sets and relabelling, $\{C_i\}_i$ is the unique partition of $U \cap \{ v>0\}$ satisfying these properties. We refer to each $C_i$ as an \textbf{essentially connected component} of $\{v^*>0\}$ in $U$. {Lastly, $\{u^*=1\}$ is $\mathcal{C}$-spanning if and only if $\{u^*\geq t\}$ is $\mathcal{C}$-spanning for all $t\in (1/2,1)$.}
\end{lemma}

\begin{proof}
Suppose that
    \begin{equation}\label{eq:there's some positive v}
        |\{v^*>0\} \cap U|>0\,.
    \end{equation}
We will now proceed to construct the desired partition $\{C_i\}_i$.
\medskip
    
\noindent{\it Step 1 (identification of $T$ and preliminary partition)}: 
To identify $T$, by standard facts about $W^{1,2}_\loc$ functions \cite[Equation 3.2]{maggi2023hierarchy}, there is a full measure subset $T\subset (0,1)$ such that if $t\in T$, then $E_t:=\{v^* \leq t\}$ are sets of locally finite perimeter in $\Omega$ with
\begin{equation}\label{eq:wheres pa Ej}
    \mbox{$\partial^* E_t \cap \Omega = \{v^* = t\}$ up to $\mathcal{H}^n$-null sets}\,.    \end{equation}
We now consider any $t_j\searrow 0$ with $\{t_j\}\subset T$.
    
Since $u=1-v$ satisfies \eqref{eq:c spanning for functions}, \cite[Theorem 2.1]{maggi2023plateau} implies that for each $j$ there exists a Caccioppoli partition $\{ U_m^j\}_m$ of $U$ (in the terminology of \cite{maggi2023plateau}, an ``essential partition") into sets of finite perimeter, such that, setting $K_j = \partial^* E_j \cap \Omega$,
    \begin{align}\label{eq:mon of Uij}
        &|U_1^j| \geq |U_2^j| \geq \cdots \geq |U_m^j| \geq |U_{m+1}^j| \geq \cdots >0\\ \label{eq:containment of reduced boundaries}
        &\mathcal{H}^n(\partial^* U_m^j \cap U \setminus K_j) = 0 \\ \label{eq:does not disconnect}
        &\mbox{$K_j$ does not essentially disconnect $U_m^j$ for each $m$}\,.
    \end{align}
Note that for each $U_m^j$, \eqref{eq:does not disconnect} and $|\{v^*=t_j\}|=0$ imply that
\begin{equation}\label{eq:cant split}
    \mbox{either \quad $(U_m^j)^\one \subset E_j^\one=\{v^*<t_j\}^\one$ \quad or \quad $(U_m^j)^\one \subset E_j^\zero=\{v^*>t_j\}^\one$\,,}    
\end{equation}
otherwise we could non-trivially partition $U_m^j$ into $U_m^j \cap E_j$ and $U_m^j \setminus E_j$, contradicting \eqref{eq:does not disconnect}. Using \eqref{eq:cant split}, we thus divide the sets $U_m^j$ into sets $B_m^j$ and $C_m^j$ which respectively satisfy
\begin{equation}\label{eq:wheres Cij}
(B_m^j)^\one \subset \{v^*<t_j\}^\one \qquad \text{and} \qquad (C_m^j)^\one \subset \{v^*>t_j\}^\one\,.    
\end{equation}
{Note that by standard density properties of sets of finite perimeter and their reduced boundaries, the containments $(B_m^j)^\one \subset \{v^*<t_j\}^\one$ and $(C_m^j)^\one \subset \{v^*>t_j\}^\one$ combined with \eqref{eq:containment of reduced boundaries} imply that
\begin{equation}\label{eq:wheres pa cij bij}
    U \setminus \cup_m (C_m^j)^\one \ \text{is $\mathcal{H}^n$-contained in} \ \{ v^* \leq t_j\}\,,
\end{equation}
meaning that the containment holds up to a $\Hcal^n$-null set. L}et us order the sets $C_m^j$ so that
\begin{equation}
\label{eq:mon of Cij}
        |C_1^j| \geq |C_2^j| \geq \cdots \geq |C_m^j| \geq |C_{m+1}^j| \geq \cdots > 0\,.
\end{equation}
We now claim that 
\begin{equation}\label{eq:small guy does not disconnect}
    \mbox{$\{v^* \leq t_j \}$ does not essentially disconnect $C_m^j$ for each $m$}.
\end{equation}
Indeed, assuming for a contradiction that $\{v^* \leq t_j\}$ essentially disconnected some $C_m^j$, we would have $A,B \subset C_m^j$ such that $|A||B|>0$ and
\begin{equation}\label{eq:wheres the reduced}
\mbox{$\partial^* A \cap \partial^* B \cap (C_m^j)^\one$ is $\mathcal{H}^n$-contained in $\{v^* \leq t_j\}$}\,.
\end{equation}
Since $A,B \subset C_m^j$, we also have
\begin{equation}\label{eq:where's the reduced 2}
\partial^* A \cup \partial^* B \subset (C_m^j)^\one \cup \partial^* C_m^j \qquad \mbox{up to $\mathcal{H}^n$-null sets};
\end{equation}
(see e.g. \cite[Equation (16.7)]{maggi2012sets}). Now by \eqref{eq:wheres pa Ej}, \eqref{eq:containment of reduced boundaries}, and \eqref{eq:wheres Cij}, we have
$$
U \cap \big((C_m^j)^\one \cup \partial^* C_m^j \big)\subset U\cap \big( \{v>t_j \}^\one \cup \{ v^* = t_j\}\big)\,.
$$
In addition, $\{v^*>t_j\}^\one \subset \{v^* \geq t_j\}$ up to $\mathcal{H}^n$-null sets, see \cite[Section 3.1]{maggi2023hierarchy}. Inserting these previous two containments into \eqref{eq:where's the reduced 2}, we find that
\begin{equation}
    \label{eq:where's the reduced 3}
U \cap (\partial^* A \cup \partial^* B) \subset \{v^* \geq t_j\} \qquad \mbox{up to $\mathcal{H}^n$-null sets}.
\end{equation}
Combining this with \eqref{eq:wheres the reduced}, we conclude that 
\begin{equation}\label{eq:where's the reduced 4}
\partial^* A \cap \partial^* B \cap (C_m^j)^\one \subset  \{v^* = t_j\}\subset K_j\qquad \mbox{up to $\mathcal{H}^n$-null sets}\,,
\end{equation}
which contradicts \eqref{eq:does not disconnect}. Therefore \eqref{eq:small guy does not disconnect} holds. 

We additionally claim that if $j<k$, then for any $C_m^j$ and $C_{m'}^k$, either
\begin{equation}\label{eq:dichotomy}
    \mbox{either\quad $|C_m^j \cap C_{m'}^k| = |C_m^j|$ \quad or \quad $|C_m^j \cap C_{m'}^k | =0$}\,.
\end{equation}
If this were not the case, then we would have $j<k$ and $C_m^j$, $C_{m'}^k$ such that $0<|C_m^j \cap C_{m'}^k| < |C_m^j|$. Let us consider the nontrivial partition $A=C_m^j \cap C_{m'}^k$, $B=C_m^j \setminus C_{m'}^k$ of $C_m^j$. By standard facts regarding reduced boundaries (see e.g. \cite[Eq. (16.7)-(16.8)]{maggi2012sets}),
\begin{equation}\label{eq:bad disconnect}
    \partial^* A \cap \partial^* B \cap (C_m^j)^\one \subset \partial^* C_{m'}^k \cap (C_m^j)^\one \quad \mbox{up to $\mathcal{H}^n$-null sets}.
\end{equation}
But $U \cap \partial^* C_{m'}^k \subset \{v^* = t_k\}$ by \eqref{eq:wheres pa Ej} and \eqref{eq:containment of reduced boundaries}. Thus \eqref{eq:bad disconnect} implies that the set $\{v^* = t_k\} \subset \{v^* \leq t_j\}$ essentially disconnects $C_m^j$, contradicting \eqref{eq:small guy does not disconnect}. So \eqref{eq:dichotomy} indeed holds. 

\medskip

\noindent{\it Step 2 (construction of $C_i$ and $F_j^i$)}: We now use \eqref{eq:dichotomy} take the limit in $j$. First for $j=2$, we note that \eqref{eq:dichotomy} implies that there is a unique $C_m^2$ such that $|C_1^1 \cap C_{m}^2| = |C_1^1|$. Let us call this $C_m^2=: F_2^1$. Next, again using \eqref{eq:dichotomy}, we choose $F^1_3$ to be the unique $C_\ell^3$ containing $F^1_2$ up to Lebesgue null sets. Continuing on as such for each $j$, we obtain an increasing sequence of sets $C_1^1=: F_1^1 \subset F^1_2 \subset \cdots \subset F^1_j\subset F_{j+1}^1 \subset \cdots$. We define
\begin{equation}\notag
    C_1 = \bigcup_j F_j^1\,.
\end{equation}

Next, in order to define $F_j^2$ and $C_2$, we first claim that for each $j$ and $m$, either 
\begin{equation}\label{eq:second dichotomy}
|C_m^j \cap C_1|= |C_m^j|\quad\mbox{or}\quad| C_m^j \cap C_1|=0\,.
\end{equation}
To see this, for each $j$, \eqref{eq:dichotomy} implies that $|C_m^j \cap F^1_k|$ is either $|C_m^j|$ or $0$ for each $k>j$. Moreover,  if $|C_m^j \cap F^1_{k'}|=|C_m^j|$ for a single $k'>j$, then by the nestedness of the sets $F^1_{k}$ we have $|C_m^j \cap F^1_{k}| = |C_m^j|$ for all $k\geq k'$ and thus $|C_m^j \cap C_1| = |C_m^j|$. So if $|C_m^j \cap F^1_k| =|C_m^j|$ for a single $k$, the first condition in \eqref{eq:second dichotomy} holds, and if $|C_m^j \cap F^1_k| = 0$ for all $k$, then $|C_m^j \cap C_1|=0$ since $|F^1_k\Delta C_1|\to 0$. So \eqref{eq:second dichotomy} holds, and this allows us to repeat the process all over again. We choose the smallest $j_2\geq 2$ such that for some $m$, $|C_m^{j_2} \cap C_1|=0$. We choose $m$ to be minimal with this property and call the resulting set $F_{j_2}^2$. By \eqref{eq:dichotomy}, there is unique $C_m^{j_2+1}$ containing $F_{j_2}^2$ (up to Lebesgue null sets); we call it $F_{j_2+1}^2$. Iterating this procedure in $j$ yields an increasing sequence of sets $\{F_j^2\}_{j\geq j_2}$. Since the sequence is increasing, there is a limiting $C_2$ with $|F_j^2 \Delta C_2|\to 0$ and that satisfies the analogue of \eqref{eq:second dichotomy}. Furthermore, \eqref{eq:second dichotomy} implies that $|C_2 \cap C_1|=0$. Indeed, if this was not the case, then $|C_2 \cap C_1|>0$, in which case $|F_j^2 \cap C_1|>0$ for some $j\geq j_2$. But then by \eqref{eq:dichotomy}, $|F_j^2 \cap C_1|=|F_j^2|$, and since $F_j^2 \supset F_{j_2}^2$, this contradicts our choice of $F_{j_2}^2$.

Continuing on in this fashion, to contruct $C_i$ and $\{F_{j}^i\}_{j\geq j_i}$, we start by choosing the smallest $j_i>j_{i-1}$ such that some $|C_m^{j_i}\cap (C_1 \cup \cdots \cup C_{i-1})|=0$, where $m$ is minimal with this property, and let $F_{j_i}^i:= C^{j_i}_m$. If no such $j_i$ exists, then the process terminates. If there is such a $j_i$, then the same procedure as above yields analogous increasing sets $\{F_{j}^i\}_{j\geq j_i}$ and limiting $C_i$. In summary, we have obtained $\{j_i\}_{i}$, families $\{ F_j^{i}\}_{j\geq j_i}$ for each $i$, and sets $\{C_i\}_{i}$ satisfying
\begin{align}\label{eq:increasing of ji's}
    &1=j_1 < j_2 < \cdots < j_i < \cdots\,, \\ \label{eq:minimality of m}
    &\mbox{$F_{j_i}^i=C_m^{j_i}$, for the minimal number $m$ such that $|C_m^{j_i} \cap (\cup_{i'<i}C_{i'})|=0$}\,,\\ \label{eq:nestedness of fs}
    &F_{j_i}^i\subset F_{j_{i}+1}^i\subset \cdots \subset F_{j_i+k}^i\subset \cdots  \quad \mbox{for each $i$}\,, \\ \label{eq:limiting to ci}
    &|F_{j}^i \Delta C_i| \to 0 \quad \mbox{as $j \to\infty$, for each $i$}\,,\\ \label{eq:disjointness of C_i}
    &|C_i \cap C_{i'}|=0 \quad \mbox{if}\,i\neq i'\,,\qquad \mbox{and} \\ \label{eq:Ci cim dichotomy}
    &|C_i \cap C_m^j| = 0 \mbox{ or }|C_i \cap C_m^j| = |C_m^j| \qquad \forall i, m, j\,.
\end{align}

\medskip

\noindent{\it Step 3 ($\{C_i\}$ is a Lebesgue partition of $\{v^*>0\}\cap U$)}:
By \eqref{eq:disjointness of C_i}, we know that $\{C_i\}$ are disjoint up to Lebesgue null sets, so it remains to show that they exhaust $U \cap \{v^*>0\}$. If this were not the case, and $|(U\cap \{v^*>0\} )\setminus \cup_i C_i |>0$, then since $t_j \searrow 0$, we would have $|(U\cap \{v^*>t_j\} )\setminus \cup_i C_i|>0$ for some $j$. Since $\{C_m^j\}_m$ partition $U \cap \{v^*>t_j\}$ for each $j$, this implies that for some $m$,
\begin{equation}\label{eq:contra with the cis}
    |C_m^j \setminus \cup_i C_i|=:\varepsilon>0\,.
\end{equation}

In particular, there must be infinitely many $C_i$'s, because the procedure in the previous step only terminated at stage $i$ if there is no $j' > \max\{j_1,\dots,j_{i-1}\}$ such that $|C_{m'}^{j'} \cap (C_1\cup \cdots \cup C_{i-1})|=0$ for some $m'$. Now this yields a contradiction for the following reason. 
Due to the fact that $|U| < \infty$ (since it has finite perimeter) and \eqref{eq:increasing of ji's}, there exists $I\in \N$ with $j_I > j$ such that $|C_I|<\varepsilon$. In turn, according to \eqref{eq:nestedness of fs} and \eqref{eq:limiting to ci}, this implies that $|F_{j_I}^I|<\varepsilon$. Now by \eqref{eq:minimality of m}, $F_{j_I}^I=C_{m_I}^{j_I}$, where $m_I$ is minimal such that $|C_{m_I}^{j_I} \cap (\cup_{i<I}C_i )|=0$. Also, since $j_I > j$ and thus $t_{j_I}<t_j$, we have $U \cap \{v > t_j\}\subset U\cap \{v>t_{J_I}\}$, with $U \cap \{v > t_j\}$ partitioned by $\{C_{m}^{j}\}_{m}$ and $U \cap \{v > t_j\}$ partitioned by $\{C_{m'}^{j_I}\}_{m'}$. By this containment and the dichotomy \eqref{eq:dichotomy}, there is $C_{m'}^{j_I}$ such that
\begin{equation}\label{eq:will give cim contra}
|C_{m'}^{j_I} \cap C_{m}^{j}| = |C_m^j|\geq \varepsilon\,,   
\end{equation}
so that in particular, $C^{j_I}_{m'} \supset C^j_m$ (up to Lebesgue null sets). Also, by the dichotomy \eqref{eq:Ci cim dichotomy}, we have that either $|C_{m'}^{j_I} \cap \cup_i C_i|=0$ or $|C_{m'}^{j_I} \cap \cup_i C_i|=|C_{m'}^{j_I}|$. By \eqref{eq:contra with the cis} and the fact that $C^{j_I}_{m'} \supset C^j_m$, it must be that $|C_{m'}^{j_I} \cap \cup_i C_i|=0$. But together, this equality and \eqref{eq:will give cim contra} say that $C_{m'}^{j_I}$ is disjoint (up to Lebesgue null sets) from $\cup_i C_i$ and $|C_{m'}^{j_I}|\geq \varepsilon > |F_{j_I}^I| = |C_{m_I}^{j_I}|$. By the ordering \eqref{eq:mon of Cij}, this inequality implies that $m' < m_I$, contradicting the minimality property \eqref{eq:minimality of m} of $m_I$.

\medskip

\noindent{\it Step 4 ($C_i$ is not essentially disconnected by $\{v^*=0\}$)}: 
Recall that we already verified $\{v^*\leq t_j\}$ does not essentially disconnect $C^j_m$ for each $m$ in \eqref{eq:small guy does not disconnect}. Suppose, for a contradiction, that $\{v^*=0\}$ does essentially disconnect $C_i$ for some $i$. Then, in particular, there exist $A,B \subset C_i$ with $|A||B| > 0$ and
\[
    (\partial^* A \cap \partial^* B \cap C_i^{(1)}) \setminus \{v^* = 0\} = 0\,.
\]
In light of \eqref{eq:nestedness of fs} and \eqref{eq:limiting to ci}, for $j$ large enough, if we choose $A_j := A \cap F_j^i$ and $B_j := B\cap F_j^i$, then $A_j,B_j$ give an essential partition of $F_j^i$, which is $C^j_m$ for some $m$, therefore contradicting \eqref{eq:small guy does not disconnect}.

\medskip
\noindent{\it Step 5 (proof of \eqref{eq:ci ft dichotomy})}: Suppose that $t\in T$ and $F_t\subset \{v^* > t\}$ is not essentially disconnected by $\{v^* = t\}$, so that it is part of the ``essential partition" of $U$ by $\{v^* = t\}$ (as in Step 1). Since $\{C_i\}$ form a Lebesgue partition of $\{v^* > 0 \}\cap U$, we must have $|C_i \cap F_t|>0$ for some $i$, where $C_i$ is an increasing limit of sets $\{F_j^i\}_{j\geq j_i}$ belonging to the essential partition of $U$ by $\{v^* = t_j\}$. For all $j$ large enough such that $t_j < t$, since the sets $\{C_m^j\}_m$ defined in Step 1 partition $\{v^* > t_j\} \cap U$ up to a Lebesgue null set, there must be some $C_{m(j)}^j$ such that $|C_{m(j)}^j \cap F_t|>0$. By \eqref{eq:dichotomy}, $|C_{m(j)}^j \cap F_t| = |F_t|$. By \eqref{eq:Ci cim dichotomy}, there is $C_i$ such that $|C_i \cap F_t| = |F_t|$.

\medskip

{\noindent{\it Step 6 (independence of $\{C_i\}$ on choice of $t_j\searrow 0$ and uniqueness)}: Suppose that $\{C_i\}_i$ is one such partition associated to $t_j \searrow 0$, and $\{D_k\}_k$ is another associated to $s_\ell\searrow 0$, with $C_i = \lim_{j\to\infty} F_j^i$ and $D_k = \lim_{\ell\to\infty} E_\ell^k$. To show that they are $\Lcal^{n+1}$-equivalent, it suffices to show that if $|C_i \cap D_k|>0$, then $|C_i| = |C_i \cap D_k| = |D_k|$. Suppose that $|C_i \cap D_k|>0$; we prove that $|C_i \cap D_k| = |C_i|$, with the other equality following similarly. For each $j$, choose $\ell(j)$ such that $s_{\ell(j)} < t_j$. Then $0<|C_i \cap D_k| = \lim_{j\to\infty}| F_j^i \cap E_{\ell(j)}^k|$. By the argument of \eqref{eq:dichotomy} and the fact that $s_{\ell(j)} < t_j$, for each $j$, exactly one of $|F_j^i \cap E_{\ell(j)}^k|=|F_j^i|$ or $|F_j^i \cap E_{\ell(j)}^k|=0$ holds. But since $0<\lim_{j\to\infty}| F_j^i \cap E_{\ell(j)}^k|$, it must be $|F_j^i \cap E_{\ell(j)}^k|=|F_j^i|$ for all sufficiently large $j$. Thus we indeed have $|C_i \cap D_k| = \lim_{j\to\infty} |F_j^i| = |C_i|$. {The same argument shows that $\{C_i\}_i$ is the unique partition satisfying the properties in the lemma, up to relabelling and null sets.}} 

\medskip

{In the last two steps, we show that $\{v^*=0\}=\{u^*=1\}$ is $\mathcal{C}$-spanning if and only if $\{u^*\geq t\}$ is for all $t\in (1/2,1)$. Note that one direction is trivial, namely that $\{u^*\geq t\}$ is $\mathcal{C}$-spanning if $\{u^*=1\}$ is, because $\{u^*=1\}$ contains $\{u^* \geq t\}$. It remains to prove the opposite implication. We fix $(\gamma,\Psi,T)\in \mathcal{T}(\mathcal{C})$, and aim to verify Definition \ref{def:span borel} for $T$.

\medskip

\noindent{\it Step 7}: We record some measure-theoretic facts needed in the last step. First, we claim that
\begin{align}\label{eq:pae v=0 subset v=0}
    \mathcal{H}^n(T \cap \partial^e \{v=0\} \setminus \{v^* =0\}) = 0\,.
\end{align}
To prove this, note that if $x\in \partial^e \{v=0\}$ and $x$ is a Lebesgue point of $v^*$, then $\{v=0\}$ has positive Lebesgue density at $x$ combined with the Lebesgue property implies that $v^* (x) = 0$. Then \eqref{eq:pae v=0 subset v=0} follows from this fact and the fact that $\mathcal{H}^n$-a.e. $x\in \Omega$ is a Lebesgue point of $v^*$. The second fact, which is an immediate consequence of the Lebesgue points theorem and the coarea formula on $T$, is that there exists $A \subset \mathbb{S}^1$ of full $\mathcal{H}^1$-measure such that
\begin{align}\label{eq:good disks}
    \mathcal{H}^n\big( T[s] \setminus (\{v = 0\}^\one \cup \cup_j \{v >t_j \}^\one )\big) =0 \qquad \forall s\in A\,. 
\end{align}

\medskip

\noindent{\it Step 8 ($\{v^*=0\}$ satisfies Definition \ref{def:span borel} on $T$)}: 
Since $u=1-v$ satisfies Definition \ref{def:gen of spanning}, the equivalent characterization of spanning from \cite[Theorem 3.1]{maggi2023plateau} implies that for each $t_j$, there exists a set $S_j \subset \mathbb{S}^1$ of full $\mathcal{H}^1$-measure such that for $s\in S_j$, the essential partition $\{U_m^j\}_m$ of $T$ induced by $K_j = T[s] \cup \partial^* \{v\leq t_j \} = T[s] \cup \partial^* \{u \geq 1-t_j \}$ satisfies
\begin{align}\label{eq:applying MNR1 thm 3.1}
    \mathcal{H}^n \big((T[s] \cap \{v\leq t_j \}^\zero) \setminus \cup_m \partial^* U_m^j  \big) = 0\,.
\end{align}
Recall from \eqref{eq:cant split} (setting $U=T \setminus T[s]$) that each $(U_m^j)^\one$ is either some $(C_{m_1}^j)^\one \subset \{v\leq t_j \}^\zero$ or $(B_{m_2}^j)^\one \subset \{v\leq t_j \}^\one$. Together with density properties of reduced boundaries, this implies that $\mathcal{H}^n( T[s] \cap \{v\leq t_j \}^\zero \cap \cup_m \partial^* B_m^j)=0$, so that \eqref{eq:applying MNR1 thm 3.1} rewrites as
\begin{align}\label{eq:applying MNR1 thm 3.1 2}
    \mathcal{H}^n \big((T[s] \cap \{v\leq t_j \}^\zero) \setminus \cup_m \partial^* C_m^j  \big) = 0\quad \mbox{if }s\in S_j\,.
\end{align}

Now we define
\begin{align}\notag
    S = A \cap \cap_j S_j\subset \mathbb{S}^1\,. 
\end{align}
Since every set in that intersection has the same $\mathcal{H}^1$-measure as $\mathbb{S}^1$, $\mathcal{H}^1(S) = \mathcal{H}^1(\mathbb{S}^1)$. Therefore, to verify that $\{v^*=0\}$ satisfies Definition \ref{def:span borel} on $T$, it suffices to show that if $s\in S$, then for almost every $x\in T[s]$, there exists a partition $\{T_1,T_2\}$ of $T$ with $x\in\partial^e T_1 \cap \partial^e T_2$ and such that $\{v^*=0\} \cup  T[s]$ essentially disconnects $T$ into $\{T_1,T_2\}$. 

So let us fix $s\in S$ and apply the construction of $\{C_i\}_i$ on $U=T\setminus T[s]$. Since $S \subset A$, \eqref{eq:good disks} implies that $\mathcal{H}^n$-a.e. $x\in T[s]$ belongs to either $\{v=0\}^\one$ or some $\{v >t_{j_0} \}^\one$. Also recall that $\mathcal{H}^n$-a.e. $x\in T$ is a Lebesgue point of $v^*$, and that, by standard facts about Caccioppoli partitions, for each $j$,
\begin{align}\notag
 \mbox{$\mathcal{H}^n$-a.e. $x\in T[s]$ belongs to exactly one of: $(C_m^j)^\one$ for some $i$,} \\ \label{eq:cacc part fact} 
 \mbox{$\{v\leq  t_j\}^\one \cup \{v \leq t_j\}^\half$, or $\partial^* C_m^j \cap \partial^* C_{m'}^j$ for some $m\neq m'$}\,.
\end{align}
Thus to prove the desired claim, it suffices construct satisfactory partitions assuming that $x\in T[s]$ is a Lebesgue point of $v^*$, \eqref{eq:cacc part fact} holds at $x$, and that either $x\in \{v=0\}^\one$ or $x\in \{v >t_{j_0} \}^\one$ for some $j_0$.

\medskip

\noindent{\it The case $x\in \{v=0\}^\one$}: Let $A\subset \mathbb{S}^1$ be an open arc on $\mathbb{S}^1$ with $s$ as one of its boundary points and some other $t\in A_1$ as the other boundary point. Let us set $T_1 = \{v^* = 0 \} \cap \Psi(A \cap B_1^n)$. It is straightforward to check that $T_1$, $T_2$ non-trivially partition $T$ up to Lebesgue null sets and $x\in (T_1)^\half \cap T_2^\half \cap T^\one$ (using $x\in \{v=0\}^\one$). It remains to show that $\partial^e T_1 \cap \partial^e T_2 \cap T^\one$ is $\mathcal{H}^n$-contained in $\{v^* = 0\} \cup T[s]$. By $\partial^e T_1 \cap \partial^e T_2 \cap T^\one = \partial^e T_1 \cap T$ (see \cite[(1.11)]{maggi2023plateau}), followed by \eqref{eq:pae v=0 subset v=0} and the fact that $s,t\in A_1$, so that \eqref{eq:good disks} applies at $s$ and $t$, we have
\begin{align}\notag
    \partial^e T_1 \cap \partial^e T_2 \cap T^\one&= \partial^e T_1 \cap T^\one \\ \notag
    &\subset \big[\partial^e \{v^*=0\} \cap \Psi(A \cap B_1^n) \big] \cup \big[ (T[s]\cup T[t]) \cap \partial^e T_1  \big] \\ \notag
    &\overset{\mathcal{H}^n}{\subset} \{v^* = 0\} \cup T[s] \cup \big[T[t] \cap \partial^e T_1 \cap (\{v = 0\}^\one \cup \cup_j \{v >t_j \}^\one)   \big]\,.
\end{align}
Now each $\{v >t_j \}^\one$ is contained in $\{ v^* >0\}^\one$, so $\{v >t_j \}^\one \subset T_1^\zero$ and thus $\{v >t_j \}^\one \cap \partial^e T_1 = \varnothing$. Continuing the previous string of containments, we find
\begin{align}\notag
    \partial^e T_1 \cap \partial^e T_2 \cap T^\one\overset{\mathcal{H}^n}{\subset} \{v^* = 0\} \cup T[s] \cup \{v = 0\}^\one \,.
\end{align}
Now if $x\in \{v=0\}^\one$ and $x$ is a Lebesgue point of $v$, then $v^*(x)=0$. So we conclude that $\partial^e T_1 \cap \partial^e T_2 \cap T^\one$ is $\mathcal{H}^n$-contained in $\{v^* = 0\} \cup T[s]$, as desired.

\medskip

\noindent{\it The case $x\in \{v>t_{j_0}\}^\one$}: If $x\in \{v>t_{j_0}\}^\one$, then $x\in \{v\leq t_j\}^\zero$ for all $j\geq j_0$, and so \eqref{eq:cacc part fact} and \eqref{eq:applying MNR1 thm 3.1 2} imply that for each $j\geq j_0$, there are $m_1(j)$ and $m_2(j)$ such that $x\in \partial^* C_{m_1(j)}^j\cap \partial^* C_{m_2(j)}^j$ for some $m_1(j)\neq m_2(j)$. Now by \eqref{eq:Ci cim dichotomy} and \eqref{eq:minimality of m}, \eqref{eq:limiting to ci}, for $k=1,2$ there exist $i_k$ such that up to Lebesgue null sets, $C_{m_k(j)}^j \subset C_{i_k}$, with $|C_{i_k}\Delta C_{\overline m_k(j)}^j|\to 0$ for some increasing sequence $\{C_{\overline m_k(j)}^j\}_j$. We claim that
\begin{align}\label{eq:coincidence of things for large j}
&\overline{m}_k(j) = m_k(j)\quad \mbox{for large $j$},\quad \mbox{and} \\ \label{eq:i1'neq i2'}
&i_1 \neq i_2  \,.  
\end{align}
First, for \eqref{eq:coincidence of things for large j}, we apply in order \eqref{eq:dichotomy}, \eqref{eq:Ci cim dichotomy}, \eqref{eq:limiting to ci}, and \eqref{eq:dichotomy} again to write
\begin{align}\notag
   0 &< \liminf_{j\to \infty}|C_{m_k(j)}^j| = \liminf_{j\to \infty}|C_{m_k(j)}^j \cap C_{i_k}| = \liminf_{j\to \infty}|C_{m_k(j)}^j \cap C_{\overline m_k(j)}^j|\\ \notag
   &= \liminf_{j\to \infty}\delta_{m_k(j) \overline m_k(j)}|C_{m_k(j)}^j|\,,
\end{align}
which immediately implies \eqref{eq:coincidence of things for large j}. Next, for \eqref{eq:i1'neq i2'}, we apply in order \eqref{eq:disjointness of C_i}, \eqref{eq:limiting to ci}, \eqref{eq:coincidence of things for large j}, and \eqref{eq:dichotomy} with $m_1(j) \neq m_2(j)$ to write
\begin{align}\notag
    \delta_{i_1 i_2}|C_{i_1}|= |C_{i_1}\cap C_{i_2}| = \lim_{j\to \infty}|C_{\overline m_1(j)}^j \cap C_{\overline m_2(j)}^j|= \lim_{j\to \infty}|C_{ m_1(j)}^j \cap C_{ m_2(j)}^j|=0\,;
\end{align}
since $|C_{i_1}|>0$, this implies that $i_1\neq i_2$.

To conclude, we set $T_1 = C_{i_1}$ and $T_2 = T\setminus T_1$. By step four, we have $\partial^e T_1 \cap \partial^e T_2\cap T^\one \subset \{v^*=0\} \cup T[s]$ up to $\mathcal{H}^n$-null sets; in other words, $\{v^*=0\} \cap T[s]$ essentially disconnects $T$ into $\{T_1,T_2\}$. Furthermore, by $C_{m_k(j)}^j \subset C_{i_k}$, the lower Lebesgue density of $C_{i_k}$ at $x$ is at least $1/2$ (the Lebesgue density of $C_{m_k(j)}^j$ at $x$) for $k=1,2$. But since $i_1\neq i_2$, it must be the case that $|C_{i_1} \cap C_{i_2}|=0$, which implies that the Lebesgue density of $C_{i_1}$ and $C_{i_2}$, and thus $T_1\supset C_{i_1}$ and $T_2\supset C_{i_2}$, at $x$ is $1/2$. Thus $x\in \partial^e T_1 \cap \partial^e T_2$, and we have produced a partition of $T$ according to Definition \ref{def:span borel}.} \end{proof}


We may now use Lemma \ref{lemma connected 2} to define cup competitors and hence our admissible class of functions.

\begin{definition}[Cup competitor]\label{d:cup-comp}
    Let $u\in W^{1,2}_{\rm loc}(\Omega;[0,1])$, let $v=1-u$ and let  $B_r(x)\subset\!\subset \Omega$, and let $\{C_i\}_i$ be the essential partition of $\{v^*>0\}\cap B_r(x)$ given by Lemma \ref{lemma connected 2}. Let $v_i = \mathbf{1}_{C_i} v$ and let $\vphi\in C^\infty(B_r(x);[0,1])$ be the semilinear cutoff constructed in Lemma \ref{lemma semilinearcutoff} satisfying $\vphi\big|_{\partial B_r(x)}=1$ and $\vphi \equiv 0$ on $B_{r/2}(x)$. Let $\psi_i$ denote the {semilinear elliptic} replacement of $v_i\big|_{ B_{r/2}(x)}$ defined in Lemma \ref{lemma semilinearreplacement} {(extended by zero to the entirety of $B_r(x)$)}; we note that $\psi_i$ is continuous and $\psi_i>0$ in $B_\frac{r}{2}(x)$. {Note that the latter property is an artifact of the fact that the boundary data is non-negative.}
    

    Letting $g_i=(v_i +  \min\{\vphi, v-v_i\}) \mathbf{1}_{B_r(x) \setminus B_{r/2}(x)} + \psi_i$, we refer to the function $w_i = 1-g_i$ as a \textbf{cup competitor for $u$ associated to $C_i$ in $B_r(x)$}.
\end{definition}

\begin{remark}\label{remark cupcompetitor}
    Geometrically, cup competitors are modifications of $v=1-u$ in a ball $B_r(x_0)$ centered at a point free boundary point $x_0$ (i.e. $v(x_0)=0$), where, in one of the essentially connected components $C_i$ of the set $\{v>0\} \cap B_r(x_0)$, we use a cutoff $\vphi$ to substitute $v_i = \mathbf{1}_{C_i} v$ by the semilinear replacement $\psi_i$ of $v= v-v_i$ in $B_\frac{r}{2}(x_0)$. For all practical purposes, $\psi_i$ behaves like a harmonic replacement (as we will see in detail in Section \ref{sec freqbound}), in particular $\psi_i>0$ in $B_\frac{r}{2}(x_0)$, implying that for the corresponding cup competitor $w_i$, the function $g_i = 1-w_i$ does not have more essentially connected components than $v$.    
    This description around free boundary points motivates our construction, but cup competitors are perfectly well defined at any point in the domain of $v$, even where $v(x_0)>0$ or when $B_r(x_0)\cap \{v>0\}$ only has one essentially connected component.
\end{remark}


We are now in a position to define our admissible class {and corresponding generalizations of \eqref{new model intro} and \eqref{new model volume constrained}}. Assume that $\wire=\mathbb{R}^{n+1}\setminus \Om$ is compact, $\mathcal{C}$ is a spanning class for $\wire$ satisfying \eqref{eq:spanning class assumption}, and that $F$ and $V$ satisfy \eqref{eq:A1}-\eqref{eq:A3}.

\begin{definition}[Admissible class]\label{d:admissible-class}
    Let
    \[
        \Fcal:= \{u\in W^{1,2}_\loc(\Omega;[0,1]) : \text{$\{u^*\geq t\}$ is $\Ccal$-spanning for all $t\in (1/2,1)$}\}\,.
    \]
    Our \textbf{admissible class} is defined to be
    \[
        \Acal := \{u \in \Fcal : \text{any cup competitor for $u$ lies in $\Fcal$}\}\,.
    \]
\end{definition}
First of all, recalling Remark \ref{r:consistency-spanning-cts}, we observe that $\Acal$ contains the admissible class of functions $u \in C(\Omega;[0,1])$ with weak gradient $\nabla u \in L^2_\loc(\Omega)$ used for \eqref{new model intro} and \eqref{new model volume constrained}- see Lemma \ref{l:gen-adm-class-larger} for the proof that cup competitors for continuous functions satisfy the spanning condition. We are now in a position to introduce the generalizations of \eqref{new model intro} and \eqref{new model volume constrained} over this admissible class. {First, however, we must write the decay at infinity condition from \eqref{new model intro} in a suitable weak sense; when $n\geq 2$ we introduce the space
\begin{equation}\label{eq:alter char of h1dot}
    D^{1,2}_n(\Omega;[0,1]) := \{v: v\in L^{2(n+1)/(n-1)}(\Om;[0,1]),\, \nabla v \in L^2(\Om) \}\,.
\end{equation}
By the Gagliardo-Nirenberg-Sobolev inequality and an extension argument (to account for the compact $\wire$), $D^{1,2}_n(\Om;[0,1])$ is closed under the topology induced by the norm $\|\cdot\|_{L^{2(n+1)/(n-1)}(\Om)}+ \|\nabla \cdot \|_{L^2(\Om)}$. If $n=1$, we cannot use this space since $2$ is the critical Sobolev exponent, so we set
\begin{equation}\label{eq:decay space in the plane}
    D^{1,2}_1(\Om;[0,1]):= \{v:0\leq v \leq 1,\, \nabla v \in L^2,\, \mathcal{L}^2(\{v>t \})<\infty \ \forall t\in (0,1) \}\,.
\end{equation}
Unlike the case when $n\geq 2$, this space is not closed under the norm induced by the $L^2$ norm of the gradient; however, due to our assumption \eqref{eq:sharp existence assumption 2} on $F$ when $n=1$, it will be closed under the convergence one obtains for a minimizing sequence for the generalized formulation of \eqref{new model intro}, which we now state. The generalization of \eqref{new model intro} and \eqref{new model volume constrained} can thus be respectively formulated as
\begin{eqnarray}
\label{new model intro 2}
\inf\Big\{\int_{\Omega} |\nabla u|^2 +F(u)\,:\,u\in D^{1,2}_n(\Omega;[0,1]) \cap \Acal \, \Big\}
\end{eqnarray}
and
\begin{equation}
\inf\left\{\int_{\Omega} |\nabla u|^2 +F(u)\, : \,u\in \Acal, \ \int_{\Omega}V(u)=1 \, \right\}\,. \label{new model volume constrained 2} 
\end{equation}

Let us verify that minimizers of \eqref{new model intro 2} and \eqref{new model volume constrained 2} satisfy the Euler-Lagrange equations \eqref{eq innervariation}-\eqref{eq:differential inequality}.

	\begin{theorem}[Euler-Lagrange equations]\label{thm:EL equations}
		If $\wire=\mathbb{R}^{n+1}\setminus \Om$ is compact, $\mathcal{C}$ is a spanning class for $\wire$ satisfying \eqref{eq:spanning class assumption}, $F$ and $V$ satisfy \eqref{eq:A1}-\eqref{eq:A3}, and $u$ is a minimizer for \eqref{new model intro 2} or \eqref{new model volume constrained 2} then, denoting by $\Phi = F$ in the former case and $\Phi= F-\lambda V$ for suitable $\lambda\in \mathbb{R}$ in the latter, the relations \eqref{eq innervariation}-\eqref{eq:differential inequality} hold. 
        
	\end{theorem}

    The proof of Theorem \ref{thm:EL equations} follows the argument of \cite[Theorem 1.3]{maggi2023hierarchy}, combined with the observation that the variations used to derive \eqref{eq innervariation}-\eqref{eq:differential inequality} lie in the admissible class $\Acal$ whenever the minimizer $u$ does, and if $u$ is continuous then so are the variations. 

	\begin{proof}[Proof of Theorem \ref{thm:EL equations}]
		Beginning with \eqref{eq innervariation}, we first remark that if $\{f_t\}_{-t_0<t<t_0}$ is a smooth one-parameter family of diffeomorphisms with $\{f_t \neq \id\} \cc \Omega$ and $f_0=\id$, then $f_t^{-1}\circ \gamma \in \mathcal{C}$ whenever $\gamma\in \mathcal{C}$ (by the homotopic closedness of $\mathcal{C}$). Since {$\{u^*\geq t\}$ is $\Ccal$-spanning according to Definition \ref{def:span borel} for triples $(\gamma,\Psi,T)$ if and only if $\{(u\circ f_t)^* \geq t\}$ is for triples $(f_t^{-1}\circ\gamma,\Psi,T)$,}
        we deduce that $u\circ f_t$ satisfies the spanning constraint if and only if $u$ satisfies it. {Thus the same applies to any cup competitor, so we remain in the class $\Acal$ when performing inner variations}. Using these variations which preserve the spanning condition {and remain in $\Acal$}, the inner variational equation \eqref{eq innervariation} can be deduced from the formulas in Lemma \ref{lemma volume fixing} by a standard computation, following for example the derivation of the constant mean curvature condition for volume-constrained minimizers of perimeter \cite[Theorem 17.20]{maggi2012sets}.
		
		The idea behind \eqref{EL outer} and \eqref{eq:differential inequality} is to find a way to make outer variations which preserve the spanning constraint, the condition $0\leq u \leq 1$, {and remain in $\Acal$ when $u\in \Acal$}. The computations may be repeated exactly as in \cite[Proof of Theorem 1.3]{maggi2023hierarchy} by taking $\varepsilon=1$ there, so we only give the outline. For \eqref{eq:differential inequality}, we construct admissible variations via the following observation: if $u$ minimizes \eqref{new model intro} or \eqref{new model volume constrained}, $\varphi \in C_c^1(\Om;[0,\infty))$, and $h\in \mathrm{Lip}_c([0,1);[0,\infty))$, then $u+\sigma h(u)\varphi$ satisfies
		\begin{align*}
			{\mbox{$\{(u + \sigma h(u)\varphi)^* \geq t\}=\{u^*\geq t\}$ \qquad for $t\in [1-\gamma,1]$ for small enough $\gamma>0$}}\,,
		\end{align*}
        and
        \begin{align*}
            \mbox{$0\leq u + \sigma h(u) \varphi \leq 1$ \qquad for small enough $\sigma>0$}\,.
        \end{align*}
        {It remains to verify that cup competitors of such variations lie in $\Fcal$. Observe that since $\spt \, h \cc [0,1)$ and $\varphi \in C_c^1(\Omega;[0,\infty))$, there is $t_0\in (0,1)$ and $\sigma_0>0$ such that $\{u=t\}=\{u+\sigma h(u)\varphi = t\}$ for all $t\in (t_0,1)$ and $0<\sigma<\sigma_0$. Since the essentially connected components are limits of superlevel sets $\{u>t\}$ and $\{u+\sigma h(u)\varphi>t\}$ as $t\nearrow 1$, the equality of level sets near $1$ implies that the procedure in Lemma \ref{lemma connected 2} yields the same partition for $u$ and $u+\sigma h(u)\varphi$. By the uniqueness of such partitions, the essentially connected components of $\{1-u>0\}$ and $\{1-u-\sigma h(u)\varphi>0\}$ are the same. Fix such a component $C_i$ and set $v=1-u$, $v^\sigma = 1 - u - \sigma h(u)\varphi$. Then $v_i=\mathbf{1}_{C_i}v$ and $v_i^\sigma=\mathbf{1}_{C_i}v_i^\sigma$ share the same zero set. Recall from Definition \ref{d:cup-comp} that the cup competitors for $u$ and $u+\sigma h(u)\varphi$ are $1-g_i$ and $1-g_i^\sigma$, where $g_i=(v_i +  \min\{\vphi, v-v_i\}) \mathbf{1}_{B_r(x) \setminus B_{r/2}(x)} + \psi_i$ and $g_i^\sigma = (v_i^\sigma +  \min\{\vphi, v-v_i^\sigma\}) \mathbf{1}_{B_r(x) \setminus B_{r/2}(x)} + \psi_i$, with $\psi_i, \psi_i^\sigma >0$. It follows from this formula and the equality of zero sets for $v_i$ and $v_i^\sigma$ that $g_i$ and $g_i^\sigma$ have the same zero set. Since $1-g_i$ has $\mathcal{C}$-spanning $1$-level set, we conclude that $1-g_i^\sigma$ does as well.} 
        Therefore, after fixing the volume constraint if necessary by using the volume fixing variations in Lemma \ref{lemma volume fixing}, {with which we again remain in $\Acal$ by the same reasoning as for inner variations}, we have a one-parameter family $\{u+\sigma h(u)\varphi\}_\sigma$ of admissible outer variations by positive test functions with which to test minimality. The inequality \eqref{eq:differential inequality} is found by testing the minimality of $u$ against $u+\sigma h_k(u)\varphi$, then letting $\sigma\to 0$, and finally sending $h_k\nearrow\mathbf{1}_{[0,1)}$ in the resulting inequality (see \cite[Proof of Theorem 1.3, Steps 1-2]{maggi2023hierarchy}). We remark that this last step of this computation utilizes the fact that
		\begin{equation}\notag
			\Phi'(1)=F'(1)-\lambda V'(1)=0\,,
		\end{equation}
		which follows from our assumption \eqref{eq:A2}.
		
		The argument for \eqref{EL outer} is similar to \eqref{eq:differential inequality} and follows precisely \cite[Proof of Theorem 1.3, Steps 3-7]{maggi2023hierarchy}. The outer variations we wish to use are of the form $u+\sigma h(u)\varphi$ (up to volume constraints, which are handled using Lemma \ref{lemma volume fixing} again), but this time with $\varphi \in C_c^1(\Omega)$, $|\sigma|<\sigma_0$, and $h\in \mathrm{Lip}_c([0,1))$. Now since $h$ is Lipschitz with $\spt \,h \cc [0,1)$, there is $\sigma_0>0$ small enough (depending on $h$ and $\varphi$) such that
		\begin{align*}
			{\mbox{$\{(u + \sigma h(u)\varphi)^*\geq t\}=\{u^*\geq t\}$}}{\quad}\mbox{and}\quad \mbox{$ u + \sigma h(u) \varphi \leq 1$ for $|\sigma|<\sigma_0$.}
		\end{align*}
		{First of all, note that for justifying that cup competitors of these variations remain in $\Fcal$, the very same argument as for \eqref{eq:differential inequality} applies, since again $\spt h \cc [0,1)$.} In addition, since $\sigma h(u)\varphi$ is no longer non-negative $\Lcal^{n+1}$-a.e., an extra argument is necessary to ensure that the variations remain non-negative. This is achieved by showing that
		\begin{equation}\label{eq:nontriviality of u on conn comp}
			\mbox{$u$ is lower-semicontinuous and $\Omega'\subset\Omega$ open, connected $\implies$ $u\equiv 0$ on $\Omega'$ or $u>0$ on $\Omega'$}\,.
		\end{equation}
		\eqref{eq:nontriviality of u on conn comp} then allows one to assume without loss of generality that $u>0$ on $\spt\, \varphi$, so that $u+\sigma h(u)\varphi>0$ on $\spt \varphi$ for small enough $\sigma$. The proof of \eqref{eq:nontriviality of u on conn comp} follows from the fact that $e^{-|\sup \Phi''|r}\fint_{B_r(x_0)} u$ is decreasing for small $r$, which is derived by testing \eqref{eq:differential inequality} with $\{\varphi_k\}$ approximating $[(r^2-|x-x_0|^2)/2]_+$ and using the property $\Phi'(0)=0$ (which follows from \eqref{eq:A2}). With these variations in hand, one tests the minimality of $u$ against $u+\sigma h(u)\varphi $, sends $\sigma \to 0$, approximates $\mathbf{1}_{[0,t]}$ by $h_k^t\overset{k\to \infty}{\to}\mathbf{1}_{[0,t]}$ for each $t\in (0,1)$, and integrates the resulting equality in $t$ {(cf. Lemma \ref{l:almost-subharm-v})}. 
    \end{proof}

{Notice that by definition, a minimizer $u$ of \eqref{new model intro 2} or \eqref{new model volume constrained 2} will have lower energy $\int_\Omega |\nabla u|^2 + F(u)$ than any of its cup competitors. However, when one additionally has the volume constraint, one can only expect almost-minimality of the energy for minimizers $u$.} As consequence of Theorem \ref{thm:EL equations}, we prove two corollaries: an almost-minimality inequality with quadratic error for a suitably modified functional and the corresponding Euler-Lagrange equations \eqref{eq modified iv}-\eqref{eq modified ineq} for $v=1-u$ for minimizers $u$ of our variational problems.

{
\begin{corollary}[Quadratic almost-minimality for Lagrangian functional]\label{corollary qdetachment}
     Let $\wire=\rn \setminus \Om$ be compact and let $\mathcal{C}$ be a spanning class for $\wire$ satisfying \eqref{eq:spanning class assumption}. Suppose that $F$, $V$ satisfy \eqref{eq:A1}-\eqref{eq:A3}, and $u$ is a minimizer for \eqref{new model volume constrained 2}. Then there exist $\tilde{C}>0$, $r_0>0$ and $\eta_0$, depending on $F$, $V$, $\wire$, $\mathcal{C}$, and $u$, such that for all $w\in W^{1,2}(\Om;[0,1])$ with $\{w\geq t\}$ $\mathcal{C}$-spanning $\wire$ for all $t\in (1/2,1)$, $\{u\neq w\}\subset B_{r_0}(x_0)\cap \Om$ for some $x_0\in \Omega$, and $\left|\int_{B_{r_0}(x_0)\cap\Omega} V(u) - V(w) \, dx\right| <\eta_0$
\begin{equation}\label{eq:almost min uniformity}
\int_\Om |\nabla u|^2 + F(u) - \lambda V(u)\,dx \leq \int_\Om |\nabla w|^2 + F(w)- \lambda V({w})\,dx + \tilde{C}\left\|u-w\right\|_{L^2(\Omega)}^2\,.    
\end{equation}
\end{corollary}
\begin{proof}
If $u$ is constant on every connected component of $\Omega$, then \eqref{eq:almost min uniformity} is trivial. Assuming then that $u$ is not constant on some connected component of $\Omega$, we can choose two balls $B_{r_1}(z_1)$ and $B_{r_2}(z_2)$ with $|z_1-z_2|> r_1+r_2$ such that $u$ is not constant on either. Let $\{f_t^1\}_t$, $\{f_t^2\}_t$ be two families of of volume fixing variations according to Lemma \ref{lemma volume fixing} with $A= B_{r_1}(z_1)$ and $A= B_{r_2}(z_2)$ respectively, with constants $\eta_0$, $t_0$, $\beta_0$, and $C_0$ as in that lemma valid for both families. By decreasing $t_0$, let us also assume it is small enough so that the Taylor expansions in Lemma \ref{lemma volume fixing}.i hold for both families with uniform constants. Let $r_0$ be small enough so that $\omega_{n+1}r_0^{n+1}<\eta_0$ and so that the ball $B_{r_0}(x_0)$ is disjoint from at least one of $B_{r_1}(z_1)$ or $B_{r_2}(z_2)$. Then for any $w$ and $B_{r_0}(x_0)$ as in the statement of the lemma, with $B_{r_0}(x_0)$ disjoint from $B_{r_i}(z_i)$ for some $i\in \{1,2\}$, let $\tilde{w}$ be the function which is $w$ on $\Omega \setminus B_{r_i}(z_i)$ and $u\circ f_t^i$ on $B_{r_i}(z_i)$, where $t$ is chosen according to Lemma \ref{lemma volume fixing}.ii so that $\int_\Omega V(\tilde{w})=1$, that is $\int_{B_{r_i}(z_i)}V(u \circ f_t^i) = \int_{B_{r_i}(z_i)}V(u) + \int_{B_{r_0}(x_0)}V(u) - V(w)\, dx$. Then we may test the minimality of $u$ against $\tilde{w}$ and add $-\lambda=-\lambda\int_\Omega V(u) = -\lambda\int_\Omega V(\tilde{w})$ to both sides, yielding
\begin{equation}\label{eq:first test}
   \int_\Omega |\nabla u|^2 + F(u) - \lambda V(u) \, dx \leq \int_\Omega |\nabla \tilde{w}|^2 + F(\tilde{w}) - \lambda V(\tilde{w})\,.
\end{equation}
Let $\eta=\int_{B_{r_0}(x_0)} V(u) - V(w)$, and note that by Lemma \ref{lemma volume fixing}.i, $|t|\leq C_1|\eta|$ for some $C_1$ depending only on the families $\{f_t^i\}$. By Taylor expanding $\int_{B_{r_i}(z_i)}|\nabla \tilde{w}|^2 + F(\tilde{w})$ and using the vanishing inner variation \eqref{eq innervariation} from Theorem \ref{thm:EL equations}, we obtain
\begin{equation}\label{eq:first expansion}
    \int_{B_{r_i}(z_i)}|\nabla \tilde{w}|^2 + F(\tilde{w})- \lambda V(\tilde{w})\, dx = \int_{B_{r_i}(z_i)}|\nabla u|^2 + F(u) - \lambda V(u) + {\rm O}(t^2)\,,
\end{equation}
where, by Lemma \ref{lemma volume fixing}.i and $|t| \leq C_1|\eta|$, we may estimate the error in the previous equality by
\begin{equation}\notag
  |{\rm O}(t^2)| \leq C_2t^2\int_{\Omega}|\nabla u|^2 + F(u) + |\lambda| V(u) \leq C_3\eta^2
\end{equation}
for some universal constant $C_3$. Inserting this estimate into \eqref{eq:first expansion} and then inserting the modified \eqref{eq:first expansion} into \eqref{eq:first test} and using that $w=u$ in $B_{r_i}(z_i)$ while $w=\tilde{w}$ outside of $B_{r_i}(z_i)$, we find that
\begin{equation}\label{eq:better estimate}
    \int_\Omega |\nabla u|^2 + F(u) - \lambda V(u) \leq \int_\Omega |\nabla w|^2 + F(w) - \lambda V(w) + C_3\eta^2\,.
\end{equation}
But by H\"{o}lder's inequality and the Lipschitz regularity of $V$, we may estimate
\begin{equation}\notag
 |\eta| \leq \int_{B_{r_0}(x_0)}|V(u) - V(w)| \leq {\rm Lip}\, V (\omega_{n+1}r_0^{n+1})^{1/2}\|u-w\|_{L^2(\Omega)}\,.   
\end{equation}
Plugging this estimate into \eqref{eq:better estimate} gives \eqref{eq:almost min uniformity}.
\end{proof}
}

\begin{corollary}[Euler-Lagrange equations for $v$]\label{c:EL-v}
	Let $\wire=\rn \setminus \Om$ be compact and let $\mathcal{C}$ be a spanning class for $\wire$ satisfying \eqref{eq:spanning class assumption}. Suppose that $F$, $V$ satisfy \eqref{eq:A1}-\eqref{eq:A3}, and $u$ is a minimizer for \eqref{new model intro 2} or \eqref{new model volume constrained 2}, respectively. Then, setting $\Phi = F$ in the former case and $\Phi= F-\lambda V$ for suitable $\lambda\in \mathbb{R}$ in the latter case, $G(t)=\Phi(1-t)-\Phi(1)$ and $v=1-u$ satisfy \eqref{eq:g assumptions} and \eqref{eq modified iv}-\eqref{eq modified ineq}, respectively, for an appropriate choice of non-negative Radon measure $\mu$ on $\Om$.
\end{corollary}

\begin{proof}
	Let $u$ be a minimizer for \eqref{new model intro 2} or \eqref{new model volume constrained 2}. We first check that $G$ satisfies \eqref{eq:g assumptions}. The fact that $G$ is $C^2$ follows from \eqref{eq:A1}, and trivially $G(0)=0$. Also, due to \eqref{eq:A2},
	\begin{equation}\notag
		G'(0)=-\Phi'(1)= \begin{cases}
			-F'(1)=0 &\quad \mbox{if $\Phi=F$}\\ 
			-F'(1) + \lambda V'(1)=0 &\quad \mbox{if $\Phi=F-\lambda V$}\,.
		\end{cases}
	\end{equation}
    Similarly, we have that $G'(1)=0$ since $F'(0)=V'(0)=0$. Next, \eqref{eq modified iv}-\eqref{eq modified out} for $v=1-u$ follow from substituting $G$ and $v$ into the criticality conditions \eqref{eq innervariation}-\eqref{EL outer} from Theorem \ref{thm:EL equations}, which applies to $u$ since \eqref{eq:A1}-\eqref{eq:A3} are satisfied. The existence of a measure $\mu$ such that \eqref{eq modified ineq} holds follows directly from the differential inequality \eqref{eq:differential inequality} and the identification of monotone linear functionals on $C_c^\infty(\Om)$ with non-negative Radon measures \cite[pg 53]{EG}, as in the proof of Lemma \ref{lemma restriction subsol}.
\end{proof}

\subsection{Lower frequency bound {and regularity} for minimizers: Proof of Theorem \ref{thm:main regularity theorem}.i}\label{sec freqbound} {In this section we obtain a lower frequency bound for minimizers of \eqref{new model intro 2} and \eqref{new model volume constrained 2} and combine it with Theorem \ref{thm:holder reg for crit points} to prove Theorem \ref{thm:main regularity theorem}.i. We are always using the precise representative $v^*$ throughout this section, which we simply denote as $v$ for simplicity of notation.}

We begin with deriving almost-subharmonicity equations associated to the ``component functions"  $v_i$ arising from restricting $v=1-u$ to essentially connected components $C_i$ of balls.

\begin{lemma}\label{eq lemma eq comp}
Let $\wire=\rn \setminus \Om$ be compact and let $\mathcal{C}$ be a spanning class for $\wire$ satisfying \eqref{eq:spanning class assumption}. Suppose that $F$, $V$ satisfy \eqref{eq:A1}-\eqref{eq:A3}, and $u$ is a minimizer for \eqref{new model intro 2} or \eqref{new model volume constrained 2}, respectively. Let $G$ be as in Corollary \ref{c:EL-v} for these respective problems, let $v=1-u$, let $x_0 \in \{v=0\}$, and let $r>0$ be such that $B_r(x_0) \subset \Omega$. Let $C_i$ be the essential connected components of $\{v>0\}\cap B_r(x_0)$ given by Lemma \ref{lemma connected 2}.  The functions $v_i = \mathbf{1}_{C_i}v$ are in $W^{1,2}(B_r(x_0))$ and satisfy $2\Delta v_i = G'(v_i) + \mu_i$  distributionally in $B_r(x_0)$ for some non-negative Radon measures $\mu_i$. Similarly, $w_i = v-v_i$ are in $W^{1,2}(B_r(x_0))$ and also satisfy $2\Delta w_i =  G'(w_i) + \nu_i$ distributionally on $B_r(x_0)$ for some non-negative Radon measures $\nu_i$.
\end{lemma}

\begin{proof}
     Let us first demonstrate that $v_i \in W^{1,2}(B_r(x_0))$. Recall from Lemma \ref{lemma connected 2} that $C_i$ is an increasing limit of sets of finite perimeter $F^i_j \subset \{v > t_{j}\} \cap B_r(x_0)$ for $j \geq j_i$, for some sequence $t_{j} \downarrow 0$ as $j\to\infty$. Consider the associated sequence of functions $v_{i,j}= (v-t_{j}) \mathbf{1}_{F^i_j}$. Since $v|_{F^i_j} \in W^{1,2}$, one may test $v_{i,j}$ against $\mathrm{div} X$ for $X \in C_c^\infty(B_r(x_0);\mathbb{R}^{n+1})$ to explicitly verify that the weak derivative $\nabla v_{i,j}$ is $\nabla v \mathbf{1}_{F^i_j} \in L^2(B_r(x_0))$ (with $L^2$ norm bounded independently of $j$), namely
    \[
        \int_{B_r(x_0)} v_{i,j} \, \mathrm{div} X \, dx = - \int_{F^i_j} \nabla v \cdot X\, dx \qquad \text{for every $X \in C_c^\infty(B_r(x_0);\mathbb{R}^{n+1})$.}
    \]
    Note that the boundary term vanishes due to the fact that $\partial^* F^i_j \cap B_r(x_0) \subset \{v= t_{j}\}$, as guaranteed by Lemma \ref{lemma connected 2}. We may now use the Dominated Convergence Theorem to  deduce that $v_{i,j} \to v_i$ in $L^2$ as $j\to \infty$, which in turn allows us to pass the above identity for the weak derivative to the limit $v$, with domain $C_i$ on the right-hand side. In conclusion, $v_i$ satisfies
     \begin{equation*}\label{eq IBP}
           \int_{B_r(x_0)} v_i \, \mathrm{div} X \, dx = - \int_{C_i} \nabla v \cdot X\, dx \qquad \text{for every $X \in C_c^\infty(B_r(x_0);\mathbb{R}^{n+1})$}
     \end{equation*}
 
    By taking $X = e_j \varphi$ for any $j \in \{1,\cdots , n+1\}$, where $\{e_j\}_{j=1}^{n+1}\subset \R^{n+1}$ is the standard orthonormal basis and $\varphi\in C_c^1(B_r(x_0))$, we deduce that for each $j$ we have
       \begin{equation*}
           \int_{B_r(x_0)} v_i \, \partial_j \varphi \, dx = - \int_{C_i} \partial_j v \varphi\, dx \qquad \text{for every $X \in C_c^\infty(B_r(x_0);\mathbb{R}^{n+1})$}
     \end{equation*}
     implying that $v_i \in W^{1,2}(B_r(x_0))$ and that $\nabla v_i = \nabla v \mathbf{1}_{C_i}$.\\

    Let us now demonstrate that each $v_i$ satisfies $2\Delta v_i = G'(v_i) + \mu_i$ for some non-negative Radon measure $\mu_i$. Recalling the proof of Corollary \ref{c:EL-v}, observe that it suffices to demonstrate that
    \begin{equation}\label{e:EL-ineq-u_i}
        0\leq  \int_{B_r(x_0)} 2\nabla u_i \cdot \nabla \varphi + \Phi'(u)\, \varphi \,dx \qquad \text{for all $\varphi \in C_c^1({B_r(x_0)};[0,\infty))$,}
    \end{equation}
    where $u_i = \mathbf{1}_{C_i} u$. Now we proceed as in \cite[Proof of Theorem 1.3, Steps 1-2]{maggi2023hierarchy}, testing the minimality of $u$ for our variational problem against the 1-parameter family of competitors $u_{i,t} := u + t h(u_i) \varphi$ with $h\in C_c^\infty([0,1);(0,\infty))$ (composed with a suitable diffeomorphism in the volume constrained case) and differentiating in $t$. For a vector field $X\in C_c^\infty({B_r(x_0)};\mathbb{R}^{n+1})$ (chosen suitably in the volume constrained case), this yields (c.f. final displayed equation of \cite[Proof of Theorem 1.3, Step 1]{maggi2023hierarchy})
    \begin{align*}
        0 &\leq \int_{B_r(x_0)}\Big(|\nabla u|^2 + \Phi(u) \Big) \mathrm{div} X - 2 \langle \nabla u, \nabla u \nabla X \rangle \,dx \\
        &\qquad+ \int_{B_r(x_0)} 2 h'(u_i)\varphi \langle \nabla u, \nabla u_i \rangle + 2 h(u_i) \langle \nabla u, \nabla \varphi \rangle \,dx + \int_{B_r(x_0)} \Phi'(u) h(u_i) \varphi\,dx \\
        &= \int_{B_r(x_0)} 2 h'(u_i)\varphi |\nabla u_i|^2 + 2 h(u_i) \langle \nabla u, \nabla \varphi \rangle \,dx + \int_{B_r(x_0)} \Phi'(u) h(u_i) \varphi\,dx\,.
    \end{align*}
    Now we may choose a monotone increasing sequence of non-negative functions $h_k$ with $h_k' \leq 0$ whose pointwise limit is 1 on $[0,1)$ as $k\to \infty$, and use the Monotone Convergence Theorem in the above inequality with this sequence in place of $h$, to obtain \eqref{e:EL-ineq-u_i} as desired. Here we have crucially used the fact that $h_k'(u_i)\varphi |\nabla u_i|^2 \leq 0$.

    It remains to verify that $w_i$ satisfy $2\Delta w_i = G'(w_i) + \nu_i$ for non-negative Radon measures $\nu_i$. Notice that $v = \sum_{i=1}^\infty v_i$ in $W^{1,2}(B_r(x_0))$ and $G(v)= \sum_{i=1}^\infty G(v_i)$ a.e. in $B_r(x_0)$, implying that $\mu = \sum_{i=1}^\infty \mu_i$ as Radon measures. Thus, $w_i \in W^{1,2}(B_r(x_0))$ and $2\Delta w_i = G'(w_i)+\nu_i$ with $\nu_i = \mu-\mu_i$ which is non-negative since $\mu = \sum_{j=1}^\infty \mu_j \geq \mu_i$.\end{proof}

In the next few lemmas we generalize key properties of harmonic functions to solutions of the semilinear equation $2\Delta \psi = G'(\psi)$. We start proving existence of solutions to this PDE and some of its properties in two particular settings.

\begin{lemma}\label{lemma semilinearreplacement}
   Let $r >0$, let $G$ satisfy \eqref{eq:g assumptions}, and let $w \in W^{1,2}(B_r;[0,1])$. Then there exists a function $\psi \in C^2(B_r)\cap W^{1,2}(B_r)$ satisfying $0<\psi<1$ in $B_r$, and weakly solving the boundary value problem
 \begin{equation}\label{eq semreplacement}
    \begin{cases}
        2 \Delta \psi =G'(\psi)\, \mbox{ in $ B_r$}\\
        \psi =w \, \mbox{ on $\partial B_r$}.
    \end{cases}
\end{equation}
Furthermore, these solutions satisfy:
\begin{enumerate}
    \item If $w_1 \leq w_2$ as traces in $\partial B_r$, then corresponding semilinear replacements $\psi_1$ and $\psi_2$ satisfy $\psi_1 \leq \psi_2$ in $B_r,$
    \item there exists $r_0>0$ depending only on the Lipschitz constant of $G'$ such that if $r \in (0, r_0)$, we have that $\Vert \psi \Vert_{L^\infty(B_r)}\leq C \Vert w \Vert_{L^\infty(\partial B_r)}$.
\end{enumerate}
\end{lemma}
\begin{proof}
   Set $K = \Lip(G')>0$, and consider the function $H(t) = G'(t)-Kt$ and the operator $L = 2\Delta-K$. Notice that by construction $H(t)$ is a decreasing function and that $f$ satisfies $2\Delta f = G'(f)$ if and only if $L(f) = H(f)$. Since $G'(1)=0$ and $0 \leq w \leq 1$, the constant function $1$ is a supersolution to \eqref{eq semreplacement}, similarly  $0$ is a subsolution to the same problem. So, we can apply the method of monotone iterations to find $\psi$. More precisely, define recursively $\psi_0 = 0$ and $\psi_{k+1}$ as the unique solution to the linear problem
    \begin{equation}\label{eq lp}
    \begin{cases}
       L \psi_{k+1} = H(\psi_k)\, \mbox{ in $ B_r$}\\
        \psi_{k+1} =w \, \mbox{ on $\partial B_r$}.
    \end{cases}
\end{equation}
This sequence is pointwise monotonically increasing. Indeed, reasoning by induction we see first that the inequality $\psi_1 > 0$ follows from a direct application of the strong maximum principle for solutions of $L$, noting that $H(0)=0$. Assuming $\psi_k \geq \psi_{k-1}$ for $k\geq 1$, we have that if we subtract the equations for $\psi_{k+1}$ and $\psi_k$ and test the difference with $\phi = (\psi_{k+1}-\psi_k)_-$ (bearing in mind the convention $f= f_+ -f_-$) we obtain 
\begin{equation}\label{eq itmon}
    \int_{B_r}  2|\nabla \phi|^2 + K \phi^2 =  \int_{B_r} [H(\psi_{k})-H(\psi_{k-1})] \phi \leq 0.
\end{equation}
If $\xi$ is any supersolution to \eqref{eq itmon}, in particular the constant supersolution $\xi = 1$, a similar argument with $\xi$ in place of $\psi_{k+1}$, noting that on $\partial B_r$ we have $(\xi-\psi_k)_- = (\xi-w)_-=0$, also shows that $ \psi_k \leq \xi$ for any $k \in \N$. So, by an elementary compactness argument exploiting the uniform $L^\infty$ and $W^{1,2}$ boundedness of the sequence, we deduce that $\psi_k \to \psi$ in $W^{1,2}$ with $\psi\in L^\infty$ weakly solving \eqref{eq semreplacement}. The boundedness of $\psi$ and the Lipschitz regularity of $G'$ implies via standard elliptic regularity that $\psi \in C^2(B_r)$. Notice now that since $G'(1)=0$, $\psi$ satisfies
$\Delta (1-\psi) = c(x)(1-\psi)$ with $|c(x)|\leq K$. Implying by the strong maximum principle that $\psi <1$. Similarly, since $G'(0)=0$, we have that
\begin{equation}\label{eq linearization}
    \Delta \psi = c(x)\psi
\end{equation}
with $|c(x)|\leq K$ which implies by the maximum principle that $w>0$ in $B_r$.

On the other hand, let $w_1, w_2 \in W^{1,2}(B_r; [0,1])$ with $w_1 \leq w_2$ as traces in $\partial B_r$ and let $\psi_1$ and $\psi_2$ be their corresponding semilinear replacements. Notice that $\psi_2$ is a supersolution to the problem satisfied by $\psi_1$ since $w_2\geq w_1$, so the monotone sequence obtained iterating from zero with boundary data $w_1$ always lies below $\psi_2$, implying that $\psi_1\leq \psi_2$.

Finally, we can apply standard elliptic estimates to \eqref{eq linearization},e.g. \cite[Theorem 3.7]{gilbarg1977elliptic}, to deduce that
\begin{equation}\label{eq absorbtion}
    \Vert \psi\Vert_{L^\infty(B_r)} \leq C \Vert w \Vert_{L^\infty(\partial B_r)}+Cr^2 \Vert \psi \Vert_{L^\infty( B_r)},
\end{equation}
 which implies $\Vert \psi \Vert_{L^\infty(B_r)}\leq C \Vert w \Vert_{L^\infty(\partial B_r)}$ for $r< r_0$ sufficiently small.
\end{proof}

\begin{lemma}\label{lemma semilinearcutoff}   
     Let $r>0$ and let $G$ satisfy \eqref{eq:g assumptions}. Given  $L \in (0,1]$ there exists a non-negative radial function $\varphi_L \in C^2(\overline{B_r \setminus B_\frac{r}{2}})$, with $\varphi_L>0$ in $B_r\setminus \overline{B_\frac{r}{2}}$, and satisfying the BVP
 \begin{equation}\label{eq semrcutoff}
    \begin{cases}
        2 \Delta \varphi_L =G'(\varphi)\, \mbox{ in $ B_r\setminus B_\frac{r}{2}$}\\
        \varphi_L =0 \, \mbox{ on $\partial B_\frac{r}{2}$},\\
          \varphi_L =L \, \mbox{ on $\partial B_r$}.
    \end{cases}
\end{equation}
Furthermore, these solutions satisfy:
\begin{enumerate}
    \item $\varphi_{L}\leq \varphi_{M}$ if $L\leq M$.
    \item there exists $r_0>0$ depending only on the Lipschitz constant on $G'$ such that if $r \in (0, r_0)$, we have that $\Vert \nabla \varphi_L\Vert_{L^\infty(B_r \setminus B_\frac{r}{2})}\leq \frac{C}{r} L$.
\end{enumerate}
\end{lemma}
\begin{proof}
    As in the proof of Lemma \ref{lemma semilinearreplacement}, we can construct $\varphi_L$ applying the method of monotone iterations, starting from the constant zero function as in Lemma \ref{lemma semilinearreplacement} and using the constant function $1$ as the reference supersolution. If $L, M \in (0,1]$, with $L \leq M$ we note that $\varphi_M$ is a supersolution to the problem satisfied by $\varphi_L$, so at each step of the iteration for $\varphi_L$ the corresponding solution lies below $\varphi_M$, implying that $\varphi_L \leq \varphi_M$. Also notice that since we are starting from the zero solution, at the first step of the iteration we are solving a linear problem with radial right-hand side and radial boundary date, so we can use the maximum principle inductively to guarantee that at each step we obtain a radial function, for this reason we have that the limiting function $\varphi_L$ is necessarily radial. 
    
    On the other hand, using standard elliptic estimates applied to solutions of $2\Delta \varphi = G'(\varphi) = c(x) \varphi$ (see e.g. \cite[Theorem 3.9]{gilbarg1977elliptic}), we deduce that
    \begin{equation*}
        \Vert \nabla \varphi_L\Vert_{L^\infty( B_r\setminus B_\frac{r}{2})}\leq \frac{C}{r}\Vert\varphi_L\Vert_{L^\infty (B_r\setminus B_\frac{r}{2})} \leq \frac{C}{r}L,
    \end{equation*}
    where the last inequality holds for $r<r_0$ as long as $r_0>0$ is small enough by the very same reasoning as in \eqref{eq absorbtion}.
\end{proof}

Our final result concerning the properties of solutions to semilinear PDEs is the following generalization of the Alt-Caffarelli trace inequality, see \cite[Lemma 3.7]{velichkov2023regularity}.

\begin{lemma}\label{lemma ACI}
 Let $r \in (0,1)$, let $G$ satisfy \eqref{eq:g assumptions}, and let $w \in W^{1,2}(B_r)$ satisfying $2\Delta w \geq G'(w)$ weakly in $B_r$ with $0 \leq w \leq 1$. Let $\psi$ be the semilinear replacement of $w$ constructed in Lemma \ref{lemma semilinearreplacement}. Then, 
    \begin{equation}\label{eq trace AC}
     |\{w =0\}\cap B_r|\Big(\fint_{\partial B_{r}} w \,d\mathcal{H}^{n}\Big)^2\leq  Cr^2\int_{B_r}|\nabla (w - \psi)|^2.
\end{equation}
\end{lemma}
\begin{proof}
We essentially reproduce the argument in \cite[Lemma 3.7]{velichkov2023regularity} highlighting the small modifications required in our case. For each \( |z| \leq \frac{1}{2} \), we consider the functions \( w_z \) and \( \psi_z \) defined on \( B_r \) as
\[
w_z(x) := w((r - |x|)z + x) \quad \text{and} \quad \psi_z(x) := \psi((r - |x|)z + x).
\]

Note that both \( w_z \) and \( \psi_z \) still belong to \( W^{1,2}(B_r) \) and that their gradients are controlled from above and below by the gradients of \( w \) and \( \psi \). We call \( S_z \) the set of all \( \xi \in \partial B_1 \) such that the set \( \{ \rho \colon r/8 \leq \rho \leq r, \, w_z(\rho \xi) = 0 \} \) is not empty. For \( \xi \in S_z \), we define
\[
r_\xi = \inf \{\rho \colon r/8 \leq \rho \leq r, w_z(\rho \xi) = 0\}.
\]

For almost all \( \xi \in \partial B_1 \) (and then for almost all \( \xi \in S_z \)), the functions \( \rho \mapsto w_z(\rho \xi) \) are square integrable. For those \( \xi \), one can suppose that the equation
\[
\big( w_z(\rho_2\xi) - \psi_z(\rho_2\xi) \big) - \big( w_z(\rho_1\xi) - \psi_z(\rho_1\xi) \big) = \int_{\rho_1}^{\rho_2} \xi \cdot \nabla (w_z(\rho\xi) - \psi_z(\rho\xi)) \, d\rho
\]
holds for all \( \rho_1, \rho_2 \in [0, r] \). Moreover, we have the estimate
\begin{equation}\label{e:Alt-Caff-psi-L2-bd}
\psi_z(r_\xi \xi) = \int_{r_\xi}^r \xi \cdot \nabla (\psi_z - w_z)(\rho \xi) \, d\rho \leq \sqrt{r - r_\xi} \left( \int_{r_\xi}^r |\nabla (\psi_z - w_z)|^2(\rho \xi) \, d\rho \right)^{1/2}.
\end{equation}

At this point the proof slightly differs from \cite[Lemma 3.7]{velichkov2023regularity}. We condense this change in the following claim. 

\medskip

\noindent {\it Claim:} For every $x\in B_r$,
\begin{equation}\label{eq lbGR}
 \psi(x) \geq  \frac{r - |x|}{C r} \fint_{\partial B_r} w \, d\mathcal{H}^{n}.
\end{equation}

We start noticing that we can apply Lemma \ref{l:almost-subharm-v} to both $\psi$ and $-\psi$ so that $\psi$ satisfies mean value type inequalities in $B_r$. From this property, we can derive exactly as in the proof of the standard Harnack inequality for harmonic functions, see \cite[Theorem 2.5]{gilbarg1977elliptic}, the very same estimate
\begin{equation*}
   \sup_{B_\frac{r}{4}} \psi \leq C  \inf_{B_\frac{r}{4}} \psi,
\end{equation*}
provided that $r$ is bounded by a fixed constant, namely $r <1$. Moreover, the same mean value type inequalities also imply that
\begin{equation}\label{eq full comparison}
  \frac{1}{C}\fint_{\partial B_r} \psi \,d\mathcal{H}^{n} \leq \sup_{B_\frac{r}{4}} \psi \leq C  \inf_{B_\frac{r}{4}} \psi \leq C \fint_{\partial B_r} \psi \,d\mathcal{H}^{n}.
\end{equation}

For $x\in B_r$, let us consider the barrier $b(x) = m[e^{-\alpha\frac{|x|}{r}} -e^{ - \alpha}]$ with $m= \inf_{B_\frac{r}{4}} \psi$ and $\alpha>0$ to be determined. Clearly $b=0\leq \psi$ on $\partial B_r$ and $b \leq m\leq \psi$ on $\partial B_\frac{r}{4}$. Moreover, computing directly in spherical coordinates, on $B_r \setminus B_\frac{r}{4}$ we have
\begin{eqnarray*}
    \Delta b(x) = \frac{\alpha}{r^2} me^{-\alpha\frac{|x|}{r}}\left( \alpha -\frac{r{n}}{|x|}\right)\geq \frac{\alpha}{r^2} me^{-\alpha\frac{|x|}{r}}\left( \alpha -\frac{{n}}{4}\right) \geq  \frac{\alpha}{r^2} b(x),
\end{eqnarray*}
provided $\alpha \geq \frac{{n}}{4}+1$. Taking in addition $\alpha \geq K= {\frac{1}{2}}\max_{t\in [0,1]} |G'(t)|$, we deduce that $ \Delta (b -\psi) \geq  K(b-\psi)$ in $B_r \setminus B_\frac{r}{4}$. The maximum principle thus implies that $\psi(x) \geq b(x)$ in $B_r \setminus B_\frac{r}{4}$, which combined with \eqref{eq full comparison} and a Taylor expansion implies that for $x \in B_r \setminus B_\frac{r}{4}$ we have
\begin{equation*}
    \psi(x) \geq  \frac{r-|x|}{Cr} \fint_{\partial B_r} \psi \, d\mathcal{H}^{n}, 
\end{equation*}
{where $C$ is a new constant, not relabeled, additionally depending on our fixed choice of $\alpha$.} On the other hand, for $x \in B_\frac{r}{4}$,  \eqref{eq full comparison} guarantees $\psi(x) \geq {C^{-2}}\fint_{\partial B_r} \psi \,d\mathcal{H}^{n}$. Recalling that $\psi|_{\partial B_r} = w$, this proves the claim.\\




Applying \eqref{eq lbGR} with $x = (r-r_\xi) z + r_\xi \xi$ and {noticing that $|x| \leq \frac{1}{2}(r-r_\xi) +r_\xi$} yields
$$\psi_z(r_\xi \xi) = \psi \big((r - r_\xi)z + r_\xi \xi \big) \geq \frac{1}{C} \frac{r - r_\xi}{r} \fint_{\partial B_r} w \, d\mathcal{H}^{n} = \frac{1}{C} \frac{r - r_\xi}{r} \fint_{\partial B_r} w_z \, d\mathcal{H}^{n}\,.$$

Combining this with \eqref{e:Alt-Caff-psi-L2-bd}, we have
\[
\frac{r - r_\xi}{r^2} \left( \int_{\partial B_r} w \, d\mathcal{H}^{n} \right)^2 \leq C \int_{r_\xi}^r  |\nabla (\psi_z - w_z)|^2(\rho \xi) \, d\rho.
\]
Integrating over \( \xi \in S_z \subset \partial B_1 \), we obtain the inequality
\[
\int_{S_z} \frac{r - r_\xi}{r^2} d\xi \left(\int_{\partial B_r} w \, d\mathcal{H}^{n} \right)^2 \leq C \int_{\partial B_1} \int_{r_\xi}^r |\nabla (\psi_z - w_z)|^2 (\rho \xi)\, d\rho \, d\xi.
\]

And, by the estimate that \( r/8 \leq r_\xi \leq r \), we have
\begin{eqnarray*}
\frac{1}{r^2} |\{w = 0\} \cap B_r \setminus B_{r/4}(rz)| \left( \int_{\partial B_r} w \, d\mathcal{H}^{n} \right)^2 &\leq& C  \int_{B_r} |\nabla (\psi_z - w_z)|^2 dx\\
&\leq& C \int_{B_r} |\nabla (\psi - w)|^2 dx.
\end{eqnarray*}
\end{proof}

We continue with two elementary non-degeneracy bounds for the frequency of complementary almost subharmonic functions.

\begin{lemma}\label{lemma lowerbound}
   Let $r>0$. There exist universal constants $\kappa_0$ and $C>0$ with the following property: if {$G$ satisfies \eqref{eq:g assumptions},} $\kappa \in (0, \kappa_0]$, and $w_1, w_2$ are two non-negative functions in $W^{1,2}(B_r)$ satisfying
      \begin{equation}\label{eq subsol comp}
          2\Delta w_i \geq G'(w_i),
      \end{equation}
    \begin{equation}\label{eq comp ball}
    w_1\,w_2 = 0  \mbox{ $\mathcal{L}^{n+1}$ a.e. in $B_r$},
    \end{equation}
     \begin{equation}\label{eq nontrivial}
    \int_{\partial B_r} w_1\, d\mathcal{H}^{n} \int_{\partial B_r} w_2\, d\mathcal{H}^{n} >0,
    \end{equation}
    and
      \begin{equation}\label{eq nontrivial'}
 \frac{\int_{ B_r} |\nabla w_1|}{ \int_{\partial B_r} w_1 \, d\mathcal{H}^{n}}\leq \kappa\,,
    \end{equation}
    then
\begin{equation}\label{eq partition sphere}
     \frac{r\int_{ B_r} |\nabla w_2|^2}{ \int_{\partial B_r} w_2^2\, d\mathcal{H}^{n}} \geq C.
\end{equation}
\end{lemma}
\begin{proof}
    By scale invariance of the statement, it suffices to prove it for $r=1$. Arguing by contradiction, we assume the existence of a sequence of non-negative pairs of functions $(w_{1,k}, w_{2,k})$ satisfying \eqref{eq subsol comp}-\eqref{eq nontrivial} such that 
      \begin{equation}\label{eq bound u1}
       \int_{ B_1} |\nabla w_{1,k}|  \leq \frac{1}{k},
    \end{equation}
    \begin{equation}\label{eq bound u2}
       \int_{ B_1} |\nabla w_{2,k}|^2  \leq \frac{1}{k},
    \end{equation}
    with  $\Vert w_{1,k}\Vert_{L^1(\partial B_1)}=1$ and $\Vert w_{2,k}\Vert_{L^2(\partial B_1)}=1$. On the other hand, since both functions are non-negative and satisfy \eqref{eq subsol comp},  we have
       \begin{equation}\label{eq l1 bound}
        \int_{B_1} w_{1,k} \leq   C\int_{\partial B_1} w_{1,k}\, d\mathcal{H}^{n},
    \end{equation}
    \begin{equation}\label{eq l2 bound}
        \int_{B_1} w_{2,k}^2 \leq   C\int_{\partial B_1} w_{2,k}^2\, d\mathcal{H}^{n}.
    \end{equation}
   
    So, combining \eqref{eq bound u1},\eqref{eq bound u2}, \eqref{eq l1 bound}, and \eqref{eq l2 bound} we have that, up to a subsequence, $w_{k,1} \rightharpoonup w_1$ weakly in $W^{1,1}(B_1)$ and $w_{k,2}\rightharpoonup w_2$ weakly in $W^{1,2}(B_1)$ with $\Vert \nabla w_1\Vert_{L^1(B_1)}=0$ and $\Vert \nabla w_2\Vert_{L^2(B_1)}=0$. Additionally, by compactness of the trace operator $T:W^{1,2}(B_1)\to L^2(\partial B_1)$, we have that $\Vert w_2\Vert_{L^2(\partial B_1)}=1$, implying that $w_2=c_2$ is a positive constant. On the other hand, by Poincar\'e's inequality, setting $a_k = \omega_{n+1}^{-1}\int_{B_1} w_{1,k}$, we have that
    \begin{equation}\label{eq poincare}
        \int_{B_1} |w_{1,k} -a_k| \leq C\int_{B_1} |\nabla w_{{1},k}|\to 0.
    \end{equation}
    Combining this last observation with \eqref{eq l1 bound} we deduce that, up to taking a further subsequential limit, $w_{1,k}\to c_1 \in \mathbb{R}$ in $L^1(B_1)$. Let us notice that $c_1 > 0$, otherwise, $a_k \to 0$ as $k\to \infty$; in this latter case  $w_{k,1}\to 0$ strongly in $W^{1,1}(B_1)$, which yields a contradiction based on the fact that the trace operator $T:W^{1,1}(B_1)\to L^1(\partial B_1)$ is continuous whereas $\int_{\partial B_1} w_{1,k}\, d\mathcal{H}^{n}=1$ for all $k\in \mathbb{N}$.
    Combining the previous considerations with Rellich's theorem we have that $w_{i,k}\to c_i>0$ strongly in $L^1(B_1)$ for $i=1,2$ and, therefore, by \eqref{eq comp ball} we have that $c_1 c_2 =0$ $\mathcal{L}^{n+1}$ a.e. in $B_1$, which gives the desired contradiction.
\end{proof}

\begin{lemma}\label{lemma complementary pairs}
     Let $r>0$. There exists a universal constant $\eta_0>0$ with the following property: if $G$ satisfies \eqref{eq:g assumptions} with $G'(1)=0$, $w_1, w_2$ are two non-negative functions in $W^{1,2}(B_r)$ satisfying
      \begin{equation}\label{eq subsol comp 2}
          2\Delta w_i \geq G'(w_i) 
      \end{equation}      
    \begin{equation}\label{eq comp ball 2}
    w_1\,w_2 = 0  \mbox{ $\mathcal{L}^{n+1}$ a.e. in $B_r$},
    \end{equation}
    and
     \begin{equation}\label{eq nontrivial 2}
    \int_{\partial B_r} w_1\, d\mathcal{H}^{n} \int_{\partial B_r} w_2\, d\mathcal{H}^{n} >0\,,
    \end{equation}
   then 
\begin{equation}\label{eq partition sphere 2}
     \frac{r\int_{ B_r} |\nabla w_1|^2}{ \int_{\partial B_r} w_1^2\, d\mathcal{H}^{n}}+ \frac{r\int_{ B_r} |\nabla w_2|^2}{ \int_{\partial B_r} w_2^2\, d\mathcal{H}^{n}} \geq \eta_0.
\end{equation}
\end{lemma}
\begin{proof}
    The proof is a simpler version of the one for Lemma \ref{lemma lowerbound}. Taking $r=1$ (by scale invariance) and arguing by contradiction, we assume the existence of a sequence of pairs of non-negative functions $(w_{1,k}, w_{2,k})$ satisfying \eqref{eq subsol comp 2}-\eqref{eq nontrivial 2} such that 
      \begin{equation}\label{eq bound pair}
       \int_{ B_1} |\nabla w_{1,k}|^2 + \int_{ B_1} |\nabla w_{2,k}|^2  \leq \frac{1}{k},
    \end{equation}
    with  $\Vert w_{1,k}\Vert_{L^1(\partial B_1)}=1$ and $\Vert w_{2,k}\Vert_{L^2(\partial B_1)}=1$. On the other hand, since both functions satisfy \eqref{eq subsol comp 2} and are non-negative, we have
       \begin{equation}\label{eq general bound}
        \int_{B_1} w_{i,k}^2 \leq   C\int_{\partial B_1} w_{i,k}^2 \, d\mathcal{H}^{n},
    \end{equation}
    for $i=1,2$.
    So, combining \eqref{eq bound pair} and \eqref{eq general bound} we have that, up to a subsequence, $w_{i,k} \rightharpoonup w_i$ weakly in $W^{1,2}(B_1)$ with $\Vert \nabla w_i\Vert_{L^2(B_1)}=0$ for $i=1,2$. Additionally, by compactness of the trace operator $T:W^{1,2}(B_1)\to L^2(\partial B_1)$, we have that $\Vert w_i\Vert_{L^2(\partial B_1)}=1$, implying that $w_i=c_i>0$ are positive constants for $i=1,2$. Combining this last observation  with Rellich's theorem we have that $w_{i,k}\to c_i>0$ strongly in $L^2(B_1)$ for $i=1,2$ and, therefore, by \eqref{eq comp ball 2} we have that $c_1 c_2 =0$ $\mathcal{L}^{n+1}$ a.e. in $B_1$, which gives the desired contradiction.
\end{proof}

From the previous two lemmas, we have reduced the problem of obtaining a lower frequency bound in $B_r(x_0)$ to finding a decomposition $v=w_1+w_2$ into non-negative complementary functions $w_1$ and $w_2$ satisfying $\Delta w_i \geq G'(w_i)$ in $B_r(x_0)$ for $i=1,2$, and such that the $L^2$ norms of $w_1$ and $w_2$ are comparable. Loosely speaking, this amounts to ruling out the existence of one essentially connected component of $\{v>0\}\cap B_r(x_0)$ that is much larger than all the others as $r\to 0^+$. In order to rule out this possibility, we exploit the minimality of $u=1-v$ and make use of the cup competitor introduced in Definition \ref{d:cup-comp}. The next lemma provides a basic energy estimate obtained by comparing minimizers with their cup competitors.
\begin{lemma}\label{lemma:cupcomparison}
    Let $u\in W^{1,2}(\Omega;[0,1])$ be a minimizer of  \eqref{new model intro 2} or \eqref{new model volume constrained 2} and set $v=1-u$. Let $x_0 \in \{v=0\}$ and let $B_{r_0}(x_0) \subset \subset \Omega$. Given $r<r_0$, consider a essential connected component $D$ of the essential partition of $\{v>0\}\cap B_r(x_0)$ and consider $w_2 = \mathbf{1}_{D}v$ and $w_1=v-w_2$. Then, there exists a universal constant $\theta$ such that if $r \in (0, \min\{r_0,\theta\})$ we have that
    \begin{eqnarray}\label{eq cont4}
   \fint_{\partial B_\frac{r}{2}}w_2 \,d\mathcal{H}^{n}  \int_{B_\frac{r}{2}} |\nabla w_1| \,dx &\leq&   C\Big(\Vert w_1\Vert_{L^\infty(B_r)} + r^2 \Vert w_2\Vert_{L^\infty(B_\frac{r}{2})}\Big) \int_{ \partial B_\frac{r}{2}}w_1\,dx. 
\end{eqnarray}
\end{lemma}
\begin{proof}
By translation invariance we can assume without loss of generality $x_0 =0$. Set $G(t) = F(1-t)$ if $u$ minimizes \eqref{new model intro 2}, and $G(t) = F(1-t)-\lambda V(1-t)$ where $\lambda$ is as in Corollary \ref{corollary qdetachment} if $u$ minimizes \eqref{new model volume constrained 2}.\\

Let $1-g'$ be a cup competitor for $u$ associated to $D$ in $B_r$ as in Definition \ref{d:cup-comp}, i.e.,  $g':=(w_2+  \min\{\varphi_1, {w_1}\}) \mathbf{1}_{B_r \setminus B_{r/2}} + \psi$ with $\psi=\psi_D$ given by the semilinear replacement constructed in Lemma \ref{lemma semilinearreplacement}, solving
\begin{equation}\label{eq harmonic replacement}
    \begin{cases}
        2 \Delta \psi =G'(\psi)\, \mbox{ in $ B_\frac{r}{2}$}\\
        \psi =w_2 \, \mbox{ on $\partial B_\frac{r}{2}$},
    \end{cases}
\end{equation}
and where the radial cutoff $\varphi_1$ is the one constructed in Lemma \ref{lemma semilinearcutoff} with $\varphi_1=1$ on $\partial B_r$. Let $\varphi$ be the cutoff given by Lemma \ref{lemma semilinearcutoff} with $\varphi = \Vert w_1\Vert_{L^\infty(B_r)} $ on $\partial B_r$. Again, thanks to Lemma \ref{lemma semilinearcutoff}, we have that $\varphi \leq \varphi_1$ and thus
\begin{equation}\label{e:g-new-competitor}
    g :=(w_2+  \min\{\varphi, {w_1}\}) \mathbf{1}_{B_r \setminus B_{r/2}} + \psi \leq g'.
\end{equation}
Since $g \in W^{1,2}_\loc(\Omega;[0,1])$ {and $u\in \Acal$}, \eqref{e:g-new-competitor} tells us that $1-g \in \Fcal$.\\

If $u$ is a minimizer for \eqref{new model intro 2}, then a direct energy comparison yields
\begin{equation}\label{eq:case 1.1}
    \int_{B_r} |\nabla v|^2 \leq \int_{B_r}|\nabla g|^2  + F(1-g)-F(1-v)\,dx \,.
\end{equation}
On the other hand, if $u$ minimizes \eqref{new model volume constrained 2}, we can exploit Corollary \ref{corollary qdetachment} to deduce
\begin{eqnarray}\notag
    \int_{B_r} |\nabla v|^2 &\leq& \int_{B_r}|\nabla g|^2  + F(1-g)-\lambda V(1-g)-(F(1-v)-\lambda V(1-v))\, dx\\ \label{eq:case 1.2}
    &&+ C\Vert v-g\Vert_{L^2(B_r)}^2.
\end{eqnarray}
In any case, we can combine the $C^{1,1}$ estimate \eqref{eq:g estimates} for $G$ with a Taylor expansion to deduce from \eqref{eq:case 1.1} or \eqref{eq:case 1.2} that
\begin{equation}\label{eq minquaderror}
    \int_{B_r} |\nabla v|^2 \leq\int_{B_r}|\nabla g|^2  + G'(g)(g-v)\,dx + C\Vert v-g\Vert_{L^2(B_r)}^2.
\end{equation}

Let $h = \min\{{\varphi}, w_1\}$. Since $w_1$ and $w_2$ have disjoint supports, we can rewrite \eqref{eq minquaderror} as
\begin{eqnarray}\notag
    \int_{B_\frac{r}{2}}|\nabla w_2|^2 + \int_{B_r}|\nabla w_1|^2 &\leq& \int_{B_\frac{r}{2}}|\nabla \psi|^2\,dx + \int_{B_r\setminus B_\frac{r}{2}}|\nabla h|^2dx\\ \label{eq decind}
   && +  \int_{B_r} G'(g)(g-v) \,dx + C\Vert g-v\Vert_{L^2(B_r)}^2.
\end{eqnarray}
Using the elementary identity  
\begin{equation}\label{eq EI}
    |\nabla (f_1-f_2)|^2= |\nabla f_1|^2-|\nabla f_2|^2+2\nabla f_2\cdot \nabla (f_2-f_1),
\end{equation}
we deduce from \eqref{eq decind}
\begin{eqnarray}\notag
    \int_{B_\frac{r}{2}}|\nabla (w_2-\psi)|^2 + \int_{B_\frac{r}{2}}|\nabla w_1|^2 &\leq&   \int_{B_r\setminus B_\frac{r}{2}} |\nabla h|^2-|\nabla w_1|^2 + G'(h)(h-v)\,dx \\\notag
    &&+\int_{B_\frac{r}{2}}2\nabla \psi\cdot \nabla (\psi-w_2)   + G'(\psi)(\psi-v)\,dx\\  \label{eq cont1}
    &&+ C\Vert g-v\Vert_{L^2(B_r)}^2.
\end{eqnarray}

Let us first analyze the first term on the right-hand side of \eqref{eq cont1}. Another application of \eqref{eq EI} together with the observation that the support of $h$ in $B_r$ is contained in $\mathrm{supp}(w_1)\cap(B_r\setminus B_{\frac{r}{2}})$ and integration by parts yields 
\begin{eqnarray}\notag
     &&\int_{B_r\setminus B_\frac{r}{2}}|\nabla h|^2-|\nabla w_1|^2+|\nabla (w_1-h)|^2 + G'(h)(h-v) \, dx   \\
     &&\qquad=  \int_{B_r\setminus B_\frac{r}{2}} 2\nabla h\cdot \nabla (h-w_1) +  G'(h)(h-v)\, dx \\\notag
     &&\qquad=  \int_{B_r\setminus B_\frac{r}{2}} 2\nabla \varphi\cdot \nabla (h-w_1)+ G'(\varphi)(h-v)\, dx\\\notag
     &&\qquad\leq 2 \int_{\partial B_\frac{r}{2}}  |\nabla \varphi| w_1 \,d\mathcal{H}^{n}\\\label{eq anulusterm}
     &&\qquad\leq \frac{C}{r}\Vert w_1\Vert_{L^\infty(B_r)} \int_{\partial B_\frac{r}{2}}  w_1 \,d\mathcal{H}^{n}.
\end{eqnarray}
where we have used {that $\vphi$ satisfies \eqref{eq semrcutoff}, that $h=w_1$ on $\partial B_r$ and $h=0$ on $\partial B_{\tfrac{r}{2}}$, and} that $\Vert \nabla \varphi \Vert_{L^\infty(B_\frac{r}{2})}\leq  \frac{C}{r}\Vert w_1\Vert_{L^\infty(B_r)}$. 

Now let us analyze the second term on the right-hand side of \eqref{eq cont1}. Since $\psi$ satisfies \eqref{eq harmonic replacement}, we can integrate by parts to deduce
\begin{eqnarray}\notag
    \int_{B_\frac{r}{2}}2\nabla \psi\cdot \nabla (\psi-w_2)   + G'(\psi)(\psi-v)\,dx &=&  - \int_{B_\frac{r}{2}} G'(\psi)w_1\,dx\\\label{eq LEHR}
    &\leq & Cr \Vert w_2\Vert_{L^\infty(B_\frac{r}{2})} \int_{ \partial B_\frac{r}{2}}w_1\,d\Hcal^n,
\end{eqnarray}
where in the last line we used the maximum principle for $\psi$ (see Lemma \ref{lemma semilinearreplacement}), as well as the almost-subharmonicity of $w_1$. The latter can be justified by the exact same reasoning as that in Lemma \ref{l:almost-subharm-v}, replacing the use of \eqref{eq:differential inequality} with the analogous PDE for $w_1$ given by Lemma \ref{eq lemma eq comp}.

By combining \eqref{eq cont1}, \eqref{eq anulusterm}, and \eqref{eq LEHR} we deduce
\begin{eqnarray}\notag
    \int_{B_\frac{r}{2}}|\nabla (w_2-\psi)|^2 + \int_{B_\frac{r}{2}}|\nabla w_1|^2 &\leq&   C\Big(\frac{1}{r}\Vert w_1\Vert_{L^\infty(B_r)} + r \Vert w_2\Vert_{L^\infty(B_\frac{r}{2})}\Big) \int_{ \partial B_\frac{r}{2}}w_1\, d\Hcal^n \\ \label{eq cont2}
    &&+ C\Vert g-v\Vert_{L^2(B_r)}^2 -  \int_{B_r\setminus B_\frac{r}{2}}|\nabla (w_1-h)|^2\,,
\end{eqnarray}

We conclude these preliminary estimates by bounding the error term $\Vert g-v\Vert_{L^2(B_r)}^2$. By decomposing $g-v$ and applying the Poincar\'e-trace inequality {(see e.g. \cite[Corollary 4.4.7]{Ziemer})} to each one of the resulting terms, we obtain
\begin{eqnarray}\notag
  \Vert g-v\Vert_{L^2(B_r)}^2 &\leq& 2\int_{B_\frac{r}{2}}  (w_2-\psi)^2 +w_1^2\,dx+\int_{B_r\setminus B_\frac{r}{2}}  (h-w_1)^2 \\\notag
    &\leq& Cr^2\int_{B_\frac{r}{2}}  | \nabla(w_2-\psi)|^2 + |\nabla w_1|^2\, dx   +  Cr^2\int_{B_r\setminus B_\frac{r}{2}}  | \nabla(w_1-h)|^2\\ \label{eq errorterms}
    &&+Cr\int_{\partial B_\frac{r}{2}} w_1^2 \,d\mathcal{H}^{n}\,.
\end{eqnarray}
By taking $\theta$ sufficiently small (and therefore also $r$), we can reabsorb the terms on the right-hand side of \eqref{eq errorterms} into those of \eqref{eq cont2} and deduce
\begin{eqnarray}\label{eq cont3}
    \int_{B_\frac{r}{2}}|\nabla (w_2-\psi)|^2 + \int_{B_\frac{r}{2}}|\nabla w_1|^2 &\leq&   C\Big(\frac{1}{r}\Vert w_1\Vert_{L^\infty(B_r)} + r \Vert w_2\Vert_{L^\infty(B_\frac{r}{2})}\Big) \int_{ \partial B_\frac{r}{2}}w_1\,d\Hcal^n\,. 
\end{eqnarray}
We next apply Cauchy-Schwarz, the generalized Alt-Caffarelli trace inequality of Lemma \ref{lemma ACI} and Young's inequality for products to deduce that
\begin{eqnarray}\notag
\frac{1}{C r} \fint_{\partial B_\frac{r}{2}}w_2 \,d\mathcal{H}^{n}  \int_{B_\frac{r}{2}} |\nabla w_1| \,dx &\leq& \frac{1}{C r} \fint_{\partial B_\frac{r}{2}}w_2 \,d\mathcal{H}^{n} \left[\int_{B_{\frac{r}{2}}} |\nabla w_1|^2\right]^{\frac{1}{2}} |\{w_1 > 0 \}\cap B_{\frac{r}{2}}|^{\frac{1}{2}} \\ \notag
&\leq& \frac{1}{C r^2} \Big(\fint_{\partial B_\frac{r}{2}}w_2 d\mathcal{H}^{n}\Big)^2 |\{w_1 > 0 \}\cap B_{\frac{r}{2}}|+\int_{B_{\frac{r}{2}}} |\nabla w_1|^2\\ \label{eq apAC}
&\leq&  \int_{B_\frac{r}{2}}|\nabla (w_2-\psi)|^2 + \int_{B_\frac{r}{2}}|\nabla w_1|^2\,.
\end{eqnarray}
Finally, by combining \eqref{eq cont3} with \eqref{eq apAC} we obtain \eqref{eq cont4}.
\end{proof}

The final preparatory result that we require for obtaining a lower frequency bound for $v$ is the following Poincar\'e inequality, where the usual average-free condition can be replaced with the presence of a sufficiently large zero set.
\begin{lemma}\label{lemma noncollapsing}
     Given any $\delta \in (0,1)$, there exists $b(\delta)$ such that if {$G$ satisfies \eqref{eq:g assumptions}} and $w$ is a non-negative function in $W^{1,2}(B_\frac{r}{2})$ satisfying $2\Delta w \geq G'(w)$ and $|\{w=0\}\cap B_\frac{r}{2}| \geq |B_\frac{r}{2}|(1-\delta)$, then
      \begin{equation}\label{eq lb small comp}
        \int_{\partial B_\frac{r}{2}} w^2\, d\mathcal{H}^{n} \leq  b(\delta)r \int_{ B_\frac{r}{2}} |\nabla w|^2\,.
      \end{equation}
\end{lemma}

{\begin{remark}
    Note that the constant $b(\delta)$ in Lemma \ref{lemma noncollapsing} will degenerate as $\delta \uparrow 1$.
\end{remark}}
\begin{proof}
   By scale invariance of the statement, we assume $r=1$. Arguing by contradiction, if the desired conclusion fails, we have a sequence of non-negative functions $w_k \in W^{1,2}(B_\frac{1}{2})$  with $\Vert w_k\Vert_{L^2(\partial B_\frac{1}{2})}=1$, satisfying $2\Delta w_k \geq G'(w_k)$
    \begin{equation}\label{e:fat-zero-set}
        |\{w_k = 0 \}\cap B_\frac{1}{2}| \geq \omega_{n+1}(1-\delta)
    \end{equation}
    for some $\delta \in(0,1)$, but such that
    \begin{equation*}
        \int_{B_\frac{1}{2}} |\nabla w_k|^2 \to 0\,.
    \end{equation*}
    Up to extracting a subsequence, we argue as in the proof of Lemma \ref{lemma lowerbound} to deduce that $w_k \rightharpoonup w_\infty$ weakly in $W^{1,2}(B_\frac{1}{2})$ for some non-negative function $w_\infty\in W^{1,2}(B_\frac{1}{2})$ with $\Vert \nabla w_\infty \Vert_{L^2(B_\frac{1}{2})} = 0$. Moreover, the continuity of the trace operator $T:W^{1,2}(B_\frac{1}{2}) \to L^2(\partial B_\frac{1}{2})$ guarantees that $\Vert w_\infty \Vert_{L^2(\partial B_\frac{1}{2})}=1$ and thus $w_\infty \equiv c_0$ for some constant $c_0 > 0$. In fact, Rellich-Kondrachov implies that we may improve the weak $L^2$ of convergence $w_k$ to $w_\infty = c_0$ to strong convergence in $L^2(B_\frac{1}{2})$.
    
    Thus, up to extracting a further subsequence, we have pointwise $\mathcal{L}^{n+1}$-a.e. convergence of $w_k$ to $c_0 > 0$, which is in contradiction with \eqref{e:fat-zero-set}. {Indeed, the latter implies that the intersection of the sets $\{w_k = 0 \}\cap B_{\frac{1}{2}}$ over $k$ must have positive $\Lcal^{n+1}$-measure.}
\end{proof}

\begin{lemma}[Lower frequency bound for minimizers]\label{lower bound frequency}
    Let $G$ satisfy \eqref{eq:g assumptions}, let $u=1-v$ be a minimizer of \eqref{new model intro 2} or \eqref{new model volume constrained 2}, and let $x_0 \in \Omega$ be such that $v(x_0)=0$. Then there exists a universal constant $\eta>0$ such that
    \begin{equation}\label{eq lower bound freq}
     \lim_{r\to 0^+} N_{v,x_0}(r)  \geq \eta\,.
    \end{equation}
\end{lemma}
\begin{proof}
We divide the proof into steps. By translation invariance we assume without loss of generality that $x_0=0$. Let us also assume that $r \in (0, \frac{2}{3} \min\{d, \theta\})$ with $\theta>0$ as in Lemma \ref{lemma:cupcomparison} {and $d = \dist(x_0,\partial\Omega)$}. Also, by renormalizing, we can assume that $\fint_{\partial B_\frac{3r}{2}} v^2 \,d\mathcal{H}^{n}=1$. This latter normalization combined with Lemma \ref{lemma: uc take 2} {and the estimate \eqref{eq lp equivalence} from its proof} implies that for any $\delta\in (0,1]$ we have
\begin{equation}\label{eq comparability norms}
   \frac{1}{C(\delta)}\leq  \rho^\frac{-(n+1)}{p}\Vert v\Vert_{L^p(B_{\rho})},  \Vert v\Vert_{L^p(\partial B_{\rho})} \leq C(\delta)
\end{equation}
for $\rho \in (\delta r, (3-\delta)r)$ and $p \in [1,\infty]$.
\medskip

\noindent{\it Step 1:} We show that either \eqref{eq lower bound freq} is valid or that we can decompose $v = w_1+w_2$, with $w_1, w_2 $ non-negative functions in $W^{1,2}(B_\frac{3r}{2})$ where $w_2= v|_{D}$ for one of the connected components of $B_\frac{3r}{2}\cap \{v>0\}$ constructed in Lemma \ref{lemma connected 2}, satisfying
  \begin{equation}\label{eq nontrivial 3}
    \int_{ \partial B_\frac{r}{2}} w_2\, d\mathcal{H}^{n} \geq \frac{1}{C}\,.
    \end{equation}
   
   Let $v_i=\mathbf{1}_{C_i}$, for $\{C_i\}_{\{i=1\}}^\infty$ be the essential connected components of $B_\frac{3 r}{2}\cap \{v>0\}$  as in Lemma \ref{lemma connected 2}. If for every sequence $r_k \to 0$ as $k\to \infty$ we have $|\{v_i =0\}\cap B_\frac{r_k}{2}| \geq {\frac{1}{2}|B_{\frac{r_k}{2}}|}$ for every {$i\geq 1$}, we deduce from Lemma \ref{lemma noncollapsing} that
\begin{equation*}
     \int_{\partial B_\frac{r_k}{2}} v_i^2 \,d\mathcal{H}^{n} \leq  Cr_k\int_{ B_\frac{r_k}{2}} |\nabla v_i|^2 \qquad \forall \, i = 1,\dots,k\,,
\end{equation*}
which, after adding up over $i$, and taking limit in $k$ yields \eqref{eq lower bound freq}.\\

Thus, we just have to consider the case where $|\{v_i =0\}\cap B_\frac{r}{2}| \leq \frac{1}{2}{|B_{\frac{r}{2}}|}$ for some $i$ and for $r \in (0, r_0)$ for some $r_0>0$ sufficiently small. Set $w_2= v_i$ for this choice of $i$. Notice now that if $\frac{1}{r^{n-1}}\int_{B_{\frac{3r}{2}}}  |\nabla v|^2 \geq \varepsilon$ for some universal $\varepsilon>0$, we immediately obtain \eqref{eq lower bound freq}, in light of our normalization. Suppose instead that $\frac{1}{r^{n-1}}\int_{B_{\frac{3r}{2}}} |\nabla v|^2 <\varepsilon$, for some $\varepsilon>0$ to be determined. By Poincar\'e's inequality, we then obtain
\begin{equation}\label{eq average control}
    \varepsilon > \frac{1}{r^{n-1}}\int_{B_{\frac{r}{2}}} |\nabla v|^2 \geq  \fint_{B_\frac{r}{2}} (v-A)^2\geq \frac{1}{|B_\frac{r}{2}|} \int_{B_\frac{r}{2}\cap \{w_2>0\}} (w_2-A)^2,
\end{equation}
where $A = \fint_{B_\frac{r}{2}} v$ satisfies $\frac{1}{C}\leq A \leq C$ in virtue of \eqref{eq comparability norms}. Since $|\{w >0\}\cap B_\frac{r}{2}| \geq \frac{1}{2}|B_{r/2}|$, we have that for $\varepsilon$ small enough, \eqref{eq average control} implies
\begin{equation*}
 \fint_{B_\frac{r}{2}} w_2^2 \geq \frac{1}{C}\,.
\end{equation*}
Combining this with the almost subharmonicity of $w_2$ (cf. the proof of Lemma \ref{lemma:cupcomparison}), we deduce
\begin{equation}
     C\fint_{\partial B_\frac{r}{2}} w_2 \geq  \Vert w_2\Vert_{L^\infty(\partial B_\frac{r}{2})} \fint_{\partial B_\frac{r}{2}} w_2\geq  \fint_{\partial B_\frac{r}{2}} w_2^2\geq \frac{1}{C}
\end{equation}
where in the first inequality we have used \eqref{eq comparability norms} to guarantee the universal $L^\infty$ bound on $w_2$.

\medskip
\noindent{\it Step 2:} In this step, we show that, for $\kappa_0$ as in Lemma \ref{lemma lowerbound}, {if the lower frequency bound \eqref{eq lower bound freq} does not hold,} there exists $r_1{=r_1(\kappa_0)} \in (0, r_0)$ such that for any $r \in (0, r_1)$  one of the following two properties holds:
\begin{enumerate}
    \item[$O_1)$] 
    \begin{equation}\label{eq upper bound components}
    \int_{ B_\frac{r}{2}} |\nabla w_1| \leq \kappa_0 \int_{\partial B_\frac{r}{2}} w_1\, d\mathcal{H}^{n}\,.
\end{equation}

\item[$O_2)$] 
    \begin{equation}\label{eq comparability components}
  M_0\fint_{\partial B_\frac{3 r}{2}} w_1^2 d\mathcal{H}^{n} \geq \Big(\fint_{\partial B_{\frac{r}{2}}} w_2 \, d\Hcal^n\Big)^2
\end{equation}
for some $M_0{= M_0(\kappa_0)} >0$\,.
\end{enumerate}
{Given this dichotomy, we will proceed to show the validity of \eqref{eq lower bound freq} in Step 3 in either case. Notice that property $O_1$ is an $L^1$-version of an upper frequency bound for $w_1$ and puts us in good shape to apply Lemma \ref{lemma lowerbound}, while $O_2$ (with \eqref{eq comparability norms}) provides $L^p$-comparability of $w_1$ and $w_2$ on $\partial B_{\tfrac{r}{2}}$, which will allow us to make use of the estimate from Lemma \ref{lemma complementary pairs} to obtain the desired lower frequency bound.} 

Our strategy here to prove this dichotomy is to show that if {\eqref{eq nontrivial 3} holds (as a consequence of the failure of \eqref{eq lower bound freq}),} and \eqref{eq comparability components} does not hold, i.e.,
  \begin{equation}\label{eq neagtion comparability}
    \fint_{\partial B_\frac{3r}{2}} w_1^2 \,d\mathcal{H}^{n}\leq \frac{1}{M_0} \Big(\fint_{\partial B_{\frac{r}{2}}} w_2\Big)^2,
\end{equation}
then \eqref{eq upper bound components} necessarily holds.\\

Let us start remarking that by the almost-subharmonicity of $w_1$, we can deduce from the estimate \eqref{eq neagtion comparability} that 
 \begin{equation}\label{eq v2 dominates v1 pt3}
   \fint_{ \partial B_\frac{r}{2}} w_2 \,d\mathcal{H}^{n} \geq \frac{\sqrt{M_0}}{C}\Vert w_1\Vert_{L^\infty(B_r)}\,.
\end{equation}
Moreover, from \eqref{eq comparability norms} and \eqref{eq nontrivial 3}, we have that
\begin{equation}\label{eq domw2}
     \fint_{ \partial B_\frac{r}{2}} w_2 \,d\mathcal{H}^{n} \geq \frac{1}{C}\fint_{ \partial B_\frac{r}{2}} v \,d\mathcal{H}^{n} \geq \frac{1}{C}\Vert w_2\Vert_{L^\infty(B_\frac{r}{2})}\,.
\end{equation}
On the other hand, by Lemma \ref{lemma:cupcomparison} we have that
  \begin{eqnarray*}
   \fint_{\partial B_\frac{r}{2}}w_2 \,d\mathcal{H}^{n}  \int_{B_\frac{r}{2}} |\nabla w_1| \,dx &\leq&   C\Big(\Vert w_1\Vert_{L^\infty(B_r)} + r^2 \Vert w_2\Vert_{L^\infty(B_\frac{r}{2})}\Big) \int_{ \partial B_\frac{r}{2}}w_1\,d\Hcal^n,
\end{eqnarray*}
which, combined with \eqref{eq v2 dominates v1 pt3} and \eqref{eq domw2}, implies
 \begin{eqnarray*}
   \int_{B_\frac{r}{2}} |\nabla w_1| \,dx &\leq&   C\Big(\frac{1}{\sqrt{M_0}} + r^2 \Big) \int_{ \partial B_\frac{r}{2}}w_1\,d\Hcal^n\,,
\end{eqnarray*}
so, by taking $r_1 < {\frac{\kappa_0}{2\sqrt{C}}}$ and $\sqrt{M_0} \geq \frac{C}{2\kappa_0}$, we deduce \eqref{eq upper bound components}.

\medskip
\noindent{\it Step 3:} We finish the proof by showing that either option in the dichotomy proved in Step 2 leads to \eqref{eq lower bound freq}.\\
We again work under the assumption that \eqref{eq lower bound freq} fails, so that we can make use of \eqref{eq nontrivial 3} from Step 1. Before considering each alternative separately, notice that from  \eqref{eq comparability norms}, \eqref{eq nontrivial 3}, and Jensen's inequality
we have that
\begin{equation}\label{eq comparability with v}
     \fint_{\partial B_\frac{r}{2}} w_2^2 \, d\mathcal{H}^n \geq \frac{1}{C}  \Big(\fint_{\partial B_\frac{r}{2}} w_2 d \mathcal{H}^{n}\Big)^2 \,
     \geq \frac{1}{C} \, = \frac{1}{C} \fint_{\partial B_\frac{3r}{2}} v^2 \, d\mathcal{H}^{n}.
\end{equation}

If $O_1$ holds, then we can invoke Lemma \ref{lemma lowerbound}  to deduce that 
\begin{equation}\label{eq option 1}
    \int_{\partial B_\frac{r}{2}} w_2^2 \, d\mathcal{H}^{n} \leq  \tilde{C}r\int_{ B_\frac{r}{2}} |\nabla w_2|^2\,.
\end{equation}

Hence, \eqref{eq option 1} and \eqref{eq comparability with v} altogether imply
\begin{equation}\label{eq lbf}
  \int_{\partial B_\frac{3r}{2}} v^2 \, d\mathcal{H}^{n} \leq  \tilde{C}r\int_{ B_\frac{3r}{2}} |\nabla v|^2\,,
\end{equation}
which is the desired lower frequency bound.

If instead $O_2$ holds, \eqref{eq comparability components} and  \eqref{eq comparability with v} implies
\begin{equation}\label{eq domination 1}
    \int_{\partial B_\frac{3r}{2}} w_1^2 \, d\mathcal{H}^{n} \geq \frac{1}{CM_0} = \frac{1}{CM_0} \int_{\partial B_{\frac{3r}{2}}} v^2 \, d\mathcal{H}^n\,.
\end{equation}

So, Lemma \ref{lemma complementary pairs}, \eqref{eq comparability with v}, and \eqref{eq domination 1} imply
\begin{eqnarray*}\notag
     \eta_0 &\leq& C\frac{r\int_{ B_{\tfrac{3r}{2}}} |\nabla w_1|^2}{ \int_{\partial B_{\tfrac{3r}{2}}} w_1^2\, d\mathcal{H}^{n}}+ \frac{r\int_{ B_{\tfrac{3r}{2}}} |\nabla w_2|^2}{ \int_{\partial B_{\tfrac{3r}{2}}} w_2^2\, d\mathcal{H}^{n}} \\
     &\leq& Cr\int_{ B_{\frac{3r}{2}}} |\nabla v|^2\Big(\frac{M_0}{ \int_{\partial B_{\frac{3r}{2}}} v^2\, d\mathcal{H}^{n}}+ \frac{1}{ \int_{\partial B_\frac{3r}{2}} v^2\, d\mathcal{H}^{n}}\Big) \\ \label{eq option 2}
     &=& C\int_{ B_\frac{3r}{2}} |\nabla v|^2\,.
\end{eqnarray*}
In either case we deduce \eqref{eq lbf} from which we obtain the result by sending $r$ to zero.
\end{proof}

We are now in a position to prove our main regularity result. The following corollary of Lemma \ref{lower bound frequency} is a more precise statement of Theorem 
\ref{thm:main regularity theorem}.(i).

\begin{corollary}[Lipschitz regularity for minimizers]\label{corollary higher order regularity}
	Let $\wire=\rn \setminus \Om$ be compact and let $\mathcal{C}$ be a spanning class for $\wire$ satisfying \eqref{eq:spanning class assumption}. Suppose that $F$, $V$ satisfy \eqref{eq:A1}-\eqref{eq:A3}, and $u$ {is a minimizer for \eqref{new model intro 2} or \eqref{new model volume constrained 2}, respectively}. 
	%
	%
    %
    %
    Then, given $\e_0>0$ there are $C=C(\e_0,u,n)$ and $r_{**}=r_{**}(n,G)$ such that for any $x_0 \in \Omega_{\e_0}$ and $r< \min\{r_{**},\e_0/3\}$, 
		\begin{equation}\label{eq uniform holder}
			r[u]_{\Lip(B_{r/2}(x_0))}\leq C \Big(\frac{1}{r^{n-1}}\int_{B_{r}(x_0)}|\nabla u|^2\Big)^\frac{1}{2}\,,
		\end{equation}
  where $\Omega_{\e_0}=\{x\in \Om : \dist(x,\partial \Om) \geq \e_0\}$. {As a consequence, $u$ is a minimizer for \eqref{new model intro} or \eqref{new model volume constrained}, respectively.}
	
\end{corollary}

\begin{proof}
	
	\noindent{\it To prove \eqref{eq uniform holder}}: {{Recall from Corollary \ref{c:EL-v} that} \eqref{eq:g assumptions} and \eqref{eq modified iv}-\eqref{eq modified ineq} hold for $G$ as defined therein and $v=1-u$, respectively. We would like to prove \eqref{eq uniform holder} by applying Theorem \ref{thm:holder reg for crit points}.(ii) to $v$ on compact subsets of $\Om'_{\e_0}$, which we define to be those $x\in \Om_{\e_0}$ such that $v$ does not vanish identically on the connected component of $\Om$ containing $x$. {In light of Theorem \ref{thm:holder reg for crit points}.i,} this boils down to showing {two things: first, that the local lower frequency bound \eqref{eq:lower freq bound assumption theorem 2.1} holds for $v$; and second,} that there exists $M$ such that the upper frequency bound \eqref{eq:freq bound for thm 3.1} holds uniformly on $\Om_{\e_0/2}'$ (which is defined analagously to $\Om_{\e_0}'$), independently of its subcomponents. {The lower frequency bound is a consequence of Lemma \ref{lower bound frequency}.} Towards verifying the uniform upper frequency bound, notice that by Theorem \ref{thm:holder reg for crit points}.ii, it holds if $\nabla v\in L^2$ and $\mathcal{L}^{n+1}(\{v<t\})<\infty$ for all $t\in (0,1)$. Now if $u$ minimizes \eqref{new model intro 2} or \eqref{new model volume constrained 2}, we have $\nabla u=\nabla (1-v)\in L^2(\Om)$. Furthermore, since $u\to 0$ uniformly as $|x|\to \infty$ in \eqref{new model intro 2}, it is immediate that $\mathcal{L}^{n+1}(\{v<t\})=\mathcal{L}^{n+1}(\{u>1-t\})<\infty$ for all $t\in (0,1)$. Similarly, in \eqref{new model volume constrained 2}, since $\int V(u)=1$ and $V>0$ on $(0,1]$, it must be the case that $\mathcal{L}^{n+1}(\{v<t\})=\mathcal{L}^{n+1}(\{u>1-t\})<\infty$ for all $t\in (0,1)$. So indeed we may apply Theorem \ref{thm:holder reg for crit points}.i to obtain \eqref{eq uniform holder}.

\medskip
    
    \noindent{{\it Minimality of $u$ for \eqref{new model intro} or \eqref{new model volume constrained}}: If $u$ is minimizing for \eqref{new model volume constrained 2}, then \eqref{eq uniform holder} implies that it is continuous and thus admissible for \eqref{new model volume constrained}. Since the admissible class for \eqref{new model volume constrained 2} contains that of \eqref{new model volume constrained} (see Lemma \ref{l:gen-adm-class-larger}), this shows that $u$ is minimizing for \eqref{new model volume constrained}. On the other hand, suppose that $u$ is minimizing for \eqref{new model intro 2}. The continuity estimate \eqref{eq uniform holder} for $u$ implies that it is continuous and decays uniformly to zero at infinity, and is thus admissible for \eqref{new model intro}. It remains to show that it is a minimizer of \eqref{new model intro}.}  This would follow from proving that any admissible $w$ for \eqref{new model intro} with finite energy belongs to the space $D_n^{1,2}(\Om;[0,1])$, {thus also verifying the containment of admissible classes in this case}. When $n\geq 2$, this can be deduced from \eqref{eq:alter char of h1dot} and the fact that $w\in L^{2(n+1)/(n-1)}(B_R^c)$, which is a consequence of $\nabla w\in L^2(B_R^c;[0,1])$ (for $R$ such that $\wire \cc B_R$) and the pointwise decay to zero at infinity of $w$ (see for example \cite[Theorem II.6.1]{Galdi} for a proof of the integrability of $u$ under these two assumptions). {When $n=1$, the uniform decay to zero of $w$ at infinity implies that $\mathcal{L}^2(\{w>t\})<\infty$ for all $t\in (0,1)$, and so $w\in D_1^{1,2}(\Om;[0,1])$.}} 
\end{proof}

\subsection{Existence of Minimizers: Proof of Theorem \ref{thm:main existence theorem}}\label{s:existence}

For Theorem \ref{thm:main existence theorem}, we will need some basic information regarding the auxiliary variational problem
\begin{equation}\label{eq:isop problem def}
\Psi(v_0) = \inf\Big\{\int_{\mathbb{R}^{n+1}}|\nabla u|^2 + F(u)\,dx : u\in W^{1,2}(\R^{n+1};[0,1]), \,\,\int_{\rn}V(u) \, dx = v_0 \Big\}\,.
\end{equation}
 This problem was introduced in \cite{maggirestrepo} with the volume potential $V$ as in \eqref{eq:v def}, and quantitative stability and Alexandrov-type ridigity were established. Here we will only need the existence of {positive} minimizers for \eqref{eq:isop problem def}. The proof is in Appendix \ref{sec: app c} and follows \cite[Theorem A.1]{maggi2023hierarchy}.

\begin{theorem}[Existence of radial isoperimetric functions on $\rn$]\label{thm:existence for isop problem}
If $v_0>0$ and $F$ and $V$ are continuous, non-negative functions such that $F(0)=0=V(0)$ and
\begin{equation}\label{eq:A4 redux}
\lim_{t\to 0^+}\frac{V(t)}{F(t)}=0\,,
\end{equation}
then there exists a {strictly positive,} radial, decreasing minimizer $x\mapsto w(|x|)$ for $\Psi(v_0)$.
\end{theorem}

{The first step towards proving Theeorem \ref{thm:main existence theorem} is the weak closure of the space $\Acal$ of admissible functions. This will allow us to use the direct method to obtain existence of minimizers for \eqref{new model intro 2} or \eqref{new model volume constrained 2}, which, combined with Corollary \ref{corollary higher order regularity}, will yield the desired result.}

\begin{proposition}\label{p:weak-closure-A}
    $\Acal$ is weakly closed in $W^{1,2}(\Omega;[0,1])$.
\end{proposition}

\begin{proof}
Let $\{u_k\}_k$ be a sequence of functions in $\Acal$ converging weakly in $W^{1,2}(\Omega;[0,1])$ to $u$, and set $v_k = 1-u_k$ and $v = 1-u$. {By Theorem \ref{thm:prelim compactness thm}, $\{u^* \geq t\}$ is $\mathcal{C}$-spanning $\wire$ for every $t\in (1/2,1)$, so $u \in \Fcal$.} Let $C_i$ be an essentially connected component of $B_r(x_0) \cap \{v^* >0\}$ with corresponding cup competitor $g_i$ as in Definition \ref{d:cup-comp}. We must show that $\{g_i^* \leq t\}$ is $\mathcal{C}$-spanning for all $t\in (0,1/2)$. The proof is divided into steps.

\medskip

\noindent{\it Step 1:} Here we show that up to taking a subsequence in $k$, there are essentially connected components $C_{i(k)}^k$ of $\{v_k^*>0\}$ such that
\begin{equation}\label{eq:liminf ci equation}
\mathbf{1}_{C_i} \leq \liminf_{k\to \infty} \mathbf{1}_{C_{i(k)}^k}   \qquad \mbox{a.e.} 
\end{equation}
Let $T_k\subset (0,1)$ and $T\subset (0,1)$ be the measure one sets associated to $u_k$ and $u$ respectively, as guaranteed in Lemma \ref{lemma connected 2}. Fix $t_j\searrow 0$ such that $\{t_j\}_j \subset T$. Let $C_i=\lim_{j\to \infty} F_j^i$ as in Lemma \ref{lemma connected 2}, so that $\mathbf{1}_{F_j^i}$ increases almost everywhere to $\mathbf{1}_{C_i}$. It is thus enough to find components $C_{i(k)}^k$ (independent of $j$) such that $\mathbf{1}_{F_j^i} \leq \liminf_{k\to\infty} \mathbf{1}_{C_{i(k)}^k}$ $\Lcal^{n+1}$-a.e.~ for every $j$. 

To do so, first we observe that for every $t_j$, $\sup_k \int_{\{t_{j+1}\leq v_k\leq t_j \}}{|\nabla v_k|^2} < \infty$. Thus by the coarea formula, there is a subsequence (not relabeled) and 
\[
    \{\beta_k^1\}_k \subset (t_2,t_1)\cap \bigcap_k T_k\,, \qquad \beta^1 \in [t_2,t_1]\,,
\]
such that $\beta_k^1\to \beta^1$ and $\sup_k\mathcal{H}^n(\{v_k^* = \beta_k^1\} \cap B_r(x_0))<\infty$. Let $\{A_m^{1,k}\}_m$ denote the essential partition of $B_r(x_0)$ induced by $\{v_k^* = \beta_k^1\}$, so that 
\[
    \sup_k \sum_m \Hcal^n(\partial^* A_m^{1,k} \cap B_r(x_0)) \leq 2\sup_k\mathcal{H}^n(\{v_k^*=\beta_k^1\}\cap B_r(x_0))<\infty\,.
\]
By restricting to a further subsequence, standard compactness for sets of finite perimeter implies the existence of an essential partition $\{A_m^1\}_m$ of $B_r(x_0)$ such that $A_m^{1,k}\to A_m^1$ as $k\to\infty$ in $L^1$ and pointwise a.e.~ for each $m$. By iterating this argument and a diagonalization procedure which restricts to a further subsequence in $k$, again not relabeled, for each $j$ we obtain sequences $\{\beta_k^j\}_k$ with $\beta_k^j \to \beta^j \in [t_{j+1},t_j]$, a partition $\{A_m^j\}_m$ of $B_r(x_0)$, and sequences of essential partitions $\{\{A_m^{j,k}\}_m\}_k$ of $B_r(x_0)$ induced by $\{v_k^* = \beta_k^j\}$ such that for each fixed $m$ and $j$, $A_m^{j,k}\to A_m^j$ as $k\to\infty$ in $L^1$ and pointwise a.e. By Lemma \ref{lemma connected 2} (in particular \eqref{eq:ci ft dichotomy}), we may also identify for each $j$ and $m$ sequences $\{C_{i(j,k,m)}^k\}_k$ of essentially connected components of $\{v_k^* >0\}$ in $B_r(x_0)$ such that $|A_m^{j,k}\setminus C_{i(j,k,m)}^k|=0$ for each $m,j,k$.

We claim that for each $j$, $\cup_m \partial^* A_m^j \cap B_r(x_0) \subset \{ v^* = \beta^j\}$ up to an $\mathcal{H}^n$-null set. To see that this is the case, consider the functions $\mathbf{1}_{A_m^{j,k}}v_k + \beta^j_{k} \mathbf{1}_{(A_m^{j,k})^c}\in W^{1,2}(B_r(x_0))$. Since they are uniformly bounded in $W^{1,2}(B_r(x_0))$ over $k$, their a.e.~ pointwise limit $v_{j,m}:=\mathbf{1}_{A_m^{j}}v + \beta^j \mathbf{1}_{(A_m^{j})^c}$ belongs to $W^{1,2}(B_r(x_0))$ also. Since $W^{1,2}$ functions cannot have jump discontinuities along hypersurfaces, the traces of $v_{j,m}$ coming from inside and outside $A_m^j$ have to be equal. But the trace coming from outside is $\beta^j$, which means that $v^* = \beta^j$ $\mathcal{H}^n$-a.e.~ on $\partial^* A_m^j \cap B_r(x_0)$, proving the claim. 

As a consequence of this claim and the fact that $\beta^j \leq t_j$, we claim further that for each $F_j^i$ there exists $m(j)$ such that $|F_j^i \setminus A_{m(j)}^j|=0$. Indeed, if this were not the case, then there would be some $A_m^j$ such that $0<|F_j^i \cap A_m^j| < |F_j^i|$. But then, we would have $\{v^* = \beta^j \}$ essentially disconnecting $F_j^i$, which is impossible since $(F_j^i)^\one \subset \{v^*> t_j\}$ up to an $\mathcal{H}^n$-null set (cf.~ \eqref{eq:cant split}). 

Putting together all of the previous observations, for each $F^i_j$ we have found a set of finite perimeter $A_{m(j)}^j$ such that $|F^i_j \setminus A_{m(j)}^j|=0$ and $\mathbf{1}_{A^j_{m(j)}} = \lim_{k\to\infty} \mathbf{1}_{A_{m(j)}^{j,k}}$ $\Lcal^{n+1}$-a.e., for a sequence $\{A_{m(j)}^{j,k}\}_k$ such that $|A_{m(j)}^{j,k}\setminus C_{i(j,k,{m(j)})}^k|=0$ for some essentially connected component $C_{i(j,k,{m(j)})}^k$ of $\{v_k^* >0\}$ in $B_r(x_0)$. Thus,
\begin{equation}\label{eq:fij containment}
    \mathbf{1}_{F_j^i} \leq \mathbf{1}_{A_{m(j)}^j} = \lim_{k\to\infty} \mathbf{1}_{A_{m(j)}^{j,k}} \leq \liminf_{k\to\infty} C_{i(j,k,{m(j)})}^k \qquad \mbox{$\Lcal^{n+1}$-a.e.}
\end{equation}
We would therefore be done with proving $\mathbf{1}_{F_j^i} \leq \liminf \mathbf{1}_{C_{i(k)}^k}$ if we could choose a subsequence in $k$ and sets $C^k_{i(k)}$ such that $C^k_{i(j,k,{m(j)})}=C^k_{i(k)}$ for all $k$; in other words, we wish to remove the dependence on $j$. To achieve this, recall that $F_j^i \subset F_{j+1}^i$ for all $j$. Combined with \eqref{eq:fij containment}, we find that
\begin{equation*}
    \mathbf{1}_{F_1^i} \leq \mathbf{1}_{F_2^i} \leq \liminf_{k\to\infty} C_{i(2,k,{m(j)})}^k \quad \mbox{a.e.}\qquad \mbox{and}\qquad \mathbf{1}_{F_1^i} \leq \liminf_{k\to\infty} C_{i(1,k,{m(j)})}^k\quad \mbox{a.e.}
\end{equation*}
But the sets $\{C_i^k\}_i$ are pairwise disjoint up to Lebesgue null sets, which together with the previous pair of inequalities implies the existence of $K(2)$ such that for all $k \geq K(2)$, $C_{i(2,k,{m(j)})}^k=C_{i(1,k,{m(j)})}^k$. Continuing on as such, we may inductively identify $K(j) \geq K(j-1)$ such that for all $k \geq K(j)$, $C_{i(j,k,{m(j)})}^k=C_{i(1,k,{m(j)})}^k$ and
\begin{equation}\label{eq:fij chain}
   \mathbf{1}_{F_1^i} \leq \mathbf{1}_{F_2^i}\leq \cdots \leq \mathbf{1}_{F_j^i} \leq \liminf_{k\to \infty} C_{i(1,k,{m(j)})}^k\qquad \mbox{a.e.}
\end{equation}
We can now choose our final subsequence in $k$ to be $\{K(j)\}_j$ and our sets $C_{i(K(j))}^{K(j)}=C_{i(1,K(j),{m(j)})}^{K(j)}$. So the chain of inequalities \eqref{eq:fij chain} finishes this step. 


\medskip

\noindent{\it Step 2}: Here we conclude that $\{g_i^*\leq t\}$ is $\mathcal{C}$-spanning for all $t\in (0,1/2)$. Fix a ball $B_r(x_0) \subset \Omega$. Let $v_i^k = \mathbf{1}_{C_{i(k)}^k}v_k$, and let $\psi_i^k$ and $\psi_i$ be the respective semilinear replacements from Lemma \ref{lemma semilinearreplacement} for $v_i^k$ and $v_i$ in $B_{r/2}(x_0)$. Let $g_i^k=(v_i^k +  \min\{\vphi, v_k-v_i^k\}) \mathbf{1}_{B_r(x_0) \setminus B_{r/2}(x_0)} + \psi_i^k$ be the cup competitors associated to $v_k$ and $C_{i(k)}^k$. Let us take a further subsequence such that $v_k\to v$ pointwise a.e.~, $v_i^k \toweak w_i$  in $W^{1,2}$ and pointwise a.e.~ for some $w_i \in W^{1,2}(B_{r}(x_0)) $, and $g_i^k \toweak G_i$ in $W^{1,2}$ and pointwise a.e.~ for some $G_i$.

By Step 1 and these observations, we have $v_i \leq \liminf_{k\to\infty} v_i^k =: w_i$ a.e. on $B_{r}(x_0)$. Thus, since $v_i, w_i \in W^{1,2}(B_r(x_0))$ we have $v_i|_{\partial B_{r/2}(x_0)} \leq w_i|_{\partial B_{r/2}(x_0)}$ in the sense of traces on $\partial B_{r/2}(x_0)$, so that by Lemma \ref{lemma semilinearreplacement}, $\psi_i \leq \psi_i'$, where $\psi_i'$ is the semilinear elliptic replacement for $w$ on $B_{r/2}(x_0)$. Furthermore, by compactness of the trace operator $T:W^{1,2}(B_{r/2}(x_0))\to L^2(\partial B_{r/2}(x_0))$, we have $v_i^k|_{\partial B_{r/2}} \to w_i|_{\partial B_{r/2}(x_0)}$ strongly in $L^2(\partial B_{r/2}(x_0))$, which implies that the weak (subsequential) $W^{1,2}$-limit of $\psi_i^k$ is $\psi_i'$ on $B_{r/2}(x_0)$. Therefore,
$$
G_i = w_i + \min\{\varphi,v - w_i\}\mathbf{1}_{B_r(x_0)\setminus B_{r/2}(x_0)}+ \psi_i'\,. 
$$
Extend $G_i$ by $v$ to a function on the entirety of $\Omega$. By Theorem \ref{thm:prelim compactness thm}, $G_i$ satisfies $\{G_i^* \leq t\}$ is $\mathcal{C}$-spanning for every $t\in (0,1/2)$. If we show that $g_i \leq G_i$ a.e.~, then it follows that $g_i$ satisfies this condition as well, so that the proof will be complete. We only need to check the inequality on $B_r(x_0)$ where $g_i \neq G_i$. On $B_{r/2}(x_0)$, we have $g_i = \psi_i \leq \psi_i' = G_i$, so it remains to check on $B_{r}(x_0)\setminus B_{r/2}(x_0)$. On the latter annulus, if $\min\{\varphi(x),v(x) - w_i(x)\}$ is achieved by $\varphi(x)$ at some (Lebesgue) point $x$, we have $G_i(x) = w_i(x) + \varphi(x) \geq v_i(x) + \min\{\varphi(x), v(x)-v_i(x)\} = g_i(x)$. On the other hand, if $\min\{\varphi(x),v(x) - w_i(x)\}$ is achieved by $v(x)-w_i(x)$, then $G_i(x)=w_i(x) + v(x) - w_i(x) = v(x) \geq v_i(x)  + \min\{ \varphi(x), v(x)-v_i(x)\}=g_i(x)$. Thus $G_i \geq g_i$ a.e. on the annulus as well, so we are done.
\end{proof}

Given the weak $W^{1,2}$-closure of $\Acal$, we are now in a position to prove our main existence theorem.

\begin{proof}[Proof of Theorem \ref{thm:main existence theorem}]

The proof is divided into steps. First we obtain limits of minimizing sequences for \eqref{new model intro 2} and \eqref{new model volume constrained 2}. Then in steps two through four, we verify that these limits are admissible and minimizing for the either \eqref{new model intro 2} or \eqref{new model volume constrained 2} (using crucially \eqref{eq:sharp existence assumption 2}) and also \eqref{new model intro} or \eqref{new model volume constrained} respectively (by applying the regularity theory in Section 3). Note that we must distinguish between the cases $n=1$ and $n\geq 2$ when verifying the admissibility for \eqref{new model intro 2} and \eqref{new model intro}.

\medskip

\noindent{\it Step 1 (limits of minimizing sequences)}: Let $\{u_j\}$ be a minimizing sequence for \eqref{new model intro 2} or \eqref{new model volume constrained 2}. By Lemma \ref{lemma:inf not infinity}, which asserts that the infimums are indeed finite, there exists $u\in W^{1,2}_\loc(\Om;[0,1])$ such that (up to a subsequence) $u_j\to u$ strongly in $L^1_\loc$ and, by the lower-semicontinuity of the Dirichlet energy and Fatou's lemma,
\begin{equation}\notag
\int_\Om |\nabla u|^2 + F(u) \,dx\leq \liminf_{j\to \infty}\int_\Om |\nabla u_j|^2 + F(u_j)\,dx\,.
\end{equation}
By Proposition \ref{p:weak-closure-A}, $u \in \Acal$.

\medskip

\noindent{\it Step 2 (admissibility/minimality of $u$ in \eqref{new model intro 2} and \eqref{new model intro} if $n\geq 2$}: In this case, by the lower-semicontinuity of the Dirichlet energy and Fatou's lemma, $u\in D_n^{1,2}(\Om;[0,1]){\cap\Acal}$ and so is admissible for \eqref{new model intro 2}. {Therefore, it is a minimizer for \eqref{new model intro 2}, and so by Corollary \ref{corollary higher order regularity}, it is a minimizer for \eqref{new model intro}.}


\medskip

\noindent{\it Step 3 (admissibility/minimality of $u$ in \eqref{new model intro 2} and \eqref{new model intro} if $n=1)$}: If $n=1$, then by \eqref{eq:sharp existence assumption 2}, there exists $t_k\searrow 0$ such that $F(t_k)>0$. In order to obtain the decay of $u$ at infinity, we will use this to show that, for all $t\in (0,1)$
\begin{equation}\label{eq:sequence bounds}
   \sup_j \mathcal{L}^2(\{u_j>t\}) < \infty\,.
\end{equation}
Assuming the validity of the uniform bound \eqref{eq:sequence bounds}, which depends on $t$ but not $j$, combined with the $L^1_\loc$ convergence of $u_j$ to $u$, we deduce that $u\in D_1^{1,2}(\Om;[0,1]){\cap\Acal}$ and thus is admissible for \eqref{new model intro 2}. 

To prove \eqref{eq:sequence bounds}, for $R$ such that $\wire \cc B_R$, we let $E$ denote a continuous linear extension operator from $W^{1,2}(B_{2R} \setminus B_R;[0,1])$ to $W^{1,2}(B_{2R};[0,1])$. In a slight abuse of notation, for $u_j$ we will let $E u_j$ denote the function on $\mathbb{R}^{n+1}$ which agrees with $u_j$ outside $B_{R}$. It thus suffices to prove \eqref{eq:sequence bounds} for $E u_j$, and in fact for $(E u_j)^*$, which is the radially symmetric decreasing rearrangment (see e.g. \cite[Section 1.4.1]{Grafakos-classical}) of $E u_j$. Note that the uniform energy bound for $u_j$ implies that
\begin{equation}\label{eq:finite energy assumption ex proof}
    \sup_j \int_{\mathbb{R}^{n+1}}|\nabla (E u_j)^*|^2 + F((E u_j)^*) \,dx < \infty\,.
\end{equation}
Let us assume for contradiction that the uniform bound \eqref{eq:sequence bounds} for $(E u_j)^*$ does not hold with some $t_0\in (0,1)$. Then, letting $r_j\to \infty$ be such that $\mathcal{L}^2(\{(E u_j)^*>t_0\})=\pi r_j^2$ (up to extracting a subsequence if necessary), we set $\mathcal{F}(t) = \int_0^t \sqrt{F(s)}\,ds$ and use the identity $2ab \leq a^2 + b^2$ to estimate
\begin{align*}
    \int_{\mathbb{R}^{n+1}}|\nabla (E u_j)^*|^2 + F((E u_j)^*) \,dx &\geq 2\int_{B_{r_j}^c}|\nabla_x \mathcal{F}((E u_j)^*(x))| \, dx \\
    &\geq 4\pi r_j \int_{r_j}^\infty |\partial_r \mathcal{F}((E u_j)^*(r))|\,dr\\
    &=4\pi r_j \Fcal((Eu_j)^*(r_j))\\
    &= 4\pi r_j \mathcal{F}(t_0)\,, 
\end{align*}
where in the next to last line we have used the fundamental theorem of calculus and the assumption that $(E u_j)^*$ decays to $0$ at infinity (which follows from $u_j \in D_1^{1,2}(\Om;[0,1])$), while in the last line we have used the fact that $(Eu_j)^*(r_j)=t_0$, by construction. Note that above we have abused notation slightly, interchanging between $(Eu_j)^*$ as a function of $x$ and as a function of $r=|x|$. Now since $F$ is not the zero function on $(0,t_0)$ by \eqref{eq:sharp existence assumption 2}, $\mathcal{F}(t_0)>0$ and thus the quantity $4\pi r_j \Fcal(t_0)$ diverges to $\infty$ if $r_j\to \infty$, contradicting \eqref{eq:finite energy assumption ex proof}. Therefore, we have shown \eqref{eq:sequence bounds}, and so $u\in D_1^{1,2}(\Om;[0,1]){\cap\Acal}$ and is minimizing for \eqref{new model intro 2}. By {Corollary \ref{corollary higher order regularity}, it is minimizing for \eqref{new model intro}.}

\medskip

\noindent{\it Step four (admissibility/minimality of $u$ in \eqref{new model volume constrained 2} and \eqref{new model volume constrained}}): 
{Since any minimizer of \eqref{new model volume constrained 2} is a minimizer of \eqref{new model volume constrained} (by Corollary \ref{corollary higher order regularity}) and we have already verified that $u \in \Acal$}, to conclude the existence proof for minimizers of \eqref{new model volume constrained} under the additional volume constraint, it remains to show that 
\begin{equation}\label{eq:vol pres in thm proof}
\int_\Om V(u)\,dx = 1\,.
\end{equation}
Assume for contradiction that $\int_\Omega V(u)$ is strictly less than one and set
\begin{equation}\notag
\e := \int_\Om V(u)\,dx\in (0,1)\,.
\end{equation}

In order to prove \eqref{eq:vol pres in thm proof} by contradiction, we make three preliminary claims: first, that
\begin{equation}\label{eq:concentration consequence}
\liminf_{j\to \infty}\int_\Om |\nabla u_j|^2 + F(u_j)\,dx \geq \int_\Om |\nabla u|^2 + F(u)\,dx + \Psi(1-\e)
\end{equation}
(recall the definition of $\Psi$ in \eqref{eq:isop problem def}); second, that $u$ is a minimizer for the problem
\begin{equation}
\Psi_\wire(\e):=\inf\left\{\int_{\Omega} |\nabla v|^2 +F(v): \begin{array}{@{}c@{}} v\in W^{1,2}(\Om;[0,1]), \ \int_{\Omega}V(v)=\e, \\ \text{$\{v^* \geq t\}$ is $\mathcal{C}$-spanning $\wire$ for all $t\in (1/2,1)$}\end{array} \right\} ;\label{eq:vol constrained model in ex proof}
\end{equation}
and third, that
\begin{equation}\label{eq:characterization of infimum}
\mbox{the infimum in \eqref{new model volume constrained 2} is equal to $\int|\nabla u|^2 + F(u)\,dx+ \Psi(1-\e)=\Psi_\wire(\e) + \Psi(1-\e)$}\,.
\end{equation}
The lower bound \eqref{eq:concentration consequence} follows by a standard localization argument which we omit. For the second two claims \eqref{eq:vol constrained model in ex proof}-\eqref{eq:characterization of infimum}, firstly for any $v$ which is admissible for \eqref{eq:vol constrained model in ex proof} (in particular $u$ itself), we may consider the functions
\begin{equation}\notag
v_j(x) = \max\{v(x),w(x-je_1)\}\,,
\end{equation}
where $w$ is a radial, decreasing minimizer for the isoperimetric problem \eqref{eq:isop problem def} with $v_0 =1-\eps$; see Theorem \ref{thm:existence for isop problem}. Observe that $v_j$ 
satisfy the spanning condition and also $\int_\Omega V(v_j)\nearrow 1$, since $w$ is decreasing to zero at infinity. Thus by Lemma \ref{lemma volume fixing}, we may fix volumes so that $v_j$ are admissible for \eqref{new model volume constrained 2}. Combining this with the fact that $\{u_j\}$ is a minimizing sequence for the latter, we obtain the upper bound
\begin{equation}\label{eq:upper bound for new model vol const}
\liminf_{j\to \infty}\int_\Om |\nabla u_j|^2 + F(u_j)\,dx \leq \lim_{j\to \infty}\int_\Om |\nabla v_j|^2 + F(v_j)\,dx = \int_\Om |\nabla v|^2 + F(v)\,dx + \Psi(1-\e)\,.
\end{equation}
By \eqref{eq:concentration consequence} and the fact that \eqref{eq:upper bound for new model vol const} holds for every admissible $v$ in \eqref{eq:vol constrained model in ex proof}, we deduce that
\begin{align*}
\Psi_\wire(\e) + \Psi(1-\e)&\leq   \int_\Om |\nabla u|^2 + F(u)\, dx + \Psi(1-\e) \\
&\leq \liminf_{j\to \infty}\int_\Om |\nabla u_j|^2 + F(u_j)\,dx \\
&\leq \Psi_\wire(\e) + \Psi(1-\e)\,.
\end{align*}
This concludes the arguments for the minimality of $u$ in \eqref{eq:vol constrained model in ex proof} and \eqref{eq:characterization of infimum}. 

\medskip

\noindent{\it To prove \eqref{eq:vol pres in thm proof} by contradiction}: Now that we have demonstrated \eqref{eq:concentration consequence}-\eqref{eq:characterization of infimum}, we are in a position to prove \eqref{eq:vol pres in thm proof}. We introduce the notation
\begin{equation}
\mathcal{E}(u;U) = \int_{U}|\nabla u|^2 + F(u)\,dx\,,\qquad \mathcal{V}(u;U) = \int_{U}V(u)\,dx\,,
\end{equation}
for $U \subset \Omega$. We simply write $\mathcal{E}(u)$ and $\mathcal{V}(u)$ respectively in the case when $U=\Omega$. We introduce the functions
\begin{equation}\notag
v_k(x) = \max \{u(x),\,w(x-ke_1) \}:\Om \to [0,1]
\end{equation}
which have volume strictly less than $1$ {due to the fact that $w>0$ (see Theorem \ref{thm:existence for isop problem})}; more precisely, denoting
\begin{equation}\notag
A_k := \{x\in \Om:0<u(x)<w(x-ke_1) \}\,,\quad B_k := \{x\in \Om:0<w(x-ke_1)\leq u(x) \}{\cup \wire}\,,
\end{equation}
which satisfy $A_k \cap B_k = \emptyset$, $|A_k|+|B_k|>0$ since {$w>0$}, we have
\begin{equation}\notag
\V(v_k) = 1-\V( u ; A_k) - \V(w(\cdot - ke_1);B_k)<1\,.
\end{equation}
Since $u$ is minimal for \eqref{new model volume constrained 2} with potential $\e^{-1}V$, Corollary \ref{corollary higher order regularity}.(iii) applies to $u$, yielding the Lipschitz bound \eqref{eq uniform holder} uniformly on small balls away from $\partial \Om$, so that $u$ decays uniformly to $0$ as $|x|\to \infty$. Combined with the fact that $w$ also decays uniformly to $0$ (it is radially decreasing), we find that
\begin{equation}\notag
0<\max \{\sup\{u(x) : x\in A_k \},\,\sup \{w(x-ke_1):x\in B_k \}\} \leq \beta_k
\end{equation}
for some $\beta_k\to 0$. Therefore, by the assumption \eqref{eq:A4} that $ \lim_{t\to 0}V(t)/F(t)=0$, 
\begin{equation}\label{missing expensive parts}
\frac{\ac(u;A_k)+ \ac(w(\cdot - ke_1);B_k)}{\V(u;A_k)+ \V(w(\cdot - ke_1);B_k)}\geq \frac{\int_{A_k}F(u) + \int_{B_k}F(w(x-ke_1))}{\V(u;A_k)+ \V(w(\cdot - ke_1);B_k)} \geq \inf_{0<t\leq \beta_k}\frac{F(t)}{V(t)}\to \infty\,.
\end{equation}
By applying a volume fixing variation to $v_k$ as given by Lemma \ref{lemma volume fixing}.(ii) that increases the volume to $1$, there is a constant $C_2>0$ (independent of $k$) such that for large $k$, there is $\tilde{v}_k$ with
\begin{equation}\label{eq:tildevk vol fix estimates}
    \mathcal{V}(\tilde{v}_k)=1,\,\, \mathcal{E}(\tilde{v}_k)) \leq C_2\big(1-\mathcal{V}(v_k)\big)+ \mathcal{E}(v_k)=C_2\big(\mathcal{V}(u;A_k)+\mathcal{V}(w(\cdot-ke_1);B_k)\big)+ \mathcal{E}(v_k)\,.
\end{equation}
By \eqref{missing expensive parts}, we may choose some $k'$ large enough so that 
\begin{equation}\label{bad ratio}
\frac{\ac(u;A_{k'})+ \ac(w(\cdot - k'e_1);B_{k'})}{\V(u;A_{k'})+ \V(w(\cdot - k'e_1);B_{k'})}{>C_2}\,,
\end{equation}
Since $\{u=1\}$ is $\mathcal{C}$-spanning, $\{\tilde{v}_{k'}=1\}$ is as well, so it is admissible for \eqref{new model volume constrained 2}. Furthermore, by \eqref{eq:tildevk vol fix estimates}-\eqref{bad ratio} and the minimality of $u$ for $\Psi_\wire(\e)$ we have {
\begin{align}\notag
\ac({\tilde{v}_{k'}}) &\leq C_2\big(\mathcal{V}(u;A_{k'})+\mathcal{V}(w(\cdot-k'e_1);B_{k'})\big)+ \mathcal{E}(v_{k'}) \\ \notag 
&=C_2\big(\mathcal{V}(u;A_{k'})+\mathcal{V}(w(\cdot-k'e_1);B_{k'})\big) + \ac(u;\Omega \setminus A_{k'})+ \ac(w;\Omega \setminus B_{k'})\\ \notag
&<\ac(u;A_{k'})+ \ac(w(\cdot - k'e_1);B_{k'})+\ac(u;\Omega \setminus A_{k'})+ \ac(w;\Omega \setminus B_{k'}) \\ \notag
&= \Psi_\wire(\e) + \Psi(1-\e)\,.
\end{align}
But by the admissibility of $\tilde v_k$ for \eqref{new model volume constrained 2}, this contradicts \eqref{eq:characterization of infimum}. So it must be the case that $\V(u)=1$, which is \eqref{eq:vol pres in thm proof}.}
\end{proof}

\begin{remark}[Optimality of the assumptions in $\mathbb{R}^2$ for \eqref{new model intro}]\label{remark:optimality of ex assumptions}
    If $n=1$ and there exists $t_0>0$ such that $F(t)=0$ for $t\in [0,t_0]$, then there do not exist any minimizers for \eqref{new model intro}, and any minimizing sequence converges to a function which is bounded from below by $t_0$. To see this, take a minimizing sequence $\{u_j\}$ and, for $R$ large enough such that $\Wbf\subset B_R$, consider the functions
    \begin{equation}
        w_j(x) = \begin{cases}
			\max\{u_j(x),t_0\} &\quad x\in B_{R} \cap \Om\\ 
			{\max\{2t_0- t_0\log(|x|)/\log(R),u_j(x)\}} &\quad x\in B_{R^2}\setminus B_{R}\\
   u_j(x)&\quad \mbox{otherwise}\,.
		\end{cases}
    \end{equation}
    Direct computation shows that their energy approaches that of $\max\{u_j,t_0\}$ on $\Om$ as $R\to \infty$, which, since $F=0$ and $u_j$ decays at infinity, has strictly less energy than $u_j$. If $u_j$ converged to a function which took values below $t_0$, this strict inequality would persist in the limit and contradict the minimality of our sequence. So there is no minimizer if $F=0$ on $[0,t_0]$. Note that when $n\geq 2$, there is no way to truncate in a way that simultaneously ensures decay at infinity and that the Dirichlet energy of the tails decay to zero. Indeed, this phenomenon of lack of existence of global solutions to \eqref{new model intro} with $F$ vanishing only occurs when $n=1$.
\end{remark}

\subsection{Proof of Theorem \ref{thm:main regularity theorem}.ii}

Proposition \ref{p:tangents} and Proposition \ref{p: freq gap} allow us to provide the following definitions of the \emph{singular} and \emph{regular} parts of the free boundary $\{u=1\}$, for solutions $u$ of  \eqref{eq innervariation}-\eqref{eq:differential inequality} in terms of $v=1-u$.

\begin{definition}
Let $v\in W^{1,2}_\loc(\Omega)$ be a solution of \eqref{eq modified iv}-\eqref{eq modified ineq}. We define the singular set $\mathcal{S}(v)$ of $\{v=0\}$ as
\[
\mathcal{S}(v) := \left\{x: N_{v,x}(0^+) \geq \tfrac{3}{2}\right\},
\]
and we define the regular set $\Rcal(u)$ of $\{u=1\}$ as
\[
\Rcal(u) := \left\{x : N_{v,x}(0) =1\right\}.
\]

Abusing notation, for $u\in W^{1,2}_\loc(\Omega;[0,1])$ a solution of \eqref{eq innervariation}-\eqref{eq:differential inequality} and $v=1-u$, we in turn define the respective singular and regular sets $\mathcal{S}(u)$, $\Rcal(u)$ of $\{u=1\}$ as
\[
    \Scal(u) := \Scal(v), \qquad \Rcal(u) := \Rcal(v)\,.
\]
\end{definition}

We have the following immediate corollary.

\begin{corollary}\label{corollary decomp fb}
Let $u\in W^{1,2}_\loc(\Omega;[0,1])$ be a solution of \eqref{eq innervariation}-\eqref{eq:differential inequality}. Then $\Omega \cap\{u=1\}$ decomposes as the disjoint union $\mathcal{S}(u) \sqcup \Rcal(u)$, where $\mathcal{S}(u)$ is relatively closed in $\Om \cap \{u=1\}$.
\end{corollary}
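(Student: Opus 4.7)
My plan is to derive the corollary as a direct consequence of three ingredients already established in the excerpt: the sharp lower bound $N_{v,x_0}(0^+) \geq 1$ at free boundary points (Proposition \ref{p:tangents}), the frequency gap $N_{v,x_0}(0^+) \in \{1\} \cup [3/2, \infty)$ at boundary points of $\{v>0\}$ (Proposition \ref{p: freq gap}), and the almost-monotonicity of the frequency from Lemma \ref{lemma almgren}, which gives upper semicontinuity of $x \mapsto N_{v,x}(0^+)$.

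For the disjoint decomposition, I would fix $x_0 \in \Om \cap \{u=1\}$ and work on the connected component $\Om'$ of $\Om$ containing $x_0$. The degenerate case $v \equiv 0$ on $\Om'$ makes the frequency ill-defined and should be excluded, consistent with the statement of Theorem \ref{thm:main regularity theorem}.(ii). Otherwise, Lemma \ref{lemma:unique continuation} yields $\mathcal{L}^{n+1}(\{v=0\} \cap \Om') = 0$, which forces $\{v=0\} \cap \Om' \subset \partial \{v>0\}$. Thus $x_0$ is a boundary point of $\{v>0\}$ to which both Proposition \ref{p:tangents} and Proposition \ref{p: freq gap} apply. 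Combining them, $N_{v,x_0}(0^+) \in \{1\} \cup [3/2, \infty)$, so $x_0$ lies in precisely one of $\mathcal{R}(u)$ or $\mathcal{S}(u)$ by the definitions.

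For the relative closedness of $\mathcal{S}(u)$ in $\Om \cap \{u=1\}$, the main tool is the upper semicontinuity of $x \mapsto N_{v,x}(0^+)$. This follows from Lemma \ref{lemma almgren}: for each fixed $r < \min\{r_*, 1, \dist(x, \partial \Om)\}$ with $H_{v,x}(r) > 0$, the quantity $e^{\kappa r^2/2}(N_{v,x}(r)+1)$ is an upper bound for $N_{v,x}(0^+)$, and it is continuous in $x$ by the continuity of $D_{v,\cdot}(r)$ and $H_{v,\cdot}(r)$ (the latter being strictly positive on the closure of $\{v>0\}$ by Remark \ref{remark:weak uc}). Taking the infimum over $r$ yields upper semicontinuity at any point in $\Om \cap \overline{\{v>0\}}$, which covers all of $\Om' \cap \{u=1\}$ on components with $v \not\equiv 0$. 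Then for any sequence $\mathcal{S}(u) \ni x_k \to x \in \Om \cap \{u=1\}$, upper semicontinuity gives $N_{v,x}(0^+) \geq \limsup_k N_{v,x_k}(0^+) \geq 3/2$, so $x \in \mathcal{S}(u)$.

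I do not anticipate a serious technical obstacle, as all the heavy lifting is in the preceding propositions. The only bookkeeping subtlety is the degenerate case where $v \equiv 0$ on a connected component of $\Om$: there the frequency is not defined and the stated decomposition cannot literally include such points, but this is exactly the exceptional component excluded in Theorem \ref{thm:main regularity theorem}.(ii), so the corollary is effectively stated modulo this understanding.
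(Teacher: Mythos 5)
Your proposal is correct and follows the same route as the paper: the disjointness comes from the frequency gap in Propositions \ref{p:tangents} and \ref{p: freq gap}, and closedness of $\mathcal{S}(u)$ follows from upper semicontinuity of $x\mapsto N_{v,x}(0^+)$ via Lemma \ref{lemma almgren}. You simply spell out more of the bookkeeping (the reduction to $\partial\{v>0\}$ via Lemma \ref{lemma:unique continuation} and the degenerate component where $v\equiv 0$) that the paper leaves implicit in its one-sentence proof.
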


\begin{proof}
The decomposition into $\mathcal{R}(u)$ and $\mathcal{S}(u)$ is an immediate consequence of Proposition \ref{p:tangents}, and the fact that $\mathcal{S}(u)$ is relatively closed in $\Om$ follows by the upper semicontinuity of the frequency function.
\end{proof}

We begin our analysis by focusing on $\Rcal(u)$; namely, we will proceed to prove Theorem \ref{thm:main regularity theorem}. This regularity will essentially follow from noticing that points in $\Rcal(u)$ correspond to regular points in the zero set of a function obtained from a suitable reflection of $v$. Thus, a priori, from the regularity of this reflected function, one can get initial regularity via the implicit function theorem for $\Rcal(u)$. This argument is rather standard and we present it here for the sake of completeness, we remark that this reflection argument can be traced back, at least in spirit the argument to \cite{evans1940minimal} where surfaces of minimal capacity were realized as zero sets of multivalued harmonic functions. In our case, we follow the arguments in \cite{TT}. The only difference lies in the analyticity conclusion which is a direct consequence of \cite[Theorem 4]{koch2015partial}. Notice that this latter result is rather surprising, since it guarantees that regular level sets of solutions to semilinear PDEs are analytic regardless of the regularity of the non-linearity.

Let us re-state Theorem \ref{thm:main regularity theorem} here for convenience.

\begin{theorem}[Regularity of $\Rcal(u)$]\label{t:reg-set}
If $u\in W^{1,2}_\loc(\Omega;[0,1])$ is a solution of \eqref{eq innervariation}-\eqref{eq:differential inequality} with {$\Phi\in C^2$ and $\Phi'(1)=0$}, then $\Rcal(u)$ is locally an $n$-dimensional analytic submanifold.
\end{theorem}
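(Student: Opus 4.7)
The plan is to reduce the problem to the analysis of regular level sets of a reflected function that solves a semilinear elliptic equation, and then invoke the analyticity result of Koch--Nadirashvili \cite{koch2015partial}.

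Fix $x_0 \in \Rcal(u)$ and set $v = 1-u$, so that $N_{v,x_0}(0^+) = 1$. First, I would verify that $\Rcal(u)$ is relatively open in $\{u=1\}$. Indeed, by Proposition \ref{p:tangents}, every point $y\in \{v=0\}$ (in the relevant connected component) satisfies $N_{v,y}(0^+) \geq 1$, while by the frequency gap in Proposition \ref{p: freq gap}, any point $y \in \{v=0\} \setminus \Rcal(u)$ has $N_{v,y}(0^+) \geq \tfrac{3}{2}$. Combined with the upper semicontinuity of $y \mapsto N_{v,y}(0^+)$ (a consequence of Lemma \ref{lemma almgren}, cf.~ the proof of Lemma \ref{lemma:unique continuation}), this forces $N_{v,y}(0^+) = 1$ for every $y$ in a neighborhood of $x_0$ in $\{v=0\}$; so in particular the hypotheses of Proposition \ref{lemma density components} are met.

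Next, I would apply Proposition \ref{lemma density components} to obtain a radius $\rho_0 > 0$ such that $\{v>0\} \cap B_{\rho_0}(x_0)$ has exactly two connected components $B^+, B^-$. Lemma \ref{l:reflection} then yields that the odd reflection
\[
    \tilde v := v\, \mathbf{1}_{\overline{B^+}} - v\, \mathbf{1}_{B^-}
\]
is Lipschitz on $B_{\rho_0}(x_0)$ and is a weak solution there of $\Delta \tilde v = \tfrac{1}{2} \tilde H(\tilde v)$, where $\tilde H$ is the odd reflection of $G'$ defined in \eqref{eq odd reflection}. Since $G \in C^2([0,1])$ with $G'(0)=0$, a direct computation shows that $\tilde H$ is continuous, and in fact $C^{0,1}$ across $0$. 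Standard interior elliptic regularity (Calder\'on--Zygmund followed by Schauder estimates applied to the bootstrapped semilinear equation) then gives $\tilde v \in C^{1,\alpha}_{\loc}(B_{\rho_0}(x_0))$ for every $\alpha \in (0,1)$.

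The key remaining point is to identify $\nabla \tilde v(x_0)$ via a blow-up. By Proposition \ref{p:tangents}, every tangent function of $v$ at $x_0$ has the form $\bar v(x) = \frac{1}{\sqrt{\omega_{n+1}}} |x \cdot e|$ for some $e \in \SS^n$. Since the blow-up convergence is locally uniform (Lemma \ref{lemma strong convergence blow-ups}) and $B^\pm$ are the two connected components of $\{v>0\} \cap B_{\rho_0}(x_0)$, the two half-spaces $\{\pm x \cdot e > 0\}$ are, respectively, the blow-up limits of (the rescalings of) $B^+$ and $B^-$ at $x_0$. Rescaling $\tilde v$ in the same way therefore yields the linear tangent $\frac{1}{\sqrt{\omega_{n+1}}}(x \cdot e)$. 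Combined with the $C^{1,\alpha}$ regularity of $\tilde v$, this forces $\nabla \tilde v(x_0) = e/\sqrt{\omega_{n+1}} \neq 0$. The main delicate step here is verifying that no rescaling normalization is lost: the factor $H_{v,x_0}(r)^{1/2}$ in the blow-up normalization of $v$ is chosen so that $\|\bar v\|_{L^2(\partial B_1)}=1$, and because $\tilde v$ already has a well-defined linear tangent from its $C^{1,\alpha}$ regularity, the two normalizations must agree and yield the same gradient.

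Finally, by the continuity of $\nabla \tilde v$, we have $\nabla \tilde v \neq 0$ on some $B_{r_1}(x_0)$ with $r_1 \leq \rho_0$. Observing that $\{u=1\} \cap B_{r_1}(x_0) = \{v=0\} \cap B_{r_1}(x_0) = \{\tilde v = 0\} \cap B_{r_1}(x_0)$ (since $\tilde v = \pm v \neq 0$ on $B^\pm$), the implicit function theorem identifies $\{u=1\} \cap B_{r_1}(x_0)$ with a $C^{1,\alpha}$ graph of dimension $n$. Analyticity then follows from \cite[Theorem 4]{koch2015partial}, which ensures that regular level sets of solutions to semilinear elliptic equations are real-analytic, regardless of the regularity of the nonlinearity. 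This concludes the proof.
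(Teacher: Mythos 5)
Your proposal follows the paper's proof almost verbatim: reduce to a neighborhood of $x_0$ via the two-component decomposition of Proposition \ref{lemma density components}, pass to the odd reflection $\tilde v$ via Lemma \ref{l:reflection}, show $\nabla\tilde v\neq 0$ on the zero set, and conclude analyticity from \cite[Theorem 4]{koch2015partial}. Two small imprecisions worth noting: (a) $\tilde H$ is in fact $C^1$, not merely $C^{0,1}$ --- since $G\in C^2$ and $G'(0)=0$, the one-sided derivatives of the odd reflection both equal $G''(0)$ at the origin --- so the paper obtains $\tilde v\in C^2_{\loc}$ directly; and (b) your identification $\nabla\tilde v(x_0)=e/\sqrt{\omega_{n+1}}$ is not quite right because the blow-up normalization $H_{x_0}(r)^{1/2}$ scales like $|\nabla\tilde v(x_0)|\,r$, not like $r$ itself, so only the direction (and nonvanishing) of the gradient is recovered, not its magnitude. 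The cleaner route, as in the paper, is by contradiction: if $\nabla\tilde v(y)=0$ at some $y\in\{v=0\}$ near $x_0$, then since the tangent function of $\tilde v$ at $y$ is a homogeneous harmonic polynomial of degree $N_{\tilde v,y}(0^+)=1$, the $C^1$ regularity of $\tilde v$ would force the tangent's degree to exceed $1$, a contradiction. This also yields $\nabla\tilde v\neq 0$ at every nearby zero point directly, rather than at $x_0$ alone with a continuity argument, though both suffice for the implicit function theorem.
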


\begin{proof}[Proof of Theorem \ref{t:reg-set}]
Let $x_0\in \Rcal(u)$. It suffices to prove that $\mathcal{R}(u)\cap B_{\rho_0}(x_0)$ has the desired structure for some $0<\rho_0<\dist(x_0,\mathcal{S}(u))$, bearing in mind that $\dist(x_0,\mathcal{S}(u))>0$, since $\mathcal{S}(u)$ is relatively closed in $\Omega$. We proceed in steps as follows.

Let $v=1-u$ and {let $G(v):= \Phi(1-v)$}. First, we observe that, in virtue of Proposition \ref{lemma density components}, there exists $0<\rho_0<\dist (x_0,\mathcal{S}(u))$ such that $\{v>0\} \cap B_{\rho_0}(x_0)$ has exactly two connected components.      

 We will now proceed to show that the zero set of $\tilde v := v\mathbf{1}_{\overline{B}^+} - v\mathbf{1}_{B^-}$ is analytic in $B_{\rho_0}(x_0)$. Firstly, we may apply Lemma \ref{l:reflection} to conclude that $\tilde v$  is a weak solution of
\[
\Delta \tilde v = \frac{1}{2}\tilde H(\tilde v) \mbox{  in  $B_{\rho_0}(x_0)$}\,,
\]
for $\tilde H$ as in \eqref{eq odd reflection}. Since $G'(0)=0$, and $G \in C^{2}$, we have that $\tilde H$ is $C^1$. Thus, in virtue of the regularity of $G$ and standard elliptic regularity theory, we deduce from the previous step that $\tilde{v}\in C^{2}_\loc(B_{\rho_0}(x_0))$.  In particular, $\nabla \tilde{v}(x)$ exists in the classical sense at any $x\in B_{\rho}(x_0)$. Let us notice that at any $y \in \{v=0\}\cap B_{\rho_0}(x_0)$, we have $N_{v,y}(0^+) = N_{\tilde v, y}(0^+)=1$. Now for any such $y$, consider a subsequential limit $w$ of the rescalings $\tilde v_{y,r}(x)=\frac{\tilde v (y+rx)}{H_{\tilde v,y}(r)^{1/2}}$. Once again exploiting Lemma \ref{l:reflection} together with Lemma \ref{lemma strong convergence blow-ups} and Lemma \ref{lemma prop blowups} (cf. the proof of Lemma \ref{l:harmonic-polyn}), we deduce that $w$ is a homogeneous harmonic polynomial of degree $N_{w,0}(0^+) = N_{\tilde v, y}(0^+)$. Now, if $\nabla \tilde v(y)=0$, the subsequential convergence of $\tilde v_{y,r}$ to $w$ guarantees that $N_{w,0}(0^+) > 1$, yielding a contradiction. Thus, $\nabla\tilde v$ doesn't vanish anywhere on $\{v=0\}\cap B_{\rho_0}(x_0)$. Finally, we deduce from \cite[Theorem 4]{koch2015partial} that $\{\tilde v =0\}\cap B_{\rho_0}(x_0)$ is analytic.
\end{proof}

We continue our analysis by providing a dimension bound on $\mathcal{S}(u)$ \`{a} la Federer. The argument is standard and appears in the literature in numerous places (for instance \cite[Theorem 4.6]{TT}, \cite{DL-survey-JDG}), but we provide a proof here nevertheless, for purpose of clarity, since it is short and elementary.  We start by combining Lemma \ref{lemma prop blowups} and Proposition \ref{p:tangents} to deduce that when $n=1$, $\mathcal{S}(u)$ consists of isolated points.

\begin{theorem}\label{t:2d-isolated-sing}
Let $n=1$ and let $v\in W^{1,2}_\loc(\Omega)$ be a solution of \eqref{eq modified iv}-\eqref{eq modified ineq}. Then $\mathcal{S}(v)$ consists of isolated points.
\end{theorem}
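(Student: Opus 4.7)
The plan is to argue by contradiction, extracting a blowup at a putative non-isolated singular point and deriving a frequency contradiction using the explicit classification from Lemma \ref{lemma:planar tangent classification}.

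Suppose $x_0 \in \mathcal{S}(v)$ is not isolated. Then there exist $x_k \in \mathcal{S}(v) \setminus \{x_0\}$ with $x_k \to x_0$. Setting $r_k = |x_k - x_0| \to 0$ and $y_k = (x_k - x_0)/r_k \in \partial B_1$, I would pass to a subsequence so that $y_k \to \bar y \in \partial B_1$, and so that the rescalings $v_{x_0,r_k} := v(x_0 + r_k \cdot)/H_{v,x_0}(r_k)^{1/2}$ converge strongly in $W^{1,2}_{\loc}(\R^2)$ and locally uniformly to a tangent function $\bar v$ at $x_0$ (Lemma \ref{lemma strong convergence blow-ups} and Lemma \ref{lemma prop blowups}). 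Since $N_{v,x_0}(0^+) \geq 3/2$, Lemma \ref{lemma:planar tangent classification} implies that, up to rotation, $\bar v(r,\theta) = \frac{1}{\sqrt{\pi}} r^{N/2} \, |\sin(N\theta/2)|$ for some integer $N \geq 3$.

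The two key observations to combine are then a passage of the frequency to the limit, and the explicit computation at $\bar y$. On the one hand, by the change-of-variables $N_{v_{x_0,r_k},y_k}(\rho) = N_{v,x_k}(\rho r_k)$ and the almost-monotonicity of the frequency (Lemma \ref{lemma almgren}), the hypothesis $N_{v,x_k}(0^+) \geq 3/2$ yields
\begin{equation*}
N_{v_{x_0,r_k},y_k}(\rho) \;\geq\; e^{-\kappa (\rho r_k)^2/2}\bigl(N_{v,x_k}(0^+)+1\bigr) - 1 \;\xrightarrow[k\to\infty]{}\; 3/2.
\end{equation*}
Using the strong $W^{1,2}_\loc$ convergence of $v_{x_0,r_k}\to \bar v$ and $y_k \to \bar y$, for a.e.\ small $\rho>0$ the numerator and denominator of $N_{v_{x_0,r_k},y_k}(\rho)$ converge to those of $N_{\bar v,\bar y}(\rho)$ (note $\bar v \not\equiv 0$ near $\bar y$ since $\bar y \neq 0$, so the denominator is positive for suitable $\rho$). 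Hence $N_{\bar v,\bar y}(\rho) \geq 3/2$ for all such $\rho$, and letting $\rho \downarrow 0$ via Lemma \ref{lemma almgren} gives $N_{\bar v,\bar y}(0^+) \geq 3/2$.

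On the other hand, since $v(x_k)=0$ for every $k$ (as $\mathcal{S}(v)\subset \{v=0\}$) and $v_{x_0,r_k}(y_k)\to \bar v(\bar y)$, we have $\bar v(\bar y)=0$. As $\bar y \neq 0$, $\bar y$ lies on one of the $N$ rays $\theta = 2\pi j/N$ comprising $\{\bar v=0\}\setminus\{0\}$. Near $\bar y$, a direct expansion $\sin(N\theta/2)\approx \pm \tfrac{N}{2}(\theta - \theta_0)$ shows that $\bar v$ is bi-Lipschitz equivalent to $|x\cdot e|$ on a neighborhood of $\bar y$, so $N_{\bar v,\bar y}(0^+) = 1$. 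This contradicts $N_{\bar v,\bar y}(0^+)\geq 3/2$, completing the proof.

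The main (mild) obstacle is the passage of the frequency through the blowup limit, which requires the strong $L^2$ convergence of gradients from Lemma \ref{lemma strong convergence blow-ups} together with a Fubini-type argument to ensure spherical traces converge for a.e.\ $\rho$; both ingredients are already available in the preceding sections, so the argument is essentially a compactness-plus-classification deduction in $\R^2$.
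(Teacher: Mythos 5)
Your proof is essentially correct and follows the same strategy as the paper: blow up at a putative accumulation point $x_0\in \mathcal{S}(v)$ along the scales $|x_k-x_0|$, use the classification in Lemma \ref{lemma:planar tangent classification} to identify the tangent function, and derive a contradiction from the fact that any $\bar y\neq 0$ in its zero set has $N_{\bar v,\bar y}(0^+)=1$ while the rescaled singular points $y_k$ force $N_{\bar v,\bar y}(0^+)\geq 3/2$. The paper compresses the last step into an appeal to upper semicontinuity of the frequency under blowup, whereas you spell it out via the almost-monotone quantity from Lemma \ref{lemma almgren} and strong $W^{1,2}$ convergence; both are fine.

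One small technical point you should adjust: you set $r_k=|x_k-x_0|$, which places $y_k$ on $\partial B_1$, exactly the boundary sphere of the ball on which Lemma \ref{lemma strong convergence blow-ups} asserts $W^{1,2}(B_1)$ convergence. This means the Fubini-type argument you invoke for convergence of $H_{v_{x_0,r_k},y_k}(\rho)$ needs the spheres $\partial B_\rho(y_k)$ to be well inside the convergence ball, which is borderline at best for $\bar y \in \partial B_1$. The paper sidesteps this entirely by choosing $r_k = 2|x_k-x_0|$, so that $|y_k|=1/2$ and $\bar y$ lies safely in the interior of $B_1$. With that modification your argument is airtight.
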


\begin{proof}
We argue by contradiction. Suppose that there exists a sequence $\{x_k\}\subset \mathcal{S}(v)$ with an accumulation in the interior of $\Omega$. Then up to extracting a subsequence, $x_k \to x_0\in \mathcal{S}(v)$. Let $r_k := 2|x_k - x_0|$. Applying Lemma \ref{lemma prop blowups} to the sequence $v_{x_0,r_k}$, we obtain a limiting radially $N_{v,x_0}(0^+)$-homogeneous function $\bar v\in W^{1,2}\cap { \text{Lip}(\bar B_1)}$, which, up to rotation, has the structure \eqref{e:structure-2d-tangents} for some integer $N \geq 2$. However, observe that the points $y_k = \frac{x_k-x_0}{r_k}$ satisfy $|y_k| = \frac{1}{2}$ and again by upper semicontinuity of the frequency, $y_k \to y_0$ with $N_{\bar v, y_0}(0) > 1$. However, this contradicts the classification in Lemma \ref{lemma:planar tangent classification} established for $\bar v$; indeed, it is easy to explicitly check that $N_{\bar v,y}(0^+) = 1$ for any $y\neq 0$.
\end{proof}

\begin{corollary}\label{c:sing-set-dim}
Let $v\in W^{1,2}_\loc(\Omega)$ be a solution of \eqref{eq modified iv}-\eqref{eq modified ineq}. Then
\[
\dim_\Hcal(\mathcal{S}(v)) \leq n-1.
\]
\end{corollary}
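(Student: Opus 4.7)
The plan is to apply the classical Federer dimension reduction scheme. Since the case $n = 1$ is already covered by Theorem \ref{t:2d-isolated-sing}, I would assume throughout that $n \geq 2$. The preliminary setup involves three observations that follow from results already established: (a) $\mathcal{S}(v)$ is relatively closed in $\Omega$, thanks to the upper semicontinuity of $x \mapsto N_{v,x}(0^+)$ (a consequence of Lemma \ref{lemma almgren}); (b) by Lemma \ref{lemma prop blowups}, every $x_0 \in \mathcal{S}(v)$ admits a nontrivial tangent function $\bar v$ that is radially homogeneous of degree $\alpha := N_{v, x_0}(0^+) \geq 3/2$ (the lower bound coming from Proposition \ref{p: freq gap}); and (c) by Lemma \ref{lemma strong convergence blow-ups}(3), such $\bar v$ satisfies \eqref{eq modified iv}--\eqref{eq modified ineq} with $G = 0$ in $B_1$.

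The heart of the argument is the \emph{spine bound}: for every such tangent function $\bar v$, the subspace of translation invariances
\[
S(\bar v) := \{e \in \R^{n+1} : \bar v(\cdot + t e) \equiv \bar v(\cdot) \text{ for all } t\in\R\}
\]
has dimension at most $n-1$. To prove this, I would suppose toward contradiction that $\dim S(\bar v) = n$, so that up to rotation $\bar v(x) = f(x \cdot e_{n+1})$ for some non-negative, $\alpha$-homogeneous $f : \R \to [0,\infty)$. Arguing as in the proof of Lemma \ref{thm:inter reg for crit points} (exploiting that the non-negative Radon measure from \eqref{eq modified ineq} vanishes on $\{\bar v > 0\}$), one concludes that $\bar v$ is classically harmonic on the open set $\{\bar v > 0\}$, so $f''(t) = 0$ on $\{f > 0\}$. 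But $\alpha$-homogeneity forces $f(t) = c_\pm |t|^\alpha$ on $\{\pm t > 0\}$ with $c_\pm \geq 0$, and since $\alpha \geq 3/2 > 1$, the identity $f'' = 0$ on $\{f > 0\}$ forces $c_\pm = 0$. Hence $\bar v \equiv 0$, contradicting the normalization $\|\bar v\|_{L^2(\partial B_1)} = 1$.

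With the spine bound in hand, I would invoke Federer's dimension reduction principle (see e.g.\ \cite[Appendix A]{Simon_theorems_on_reg} or \cite[Theorem 4.6]{TT}) to conclude that $\dim_{\mathcal{H}} \mathcal{S}(v) \leq n-1$. The standard argument stratifies $\mathcal{S}(v) = \bigcup_{k=0}^{n-1} \mathcal{S}^k(v)$ by the maximal spine dimension of tangents at a point, and shows $\dim_{\mathcal{H}} \mathcal{S}^k(v) \leq k$ by an inductive blow-up and density argument. The iteration of the blow-up is legitimate because every tangent function is itself a radially homogeneous solution of \eqref{eq modified iv}--\eqref{eq modified ineq} with $G = 0$, so the spine bound applies equally well at every singular point of any tangent, and the required compactness is delivered by Lemma \ref{lemma strong convergence blow-ups}. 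The main obstacle is therefore the spine bound already addressed above; the remainder is a verbatim application of the Federer scheme as in the references cited.
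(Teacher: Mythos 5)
Your proof is correct but reorganizes the argument relative to the paper. The paper carries out Federer's dimension reduction explicitly: it inducts on the ambient dimension, with $n=1$ (Theorem \ref{t:2d-isolated-sing}) as the base case; in the inductive step it supposes $\Hcal^{n-1+\alpha}(K)>0$ for some compact $K\subset\Scal(v)$, selects a density point for the Hausdorff content $\Hcal^{n-1+\alpha}_\infty$, blows up twice (first at that point, then at a nonzero point $y_0$ of the limiting compact singular set), and uses the resulting translation invariance of the second tangent, together with the inductive hypothesis in $\R^n$, to reach a contradiction. No spine bound is ever stated in the paper; it is absorbed into the inductive hypothesis and the $n=1$ base case. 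You instead isolate the spine bound $\dim S(\bar v)\le n-1$ as a standalone lemma, ruling out full-spine singular tangents by the one-dimensional observation that a non-negative $\alpha$-homogeneous function of one real variable which is harmonic (hence affine) on $\{f>0\}$ and has $\alpha\ge 3/2$ must vanish identically, and then cite the Federer scheme from \cite{Simon_theorems_on_reg} and \cite{TT} as a black box. The content/blow-up machinery you invoke is exactly what the paper performs by hand, so the two proofs are substantively equivalent; your version has the advantage of making explicit the one structural fact specific to this problem (no tangent depending on a single linear coordinate can have frequency $\ge 3/2$), while the paper's version is more self-contained. Both rest on the frequency gap (Proposition \ref{p: freq gap}), homogeneity and compactness of tangents (Lemmas \ref{lemma strong convergence blow-ups} and \ref{lemma prop blowups}), and the upper semicontinuity of the frequency.
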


\begin{proof}
We will argue by induction on $n$, following Federer's dimension reduction argument in this setting. Observe that the $n=1$ case is automatically covered by Theorem \ref{t:2d-isolated-sing}, which already provides a sharper statement. Now suppose that $n\geq 2$ and that we have established the the dimension estimate in $\R^n$, but suppose for a contradiction that it is false in $\R^{n+1}$. Then there exists $v$ satisfying \eqref{eq modified iv}-\eqref{eq modified ineq}, an exponent $\alpha>0$ and a compact subset $K\subset \mathcal{S}(v)$ such that
\[
\Hcal^{n-1+\alpha}(K) > 0.
\]
Recall the notion of $(n-1+\alpha)$-dimensional Hausdorff content $\Hcal^{n-1+\alpha}_\infty$ (see e.g. \cite{LSimon_GMT}), which has the same negligible sets as $\Hcal^{n-1+\alpha}$, but unlike the Hausdorff measure itself, is upper semicontinuous with respect to Hausdorff convergence of compact sets).

In particular, {$\Hcal^{n-1+\alpha}_\infty(K) > 0$ and so, since for $\Hcal^{n-1+\alpha}$-a.e. point the upper $\Hcal^{n-1+\alpha}_\infty$-density is strictly positive (again, see \cite{LSimon_GMT}),} there exists $x_0\in K$ and $\eta > 0$ such that
\[
 \liminf_{r\downarrow 0} \Hcal^{n-1+\alpha}_\infty(B_1\cap K_{x_0,r}) {=} \liminf_{r\downarrow 0} \frac{\Hcal^{n-1+\alpha}_\infty(B_r(x_0)\cap K)}{r^{n-1+\alpha}} \geq \eta.
\]
where $K_{x_0,r} \subset \Scal(v_{x_0,r})$ denotes the rescaling $(K- x_0) r^{-1}$, with $v_{x_0,r}$ as defined in \eqref{e:rescaling}. Therefore, there exists a subsequence $r_k \downarrow 0$ and a compact set $K_\infty$ such that $K_{x_0,r_k} \to K_\infty$ in Hausdorff distance, and
\begin{equation}\label{e:density-pt}
\Hcal^{n-1+\alpha}_\infty(B_1\cap K_\infty) \geq \eta.
\end{equation}
In particular, we argue as above to deduce that there must exist a point $y_0\in K_\infty\cap B_1 \setminus \{0\}$ with
\[
\liminf_{r\downarrow 0} \frac{\Hcal^{n-1+\alpha}_\infty(B_r(y_0)\cap K_\infty)}{r^{n-1+\alpha}} > 0
\]
Furthermore, letting $\bar v$ denote a tangent function of $v$ at $x_0$ along the sequence $\{r_k\}$; the conclusions of Lemma \ref{lemma prop blowups} imply that $K_\infty\cap B_1 \subset \Scal(\bar v)$. 

Repeating the above steps, we may now apply Lemma \ref{lemma prop blowups} to take a tangent function $\bar v_\infty$ to $\bar v$ at $y_0$, along some sequence $\rho_k \downarrow 0$, so that we additionally have
\[
\Hcal^{n-1+\alpha}_\infty(B_1\cap \Scal(\bar v_\infty)) > 0.
\]
Since $y_0\neq 0$ and $\bar v$ is radially homogeneous, this implies that $\bar v_\infty$ is translation-invariant along some line through the origin. In other words, up to rotation, $\bar v_\infty(x_1,\dots, x_{n+1}) = \bar w_\infty(x_1,\dots, x_{n})$, with
\[
\Hcal^{n-2+\alpha}_\infty(B_1\cap \Scal(\bar w_\infty)) > 0
\]
However, by our inductive hypothesis, we must have $\dim_\Hcal(\Scal (\bar w_\infty)) \leq n-2$, which yields the desired contradiction.
\end{proof}

\begin{lemma}\label{l:2d-finite-components}
    Let $n=1$ and let $u$ be a solution of \eqref{eq innervariation}-\eqref{eq:differential inequality}. Suppose that $x_0\in \{v=0\}$. Then there exists $r_0 > 0$ (depending on $x_0$) such that $\{v>0\}\cap B_{r_0}(x_0)$ has finitely many connected components.
\end{lemma}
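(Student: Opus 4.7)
I will split the argument according to whether $x_0 \in \Rcal(u)$ or $x_0 \in \Scal(u)$; recall that $\{v=0\} = \Rcal(u)\sqcup \Scal(u)$ by Corollary \ref{corollary decomp fb}, and that the case in which $v\equiv 0$ on the component of $\Omega$ containing $x_0$ is trivial. If $x_0 \in \Rcal(u)$, Theorem \ref{t:2d-isolated-sing} tells us $\Scal(u)$ is discrete, so for $r_0 \in (0,\dist(x_0,\Scal(u)))$ every $y\in\{v=0\}\cap B_{r_0}(x_0)$ has frequency $1$; Proposition \ref{lemma density components} applied at $x_0$ then yields that $\{v>0\}\cap B_{r}(x_0)$ has exactly two connected components for some $r \leq r_0$. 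If instead $x_0 \in \Scal(u)$, Theorem \ref{t:2d-isolated-sing} again allows me to pick $R_0>0$ with $\Scal(u)\cap\overline{B_{R_0}(x_0)}=\{x_0\}$, and Theorem \ref{t:reg-set} then identifies $M := \{v=0\}\cap(B_{R_0}(x_0)\setminus\{x_0\})$ as an analytic $1$-submanifold of the punctured ball. I then apply Sard's theorem to the analytic function $\phi(y) := |y-x_0|$ on $M$: for a.e.~$r \in (0,R_0)$, $r$ is a regular value, so $M\cap\partial B_r(x_0)$ is a $0$-dimensional submanifold, closed in the compact circle, hence finite. Consequently $\{v>0\}\cap\partial B_r(x_0)$ has finitely many arc components.

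The crux of the singular case is then to show that, for $r_0$ sufficiently small, every connected component $U$ of $\{v>0\}\cap B_{r_0}(x_0)$ satisfies $\overline U \cap \partial B_{r_0}(x_0) \neq \varnothing$; granted this, the total number of components is bounded above by the finite number of arcs in $\{v>0\}\cap\partial B_{r_0}(x_0)$. To prove the touching property, suppose for contradiction that $U$ is a component with $\overline U\subset B_{r_0}(x_0)$. Since $v$ is Lipschitz on $\overline U$ with $v=0$ on $\partial U\subset\{v=0\}$, extending by zero shows $v \in H^1_0(U)$, and by the argument in the proof of Lemma \ref{thm:inter reg for crit points} showing $\mu=0$ on $\{v>0\}$, the function $v$ classically solves $2\Delta v = G'(v)$ in $U$. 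Testing against $v$ yields
\[
    -2\int_U|\nabla v|^2\,dx = \int_U G'(v)\,v\,dx,
\]
and combining the pointwise estimate $|G'(t)|\leq k t$ (with $k:=\sup_{[0,1]}|G''|$) from \eqref{eq:g estimates} with the Poincar\'e inequality $\int_U v^2 \leq C_P\diam(U)^2\int_U|\nabla v|^2$ (for a universal $C_P$, applied after extending $v$ by zero to a ball of radius $\diam(U)$ containing $U$) produces
\[
    2\int_U|\nabla v|^2 \leq k C_P \diam(U)^2 \int_U|\nabla v|^2.
\]
Since $v$ is positive on $U$ and vanishes on $\partial U$, $\int_U|\nabla v|^2>0$, and rearranging gives the universal lower bound $\diam(U)\geq d_* := \sqrt{2/(kC_P)}$. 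Choosing $r_0 < d_*/2$ then forces $\diam(U)\leq 2r_0 < d_*$, the desired contradiction.

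The principal obstacle is precisely this ``touching'' step. The blow-up at $x_0$ produces a homogeneous tangent $\bar v = c r^{N/2}|\sin(N\theta/2)|$ whose positive set in $B_1$ is a disjoint union of $N$ sectors each reaching $\partial B_1$, but uniform convergence to this tangent does not, by itself, preclude the formation of small bounded ``blister'' components of $\{v>0\}$ along the rays of $\{\bar v = 0\}$ at positive scales. The PDE-based rigidity estimate above excludes all such blisters uniformly, with threshold depending only on $\sup_{[0,1]}|G''|$ and with no requirement on the sign of $G''(0)$.
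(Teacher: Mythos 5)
Your proof is correct and shares the key Poincar\'e-inequality step with the paper, but the first half of the argument is handled by a genuinely different (and arguably cleaner) device. The paper's proof first establishes, via the reflection identity of Lemma \ref{l:reflection}, an integration-by-parts estimate showing that $\{v=0\}$ has finite $\mathcal{H}^1$-measure in any annulus $B_s\setminus\overline{B}_r$ centered at $x_0$ (their Step 1); this length bound is what lets them conclude that only finitely many curves of $\{v=0\}$ can run from one boundary circle to the other. You instead exploit directly the fact that $M := \{v=0\}\cap (B_{R_0}(x_0)\setminus\{x_0\})$ is an analytic $1$-manifold (Theorem \ref{t:reg-set}) and apply Sard to $\phi = |\cdot - x_0|$ restricted to $M$: a.e.\ radius is regular, and the preimage of a regular radius is a compact $0$-manifold in the circle, hence finite. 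This avoids the quantitative length estimate entirely, substituting a softer topological/measure-theoretic step, and is a nice simplification. Both proofs then rely on the same CTV-type Poincar\'e rigidity to rule out ``blister'' components with $\overline U\subset B_{r_0}(x_0)$: your diameter threshold $d_* = \sqrt{2\lambda_1(B_1)/k}$ (via $C_P = \lambda_1(B_1)^{-1}$) is exactly the paper's condition $k r_1^2/2 < \lambda_1(B_1)$ after rescaling. The preliminary split into $x_0\in\mathcal{R}(u)$ versus $x_0\in\mathcal{S}(u)$ is an organizational choice rather than a substantive difference, since the regular case is subsumed by Proposition \ref{lemma density components}.

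One point worth making explicit in the write-up: after establishing that every component $U$ of $\{v>0\}\cap B_{r_0}(x_0)$ satisfies $\overline U\cap\partial B_{r_0}(x_0)\neq\varnothing$, you count components by arcs of $\{v>0\}\cap\partial B_{r_0}(x_0)$. A priori $\overline U\cap\partial B_{r_0}(x_0)$ could consist only of points in $\{v=0\}\cap\partial B_{r_0}(x_0)$ (the finite set of transversal crossings). To conclude that $U$ actually borders an open arc of $\{v>0\}\cap\partial B_{r_0}(x_0)$, one should invoke the local picture near such a crossing $p$: since $N_{v,p}(0^+)=1$ and $\{v=0\}$ is transversal to $\partial B_{r_0}(x_0)$ at $p$, in a small neighborhood of $p$ the set $\{v>0\}\cap B_{r_0}(x_0)$ consists of exactly two regions (Proposition \ref{lemma density components}), each of which is adjacent to an arc; $U$ must contain one of them. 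Likewise, distinct components cannot share an arc since $v$ is positive, hence bounded away from zero, on a neighborhood of any point of the arc. These are short verifications but they close the injection ``boundary-touching components $\hookrightarrow$ arcs'' that the count implicitly uses.
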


\begin{proof}
We may without loss of generality assume that $x_0=0$. We divide the proof into steps as follows.

    \textit{Step 1.} We first demonstrate that $\{v=0\}$ has finite length inside any annulus centered at the origin contained in $B_{r_0}$, for any $r_0>0$ small enough. In light of Theorem \ref{t:reg-set} and Theorem \ref{t:2d-isolated-sing}, there exists $r_0>0$ such that $\mathcal{S}(u)\cap B_{r_0} = \{0\}$, and $\Rcal(u)\cap B_{r_0}$ consists of analytic curves (possibly infinitely many) terminating at the origin. Let $0<r<s\leq \frac{r_0}{2}$ and let $\vphi\in C_c^\infty(B_{r_0};[0,\infty))$ be such that $\mathbf{1}_{B_s \setminus \overline{B}_r} \leq \vphi \leq \mathbf{1}_{B_{2s} \setminus \overline{B}_{r/2}}$. Let $\{D_i\}_{i\in\N}$ denote the connected components of $B_{r_0}$ and let $v_i = v|_{D_i}$, extended by zero to $B_{r_0}$. Then each $v_i$ is Lipschitz, Lemma \ref{l:reflection} and an analogous computation to \eqref{eq int by parts} together guarantee that, since $2\Delta v_i = G'(v_i)$ in $D_i$, we have
    \begin{align*}
        2\sum_i \int_{\partial\{v_i=0\}} |\nabla v_i| \varphi  d\Hcal^n &= - {2}\sum_i\int_{\{v_i>0\}} \nabla v_i\cdot \nabla \varphi  -\sum_i \int_{\{v_i>0\}} G'(v_i) \varphi\,.
    \end{align*}
    Since $N_{v,x}(0^+) = 1$ for each $x\in B_{2s}\setminus \overline{B}_{r/2}$, the same argument as in Step 2 of Theorem \ref{t:reg-set} guarantees that $|\nabla v|$ does not vanish anywhere on $\{v=0\}\cap (B_{s}\setminus \overline{B}_{r})$, and thus
    \[
        \Hcal^n(\{v=0\}\cap (B_{s}\setminus \overline{B}_{r})) \leq C(r,s)\,. 
    \]
    \textit{Step 2.} Let us now conclude that there exists $r_1 \leq \tfrac{r_0}{2}$ such that for any $0<r<r_1$, under the additional assumption that $\{v=0\}$ has transverse intersection with $\partial B_{r}$, then $\{v=0\}$ consists of finitely many curves in $B_r$. From this, the conclusion will follow, in light of the transversality of smooth parametric families of maps to a given smooth submanifold (which follows from Sard's Theorem). Indeed, the latter together with the regularity of $\{v=0\}$, tells us that for almost-every $\rho \in (0,r_1)$, $\{v=0\}$ is transverse to $\partial B_\rho$. 
    
    Fix $r_1$ arbitrarily, to be determined later. First of all, observe that the conclusion of Step 1 guarantees that $\{v=0\}\cap (\overline{B}_{r_1}\setminus B_{r})$ consists of countably many disjoint curves $\gamma_i: [0,1] \to \overline{B}_{r_1}\setminus B_{r}$, $i\in\N$, and at most finitely many of them have $\gamma_i(0)\in \partial B_{r_1}$ and $\gamma_i(1)\in \partial B_{r}$ (or vice versa).
    
    In addition, we claim that only finitely many of them can have both $\gamma_i(0)$ and $\gamma_i(1)$ lying on $\partial B_{r}$. Indeed, if there are infinitely many, then the transversality assumption combined with an additional application of the conclusion of Step 1 implies that there must exist a closed embedded curve $\Ccal\subset \{v=0\}$ contained in the interior of $B_{r_1}$. This in turn produces a connected component $U$ of $\{v>0\}$ contained strictly in the interior of $B_{r_1}$. We claim that for $r_1$ sufficiently small (depending implicitly on $x_0$ which we have taken to be the origin), this is not possible. This follows the reasoning of \cite[Proposition 6.2]{CTV-three-phase}, which we repeat here for convenience. First of all, consider the rescaling $v_{r_1} \equiv v_{0,r_1}$ as in \eqref{e:rescaling}. In light of Lemma \ref{lemma strong convergence blow-ups}, we have the identity 
    \[
        \Delta v_{r_1} = \frac{r_1^2}{2H(r_1)^{1/2}}G'\big(v_{r_1} H(r_1)^{1/2}\big)\,,
    \]
    inside the rescaled component $\tilde U := r_1^{-1}U$. Testing this against $v_{0,r_1}$ (which can be done since $v$ has zero boundary data in $U$) and integrating by parts, we obtain the Poincar\'{e} inequality
    \[
        \int_{\tilde U} |\nabla v_{r_1}|^2 \leq \frac{kr_1^2}{2} \int_{\tilde U} v_{r_1}^2\,,
    \]
    where $k= \sup_{[0,1]}|G''|$. Choosing $r_1$ sufficiently small such that $\tfrac{kr_1^2}{2} < \lambda_1(B_1)$, where $\lambda_1(B_1)$ denotes the lowest Dirichlet eigenvalue of the unit ball (which is an explicitly computable constant), we arrive at a contradiction.
    
    Observe that this argument further tells us that we cannot have any connected components of $\{v>0\}$ in $B_r$, and thus, again combining with the transverse intersection assumption, we deduce that the only possibility is that $\{v=0\}\cap B_r$ consists of a finite number of curves with either both endpoints on $\partial B_r$, or with one endpoint on $\partial B_r$ and one endpoint at the origin.
\end{proof}

We finish this section with the proof of our main theorem. Our proof of the uniqueness of blow-ups at singular points in the planar case is a well know argument (see, e.g, \cite{TT}) which exploits the expansion of solutions to elliptic equations around critical points in the plane  \cite[Theorem 1]{hartman1953local}.
\begin{proof}[\textit{Proof of Theorem \ref{thm:main regularity theorem}.ii}]
The conclusions of Part \textbf{(ii)} when $n\geq 2$, together with the regularity of $\Rcal(u)$ when $n=1$, follow immediately from Corollary \ref{corollary decomp fb}, Theorem \ref{t:reg-set} and Corollary \ref{c:sing-set-dim}. It merely remains to characterize the behavior of $\{u=1\}$ at points in $\Scal(u)$ when $n=1$.
Letting $v=1-u$, from Theorem \ref{t:2d-isolated-sing} we know that $\mathcal{S}(u)$ is discrete. Thus, for $x_0 \in \mathcal{S}(u)$, in virtue of Lemma \ref{l:2d-finite-components}, there exists $r_0>0$ such that  $\{v>0\}\cap B_{r_0}(x_0)$ has a finite number $\ell$ of connected components. Assuming without loss of generality that $x_0=0$, let us consider the function $w(\rho,\theta) = v(\rho^2,2\theta)$ written in polar coordinates $(\rho, \theta)$. Notice that $\{w>0\}\cap B_{r_0}$ has $2\ell$ connected components $\{C_{i}\}_{i=1}^{2\ell}$ labelled so that $\partial C_{i}\cap \partial C_{i+1} \cap B_{r_0}\neq \varnothing$ for $i=1,\cdots, 2\ell-1$ and $\partial C_{2\ell}\cap \partial C_{1} \cap B_{r_0}\neq \varnothing$. Consider now the function $z = \sum_{i=1}^{2\ell} (-1)^i w|_{C_i}$. We claim that
\begin{equation}\label{eq z bvp}
\Delta z(x) = 2|x|^2 \tilde{H}(z(x)) \quad \mbox{$x \in B_{r_0}$,}
\end{equation}
with $\tilde{H}$ given by \eqref{eq odd reflection}, and that \eqref{eq z bvp} implies the desired conclusion.

Assuming for a moment the validity of the claim \eqref{eq z bvp}, since $f(x)=2|x|^2\frac{\tilde{H}(z(x))}{z(x)}$ is continuous, \eqref{eq z bvp} falls under the hypotheses of \cite[Theorem 1]{hartman1953local} with this choice of $f$, and $d=e=0$ (see also \cite{hartman1955local} for a ``modern'' formulation of the result), implying that $z$ admits a unique asymptotic expansion in polar coordinates of the form
\begin{equation}\label{eq HW}
z(\rho, \theta) = c_1 \rho^{L}\sin(L\theta)+c_2 \rho^{L}\cos(L\theta)+o\Big(\rho^L\Big),
\end{equation}
as $\rho \to 0^+$ for some $c_1,c_2\in \R$ and $L\in \N$. Notice that this combined with Lemma \ref{lemma:planar tangent classification} and Lemma \ref{lemma prop blowups} implies that the tangent function $\bar v$ of $v$ at $0$ is unique and that $c_1 = \frac{1}{\sqrt{\pi}}$ and $L ={2\ell} = 2 N_{v,0}(0^+)$ in the expansion \eqref{eq HW}, as desired.

We finish the argument by proving \eqref{eq z bvp}.  Let us observe first that when $z>0$, 
\begin{eqnarray*}
\Delta z(\rho,\theta)&=&  \partial_{\rho \rho}z +\frac{\partial_\rho z}{\rho} +\frac{1}{\rho^2}\partial_{\theta \theta} z\\
&=& 4\rho^2\Big( \partial_{\rho \rho}v(\rho^2,2\theta)+{\frac{1}{\rho^2}}\partial_{\rho }v(\rho^2,2\theta)+\frac{1}{\rho^4}\partial_{\theta \theta} v(\rho^2,2\theta)\Big)\\
&=& 2\rho^2 G'(v)(\rho^2,2\theta),
\end{eqnarray*}
similarly we have that if $z<0$, $\Delta z = -2\rho^2G'(-v)(\rho^2,2\theta)$. Lastly, since for each connected component $C_i$ of $w > 0$, $\partial C_i \cap (B_{r_0}\setminus \{0\})$ is a union of regular curves in virtue of Theorem \ref{t:reg-set} and the normal derivatives of $z$ on each side of $\partial C_i $ match for $i=1,\cdots ,2\ell$, we have that $\Delta z(x) = 2|x|^2 \tilde{H}(z(x))$ holds in $B_{r_0}\setminus \{0\}$ but since $z$ is continuous up to the origin, we conclude that actually \eqref{eq z bvp} holds.
\end{proof}

\appendix



\section{Variational estimates}
Here we collect some basic variational estimates relating to minimizers of \eqref{new model intro 2} and \eqref{new model volume constrained 2}, mostly contained in \cite{maggi2023hierarchy}. We begin with the following lemma, quoted from \cite[Lemma 4.5]{maggi2023hierarchy}, giving the inner variation formulae for the energy and volume. 

\begin{lemma}\label{lemma volume fixing}
{\bf (i):} If $F$, $V$ are $C^1$, $A\subset\R^{n+1}$ is open, $X\in C^\infty_c(A;\R^{n+1})$, and $f_t(x)=x+t\,X(x)$, then there are positive constants $t_0$ and $C_0$ depending on $X$ only, such that, for every $|t|<t_0$, $f_t:A\to A$ is a diffeomorphism, and for every $w\in W^{1,2}(A;[0,1])$ we have
\begin{eqnarray}\nonumber
&&\Big|\int_A |\nabla (w \circ f_t)|^2 + F(w \circ f_t) - \int_A |\nabla w|^2 + F(w)
\\\nonumber
&&\hspace{1cm}-t\,\int_A\big[|\nabla w|^2+F(w)\big]\,\Div\,X-2(\nabla w)\cdot\nabla X[\nabla w]\Big|\le C_0t^2 \int_A |\nabla w|^2 + F(w)\,,
\\\label{pushing}
&&\Big|\int_A V(w\circ f_t)-\int_A V(w)-t\,\int_A\,V(w)\,\Div\,X\Big|\le C_0t^2 \int_A V(w)\,,
\end{eqnarray}
{\bf (ii):} If $F$, $V$ are $C^1$ and $V$ satisfies \eqref{eq:A3}, $A\subset\R^{n+1}$ is open, $u\in L^1(A;[0,1])$ and $u$ is not constant on $A$, then there are positive constants $\eta_0$, $t_0$, $\beta_0$, and $C_0$ and a one parameter family of diffeomorphisms $\{f_t\}_{|t|<t_0}$, all depending on $A$ and $u$, such that $f_0=\id$, $\{f_t\ne\id\}\cc A$, {$f_t(x) = x + tX(x)$ for some $X \in C_c^\infty(A;\mathbb{R}^{n+1})$,} and for every $w\in W^{1,2}(A;[0,1])$ with $\|u-w\|_{L^1(A)}\leq \beta_0$ and $|\eta|<\eta_0$, there is $t=t(\eta)\in (-t_0,t_0)$ such that $w_t=w\circ f_t$ satisfies
\[
\int_A V(w_t)=\int_A V(w)+\eta\,,\qquad \Big| \int_A |\nabla w_t|^2 + F(w_t) - |\nabla w|^2 - F(w)\Big|\le C_0|\eta|\int_A |\nabla w|^2 + F(w)\,\,.
\]
\end{lemma}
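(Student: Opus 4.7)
The plan is to prove \textbf{(i)} by a direct change of variables followed by first-order Taylor expansion in $t$, then bootstrap to \textbf{(ii)} by solving a one-dimensional equation for the volume via the intermediate value theorem.

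For Part \textbf{(i)}, I would use that $f_t(x) = x + tX(x)$ is a diffeomorphism of $A$ onto itself for $|t| < t_0 := \|DX\|_\infty^{-1}$, with the standard expansions
\[
\det Df_t = 1 + t\,\Div X + t^2\rho_1(t,\cdot)\,, \qquad (Df_t)(Df_t)^T = I + t\,(DX + DX^T) + t^2\rho_2(t,\cdot)\,,
\]
and $\rho_1,\rho_2$ smooth and uniformly bounded in $|t|<t_0$ and $x\in A$. Substituting $y = f_t(x)$ in each of $\int_A |\nabla(w\circ f_t)|^2\,dx$, $\int_A F(w\circ f_t)\,dx$, and $\int_A V(w\circ f_t)\,dx$, and using the chain rule $\nabla(w\circ f_t) = (Df_t)^T(\nabla w)\circ f_t$, converts each integral into an integral of $\nabla w$ (resp.\ $w$) against a smooth factor of the form $I + tA_1(y) + t^2 A_2(t,y)$ with $A_2$ uniformly bounded. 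Reading off the linear terms reproduces the three displayed first-order expressions. Because $0\leq w\leq 1$ and $F,V$ are Lipschitz on $[0,1]$, the quadratic remainders are controlled multiplicatively by $\int_A(|\nabla w|^2 + F(w))$ and $\int_A V(w)$ respectively, with $C_0$ depending only on $\|X\|_{C^2}$ and on $F,V$.

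For Part \textbf{(ii)}, I would first fix a single vector field $X \in C_c^\infty(A;\R^{n+1})$ so that $\alpha := \int_A V(u)\,\Div X \neq 0$. Such an $X$ exists because $V$ is strictly increasing on $(0,1)$ by \eqref{eq:A3} and $u$ is non-constant on $A$: this allows one to choose disjoint balls $B_1, B_2 \cc A$ and thresholds $0\le a < b \le 1$ with $\mathcal{L}^{n+1}(\{u\leq a\}\cap B_1)\cdot \mathcal{L}^{n+1}(\{u\geq b\}\cap B_2) > 0$, and then to construct a cutoff vector field supported in a tube connecting $B_1$ to $B_2$ that pushes mass from $B_1$ into $B_2$; the resulting $\alpha$ is nonzero since $V(u)$ differs on average over the two ends. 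Applying Part \textbf{(i)} with this $X$ yields
\[
\Phi_w(t) := \int_A V(w\circ f_t)\,dx - \int_A V(w)\,dx = t\int_A V(w)\,\Div X\,dx + r_w(t)\,, \qquad |r_w(t)|\leq C_0 t^2 \int_A V(w)\,.
\]
Since $V$ is Lipschitz on $[0,1]$, $w\mapsto \int_A V(w)\,\Div X$ is continuous in $L^1(A)$, so for $\beta_0$ small enough (depending only on $A$ and $u$), $\|u-w\|_{L^1(A)}\le \beta_0$ forces $|\int_A V(w)\,\Div X|\ge|\alpha|/2$. For such $w$, the $C^1$ map $t\mapsto \Phi_w(t)$ has derivative at $0$ of fixed sign and modulus $\geq|\alpha|/2$; shrinking $t_0$ as needed, the intermediate value theorem produces, for each $|\eta|<\eta_0$, a unique $t(\eta)\in(-t_0,t_0)$ with $\Phi_w(t(\eta))=\eta$ and $|t(\eta)|\leq 4|\eta|/|\alpha|$. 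Substituting this $t(\eta)$ into the energy expansion from Part \textbf{(i)} and using that $\|\Div X\|_\infty$ and $\|DX\|_\infty$ are fixed then yields the stated $O(|\eta|)$ energy bound.

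The principal technical obstacle is the \emph{uniformity} in Part \textbf{(ii)}: the constants $t_0, \eta_0, \beta_0, C_0$ must depend only on $A$ and $u$, not on the competitor $w$. This is resolved by choosing $X$ once and for all from $u$ and transferring the lower bound $|\alpha|$ to all nearby $w$ via the $L^1$-continuity of $V$; once this is in place, Part \textbf{(i)} supplies the energy control automatically.
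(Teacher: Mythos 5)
Your argument follows essentially the same route as the paper, which defers to \cite[Lemma 29.13, Theorem 29.14]{maggi2012sets} and \cite[Lemma 4.5.(ii)]{maggi2023hierarchy}: part (i) is the area-formula/Taylor expansion, and part (ii) pins down one vector field $X$ with $\alpha=\int_A V(u)\,\Div X\neq 0$ from the non-constancy of $V(u)$ (the point emphasized in the paper's outline, enabled by \eqref{eq:A3}), then transfers $|\alpha|$ to nearby competitors via $L^1$-continuity and closes with the intermediate value theorem. One small remark: the ``tube joining $B_1$ to $B_2$'' construction of $X$ is a bit informal and implicitly requires $A$ (or a component of it on which $u$ is non-constant) to be connected; a cleaner way to produce $X$ is by duality---if $\int_A V(u)\,\Div X=0$ for every $X\in C_c^\infty(A;\R^{n+1})$ then $V(u)$ is locally constant on $A$, contradicting its non-constancy on the relevant connected component---but this is the same idea and not a substantive gap.
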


\begin{proof}[Outline of Proof]
The first item follows from the area formula and does not depend on the form of $V$. The second item is the volume-fixing variations argument for perimeter (\cite[Lemma 29.13, Theorem 29.14]{maggi2012sets}) adapted to the Allen-Cahn setting. The only required property of $V$ is that the non-constancy of $u$ implies that $V(u)$ is non-constant also; see \cite[Proof of Lemma 4.5.(ii)]{maggi2023hierarchy}. But this is guaranteed by our assumption \eqref{eq:A3} that $V$ is strictly increasing.
\end{proof}

\begin{corollary}\label{corollary:almost min appendix}
If $F,V$ are $C^1$, $\wire=\mathbb{R}^{n+1}\setminus \Om$ is compact, $\mathcal{C}$ is a spanning class for $\Om$, and $u$ is a minimizer for \eqref{new model volume constrained 2}, then there exists positive $\tilde{r}$ and $\tilde{C}$, both depending on $u$, such that for all $w\in W^{1,2}(\Om;[0,1])$ with $\{w\geq t\}$ $\mathcal{C}$-spanning $\wire$ for all $t\in (1/2,1)$ and $\{u\neq w\}\subset B_{\tilde{r}}(x_0)\cap \Om$ for some $x_0\in \Omega$,
\begin{equation}\label{eq:almost min corollary}
\int_\Om |\nabla u|^2 + F(u)\,dx \leq \int_\Om |\nabla w|^2 + F(w)\,dx + \tilde{C}\left|1- \int_\Om V(w)\right|\,.
\end{equation}
\end{corollary}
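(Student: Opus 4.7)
The plan is to correct the volume of the competitor $w$ back to $1$ using an inner variation supported away from $B_{\tilde r}(x_0)$, and then exploit the minimality of $u$ against the corrected function. First I would note that $u$ cannot be essentially constant on $\Omega$: since $\mathcal{L}^{n+1}(\Omega)=+\infty$ while $\int_\Omega V(u)=1<\infty$, any such constant value $c$ would force $V(c)=0$, i.e.\ $c=0$, in contradiction with $\int V(u)=1$. Consequently, there exist two \emph{disjoint} open balls $B^0_1,B^0_2\cc\Omega$ on each of which $u$ is non-constant. Setting $\delta:=\dist(B^0_1,B^0_2)>0$, Lemma \ref{lemma volume fixing}.(ii) applied on each $B^0_i$ produces constants $\eta_0^i,t_0^i,\beta_0^i,C_0^i$ and a one-parameter family of diffeomorphisms $\{f_t^i\}_{|t|<t_0^i}$ with $\{f_t^i\neq\id\}\cc B^0_i$. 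I would then fix $\tilde r\in(0,\delta/2)$ small enough that $V(1)\,\omega_{n+1}\tilde r^{n+1}<\min(\eta_0^1,\eta_0^2)$.

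Next, given $w$ as in the statement, I may assume $\mathcal{E}(w)\leq \mathcal{E}(u)$, where $\mathcal{E}(\cdot):=\int_\Omega|\nabla\cdot|^2+F(\cdot)\,dx$, as \eqref{eq:almost min corollary} is otherwise trivial. The condition $\tilde r<\delta/2$ guarantees that at least one $B^0_i$ is disjoint from $B_{\tilde r}(x_0)$; call it $B^0$ and drop the index. Since $w\equiv u$ on $B^0$, the $L^1$-closeness hypothesis of Lemma \ref{lemma volume fixing}.(ii) is trivially satisfied there. Moreover, the volume deficit obeys
\[
|\eta|:=\Big|1-\int_\Omega V(w)\Big|=\Big|\int_{B_{\tilde r}(x_0)\cap\Omega} V(u)-V(w)\,dx\Big|\leq V(1)\,\omega_{n+1}\tilde r^{n+1}<\eta_0,
\]
so the lemma produces $t=t(\eta)$ such that $w_t:=w\circ f_t$ satisfies $\int_\Omega V(w_t)=1$ and $|\mathcal{E}(w_t)-\mathcal{E}(w)|\leq C_0|\eta|\mathcal{E}(w)\leq C_0\mathcal{E}(u)|\eta|$.

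To conclude I would verify that $w_t$ is admissible for \eqref{new model volume constrained 2}: the volume constraint and $0\leq w_t\leq 1$ hold by construction, while the spanning condition $\{w_t^*\geq s\}=f_t^{-1}(\{w^*\geq s\})$ for $s\in(1/2,1)$ follows from the homotopic closedness of $\mathcal{C}$, since $f_t\circ\gamma$ is smoothly homotopic to $\gamma$ through $\{f_s\circ\gamma\}_{s\in[0,t]}$ for every $\gamma\in\mathcal{C}$; the same invariance passes to the tubular spanning class $\mathcal{T}(\mathcal{C})$ of Definition \ref{def:span borel} (this is the observation underpinning the derivation of the inner variational equation in Theorem \ref{thm:EL equations}). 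Minimality of $u$ then gives $\mathcal{E}(u)\leq \mathcal{E}(w_t)\leq \mathcal{E}(w)+C_0\mathcal{E}(u)|\eta|$, which is \eqref{eq:almost min corollary} with $\tilde C:=C_0\mathcal{E}(u)$. The only subtle point is the uniformity in $x_0$: one must localize the volume correction away from $B_{\tilde r}(x_0)$ regardless of where $x_0$ sits, which is exactly why two disjoint ``pockets'' $B^0_1,B^0_2$ of non-constancy for $u$ are reserved up front.
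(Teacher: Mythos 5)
Your argument is correct and follows essentially the same route as the paper's proof: non-constancy of $u$ forces the existence of two separated ``pockets'' of non-constancy, one of which is always available for a volume-fixing inner variation supported away from $B_{\tilde r}(x_0)$, and spanning is preserved because precomposing by $f_t$ maps $\mathcal{C}$ and the associated tubular data to themselves. The one genuine (and welcome) addition over the paper's terse sketch is the explicit reduction to the case $\mathcal{E}(w)\leq\mathcal{E}(u)$: without it, the constant coming out of Lemma \ref{lemma volume fixing}.(ii), namely $C_0\mathcal{E}(w)$, would depend on the competitor rather than only on $u$, and your observation that \eqref{eq:almost min corollary} is trivial when $\mathcal{E}(w)>\mathcal{E}(u)$ cleanly closes that small gap.
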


\begin{proof}
First, note that $u$ cannot be constant on $\Om$ since $\int_\Om V(u)=1$ and $\Om$ is unbounded. Let $A_i\subset \Om$ for $i=1,2$ be such that $\dist(A_1,A_2)>0$ and $u$ is non-constant on each $A_i$. Then Lemma \ref{lemma volume fixing}.(ii) applies to $u$ and $A_i$, so we may choose $\tilde{r}$ small enough so that if $w\in W^{1,2}(\Om;[0,1])$ and $\{u\neq w\}\subset B_{\tilde{r}}(x_0)\cap \Om$ for any $x_0$, then $|1-\int V(w)|< \eta_0$ and $B_{\tilde{r}}(x_0)$ is disjoint from at least one $A_i$. By fixing the volume of $w$ on this $A_i$ via $w \circ f_{t(\eta)}$ as in the previous lemma, we may modify it so that the modification has volume $1$. In addition, we claim that this modification preserves the spanning constraint in \eqref{new model volume constrained}. Indeed, if $B$ is Borel, then $f_t^{-1}(B^\one) = (f_t^{-1}(B))^\one$ and $f_t^{-1}(B^\zero) = (f_t^{-1}(B))^\zero$ (both immediate consequences of the area formula) imply that
\begin{equation}\label{eq:preservation of essential boundaries}
\pa^e (f_t^{-1}(B)) = \big((f_t^{-1}(B))^\one \cup (f_t^{-1}(B))^\zero\big)^c=\big(f_t^{-1}(B^\one) \cup f_t^{-1}(B^\zero)\big)^c = f_t^{-1}(\pa^e B)\,;
\end{equation}
also, due to the closure of $\mathcal{C}$ under homotopy, 
\begin{equation}\label{eq:preservation of gamma}
f_t^{-1}\circ \gamma \in \mathcal{C} \quad \forall |t|<t_0\,.
\end{equation}
By \eqref{eq:preservation of essential boundaries}-\eqref{eq:preservation of gamma}, the verification of \eqref{eq:c spanning for functions} for $w\circ f_t$ via Definition \ref{def:span borel} with a triple $(\gamma,\Psi,T)$ reduces to the validity of the same condition for $w$ on $(f_t^{-1}\circ \gamma,f_t^{-1}\circ \Psi, f_t^{-1}\circ T)\in \mathcal{T}(\mathcal{C})$.
Testing the minimality of $u$ against this modification and using the estimates from Lemma \ref{lemma volume fixing}.(ii) concludes the argument.
\end{proof}

\section{Preliminaries for Theorem \ref{thm:main existence theorem}}\label{sec: app c}


The following important lemma will allow us to work with the spanning condition of Definition \ref{def:span closed} in place of that of Definition \ref{def:span borel} for continuous functions.

\begin{lemma}[Spanning for continuous functions]\label{lemma:equiv for continuous functions}
If $\wire=\mathbb{R}^{n+1}\setminus \Om$ is compact, $\mathcal{C}$ is a spanning class for $\wire$, $\delta\in (1/2,1]$, $u\in (W^{1,2}_\loc\cap C^0)(\Om;[0,1])$, and $\{u^* \geq t\}=\{u\geq t\}$ is $\mathcal{C}$-spanning $\wire$ for every $t\in (1/2,\delta)$, then $\{u\geq \delta\}$ is $\mathcal{C}$-spanning $\wire$.
\end{lemma}

\begin{proof}
By Remark \ref{remark:consistency of spanning defs}, it suffices to show that for every $\gamma\in \mathcal{C}$, $\{u\geq \delta\} \cap \gamma \neq \varnothing$. Pick a sequence $\{t_j\}\subset (1/2,\delta)$ such that $t_j \nearrow \delta$. Since $\{u\geq t_j\}$ is closed for every $t$, Remark \ref{remark:consistency of spanning defs} implies that $\{u\geq t_j\}$ is $\mathcal{C}$-spanning in the sense of Definition \ref{def:span closed}, that is there exists $x_j\in \gamma_j$ such that $u(x_j)\geq t_j$. By the compactness of $\gamma$, there must therefore be $x\in \gamma$ such that $x_j \to x$. Thus by the continuity of $u$, 
\[
u(x) = \lim_{j\to\infty} u(x_j) \geq \lim_{j\to\infty} t_j = \delta,
\]
and so $\{u \geq \delta\} \cap \gamma \neq \varnothing$.
\end{proof}

We additionally require the following lemma, which guarantees that our admissible class $\Acal$ of Definition \ref{d:admissible-class} contains all continuous functions with gradients in $L^2_\loc$.

{\begin{lemma}[Generalized admissible class contains original one]\label{l:gen-adm-class-larger}
    Suppose that $\wire=\mathbb{R}^{n+1}\setminus \Om$ is compact and that $\mathcal{C}$ is a spanning class for $\wire$. Then we have the containment
    \[
        \{u\in C(\Omega;[0,1]): \nabla u \in L^2(\Omega)\} \subset \Acal\,.
    \]
\end{lemma}

\begin{proof}
    Let $B_r(x) \cc \Omega$, consider a cup competitor $g_i=1-w_i$ associated to a component $C_i$ for $u$ in $B_r(x)$, and let $\gamma\in \Ccal$. Note that when $u$ is continuous, the essential partition $\{C_i\}_i$ of $\{v^*>0\}\cap B_r(x)$ for $v=1-u$ simply consists of the connected components of $B_r(x)\setminus \{u=1\}$. Moreover, by construction, $g_i$ is also continuous in this case. In light of Lemma \ref{lemma:equiv for continuous functions} (see Remark \ref{r:consistency-spanning-cts}), we may apply \cite[Lemma 10]{DGM17} to $K= \{u=1\}$ in $B_r(x)$ and $\gamma$, which tells us that either $\gamma \cap (\{u=1\}\setminus B_r(x)) \neq \emptyset$, or $\gamma \cap \overline{B_r(x)}$ is homeomorphic to a closed interval with endpoints belonging to two distinct connected components of $\overline{B_r(x)}\setminus \{u=1\}$. If neither of these connected components contains $C_i$, then either $\gamma \cap B_{r/2}(x) = \emptyset$ in which case 
    \[
        \gamma\cap (\{g_i=1\}\cap \overline{B_r(x)}) = \gamma \cap (\{u=1\}\cap \overline{B_r(x)}) \neq \emptyset\,,
    \]
    or $\gamma \cap B_{r/2}(x) \neq \emptyset$ in which case $\gamma$ intersects $\partial B_{r/2}(x)\setminus \overline{C_i}$, where $w_i=0$ and thus $g_i=1$ by construction.

    It thus remains to treat the case when one of the distinct connected components of $\overline{B_r(x)}\setminus \{u=1\}$ containing an endpoint of $\gamma$ also contains $C_i$. In this case, recalling the definition of $w_i$, see Definition \ref{d:cup-comp}, we simply observe that since the semilinear replacement of $(v-v_i) \big|_{\partial B_{r/2}(x)}$ is positive in $\overline{B_{r/2}(x)}$ and the radial cutoff $\vphi$ is positive, $w_i$ preserve the property of disconnecting points of $C_i\cap \partial B_r(x)$ from $\partial B_r(x)\setminus \overline{C_i}$.
\end{proof}}

We need to know that spanning is preserved for $L^1_\loc$-limits of sequences with uniform Dirichlet energy bounds; this is guaranteed by the following theorem, which was originally proved in \cite{maggi2023hierarchy}.

\begin{theorem}[Compactness]\label{thm:prelim compactness thm}
If $\wire\subset \mathbb{R}^{n+1}$ is compact, $\mathcal{C}$ is a spanning class for $\wire$, \{$\delta_j\}_j\subset (1/2,1]$, $\delta_j\to \delta_0\in (1/2,1]$, and $\{u_j\}\subset W^{1,2}_\loc(\Omega;[0,1])$ are such that $u_j \to u$ in $L^1_\loc(\Om)$ for some $u$, $\{u_j^* \geq t\}$ is $\mathcal{C}$-spanning for each $j$ and $t\in [1/2,\delta_j)$, and
\begin{equation}\label{eq:uniform dirichlet bound}
\sup_j \int_{\Om} |\nabla u_j|^2 \,dx < \infty\,,
\end{equation}
then $\{u^* \geq t\}$ is $\mathcal{C}$-spanning for every $t\in [1/2,\delta_0)$.
\end{theorem}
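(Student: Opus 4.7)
The strategy is to verify Definition \ref{def:span borel} directly for $\{u^* \geq t\}$ by combining a slicing argument in tubular neighborhoods with $BV$-compactness, following the scheme developed in \cite{maggi2023hierarchy}. Fix $t\in[1/2,\delta_0)$ and a tubular spanning triple $(\gamma,\Psi,T)\in \mathcal{T}(\mathcal{C})$; writing $T_s := \Psi(\{s\}\times B_1^n)$, our task is to produce, for $\mathcal{H}^1$-a.e.~$s$ and $\mathcal{H}^n$-a.e.~$x\in T_s$, a partition $\{T_1,T_2\}$ of $T$ witnessing that $\{u^*\geq t\}\cup T_s$ essentially disconnects $T$. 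Since $\delta_j\to \delta_0>t$, one can fix $t_0\in (1/2,t)$ with $t_0<\delta_j$ for all $j$ large; thus $\{u_j^*\geq t_0\}$ is $\mathcal{C}$-spanning for every such $j$, and the plan is to extract good partitions at the level $t_0$ and take limits, finally sending $t_0\nearrow t$.

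First, I would isolate a set of ``good'' slices. By Fubini applied to the uniform bound $\sup_j\|\nabla u_j\|_{L^2(T)}^2<\infty$, together with the $L^1_\loc$-convergence $u_j\to u$ and a diagonalization, there exists $\Sigma\subset \mathbb{S}^1$ with $\mathcal{H}^1(\mathbb{S}^1\setminus \Sigma)=0$ and a subsequence (not relabeled) such that, for every $s\in \Sigma$, the traces $\restr{u_j}{T_s}$ are uniformly bounded in $W^{1,2}(T_s)$ and converge to $\restr{u}{T_s}$ pointwise $\mathcal{H}^n$-a.e.~and in $L^2(T_s)$. Simultaneously, the coarea formula on $T$ yields
\[
\int_0^1 \mathcal{H}^n\bigl(\partial^e\{u_j^*\geq \tau\}\cap T\bigr)\,d\tau \;=\; \|\nabla u_j\|_{L^1(T)} \;\leq\; C,
\]
so, after possibly replacing $t_0$ by a nearby value, we may further assume the uniform perimeter estimate $\sup_j \mathcal{H}^n(\partial^e\{u_j^*\geq t_0\}\cap T)<\infty$.

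Next, for each $s\in \Sigma$ and $\mathcal{H}^n$-a.e.~$x\in T_s$, the $\mathcal{C}$-spanning of $\{u_j^*\geq t_0\}$ produces a partition $(T_1^{j},T_2^{j})$ of $T$ with $x\in \partial^e T_1^{j}\cap \partial^e T_2^{j}$ and
\[
(\partial^e T_1^{j}\cap \partial^e T_2^{j}\cap T^{(1)})\setminus T_s \;\subset\; \{u_j^*\geq t_0\}\quad \text{up to $\mathcal{H}^n$-null sets}.
\]
The previous perimeter estimate, together with $\mathcal{H}^n(T_s)<\infty$, gives $\sup_j P(T_1^{j};T)<\infty$. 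Compactness for sets of finite perimeter then extracts an $L^1(T)$-subsequential limit $(T_1,T_2)$ partitioning $T$ into two sets of positive $\mathcal{L}^{n+1}$-measure, the nontriviality being guaranteed by the common boundary point $x$ and a standard density/lower semicontinuity argument.

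The main obstacle is the last step: showing that the limit partition, together with $\{u^*\geq t\}\cup T_s$, still essentially disconnects $T$. The appropriate stability statement is that, up to $\mathcal{H}^n$-null sets,
\[
\partial^e T_1\cap\partial^e T_2\cap T^{(1)} \;\subset\; \bigl\{y\in T : \exists\, y_{j_k}\to y,\ y_{j_k}\in \partial^e T_1^{j_k}\cap \partial^e T_2^{j_k}\cap T^{(1)}\bigr\},
\]
which follows from the lower semicontinuity of perimeter together with the density-point characterization of the essential boundary, as established in \cite{maggi2023plateau}. For $\mathcal{H}^n$-a.e.~$y$ in the left-hand side lying off $T_s$, one extracts such $y_{j_k}\in\{u_{j_k}^*\geq t_0\}$ and, using slice-wise $\mathcal{H}^n$-a.e.~convergence of $u_{j_k}^*$ to $u^*$ on $\Sigma$ (combined with the upper semicontinuity of the precise representative $u^*$ provided by the almost-monotonicity of spherical averages, cf.~Lemma \ref{l:almost-subharm-v}), one deduces $u^*(y)\geq t_0$. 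Letting $t_0\nearrow t$ along a sequence inheriting all the good properties above, and combining with arbitrariness of $(\gamma,\Psi,T)$, concludes the verification of the spanning property for $\{u^*\geq t\}$.
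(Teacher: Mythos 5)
Your proposal attempts to reprove the compactness result from scratch, whereas the paper proceeds by a reduction: it sets $v_j = \max\{u_j, w\}$ for a fixed Lipschitz function $w$ vanishing on $\cl T$ and equal to $1$ outside a large ball, checks that this modification preserves the spanning constraint on the fixed tube and turns the uniform Dirichlet bound into the uniform Allen--Cahn bound \eqref{eq:allencahn bound}, and then cites \cite[Theorem 3.2]{maggi2023hierarchy} verbatim. The two routes are genuinely different in spirit, but your direct route has a gap in the final, crucial step.

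The problem is in the passage from the stability of the essential boundaries to the containment $\partial^e T_1\cap\partial^e T_2\cap T^{(1)}\setminus T_s\subset\{u^*\geq t_0\}$ up to $\mathcal{H}^n$-null sets. You have $\mathcal{H}^n$-a.e.~convergence of $u_{j}^*$ to $u^*$ only on the good slices $T_s$, $s\in\Sigma$. But the essential boundary of the limit partition is an $n$-dimensional set that need not be covered (up to $\mathcal{H}^n$-null sets) by the good slices, and the approximating points $y_{j_k}\in\{u_{j_k}^*\geq t_0\}$ that you produce via the density argument are not constrained to lie on good slices either. So there is no mechanism in the proposal by which pointwise slice-wise convergence of $u_{j_k}^*$ controls the value $u^*(y)$ at the points $y$ you actually care about. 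Furthermore, the appeal to Lemma \ref{l:almost-subharm-v} for upper semicontinuity of $u^*$ is not justified here: that lemma requires $v=1-u$ to satisfy the outer variation equation \eqref{eq modified out}, which is not among the hypotheses of the compactness theorem (the $u_j$ are an arbitrary sequence with uniform Dirichlet bound and spanning, not critical points), and in any case upper semicontinuity of $u^*$ alone would let you compare $u^*$ at nearby points, not $u_{j_k}^*(y_{j_k})$ to $u^*(y)$. This is precisely the delicate point that \cite[Theorem 3.2]{maggi2023hierarchy} is designed to handle, via a quantitative cut-and-paste construction on tubes rather than pointwise convergence of precise representatives; the paper's proof simply arranges the hypotheses so that that theorem applies, which is why its outline is so short.
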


\begin{proof}[Outline of Proof] Fix a triple $(\gamma,\Phi,T)\in \mathcal{T}(\mathcal{C})$ for which we must verify that Definition \ref{def:span borel} holds for every $\{u^* \geq t\}$ with $t\in [1/2,\delta_0)$. We modify our function so as to allow for the application of \cite[Theorem 3.2]{maggi2023hierarchy}. Let $w\in W^{1,2}_\loc(\Om;[0,1])$ be such that
\begin{equation}\label{eq:w props}
\mbox{$w=0$ on $\cl T$, $w=1$ on $\Om \setminus B_R(0)$ for some large $R$}
\end{equation}
and consider the functions
\begin{equation}\label{eq:max def}
v_j = \max\{u_j,w\}\,,\qquad v = \max\{u,w\}\,.
\end{equation}
Note that
\begin{align}\label{eq:loc L1 conv 2}
&\sup_j \int_\Om |\nabla v_j|^2 \, dx < \infty\,,
\\
\label{eq:loc L1 conv}
&\mbox{ $v_j \overset{L^2_\loc}{\longrightarrow} v$, and $\{v_j^* \geq t\}$ is $\mathcal{C}$-spanning $\wire$ for every $t\in [1/2,\delta_j)$ }\,.
\end{align}
Since $v=u$ on $\cl T$, the super-level sets of $v$ satisfy the spanning condition on this fixed tube $T$ if and only if those of $u$ do as well. So it suffices to explain why $\{v^*\geq t\}$ satisfies Definition \ref{def:span borel} for this $(\gamma,\Phi,T)$. This can be done by following the compactness result \cite[Theorem 3.2]{maggi2023hierarchy}, which gives conditions under which the spanning condition is preserved under limits of functions. Our assumptions \eqref{eq:loc L1 conv 2}-\eqref{eq:loc L1 conv} on $v_j$ the same as in \cite[Theorem 3.2]{maggi2023hierarchy} up to the facts that there, the uniform bound
\begin{equation}\label{eq:allencahn bound}
\sup_j  \int_\Om \e|\nabla v_j|^2 + \frac{W(v_j)}{\e}\,dx<\infty\,,\qquad \e>0\,,
\end{equation}
where $W$ is a double-well potential with $W(1)=0=W(0)$ is assumed instead of \eqref{eq:loc L1 conv 2}, and the functions $v_j$ are assumed to belong to $L^2$ rather than the $L^2_\loc$. By \eqref{eq:w props}-\eqref{eq:max def}, the functions $v_j$ satisfy \eqref{eq:allencahn bound}, and the class $L^2_\loc$ is enough to repeat \cite[Proof of Theorem 3.2]{maggi2023hierarchy} verbatim. (The spanning condition on  a single tube $(\gamma,\Phi,T)$ is local in nature, in that it does not depend on the values of $v$ outside $T$, so this last claim should be heuristically clear without referencing the details of \cite[Proof of Theorem 3.2]{maggi2023hierarchy}.)
\end{proof}
\begin{lemma}[Non-triviality of \eqref{new model intro 2}-\eqref{new model volume constrained 2}]\label{lemma:inf not infinity}
If $F$ and $V$ are continuous with $F(0)=0=V(0)$ and $V(t)>0$ for $t\in (0,1]$, $\wire= \mathbb{R}^{n+1}\setminus \Om$ is compact, and $\mathcal{C}$ is a spanning class for $\wire$ satisfying \eqref{eq:spanning class assumption}, then \eqref{new model intro 2} and \eqref{new model volume constrained 2} have finite infimums.
\end{lemma}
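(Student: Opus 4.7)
The plan is to exhibit, for each of \eqref{new model intro 2} and \eqref{new model volume constrained 2}, a single admissible test function with finite energy; the infimum is then at most this energy, hence finite. The starting point is to fix a bounded, $(\mathcal{H}^n,n)$-rectifiable Borel set $K\subset \Omega$ with $\mathcal{H}^n(K)<\infty$ that is $\mathcal{C}$-spanning in the sense of Definition \ref{def:span borel}. Under the non-triviality assumption \eqref{eq:spanning class assumption} on $\mathcal{C}$, such a set exists by Plateau problem existence in the Harrison-Pugh framework; see, e.g., \cite{maggi2023plateau}. The role of $K$ is to serve as the skeleton on which my test functions will concentrate.

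Next I would define the one-parameter family of Lipschitz cutoffs
\[
u_\rho(x):=\max\Big(1-\frac{\dist(x,K)}{\rho},\,0\Big),\qquad \rho>0,
\]
which satisfy $\|\nabla u_\rho\|_{L^\infty(\Omega)}\leq 1/\rho$ and $\spt\,u_\rho\subset N_\rho(K):=\{x\in\Omega:\dist(x,K)\leq \rho\}$. Since $u_\rho$ is continuous, $u_\rho^*\equiv u_\rho$, and for any $t\in(1/2,1)$ the superlevel set $\{u_\rho\geq t\}=\{\dist(\cdot,K)\leq (1-t)\rho\}$ contains $K$. A direct inspection of Definition \ref{def:span borel} shows that supersets of Borel $\mathcal{C}$-spanning sets remain $\mathcal{C}$-spanning (enlarging $K$ in the condition $\mathcal{H}^n((\cdots)\setminus K)=0$ only helps), so the spanning constraint is satisfied by $u_\rho$. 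Combining the Minkowski-content bound $\mathcal{L}^{n+1}(N_\rho(K))\leq C(n)\rho\,\mathcal{H}^n(K)$, valid for small $\rho$ thanks to the rectifiability of $K$, with the gradient bound $|\nabla u_\rho|\leq 1/\rho$ yields
\[
\int_\Omega |\nabla u_\rho|^2+F(u_\rho)\,dx\leq \frac{C\mathcal{H}^n(K)}{\rho}+\|F\|_{L^\infty([0,1])}\,C\rho\,\mathcal{H}^n(K)<\infty.
\]
Since $u_\rho$ is compactly supported and $[0,1]$-valued, it belongs to $D^{1,2}_n(\Omega;[0,1])$ in the sense of \eqref{eq:alter char of h1dot}-\eqref{eq:decay space in the plane}, which settles the finiteness of the infimum in \eqref{new model intro 2}.

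For \eqref{new model volume constrained 2}, the plan is to tune $\rho$ so that the volume constraint holds. The map $\rho\mapsto \int_\Omega V(u_\rho)\,dx$ is continuous by dominated convergence (with dominant $\max_{[0,1]}V\cdot \chi_{N_{2\rho}(K)}$ valid locally in $\rho$), vanishes as $\rho\to 0^+$ since the support $N_\rho(K)$ shrinks and $V$ is bounded, and tends to $+\infty$ as $\rho\to\infty$, because $u_\rho(x)\to 1$ pointwise on $\Omega$, $V(1)>0$, and $\mathcal{L}^{n+1}(\Omega)=\infty$, so Fatou's lemma gives $\liminf_{\rho\to\infty}\int_\Omega V(u_\rho)\geq V(1)\,\mathcal{L}^{n+1}(\Omega)=\infty$. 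The intermediate value theorem then yields some $\rho^*>0$ with $\int_\Omega V(u_{\rho^*})\,dx=1$, and $u_{\rho^*}$ is an admissible finite-energy competitor for \eqref{new model volume constrained 2}.

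The main obstacle, in my view, is precisely the input of the bounded, finite-$\mathcal{H}^n$-measure $\mathcal{C}$-spanning set $K$; this is where assumption \eqref{eq:spanning class assumption} is essential, since without it the admissible class could be empty in \eqref{new model intro 2} or the infimum infinite in \eqref{new model volume constrained 2}. Once such a $K$ is available, the remainder of the argument is a standard cutoff construction paired with a one-parameter continuity argument for the volume.
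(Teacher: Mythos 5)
Your overall strategy -- build a compactly supported Lipschitz cutoff around an $\Hcal^n$-finite $\mathcal{C}$-spanning set, bound the resulting energy, and tune a parameter to hit the volume constraint by the intermediate value theorem -- is exactly the strategy the paper uses. However, you have delegated the genuinely nontrivial step to a citation that does not deliver it, and this constitutes a real gap.

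Specifically, you take as input ``a bounded, $(\Hcal^n,n)$-rectifiable Borel set $K\subset\Omega$ with $\Hcal^n(K)<\infty$ that is $\mathcal{C}$-spanning'' and assert its existence ``by Plateau problem existence in the Harrison--Pugh framework; see, e.g., \cite{maggi2023plateau}.'' But the Plateau existence theorems in that line of work (\cite{HP16}, \cite{DGM17}, \cite{maggi2023plateau}) are all stated under the standing hypothesis that the Plateau infimum $\ell = \inf\{\Hcal^n(S)\,:\,S\ \textup{is}\ \mathcal{C}\textup{-spanning}\}$ is \emph{finite}; they do not produce a finite-measure competitor from \eqref{eq:spanning class assumption} alone. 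Establishing $\ell<\infty$ is precisely the content you are being asked to provide, so the appeal is circular. The paper bridges this gap with an explicit elementary construction: it shows that under \eqref{eq:spanning class assumption} any $\gamma\in\mathcal{C}$ contained in $\wire_\delta^c$ must have diameter exceeding $\delta/2$, and then observes that the compact, $\Hcal^n$-finite set $(\Om\cap\wire_\delta)\cup\partial B_R\cup(B_R\cap G_{\delta/2})$ -- a $\delta$-collar of the frame, a large sphere, and a grid of mesh $\delta/2$ -- is therefore $\mathcal{C}$-spanning. This grid argument is the actual substance of the lemma; once it (or some equivalent construction) is in hand, your cutoff $u_\rho=\max(1-\dist(\cdot,K)/\rho,0)$, the superset-stability of the spanning condition, the finite-energy bound (for which you do not even need the Minkowski content estimate, since $N_\rho(K)\subset B_{R+\rho}$ already has finite volume), and the IVT argument for the volume constraint all go through as you describe. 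To repair the proof, either carry out an explicit construction of $K$ as above, or identify a precise statement in a reference that establishes $\ell<\infty$ under \eqref{eq:spanning class assumption} without presupposing it.
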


\begin{proof}
We will proceed to explicitly construct an admissible function $u$ for both \eqref{new model intro 2} and \eqref{new model volume constrained 2} with finite energy. For some $\delta>0$ fixed, let us consider the $\wire_\delta$ to be those points in $\Om$ such that $\dist(x,\wire)\leq \delta$. We claim that:
\begin{equation}\label{eq:gamma diameter property}
    \mbox{if $\gamma \in \mathcal{C}$ and (the image of) $\gamma$ is contained in $\wire_\delta^c$, then $\diam \gamma > \delta/2$}.
\end{equation}
Indeed, if this were not the case, and there were some $\gamma$ with image contained in $\wire_\delta^c$ with diameter no more than $\delta/2$, then, choosing some ball $\wire_\delta^c\supset B_{\delta/2}(x)\supset \gamma$, we would have $\dist(B_{\delta/2}(x),\wire)\geq \delta/2$. But then $\gamma \subset B_{\delta/2}(x)\subset \Omega$, and thus $\gamma$ is homotopic to a point, contradicting \eqref{eq:spanning class assumption}. {Let $R>0$ be such that $\wire \subset B_R$. Now, defining the grid $G_{\delta/2}=\cup_{z\in \mathbb{Z}^{n+1}}\tfrac{\delta}{2\sqrt{n+1}}\big(z+\partial( [0,1]^{n+1})\big)$ of diameter $\tfrac{\delta}{2}$, we claim that
\begin{equation}\label{eq:grid c spanning set}
    \mbox{$\big[(\Omega \cap \wire_\delta) \cup \partial B_R \cup ( B_R \cap G_{\delta/2})\big]\cap \gamma \neq \varnothing$ for all $\gamma \in \mathcal{C}$}\,.
\end{equation}
To see that this is the case, first notice that if $\dist (\gamma,\wire)\leq \delta$, then clearly the intersection is non-empty. On the other hand, if $\gamma \subset \wire_\delta^c$, then it must intersect $\partial B_R \cup ( B_R \cap G_{\delta/2})$, because otherwise it would be contained in a single cube and contradict \eqref{eq:gamma diameter property} or be contained in $\overline{B}_R^c$ and again be homotopic to a point, contradicting \eqref{eq:spanning class assumption}. Finally, for $\varepsilon>0$ to be determined, we define
\begin{equation}\notag
    u(x) = \max\{1-\dist(x, \wire_\delta \cup \partial B_R \cup ( B_R \cap G_{\delta/2}))/\varepsilon, 0\}\,.
\end{equation}
Since $u$ is Lipschitz with compact support and $\{u=1\}$ contains the $\mathcal{C}$-spanning set from \eqref{eq:grid c spanning set}, $u$ is admissible in \eqref{new model intro 2}. Furthermore, for \eqref{new model volume constrained 2}, note that $\int_\Om V(u)$ is continuous and increasing in $\varepsilon$, so the intermediate value theorem yields some $\varepsilon$ such that $u$ satisfies the volume constraint. Finally, clearly $u$ has finite energy, since it has compact support and is Lipschitz.}\end{proof}

\begin{proof}[Proof of Theorem \ref{thm:existence for isop problem}]
The argument is the same as the one in \cite[Theorem A.1]{maggi2023hierarchy} and depends on \eqref{eq:A4 redux}. Let $\{w_j\}_j$ be a minimizing sequence for $\Psi(v)$. By the P\'{o}lya-Szeg\"{o} inequality, we may as well assume that $w_j(x)=g_j(|x|)$ are radially decreasing. Due to the uniform Dirichlet and $L^\infty$ bounds on $w_j$, there exists $w\in L^1_\loc(\rn;[0,1])$ with finite Dirichlet energy such that $w_j \to w$ in $L^1_\loc$, $w(x)=g(|x|)$, and
\begin{equation}\notag
\int_{\mathbb{R}^{n+1}}|\nabla w|^2 + F(w)\,dx \leq \liminf_{j\to \infty}\int_{\mathbb{R}^{n+1}}|\nabla w_j|^2 + F(w_j)\,dx\,.
\end{equation}
To show that $w$ is a minimizer for \eqref{eq:isop problem def}, we only need to show that $\int_{\rn}V(w)\,dx = v$, which would follow from showing that
\begin{equation}\label{eq:tight}
\lim_{R\to \infty}\sup_j\int_{B_R^c}V(w_j)\,dx = 0\,.
\end{equation}
Since $g_j\to g$ a.e.~ on $(0,\infty)$ and $g$ is radially decreasing, it follows that $\lim_{R\to \infty}\sup_j g_j(R)=0$. Now by \eqref{eq:A4 redux}, we estimate
\begin{equation}\label{eq:no mass escaping isop}
0\leq  \lim_{R\to \infty} \sup_j \int_{B_R^c}V(g_j(|x|))\,dx \leq \lim_{R\to \infty}
\begin{cases}
	\sup_j \displaystyle\frac{V(g_j(R))}{F(g_j(R))}\displaystyle\int_{B_R^c} F(g_j(|x|))\,dx & g_j(R) \neq 0 \\ 
	0 & g_j(R)=0\,;
\end{cases}
\end{equation}
note that $g_j(R)\neq 0$ implies that $F(g_j(R))\neq 0$ by \eqref{eq:A4 redux} and \eqref{eq:A3}. By using $\lim_{R\to \infty}\sup_j g_j(R)=0$ in \eqref{eq:no mass escaping isop}, we find \eqref{eq:tight}. The fact $w>0$ follows from the Euler-Lagrange equations as in \eqref{eq:nontriviality of u on conn comp}.
\end{proof}

\bibliographystyle{alpha}
\bibliography{ref}

\end{document}